\documentclass[12pt]{article}
\usepackage[english]{babel}
\usepackage{amsmath,amsthm}
\usepackage{amsfonts}

\def\bes{\begin{eqnarray*}}
\def\ees{\end{eqnarray*}}
\def\bee{\begin{eqnarray}}
\def\eee{\end{eqnarray}}
\def\la{\langle}
\def\ra{\rangle}

\def\O{\mathbf O}
\def\idl{\triangleleft}
\def\a{\alpha}

\def\Z{\mathbf Z}
\def\0{\bar 0}
\def\1{\bar 1}

\newcommand{\R}{{\mathbb R}}
\newcommand{\C}{{\mathbb C}}

\newcommand{\VV}{{\mathcal V}}

\def\prf{{\it Proof.\ } }
\def\ctd{\hfill$\Box$}

\def\VV{\mathcal V}

\newtheorem{thm}{Theorem}[section]
\newtheorem{cor}[thm]{Corollary}
\newtheorem{lem}[thm]{Lemma}
\newtheorem{prop}[thm]{Proposition}
\newtheorem{remark}[thm]{Remark}
\newtheorem{conj}[thm]{Conjecture}

\theoremstyle{definition}

\theoremstyle{remark}

\numberwithin{equation}{section}
\begin{document}

\title{Simple unital Jordan superalgebras}

\author{Ivan Shestakov\thanks{Supported by the Ministry of Science and Higher Education of the Republic of Kazakhstan, grant No BR28713025,   by IMC SUSTech, Shenzhen, China, and by Brazilian grants FAPESP 2024/14914-9 and CNPq  305196/2024-3.} \\
{\small Shenzhen International Center for Mathematics, Shenzhen, China}\\
{\small Universidade de S\~ao Paulo}, 
{\small S\~ao Paulo, Brasil}\\
{\small ivan.shestakov@gmail.com}
\and
Efim Zelmanov\thanks{The author gratefully acknowledges support from the NSF of China, grant No 12350710787, and the Guandong Program 2023JC10X085.
}\\
{\small Shenzhen International Center for Mathematics,}\\
 {\small Southern University of Science and Technology, }\\
 {\small Shenzhen, China}\\
 {\small efim.zelmanov@gmail.com}
}


\maketitle
\begin{abstract}
We prove that a simple unital Jordan superalgebra of arbitrary dimension belongs to the list of known simple unital superalgebras or lies in  a certain proper subvariety.
\end{abstract}
\large{
\section{Introduction}
\hspace{\parindent}

Throughout the paper, if otherwise is not stated,  all algebras are considered over a field $F$ of characteristic $\neq 2$. 

A (linear ) {\em Jordan algebra} is a vector space $J$ with a binary bilinear operation $xy$ satisfying the identities
\bes
  xy&=&yx,\\
(x^2y)x&=&x^2(yx).
\ees

A canonical example of a Jordan algebra is the algebra {$A^{(+)}$}  obtained from an  associative algebra {$A$} by introducing the new product {$a\cdot b=\tfrac12(ab+ba)$.}  Algebras of this type and their subalgebras are called {\em special}. 

\smallskip
An important example is a {\em Hermitian algebra} {$H(A,*)$} of self-adjoint elements in an associative algebra {$A$} with involution {$*$}. They include algebras of type {$A^{(+)}$} since {$A^{(+)}\cong H(B,*)$}, where {$B=A\oplus A^{\circ}$} with {$A^{\circ}$} anti-isomorphic to {$A$} and {$(a,b)^*=(b,a)$}.

The Jordan algebra {$H(A,*)$} is simple if and only if {$(A,*)$} is {$*$}-simple.
\smallskip

Another important algebra is an {\em algebra of a bilinear form} {$J(V,f)$} or   {\em Jordan Clifford algebra}. Let {$V$ } be a vector space with a symmetric bilinear form  {$f: V\times V\rightarrow F$}. Consider the vector space direct sum
{$J(V,f)=F\cdot 1\oplus V$} and define a product on it by setting {$1=1_{J(V,f)}$} and for {$u,v\in V$}
\bes
{u\cdot v=f(u,v)1}.
\ees

The algebra {$J(V,f)$} is simple
if and only if  the form {$f$} is non\-de\-ge\-nerate and {$\dim_FV>1$}.

Clearly, it lies in the associative Clifford algebra  {$Cl(V,f)$} of the bilinear form {$f$}, hence it is special.
\smallskip

Not all Jordan algebras are special; the most important example of a non-special (or {\em exceptional})  Jordan algebra is the algebra {$H(\O_3)$} of hermitian {$3\times 3$}-matrices over the algebra {$\O$} of octonions (with respect to the symmetric product {$a\cdot b$}).

The algebra {$H(\O_3)$} is simple. Its exceptionality was  proved in 1934 by A.Albert.  This algebra and its forms are called now { \em Albert algebras}.
\smallskip

It was proved in \cite{Zel6} that\\[1mm]
{\em Every simple Jordan algebra is either special or is isomorphic to a form of the algebra $H(\O_3)$} (an  {\em Albert algebra}).

\medskip

The objective of our paper  are  {\em Jordan superalgebras}.  They were introduced in 1970-s by V.Kac \cite{Kac} and I.Kap\-lan\-sky \cite{Kap1, Kap2}. The class is rich in interesting examples and has deep connections to Lie superalgebras and to Mathematical Physics (see \cite{NS, Ram}).

Consider the definition and main  examples of Jordan superalgebras.
\smallskip

Let $V$ be a vector space of countable dimension and let $G(V)$ be the Grassmann  (or exterior) algebra over $V$. If $\{e_i,\ i\geq 1\}$ is a basis of the space $V$, then the algebra $G(V)$ is presented by generators $e_i,\ i\geq 1$,  and relations $e_ie_j+e_je_i=0,\ i.j\geq 1$. The set of ordered products $1,\, e_i,\, \ldots, e_{i_1}\cdots e_{i_k},\ i_1<i_2<\cdots<i_k,$  is a basis of $G(V)$.

The algebra $G(V)$ is $\Z/2\Z$-graded,  $G(V)=G(V)_{\0}+G(V)_{\1}$. The even part $G(V)_{\0}$ is the span of products of even length: $1,e_{i_1}\cdots e_{i_{2k}},\ i_1<\cdots <i_{2k}$, whereas $G(V)_{\1}$ is the space of all products of odd length $e_{i_1}\cdots e_{i_{2k+1}},\ i_1<\cdots<i_{2k+1}$.

By a {\em superalgebra} we mean an algebra
\bes
A=A_{\0}+A_{\1}
\ees
that is $\Z/2\Z$-graded.   The elements from $A_{\0}$ are usually called {\em even} and the elements from $A_{\1}$ are called {\em odd}.

Given a variety $\VV$ of algebras defined by homogeneous identities (see \cite{Jac, ZSSS}) we say that a superalgebra $A=A_{\0}+A_{\1}$ is a $\VV$-superalgebra if its {\em Grassmann envelope} 
\bes
G(A)=A_{\0}\otimes G(V)_{\0}+A_{\1}\otimes G(V)_{\1}
\ees
belongs to the variety $\VV$.

Knowing the identities defining the variety $\VV$,  one can easily write down the {\em superidentities}  defining the $\VV$-superalgebras.
Thus an associative superalgebra is just a $\Z/2\Z$-graded associative algebra.  A {\em commutative superalgebra}  is a $\Z/2\Z$-graded algebra $A=A_{\0}+A_{\1}$ that satisfies 
the superidentity 
\bes
xy=(-1)^{\bar x\bar y}yx,
\ees
where for an element $a\in A_{\0}\cup A_{\1}$ the symbol  $\bar a$ denotes its parity, $\bar a=0$ or 1.
In particular,  the Grassmann algebra is a commutative superalgebra.
\smallskip


\smallskip

A superalgebra $J=J_{\0}+J_{\1}$ is a {\em Jordan superalgebra} if it satisfies the  superidentities
\bee
& xy=(-1)^{\bar x\bar y}yx,&\label{SJ1}\\
& ((xy)z)t+(-1)^{\bar y\bar z+\bar y\bar t+\bar z\bar t}((xt)z)y+(-1)^{\bar z\bar t}x((yt)z)&\nonumber\\
&= (xy)(zt)+(-1)^{\bar y\bar z}(xz)(yt)+(-1)^{\bar t(\bar y+\bar z)}(xt)(yz).&\label{SJ2}
\eee
Consider the main examples of Jordan superalgebras.
Similarly to the case of algebras,  we have the following three classes of superalgebras:

\smallskip

{\bf Example 1.}
Let $A=A_{\0}+A_{\1}$ be an associative superalgebra. The new operation
\bes
a\circ b=\tfrac12(ab+(-1)^{\bar a\bar b}ba)
\ees
defines a structure of a Jordan superalgebra on $A$. We will denote this Jordan superalgebra as $A^{(+)}$.

{\bf Example 2. } ({\em Hermitian superalgebras}). A linear operator $*:A\rightarrow A$ on a superalgebra $A$ is called a {\em superinvolution} if it satisfies identities $(a^*)^*=a,\ (ab)^*=(-1)^{\bar a\bar b}b^*a^*$ for arbitrary elements $a,b\in A_{\0}\cup A_{\1}$.  If $A$ is associative  then the subspace of symmetric elements $H(A,*)=\{a\in A\,|\, a^*=a\}$ is a Jordan subsuperalgebra of $A^{(+)}$.

\smallskip
{\bf Example 3. }({\em Clifford superalgebra}). Let $V=V_{\0}+V_{\1}$ be a $\Z/2\Z$-graded vector space over $F$ with a bilinear form $(v\,|\,w)$ on $V$ such that $(v\,|\,w)$ is symmetric on $V_{\0}$, skew-symmetric on $V_{\1},$ and $(V_{\0}\,|\,V_{\1})=(0)$.  The direct sum of vector spaces $J(V)=F\cdot 1+V= (F\cdot 1+V_{\0})+V_{\1}$ is a Jordan  superalgebra with respect to multiplication $v\cdot w=(v|w)1.$ We refer to this superalgebra as  {\em superalgebra  of a superform}  and denote $J(V).$
\smallskip

The next examples have no analogues among Jordan algebras.

\smallskip
{\bf Example 4.} The $3$-dimensional {\em Kaplansky superalgebra} $k_3=Fe+(Fx+Fy)$  with the multiplication $e^2=e, \,ex=\tfrac12 x,\, ey=\tfrac12 y,  xy=e$ is simple and not unital.

\smallskip
{\bf Example 5.} The $1$-parametric family of 4-dimensional superalgebras $D_t(F)=(Fe_1+Fe_2)+(Fx+Fy)$, where $e_1,e_2$ are orthogonal even idempotents, $e_ix=\tfrac12 x,\,e_iy=\tfrac12 y,\,xy=e_1+te_2,\, t\in F$. The superalgebra $D_t(F)$ is simple if $t\neq 0$.

\smallskip
{\bf Example 6.} V.\,Kac introduced the 10-dimensional simple superalgebra $K_{10}$ that is related (via the Tits-Kantor-Koecher construction) to the exceptional 40-dimensional Lie superalgebra.

\smallskip
{\bf Example 7.} Let $A$ be an associative commutative superalgebra equipped with a bilinear superskew-symmetric map $[,]: A\times A\rightarrow A, \, [A_{\bar i},A_{\bar j}]\subseteq A_{\overline{i+j}}$. A {\em Kantor double} is a direct sum of vector spaces
\bes
Kan(A,[,])=A+Av, \, |v|=1,
\ees
with the product
\bes
a(vb)=(-1)^{\bar a}vab,\, (vb)a=vba, \, (va)(vb)=(-1)^{\bar a}[a,b].
\ees
We say that $[,]$ is a {\em Jordan bracket} if $Kan(A,[,])$ is a Jordan superalgebra with respect to the grading $Kan(A,[,])_{\0}=A_{\0}+A_{\1}v,\, Kan(A,[,])_{\1}=A_{\1}+A_{\0}v.$
I.\,Kantor \cite{Kantor} showed that every Poisson bracket is a Jordan bracket.  Another example of a Jordan bracket is a {\em bracket of vector type}: let $(A,d)$ be an associative commutative superalgebra with an even derivation $d:A\rightarrow A$, then the bracket $[a,b]=a^db-ab^d$ is Jordan.  D.\,King and K.\,McCrimmon \cite{KingMC} found graded identities that determined the class of Jordan brackets. If the bracket $[,]$ is fixed, then we talk just about superalgebra $Kan(A)$.
\smallskip

N.\,Cantarini and V.\,Kac \cite{CanKac} showed that Jordan brackets are in 1-1 correspondence with {\em Lie contact brackets}.

\smallskip
{\bf Example 8.} Let $A$ be an associative commutative superalgebra equipped with a Jordan bracket $[,]:A\otimes A\rightarrow A$ as in the Example 7.  Suppose further that $A$ has a 
$\Z/2\Z$-grading  $A=A_{(0)}+A_{(1)}$ that is compatible with the grading $A=A_{\0}+A_{\1}$ and, moreover, $[A_{(i)},A_{(j)}]\subseteq A_{(i+j)}$. Then the {\em Kantor double} $Kan(A,[,])$ has the  
$\Z/2\Z$-grading
\bes
Kan(A,[,])&=&Kan(A,[,])_{(0)}+Kan(A,[,])_{(1)},\\
Kan(A,[,])_{(0)}&=&A_{(0)}+A_{(1)}v,\ Kan(A,[,])_{(1)}=A_{(1)}+A_{(0)}v.
\ees
We will refer to $A_{(0)}+A_{(1)}v$ as a {\em twisted Kantor double}.  A twisted Kantor double may be a simple Jordan superalgebra that is not isomorphic to any Kantor double.

For example,  let $A=\R[\sin t,\cos t],$   (see \cite{ZhSh}) where $\R$ is the field of real numbers,  $A_{(0)}=\R[\sin 2t,\cos 2t], \, A_{(1)}=(\sin t)A_{(0)}+(\cos t)A_{(0)},\ A=A_{(0)}+A_{(1)}$ is a $\Z/2\Z$-grading,  $[f(t),g(t)]=f'(t)g(t)-f(t)g'(t))$.
The odd part $A_{(1)}v$ of the twisted Kantor double is a projective module over the even part $A_{(0)}$, it is 2-generated, but not 1-generated. Similar examples over an arbitrary field of zero characteristic were constructed in  \cite{Zhel2}. 
 Moreover,  the examples where the odd part  is an $n$-generated module over the even part but can not be generated by less then $n$ elements were constructed in \cite {ZhZ}  for  abitrary $n\geq 2$ (over fields $\R$ and $\C$).  

\smallskip

{\bf Example 9.} 
Let $(A,d)$ be an associative commutative superalgebra with an even derivation $d:A\rightarrow A$.  Following  \cite{ChengKac},  C.\,Martinez and E.\,Zelmanov introduced a family  of Cheng-Kac Jordan superalgebras  $JCK(A,d)$ that are free $A$-modules of rank 8.  The superalgebras $JCK(F[t,t^{-1}],d/dt)$ are related  via the Tits-Kantor-Koecher construction to the exceptional superconformal algebras $CK_6$ (see \cite{GLS,MarZel}). 

Observe that when $A$ is equipped with a $\Z/2\Z$-grading competible with the derivation $d$,  the superalgebra $JCK(A,d)$ also  contains the twisted subsuperalgebra which is simple and may be not isomorphic to any Cheng-Kac superalgebra \cite{Zhel4}.   We will call it the {\em twisted Cheng-Kac superalgebra}.

\smallskip

V.\,Kac \cite{Kac} (see also I.\,Kantor \cite{Kantor}) proved that every simple finite dimensional Jordan superalgebra over an algebraically closed field $F$ of zero characteristic is isomorphic to one of the superalgebras from the Examples 1 - 8.

\smallskip
In \cite{RZ} it was shown that if $char\,F=p>3$ and the even part $J_{\0}$ is semisimple then every simple finite dimensional Jordan superalgebra is also isomorphic to one of the examples 1 - 8.

If $char\,F=3$ then some new examples appear (see \cite{RZ, Sh3}).

If the even part $J_{\0}$ is not semisimple then the only  new examples are Kantor doubles 
$Kan(O_n\otimes G_m)$ \cite{MarZel} and Cheng-Kac superalgebras $JCK(O_n\otimes G_m,d)$, where $O_n=F[t_1,\ldots,t_n\,|\,t_i^p=0,\, 1\leq i\leq n]$ is the algebra of trancated polynomials, $d$ is an even derivation.

\smallskip

Now let us discuss infinite dimensional simple Jordan algebras and superalgebras.

We will formulate a slightly modified conjecture on classification of simple (finite or infinite dimensional) Jordan superalgebras that is due to N.\,Cantarini and V.\,Kac \cite{CanKac}.

\begin{conj} A simple Jordan superalgebra over a field $F$ of zero characteristic (with a nonzero odd part) is isomorphic to one of the following superalgebras:
\begin{itemize}
\item[I)]
$H(A,*)$, where $(A,*)$ is a $*$-simple associative superalgebra with a superinvolution $*$ and with a noncommutative even part (see \cite{GA, GA-M});
\item[II)]
a Jordan superalgebra of a superform or $k_3$,  or $D_t$,  or $K_{10}$ over some extension of the ground field $F$;
\item[III)]
a Kantor double  $Kan\,(A,[,])$ or a twisted Kantor double.
\item[IV)]
a Jordan Cheng-Kac superalgebra $JCK(A,d)$ or a twisted subsuperalgebra of $JCK(A,d)$.
\end{itemize}
\end{conj}

N.\,Cantarini and V.\,Kac \cite{CanKac} proved the Conjecture for linearly compact simple Jordan superalgebras. V.\,Kac, C.\,Mart\'inez and E.\,Zelmanov \cite{KMZ} proved the Conjecture for superconformal  Jordan algebras, i.e. graded Jordan superalgebras $J=\sum_{i\in\Z}J_i$ that are  simple and have all dimensions $\dim_FJ_i$ uniformarly bounded.

\smallskip

A Jordan superalgebra $J$ is called {\em special} if it is embeddable in $A^{(+)}$ for some associative superalgebra $A$. A superalgebra is {\em $i$-special} if it is a homomorphic image of a special superalgebra. Equivalently, a superalgebra is $i$-special if it satisfies all identities that are satisfied by all special Jordan superalgebras.

It is easy to see that examples of types I -- III, exept for $K_{10}$, are special. Yu.\,Medvedev and E.\,Zelmanov \cite{MedZel1} proved that the superalgebra $K_{10}$ is not even $i$-special. 
K.\,McCrimmon \cite{McC}  (see also \cite{Sh5})  proved that a Kantor double $K(A,[,])$ of a bracket of vector type is special.  He also proved that the Kantor double of the classical Poisson bracket is not special.  On the other hand,  I.\,Shestakov \cite{Sh4, MarShZ} showed that Kantor doubles are always $i$-special.   C.\,Martinez, I.\,Shestakov and E.\,Zelmanov \cite{MarShZ} showed that Jordan Cheng-Kac superalgebras are special.
\medskip

 A weaker form of the above Conjecture can be formulated as follows:
 \smallskip

{\em A simple Jordan superalgebra over a field of zero characteristic with a nonzero odd part is either {${i}$}-special or isomorphic to {$K_{10}$.}}
\smallskip

In \cite{Zel5} the second author constructed a proper variety of Jordan algebras $Var$ satisfying the ideal of identities $X$.  In an arbitrary Jordan superalgebra $J$ the ideal $X(J)$ has bounded multiplicative length (see section 1).  Among finite dimensional simple Jordan algebras only Jordan algebras of symmetric bilinear form 
lie in $Var$. 

In this paper we show that counterexamples to the Conjecture, if any, lie in the variety $Var$. \\

{\bf Main Theorem} {\em Let $J$ be a simple unital Jordan superalgebra over a field of zero characteristic. Then either $J$ is isomorphic to one of the superalgebras  I -- V above or $J\in Var$.}\\

We recall that in a recent paper \cite{ShZ} we classified simple Jordan superalgebras with the even part being a sum of Jordan algebras of nondegenerate symmetric forms.   The proof of lemma 4.4 in \cite{ShZ} contains a mistake; lemmas 4.4 and 4.5 of \cite{ShZ} should be substituted by lemma \ref{lem5.3} below.

\smallskip

The main technical assertion of the paper refers to a wider class of primitive Jordan superalgebras of finite multiplicative length. \\

{\bf Main Proposition} {\em Let $J$ be a unital primitive   Jordan superalgebra with nonzero odd part of finite multiplicative length over an algebraically closed  field of zero characteristic. Then either $J$ is $i$-special or  $J\cong K_{10}$.}\\

\section{Definitions and notation}

We refer to \cite{ShZ,  CanKac} for the principal definitions and notation concerning Jordan superalgebras.  Here we give only definitions and notations that will be used in the paper.
\smallskip

Let $J=J_{\0}+J_{\1}$ be a Jordan superalgebra.  For a homogeneous element $a\in J_{\0}\cup J_{\1}$ we denote (the parity index of $a$) $\bar a=i$ if $a\in J_{\bar i}$.

The defining superidentity \eqref{SJ2} implies the following {\em Jordan  operator identity}
\bee
&R(y)R(z)R(t)+(-1)^{\bar y\bar z+\bar y\bar t+\bar z\bar t}R(t)R(z)R(y)+(-1)^{\bar z\bar t}R((yt)z)&\nonumber \\
&= \!R(y)R(zt)\!+\!(-1)^{\bar y\bar z}R(z)R(yt)\!+\!(-1)^{\bar t(\bar y+\bar z)}R(t)R(yz),& \label{OSJ}
\eee
where  $R(a):x\mapsto xa$ denotes  the operator of right multiplication.

For homogeneous elements $a,b\in J_{\0}\cup J_{\1}$ denote by 
$D(a,b)=R(a)R(b)-(-1)^{\bar a\bar b}R(b)R(a)$ the   graded inner derivation of $J$.
We have the equations
\bee
R(c)D(a,b)-(-1)^{\bar c(\bar a+\bar b)}D(a,b)R(c)&=&R(cD(a,b)),\label{D0}\\
D(ab,c)-D(a,bc)-(-1)^{\bar a \bar b}D(b,ac)&=&0. \label{D1}
\eee
It follows from \eqref{OSJ} that 
\bee\label{D2}
 &R(x)R(y)R(z) \hbox{\em can be represented as  a linear
combination of ope-}&\nonumber \\ 
&\hbox{\em rators of length  2 and of operators of the type
$R(x)D(y, z)$.}&
\eee

A {\em triple Jordan product} of homogeneous elements $a,b,c\in J_{\0}\cup J_{\1}$ is defined as $\{a,b,c\}=(ab)c+a(bc)-(-1)^{\bar b\bar c}(ac)b$.

 For $a\in J_{\0}$ let $U(a)=2R(a)^2-R(a^2)$ be the operator of ``quadratic multiplication'': $xU(a)=\{a,x,a\}$.

An element $0\neq a$ of a Jordan algebra $A$ is called an {\em absolute zero divisor} if $U(a)=0$ or, equivalently, $\{a,A,a\}=(0)$.  The algebra $A$ is called {\em nondegenerate} if it has no absolute zero divisors.

A subspace $K\subseteq J$ is called  an {\em  inner ideal} of $J$ (denoted as $K\triangleleft_{in} J$) if $\{K,J,K\}\subseteq K$.

We call an inner ideal $K$ {\em modular} if for an arbitrary nonzero ideal $I\triangleleft J$ the sum $K+I$  is equal to $J$.  
A unital superalgebra  $J$ is called {\em primitive}  if it contains  a modular inner ideal $K\triangleleft_{in} J,\ K\not=J$.

We say that a (super)algebra $J$ has  {\em a finite multiplicative length  $\leq d$} if for an  arbitrary $m\geq 1$, arbitrary elements $a_1,\ldots,a_m\in J$, the product $R(a_1)\cdots R(a_m)$ of right multiplication operators can be represented as a linear combination of operators $R(b_1)\cdots R(b_k),\, b_i\in J,\, k\leq d$.  In other words, since $J$ is unital, this means that the algebra $R\la J\ra$ of multiplications of $J$ may be written as $R\la J\ra=R(J)^d$.

If $A\subset B$ is a sub(super)algebra of a (super)alebra $B$, we denote by $R_B\la A\ra $ the (nonunital) subalgebra of the algebra $R\la B\ra$ of multiplications of $B$ generated by the set $\{ R_a\,|\, a\in A\}$.


\smallskip




Recall some definitions and results from \cite{Jac, Zel2,ZSSS}.  Though they were formulated for Jordan algebras, the extension to superalgebras is straightforward.

Let  $K\triangleleft_{in}J$ be an inner ideal of a superalgebra $J$.  Define
\bes
Ab(K)=\{k\in K\,|\, k\cdot J\subseteq K\}.
\ees
Then
\bee
&Ab(K)\idl_{in}J,\ Ab(K)\idl K,&\label{id3} \\
&\hbox{the factor-algebra $K/Ab(K)$ is special}.&\label{id5}
\eee

For the benefit of a reader we prove a lemma that was implicitely used in \cite{Zel2}.

\begin{lem}\label{lem3.1}
$Ab(K)^{i+2}\cdot J\subseteq Ab(K)^i,\ i\geq 0$, where $Ab(K)^0=K$.
\end{lem}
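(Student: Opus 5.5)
The plan is to argue by induction on $i$, writing $A=Ab(K)$. I take $A^1=A$ and $A^{n}=\sum_{j+k=n,\ j,k\ge 1}A^jA^k$, so that $A^n$ is spanned by all products of $n$ elements of $A$ in any bracketing; this reading of the powers is my assumption, and $A^0=K$ as in the statement. The basic tool is the linearized triple product: for homogeneous $u,v$ and $x\in J$,
$$
(uv)x=\pm\bigl((ux)v+u(xv)-\{u,x,v\}\bigr),
$$
up to the appropriate signs $(-1)^{\bar x\bar v}$, which are immaterial for span arguments. Since $K$ is an inner ideal and $\operatorname{char}F\neq 2$, linearizing $\{K,J,K\}\subseteq K$ gives $\{K,J,K\}\subseteq K$ for the full trilinear triple product.

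\emph{Base case $i=0$.} Let $a,b\in A$ and $x\in J$. Then $\{a,x,b\}\in K$ because $K$ is inner. Next, $ax\in K$ by the definition of $Ab(K)$, and since $A\idl K$ by (\ref{id3}), we get $(ax)b\in A\subseteq K$. By the same argument $a(xb)\in K$. The identity above then gives $(ab)x\in K=A^0$.

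\emph{Inductive step.} Suppose the claim holds for all $i'\le i$, and take $uv\in A^{i+3}$ with $u\in A^j$, $v\in A^k$, $j+k=i+3$. I would treat the three terms on the right-hand side separately.

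First term, $(ux)v$. If $j\ge 2$, the induction hypothesis gives $ux\in A^{j-2}$, so $(ux)v\in A^{j-2}A^k\subseteq A^{j+k-2}=A^{i+1}$, provided $j\ge 3$. The boundary cases $j=1$ and $j=2$ instead produce factors in $K$, namely $ux\in K$ or $ux\in A^0=K$.

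Second term, $u(xv)$. This is handled symmetrically in $k$.

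Third term, $\{u,x,v\}$. This is the real issue.

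To control both the boundary cases and the triple term, I would strengthen the induction and prove two auxiliary statements simultaneously with the lemma:
\begin{itemize}
\item[(a)] $K\cdot A^n\subseteq A^{n}$ for all $n\ge 1$;
\item[(b)] $\{A^j,J,A^k\}\subseteq A^{j+k-2}$ for all $j,k\ge 1$.
\end{itemize}
Statement (a) should follow by induction on $n$ from $A\idl K$, expanding $k(ab)$ by the commutative Jordan superidentity (\ref{SJ2}) with one variable specialized to $k\in K$. Each resulting term then has a $K$-factor acting on a lower power, or lies in $\{A,K,A\}\subseteq$ a power of $A$ of the correct degree. For (b), the natural route is to use (\ref{D0}) and (\ref{D1}) to move $x$ inward. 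One expresses $\{u_1u_2,x,v\}$ through triple products $\{u_1,\,\cdot\,,v\}$ and $\{u_2,\,\cdot\,,v\}$ with second arguments in $J$, and through products of the form $(A^{\cdot}J)A^{\cdot}$ that are already covered by the inductive hypothesis. The base case of (b) is $\{A,J,A\}\subseteq K=A^0$, which is just the inner ideal property.

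Once (a) and (b) are available, the boundary cases are settled as follows. When $j=1$, we have $(ux)v\in K\cdot A^{i+2}\subseteq A^{i+2}\subseteq A^{i+1}$, using the inclusion $A^{n+1}\subseteq A^n$, which follows from (a) and $A\subseteq K$. When $j=2$, $ux\in K$ and so $(ux)v\in K\cdot A^{i+1}\subseteq A^{i+1}$. The remaining term $\{u,x,v\}$ lies in $A^{i+1}$ by (b).

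I expect the main obstacle to be (b). The identity needed there is a genuine Jordan (super)identity, not just the triple-product definition, and the bookkeeping of degrees must lose at most $2$. What I would try first is the linearized form of the Macdonald-type identity $\{ab,x,c\}=\{a,\{b,\cdot\},c\}$-style reduction coming from (\ref{OSJ}). Its superization is straightforward by the Grassmann envelope argument.
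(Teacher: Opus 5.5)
\textbf{There is a genuine gap: the heart of the argument, (b), is never proved, and (a) is false as stated.}

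\emph{(b) is the lemma itself.} By definition $\{u,x,v\}=(ux)v+u(xv)-(-1)^{\bar x\bar v}(uv)x$. So once $(ux)v$ and $u(xv)$ are bounded, proving (b) for the pair $(u,v)$ is equivalent to proving $(uv)x\in A^{i+1}$, which is exactly what you want to show. Leaving (b) to an unspecified ``Macdonald-type'' identity therefore leaves the whole lemma unproved.

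\emph{(a) fails already for $n=2$.} With it fails your sketched justification that $\{A,K,A\}$ lies in a power of $A$ of the right degree; the square of an ideal of a Jordan algebra need not be an ideal. Here is a counterexample.
\begin{itemize}
\item Let $C=F\langle a,b,c\rangle$ be the free unital associative algebra and $I$ the ideal generated by $a,b$.
\item Let $B=\begin{pmatrix} C & C/I\\ C/I & F\end{pmatrix}$, with $C$ acting on $C/I$ by multiplication and all products between the two off-diagonal corners equal to $0$.
\item In $J=B^{(+)}$ the inner ideal $K=e_{11}Be_{11}=C$ has $Ab(K)=I$.
\item However $4\,c\circ(a\circ b)=abc+bac+cab+cba\notin I\circ I$. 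The multilinear part of $I\circ I$ is spanned by $abc+bca$, $acb+cba$, $acb+bac$, $cab+bca$. Comparing the coefficients of $abc$, $cab$ and $bca$ gives $2=0$.
\end{itemize}
So your treatment of the boundary case $j=2$ is invalid: at $i=1$ it needs exactly $K\cdot A^2\subseteq A^2$.

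\textbf{The missing idea.} Do not separate $u(xv)$ from $\{u,x,v\}$. Together they form a derivation term:
$u(xv)-\{u,x,v\}=(-1)^{\bar x\bar v}uD(v,x)$, that is, $(uv)x=(-1)^{\bar x\bar v}(ux)v+uD(v,x)$.
The point is that $D(Ab(K),J)$ preserves every power of $Ab(K)$. The paper proceeds as follows.
\begin{itemize}
\item For $k\in K$, $a\in Ab(K)$, $x\in J$, both $(ka)x$ and $(kx)a$ lie in $K$, so $KD(Ab(K),J)\subseteq K$.
\item Then \eqref{D0} gives $Ab(K)D(Ab(K),J)\subseteq Ab(K)$. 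Hence $Ab(K)^jD(Ab(K),J)\subseteq Ab(K)^j$ for all $j\ge 0$, by the Leibniz rule.
\item Write $Ab(K)^{i+2}\subseteq Ab(K)^{i+1}Ab(K)+Ab(K)^iAb(K)^2$, and use \eqref{D1} to get $D(Ab(K)^2,J)\subseteq D(Ab(K),Ab(K)J)$.
\item The derivation term then lands in $Ab(K)^{i+1}$ or $Ab(K)^i$ for free, and the term $(ux)v$ is handled by the induction hypothesis.
\end{itemize}
The split matters. The derivation must act on the factor of large degree, so with your arbitrary splitting $j+k=i+3$ you would also need this reduction.

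The same identity supplies the correct replacements for (a) in the low-degree terms. It gives $Ab(K)^2\cdot J\subseteq Ab(K)$ and $K\cdot Ab(K)^2\subseteq Ab(K)$. The first of these is what controls the term $(Ab(K)^2J)\,Ab(K)^2$ when $i=2$.
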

\prf
Notice that $KD(Ab(K),J)\subseteq K$, therefore 
$$
Ab(K)D(Ab(K),J)\subseteq Ab(K),
$$
and 
$$
Ab(K)^jD(Ab(K),J)\subseteq Ab(K)^j
$$
 for any $j\geq 0$.

Let us prove the lemma by induction on $i\geq 0$. For $i=0$ the inclusion holds by definition.

Let $i\geq 1$. Then 
$$
Ab(K)^{i+2}\subseteq Ab(K)^{i+1}\cdot Ab(K)+Ab(K)^i\cdot Ab(K)^2.
$$ 
As above,
\bes
Ab(K)^{i+2}\cdot J&\subseteq& (Ab(K)^{i+1}\cdot J)\cdot Ab(K)+Ab(K)^{i+1}D(Ab(K),J)\\
&+& (Ab(K)^i\cdot J)\cdot Ab(K)^2+Ab(K)^iD(Ab(K),Ab(K)\cdot J)\\
&\subseteq &Ab(K)^i
\ees
by the induction assumption.

\ctd
\begin{cor}\label{cor2.0} For an arbitrary $ n\geq 1$  and arbitrary elements  $ a_1,\ldots,a_n\in J$ we have  
\bes
Ab(K)^{2n}R(a_1)\cdots R(a_n)\subseteq K.
\ees
\end{cor}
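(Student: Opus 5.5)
We prove the stronger statement $Ab(K)^{2n}R(a_1)\cdots R(a_n)\subseteq Ab(K)^{0}=K$ by deriving from Lemma \ref{lem3.1} a sharper inclusion for each single right multiplication, and then iterating. The plan is to show that for every $i\geq 0$
\bes
Ab(K)^{i+2}R(a)\subseteq Ab(K)^{i},\qquad a\in J .
\ees
This is exactly Lemma \ref{lem3.1}, since $Ab(K)^{i+2}R(a)=Ab(K)^{i+2}\cdot a\subseteq Ab(K)^{i+2}\cdot J$. So each right multiplication lowers the "depth" index by at most $2$.

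The iteration is by induction on $n$. Applying this $n$ times gives
\bes
Ab(K)^{2n}R(a_1)\subseteq Ab(K)^{2n-2},\quad Ab(K)^{2n-2}R(a_2)\subseteq Ab(K)^{2n-4},\ \ldots,\ Ab(K)^{2}R(a_n)\subseteq Ab(K)^{0}=K .
\ees
Formally: for $n=1$ the claim is Lemma \ref{lem3.1} with $i=0$. For the step, write
\bes
Ab(K)^{2n}R(a_1)\cdots R(a_n)\subseteq \bigl(Ab(K)^{2(n-1)+2}R(a_1)\bigr)R(a_2)\cdots R(a_n)\subseteq Ab(K)^{2(n-1)}R(a_2)\cdots R(a_n).
\ees
The right-hand side lies in $K$ by the induction hypothesis, which is applied to the $n-1$ elements $a_2,\ldots,a_n$.

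The main point to check is that the powers are used consistently with Lemma \ref{lem3.1}. Here $Ab(K)^j$ means the span of all products of $j$ elements of $Ab(K)$ under all bracketings, and $Ab(K)^0=K$. We also need these powers to be decreasing, $Ab(K)^{j+1}\subseteq Ab(K)^j$, so that the chain above makes sense. For $j\geq 1$ this holds because $Ab(K)$ is an ideal of $K$ by \eqref{id3}. For $j=0$ it holds because $Ab(K)\subseteq K$. With these observations the corollary is a direct telescoping of Lemma \ref{lem3.1}, and I expect no real obstacle.
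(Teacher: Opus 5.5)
Your proof is correct and is exactly the intended argument: the paper states this corollary without a separate proof, as an immediate consequence of Lemma \ref{lem3.1} applied successively with $i=2n-2,2n-4,\ldots,0$. The monotonicity $Ab(K)^{j+1}\subseteq Ab(K)^{j}$ that you verify is true but not needed, because $2n=2(n-1)+2$ makes each step match the indices of the lemma exactly.
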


\section{The structure of the heart $h(J)$ (semisimple part).}
\hspace{\parindent}
From now on  $J$ denotes a unital primitive Jordan superalgebra  over an algebraically closed field $F$ of zero characteristic, $card\,F>\dim_FJ$.
 
\smallskip

In this section we make an assumption that is weaker than finite multiplicative length: {\em there exists $d\geq 1$ such that for any  modular inner ideal $K\triangleleft_{in} J$}
\bee\label{id000}
R\la J\ra=(R_J\la K\ra +F\cdot Id)\, R(J)^d.
\eee
Recall that $R_J\la K\ra$ denotes the (nonunital) subalgebra of  the algebra $R\la J\ra$ of multiplications of $J$ generated by the set $\{ R_a\,|\, a\in K\}$, $Id$ is the identity operator.

Let $S$ denote the $T$-ideal of all $s$-identities in the free Jordan superalgebra.  

We assume  that $S(J)\neq (0)$. 
\begin{lem}\label{lem2.1}
1. For an arbitrary proper modular inner ideal $K\triangleleft_{in} J$ we have $S(K)^{2d}=(0)$.
2. For an arbitrary nonzero ideal $I\triangleleft J$ we have $S(J)^{2d}\subseteq I$.
\end{lem}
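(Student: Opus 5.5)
The plan is to derive part 1 from the speciality of $K/Ab(K)$ (see \eqref{id5}) together with Corollary \ref{cor2.0} and the modularity of $K$. Part 2 then follows from part 1 by passing to $J/I$.

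\emph{Part 1.} Since $K/Ab(K)$ is a special Jordan superalgebra, every $s$-identity vanishes on it, so $S(K)\subseteq Ab(K)$. Because $S$ is a $T$-ideal, $S(K)$ is an ideal of $K$ stable under every derivation of $K$, in particular under $D(K,K)$. Set $B=S(K)^{2d}\subseteq Ab(K)^{2d}$. The ideal of $J$ generated by $B$ is $B\,R\la J\ra$, and by the assumption \eqref{id000} it equals
$$B\,(R_J\la K\ra+F\cdot Id)\,R(J)^d .$$
I would first show that $B\,R_J\la K\ra\subseteq Ab(K)^{2d}$. The idea is that $S(K)^j$ (and $Ab(K)^j$) is closed under right multiplications by $K$ and under $D(K,K)$. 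I would prove this by induction on $j$ exactly as in Lemma \ref{lem3.1}, writing $(xy)k=(xk)y\pm x(yk)$ modulo $D(K,K)$-terms and using \eqref{D0}, \eqref{D1}. Then Corollary \ref{cor2.0} with $n=d$ gives $Ab(K)^{2d}R(a_1)\cdots R(a_d)\subseteq K$. Hence the ideal of $J$ generated by $B$ lies in $K$. If this ideal $I$ were nonzero, modularity would give $J=K+I=K$, contradicting $K\neq J$. Thus $I=(0)$, and so $B=(0)$. I expect the closure step, namely that the powers $S(K)^j$ are stable under $R(K)$, to be the main technical obstacle. If that closure fails in the naive form, I would use a slightly higher power inside $Ab(K)$ combined with Lemma \ref{lem3.1}, $Ab(K)^{i+2}J\subseteq Ab(K)^i$, to absorb the extra multiplications.

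\emph{Part 2.} Take a nonzero ideal $I$ and a proper modular inner ideal $K$; one exists since $J$ is primitive. Then $K+I=J$, so the image $\bar K$ of $K$ is all of $J/I$. Write $1=k_0+i_0$ with $k_0\in K$ and $i_0\in I$. For $a,b\in K$ we have
$$ab=\{a,1,b\}\equiv\{a,k_0,b\}\pmod I,$$
and $\{a,k_0,b\}\in K$ by the linearized inner ideal property. Hence $J/I$ is a homomorphic image of the homotope $K^{(k_0)}$, i.e.\ the space $K$ with the product $a\cdot_{k_0}b=\{a,k_0,b\}$. Consequently $\overline{S(J)^{2d}}=\overline{S(K^{(k_0)})^{2d}}$. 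I would then rerun the argument of part 1 for the homotope. Its analogue of $Ab(K)$ is again an ideal with special quotient, since homotopes of special superalgebras are special. The ideal of $J$ generated by $S(K^{(k_0)})^{2d}$ is still controlled by \eqref{id000} and Corollary \ref{cor2.0}, because products in the homotope are expressible through $R(J)$. This gives $S(K^{(k_0)})^{2d}\subseteq I$, hence $S(J)^{2d}\subseteq I$.
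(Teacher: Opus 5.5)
\textbf{Part 1.} Your skeleton is the paper's: speciality of $K/Ab(K)$ gives $S(K)\subseteq Ab(K)$, and then \eqref{id000}, Corollary~\ref{cor2.0} and modularity finish. The gap is the closure step you single out, and neither of your two ways of handling it works.

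The proposed induction cannot succeed, because in a Jordan algebra the square of an ideal need not be an ideal. Take the free special Jordan algebra on $x,k$ and $I=id\la x\ra$. The part of $I^2$ of degree $2$ in $x$ and $1$ in $k$ is spanned by $x\circ(x\circ k)=\tfrac14(x^2k+2xkx+kx^2)$. It does not contain $x^2\circ k=\tfrac12(x^2k+kx^2)$, although $x^2\in I^2$. Every ideal is stable under $D(K,K)$, so any induction that only uses that $Ab(K)$ (or $S(K)$) is a $D(K,K)$-stable ideal of $K$ cannot give $R(K)$-stability of its powers. Concretely, in $(xy)k=(xk)y+x(yk)-\{x,k,y\}$ (up to signs) the triple product is uncontrolled. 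Likewise the term $xD(y,k)$ with $y\in Ab(K)$ only lands in $Ab(K)^i$, not in $Ab(K)^{i+1}$.

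Your fallback fails too. Lemma~\ref{lem3.1} costs two degrees per factor, while the words in $R_J\la K\ra$ have unbounded length. The paper itself passes from \eqref{id000} to $id_J\la Ab(K)^{2d}\ra\subseteq K$ without comment, so this is exactly the point that needs an argument.

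What works is Lemma~\ref{lem2.2'} applied inside $K$ to the ideal $Ab(K)$. The space $Ab(K)^{\la 2d+1\ra}$ is an ideal of $K$ and lies in $Ab(K)^{2d+1}\subseteq Ab(K)^{2d}$. Hence
\[
id_J\la Ab(K)^{\la 2d+1\ra}\ra=Ab(K)^{\la 2d+1\ra}(R_J\la K\ra+F\cdot Id)R(J)^d\subseteq Ab(K)^{2d}R(J)^d\subseteq K,
\]
and modularity forces $Ab(K)^{\la 2d+1\ra}=(0)$. This gives $S(K)^{\la 2d+1\ra}=(0)$. By Lemma~\ref{lem2.2'} again, $S(K)^{4d+1}\subseteq Ab(K)^{4d+1}\subseteq Ab(K)^{\la 2d+1\ra}=(0)$. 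So the exponent $2d$ is replaced by $4d+1$ for ordinary powers. The form $S(K)^{\la 2d+1\ra}=(0)$ is precisely what Lemma~\ref{lem2.3} uses later.

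\textbf{Part 2.} The homotope is an unnecessary detour. Your ``rerun of Part 1 for $K^{(k_0)}$'' is only asserted, and it brings back the same closure problem in a worse form: products $\{a,k_0,b\}$ of elements of $Ab(K)$ are not a priori in the ordinary powers of $Ab(K)$ controlled by Corollary~\ref{cor2.0}.

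You wrote $ab=\{a,1,b\}$ yourself. Since $1\in J$ and $K$ is an inner ideal, this already gives $ab\in K$, i.e.\ $K$ is a subalgebra of $J$. As $K+I=J$, the map $k\mapsto k+I$ is a surjective homomorphism $K\to J/I$. Since $S$ is a $T$-ideal, $S(J)+I=S(K)+I$, because both images equal $S(J/I)$. Hence whatever nilpotency Part 1 gives for $S(K)$ passes at once to $S(J)$ modulo $I$. This is the paper's argument.
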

\prf
1.  By Corollary \ref{cor2.0},  for arbitrary elements $ a_1,\ldots,a_d\in J$ we have $Ab(K)^{2d}R(a_1)\cdots R(a_d)\subseteq K$.  Then it follows form the condition \eqref{id000} that  $id_J\la Ab(K)^{2d}\ra\subseteq K$, hence  $Ab(K)^{2d}=0$.  Since the factor-algebra $K/Ab(K)$ is special,  we get $S(K)\subseteq Ab(K)$.  Therefore $S(K)^{2d}\subseteq Ab(K)^{2d}=(0)$.

2. Let $(0)\neq I\idl J$. Then $K+I=J$. It follows that $S(J)^{2d}=S(K)^{2d}=(0) \ (mod\, I)$ or, in other words, $S(J)^{2d}\subseteq I$, which completes the proof of the lemma.
\ctd
\begin{lem}\label{lem2.2}
The superalgebra $J$ does not contain nonzero ideals with nil even part.
\end{lem}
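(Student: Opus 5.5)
We argue by contradiction: suppose $(0)\neq I\idl J$ has nil even part $I_{\0}$. By Lemma \ref{lem2.1}(2), $S(J)^{2d}\subseteq I$, and since $S(J)\neq(0)$ we would like to conclude that $S(J)$, and hence a large part of $J$, is "nil modulo something". The natural route is to exploit the modular inner ideal $K\neq J$. Since $K+I=J$ and $J$ is unital, we can write $1=k+i$ with $k\in K_{\0}$, $i\in I_{\0}$. Because $i$ is nilpotent, say $i^n=0$, the element $k=1-i$ is invertible in the subalgebra $F[i]$ (it is even and the even part of a unital Jordan superalgebra is a Jordan algebra, so power-associativity holds). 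The plan is to show that $K$ then contains $1$, contradicting $K\neq J$.

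Concretely, $k\in K$ and $K$ is an inner ideal, so $\{k,J,k\}=J\,U(k)\subseteq K$. Now $U(k)$ is invertible on $J$: indeed $U(k)=U(1-i)$, and $U(1-i)=Id-2R(i)+U(i)$ is a polynomial in $R(i)$ and $R(i^2)$; more cleanly, in the associative algebra generated by $R(i),R(i^2),\ldots$ (which are pairwise commuting operators, since $F[i]$ is an associative commutative subalgebra and operators of a single even element commute in a Jordan (super)algebra by the identity \eqref{OSJ} specialized to $y=z=t$ powers of $i$) the operator $U(1-i)$ equals $Id$ plus a nilpotent operator, because $U(1-i)=U(k)$ and $U(k^{-1})$ with $k^{-1}=1+i+\cdots+i^{n-1}\in F[i]$ satisfies $U(k)U(k^{-1})=U(k k^{-1})=Id$ by the fundamental formula restricted to the commutative associative subalgebra generated by $k$ (Macdonald-type identity for two operators of one element, which holds in superalgebras as it follows from even identities). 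Hence $J=J\,U(k)\subseteq K$, contradicting $K\neq J$.

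The only delicate step is justifying $U(k)U(k^{-1})=Id$, i.e.\ that $U$ is multiplicative on the subalgebra $F[k]$ acting on the whole superalgebra $J$ (including the odd part). I would verify it via the Grassmann envelope: $G(J)$ is a Jordan algebra, $k\otimes 1$ and $k^{-1}\otimes1$ lie in it, and in a unital Jordan algebra $U(a)U(b)=U(ab)$ for $a,b$ in the (associative) subalgebra generated by one element, a consequence of Macdonald's theorem. Since the operator equality on $G(J)$ for even elements restricts to $J$, we get the claim. Note this argument does not even need Lemma \ref{lem2.1}; only modularity of $K$ and $K\neq J$ are used.
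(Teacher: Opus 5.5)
Your argument is correct and is essentially the paper's proof: write $1=k+b$ with $k\in K_{\0}$ and $b\in I_{\0}$ nilpotent, so that $k=1-b$ is invertible, which is incompatible with $K\neq J$. You also supply the step the paper leaves implicit, namely that $U(k)$ is invertible with inverse $U(k^{-1})$ (justified via the Grassmann envelope and a one-element Macdonald/Shirshov--Cohn identity), hence $J=JU(k)=\{k,J,k\}\subseteq K$; and you are right that Lemma \ref{lem2.1} is not needed.
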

\prf
Let $(0)\neq I\idl J, \ I_{\0}$ is nil. Then $K+I=J$. Let $k+b=1,\, k\in K_{\0},\,b\in I_{\0}$. The element $k=1-b$ is invertible wich contradicts the assumption that $K\neq J$.

\ctd

\begin{lem}\label{lem2.2'}
Let $I$ be an ideal of a Jordan (super)algebra $J$.  Define
\bes
 I^{\la 3\ra}=I^3,\, I^{\la 5\ra}=I^{\la 3\ra}\cdot I^2+(I^{\la 3\ra}I)I,\ldots, I^{\la 2k+1\ra}=I^{\la 2k-1\ra}\cdot I^2+(I^{\la 2k-1\ra}I)I.
\ees
Then $I^{\la 2k+1\ra}\idl J,\  I^{\la 2k+1\ra}\subseteq I^{2k+1}\subseteq \left\{\begin{array}{c}
I^{\la k+2\ra},\ k \hbox{ odd},\\ \
I^{\la k+1\ra},  k \hbox{ even}.
\end{array}\right.$.
\end{lem}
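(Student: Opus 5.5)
We prove the three claims in turn: that each $I^{\la 2k+1\ra}$ is an ideal, that $I^{\la 2k+1\ra}\subseteq I^{2k+1}$, and the reverse comparison with $I^{2k+1}$. Throughout, $I^n$ denotes the span of all products of $n$ elements of $I$ with arbitrary bracketings. By induction, each $I^n$ is an ideal of $J$ and $I^nI^m\subseteq I^{n+m}$.

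\emph{Ideal property.} The inclusion $I^{\la 2k+1\ra}\subseteq I^{2k+1}$ is immediate from $I^{2k-1}I^2\subseteq I^{2k+1}$ and $(I^{2k-1}I)I\subseteq I^{2k+1}$. For the ideal property we induct on $k$. The base case is that $I^3=(I^2)I$ is an ideal; this is the classical fact, proved by linearizing the Jordan identity, that for an ideal $I$ the cube $I^3$ is again an ideal. It carries over to superalgebras via the Grassmann envelope.

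For the inductive step, suppose $M=I^{\la 2k-1\ra}$ is an ideal. We must show that $M\cdot I^2+(MI)I$ is an ideal. Take $x\in J$ and apply the operator identities.
\begin{itemize}
\item For $(m\cdot b)x$ with $b\in I^2$: write $R(b)R(x)=\pm R(x)R(b)+D(b,x)$. Then $mR(x)\in M$ gives a term in $MI^2$. For the other term, by \eqref{D0}, $mD(b,x)$ lies in $MD(I^2,J)$. Expanding $b=\sum i_1i_2$ and using \eqref{D1}, $D(i_1i_2,x)$ is a combination of $D(i_1,i_2x)$ and $D(i_2,i_1x)$, so $MD(I^2,J)\subseteq MD(I,I)$. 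Finally $MD(I,I)\subseteq (MI)I+(MI)I$.
\item For $((mi)j)x$: apply \eqref{OSJ} to $R(i)R(j)R(x)$. This rewrites it through $R(x)R(j)R(i)$, $R((ix)j)$, and $R(\cdot)R(\cdot\,\cdot)$ terms in which at least one factor comes from $I$ and one from $I^2$, or both from $I$. Every resulting term lies in $MI^2+(MI)I$, because $M$ is an ideal and $IJ\subseteq I$.
\end{itemize}
This bookkeeping is routine but sign-heavy, so we would present it modulo signs.

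\emph{Reverse inclusion.} We prove by induction on $n$ that $I^{n}\subseteq I^{\la n'\ra}$, where $n'=k+2$ or $k+1$ according to the parity of $k$, with $n=2k+1$. Equivalently, a product of roughly twice as many factors lies in the "clean" power. The plan is to show $I^{\la 2j+1\ra}I\subseteq I^{\la 2j+1\ra}$ trivially and $I^{\la 2j+1\ra}\cdot I^{\la 2j+1\ra}$-type products climb. The cleanest route is the Jordan analogue of Zelmanov's estimate $I^{2m+1}\subseteq$ a chain of ideals built by multiplying by $I$ twice. Concretely:
\begin{itemize}
\item Show that any product of $2m+1$ elements of $I$ can be rewritten as a sum of terms $w\cdot I^2$ or $(wI)I$, where $w$ is a product of about $2m-1$ elements. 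This uses that each bracketing of length at least $3$ splits as $u\cdot v$ with $u\in I^p$, $v\in I^q$, $p+q=2m+1$.
\item If $q\ge2$, write $v$ via $R$-operators and apply \eqref{D2} to reduce $R(I)^q$ to lengths $\le 2$ plus $D$'s. Each $D(I,I)$ costs two factors and is absorbed as $(\cdot I)I$.
\item This loses roughly half the exponent, which is exactly the $k+2$ versus $k+1$ bound.
\end{itemize}

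\emph{Main obstacle.} The main obstacle is this length-halving step, namely controlling which summands land in $I^{\la\cdot\ra}$ with the precise index stated, including the parity split. We would first try a direct induction on $k$, proving $I^{2k+1}\subseteq I^{\la 2\lceil k/2\rceil+1\ra}$, using the two-step relation $I^{n+2}\subseteq I^{n}I^2+(I^nI)I+I^{n-1}I^3$ together with the base fact $I^3\subseteq$ ideal.
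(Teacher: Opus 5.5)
Your proof of the ideal property is correct and more explicit than the paper's ``easy induction''. Your first bullet with $M=I$ already shows that $I^3=I\cdot I^2$ is an ideal, so you do not need to cite a classical fact. The inclusion $I^{\la 2k+1\ra}\subseteq I^{2k+1}$ is also fine.

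The gap is the reverse inclusion, which is the real content of the lemma, and you do not prove it. Use \eqref{OSJ} to reduce $R$ of any product of three or more factors to products of operators $R(u)$ with $u\in I\cup I\cdot I$. Then look at the last one or two operators in a word. This gives
\[
I^{2k+1}=I^{2k-1}I^{2}+(I^{2k-1}I)I+(I^{2k-2}I^2)I .
\]
The third term, a single factor after a square, has neither of the shapes $w\cdot I^2$ or $(wI)I$ with $w\in I^{2k-1}$ that your first bullet asserts. The word ``about'' is hiding exactly this case.

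Your fallback relation $I^{n+2}\subseteq I^nI^2+(I^nI)I+I^{n-1}I^3$ is unproved, and the natural attempt at it is circular. For $z\in I^{n-1}$, write $zR(ab)R(c)=\pm(zc)(ab)+zD(ab,c)$ and expand $D(ab,c)$ by \eqref{D1}. This produces $(za)(bc)\in I^nI^2$, but also $(z(bc))a$, which is again of the shape $(I^{n-1}I^2)I$. Even granting the relation, you would still need $I^{\la m\ra}I^3\subseteq I^{\la m+2\ra}$, which you do not mention. Finally, ``loses roughly half the exponent'' does not produce the exact indices $k+2$ and $k+1$.

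The missing idea is that the third term is harmless precisely because the $I^{\la 2j+1\ra}$ are ideals; the price is spending four degrees per step instead of two. Suppose by induction $I^{2k-3}\subseteq I^{\la m\ra}$, and use $I^{2k-1},I^{2k-2}\subseteq I^{2k-3}$.
\begin{itemize}
\item The first two terms satisfy $I^{2k-1}I^2+(I^{2k-1}I)I\subseteq I^{\la m\ra}I^2+(I^{\la m\ra}I)I=I^{\la m+2\ra}$.
\item The third term satisfies $(I^{2k-2}I^2)I\subseteq (I^{\la m\ra}I^2)J\subseteq I^{\la m+2\ra}$, because $I^{\la m+2\ra}\idl J$.
\end{itemize}
This is the paper's step $I^{2k+1}\subseteq I^{2(k-2)+1}I^2+(I^{2(k-2)+1}I)I$, with the trailing factor absorbed by the ideal property. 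Induct from $k-2$ to $k$ using the two base cases $I^3=I^{\la 3\ra}$ and $I^5\subseteq I^3=I^{\la 3\ra}$. This yields exactly the parity split in the statement: $k+2$ for $k$ odd and $k+1$ for $k$ even.
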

\prf
It is easy to see by the induction on $k$ that  $I^{\la 2k+1\ra}\idl J$.  It is clear also that $I^{\la 2k+1\ra}\subseteq I^{2k+1}$.  Let us prove the last inclusions.
We have $I^3=I^{\la 3\ra},\ I^5\subseteq I^{\la 3\ra},$  and for $k>2$
\bes
I^{2k+1}&=&I^{2k-1}I^{2}+(I^{2k-1}I)I+(I^{2k-2}I^2)I\\
&\subseteq& I^{ 2(k-2)+1}I^{2}+(I^{2(k-2)+1}I)I,
\ees
which by induction on $k$ implies the needed inclusions.

\ctd

Recall that {\em the heart $h(J)$} of a superalgebra $J$ is defined as the intersection of all nonzero  ideals of $J$. 
The superalgebra $J$ is subdirectly indecomposible if and only if $h(J)\neq (0)$.

\smallskip
If $S(J)\neq (0)$ then $S(J)^{ k}\neq (0)$ for any $k\geq 1$ by lemma \ref{lem2.2}.  Hence $S(J)^{2d}\subseteq H=h(J)\neq (0)$.
Without loss of generality,  by lemma \ref{lem2.2'} we may assume that  $S(J)^{\la 2d+1\ra}=H$.

From now on we assume that $S(J)\neq (0)$ and that $S(J)^{\la 2d+1\ra}=H$.  For an arbitrary element $b\in J_{\0}$ we define
\bes
{\rm Spec}\,(b)=\{\a\in F\,|\,\a\cdot 1- b\hbox{ is not invertible}\}.
\ees
\begin{lem}\label{lem2.3}
The cardinalities of all spectra $Spec\,(b),\, b\in H_{\0}$ are uniformly bounded.
\end{lem}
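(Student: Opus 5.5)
We bound $|{\rm Spec}\,(b)|$ for $b\in H_{\0}$ by bounding the number of pairwise orthogonal idempotents that polynomials in $b$ can produce, and we get that bound from the finiteness condition \eqref{id000}. Fix a proper modular inner ideal $K$. Since $H=S(J)^{\la 2d+1\ra}$ and $S(J)^{2d}\subseteq K+$(lower terms), the idea is that elements of $H$ behave like elements of a ``small'' special part. Concretely, for $b\in H_{\0}$ consider the unital subalgebra $F[b]\subseteq J_{\0}$, which is a commutative associative polynomial algebra in one variable. Suppose $\a_1,\ldots,\a_n$ are distinct elements of ${\rm Spec}\,(b)$. We aim to show that $n$ is bounded by a constant depending only on $d$, and possibly on the length of the filtration $S(J)^{\la 2k+1\ra}$.

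\textbf{Step 1: reduce to the algebraic case.} If $b$ is not algebraic over $F$, then $F[b]\cong F[t]$. In that case $\a-b$ fails to be invertible in $F[b]$ for every $\a$. Using $card\,F>\dim_FJ$, the elements $(\a-b)^{-1}$ cannot all exist in $J$: a family of more than $\dim_F J$ such inverses would have to be linearly independent, by the usual partial fractions argument (the standard Amitsur trick). Hence $b$ is algebraic except on a small set, and we expect only finitely many non-invertible shifts. This still leaves us needing a \emph{uniform} bound.

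\textbf{Step 2: the main step, building orthogonal pieces.} For each $\a_i$, take a polynomial $p_i(b)$ (via Lagrange interpolation, or Fitting decomposition of the algebraic element $b$) that gives orthogonal idempotents $e_i\in F[b]$ with $e_i\neq 0$. Here non-invertibility of $\a_i-b$ shows the corresponding component is nonzero, using unitality of $J$.

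\textbf{Step 3: get a contradiction from many idempotents.} The Peirce projections of the $e_i$ yield operators $R(e_{i_1})\cdots$. We want to show that if $n$ is large, say $n>2d+1$, then some product of multiplication operators of length larger than $d$ is needed. Specifically, consider a product of the form
$$U(e_{1})R(x_1)U(e_2)R(x_2)\cdots,$$
with $x_j$ chosen from $H$. Such a product cannot be rewritten modulo $(R_J\la K\ra+F\cdot Id)R(J)^d$. The key input is that $H\cap K$ is small: $S(K)^{2d}=0$ by Lemma~\ref{lem2.1}. Consequently $R_J\la K\ra$ acts nilpotently on $H$-components, so applying \eqref{id000} to the $H$-part gives a bound on the number of distinct Peirce components.

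\textbf{Main obstacle.} The hardest part is Step 3: turning \eqref{id000} into a bound on the number of orthogonal idempotents. What I would try first is to use that the Peirce spaces $J_{ij}$ relative to $e_1,\ldots,e_n$ cannot all be zero, since $H$ lies in every ideal and $J$ is prime-like by Lemma~\ref{lem2.2}. I would then show that chains $e_1\to e_2\to\cdots\to e_n$ of nonzero Peirce connections force long irreducible operator words, contradicting multiplicative length at most $d$ modulo $R_J\la K\ra$.
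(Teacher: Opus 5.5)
Your proposal never reaches a bound, and the route it sketches cannot reach one.

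Steps 1 and 2 are circular. Amitsur's trick yields algebraicity of $b$ only when $\alpha\cdot 1-b$ is invertible for more than $\dim_F J$ scalars $\alpha$, i.e.\ only once ${\rm Spec}\,(b)$ is already known to be small. For a transcendental $b$ it says the opposite: all but at most $\dim_F J$ scalars lie in ${\rm Spec}\,(b)$. That is why, in the paper, algebraicity of $H_{\0}$ is a \emph{consequence} of this lemma. Without algebraicity, $F[b]\cong F[t]$ has no nontrivial idempotents, so Step 2 produces nothing.

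Step 3, which you rightly call the heart of the matter, is only a heuristic, and the heuristic is false: many orthogonal idempotents are compatible with \eqref{id000}. For example, $M_n(F)^{(+)}$ has $n$ orthogonal idempotents. Its commutators $[R(a),R(b)]$ are the maps $x\mapsto\tfrac14[x,[a,b]]$, so all associative left and right multiplications lie in the span of products of at most two operators $R(a)$. Hence $R\la M_n(F)^{(+)}\ra={\rm End}_F(M_n(F))$ is spanned by products of at most four of them, for every $n$. So no bound depending only on $d$ can come out of operator-length counting. The hypothesis $S(J)\neq(0)$ has to be used quantitatively. Your claim that $R_J\la K\ra$ acts nilpotently on $H$-components neither follows from $S(K)^{2d}=(0)$ nor is what Lemma \ref{lem2.1} gives.

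The missing idea is to attach to each spectral value its own modular inner ideal, and to test a fixed nonvanishing element of $S^{\la 2d+1\ra}$ against it. For $0\neq\alpha\in{\rm Spec}\,(b)$, the inner ideal $K_\alpha=JU(1-\alpha^{-1}b)$ is proper because $1-\alpha^{-1}b$ is not invertible. It is modular because $b\in H_{\0}$ lies in every nonzero ideal. So by Lemma \ref{lem2.1}, all of $S^{\la 2d+1\ra}$ vanishes on $K_\alpha$.

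Now fix a multilinear $f\in S^{\la 2d+1\ra}$ of degree $m$ and $a_1,\ldots,a_m\in J$ with $f(a_1,\ldots,a_m)\neq 0$; this is possible since $H=S(J)^{\la 2d+1\ra}\neq(0)$. Suppose ${\rm Spec}\,(b)$ contained $2m+1$ distinct nonzero $\alpha_i$.
\begin{itemize}
\item The polynomials $p_i(t)=\prod_{j\neq i}(1-t/\alpha_j)$ are coprime, giving $\sum_i p_i(b)q_i(b)=1$.
\item Write $a_s=\{1,a_s,1\}$ and expand both copies of $1$. By multilinearity some term $f(\{p_{i_1}q_{i_1}(b),a_1,p_{j_1}q_{j_1}(b)\},\ldots)$ is nonzero.
\item Only $2m$ indices occur in that term, so some $\mu$ is missed. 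Then both outer entries of every argument are divisible by $1-\alpha_\mu^{-1}b$ in $F[b]$, so all arguments lie in $K_{\alpha_\mu}$ and the term must vanish, a contradiction.
\end{itemize}
Thus $|{\rm Spec}\,(b)|\le 2m+1$ for all $b\in H_{\0}$ simultaneously, with no algebraicity, idempotents, or length counting required.
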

\prf
Let $f(x_1,\ldots,x_m)$ be a multilinear element from $S^{\la 2d+1\ra}$, and let $a_1,\ldots,a_m\in J$ be elements such that $f(a_1,\ldots,a_m)\neq 0$. Let $b\in H_{\0}$. We claim that $|Spec\,(b)|< 2m+2$.
Indeed, suppose that $|Spec\,(b)|\geq 2m+2$. Then there exist distinct nonzero scalars $\a_1,\ldots,\a_{2m+1}\in Spec\,(b)$ such that the elements $1-\tfrac{1}{\a_i}b,\,1\leq i\leq 2m+1$, are not invertible.
Let 
$$
p_i(t)=\prod_{j=1}^{2m+1}(1-\tfrac{1}{\a_j}t)/(1-\tfrac{1}{\a_i}t);
$$
 then $ p_1(t),\ldots,p_{2m+1}(t)$ are polynomials with the greatest common divisor 1. Hence there exist polynomials $q_1(t),\ldots,q_{2m+1}(t)$ such that $\sum_{i=1}^{2m+1} p_i(t)q_i(t)=1$. Since $f(a_1,\ldots,a_m)\neq 0$, it follows that there exist $1\leq i_1,j_1,\ldots,i_m,j_m\leq 2m+1$ such that
\bes
a=f(\{p_{i_1}(b)q_{i_1}(b),a_1,p_{j_1}(b)q_{j_1}(b)\},\ldots, \{p_{i_m}(b)q_{i_m}(b),a_m,p_{j_m}(b)q_{j_m}(b)\})\neq 0.
\ees
Let $\mu\in\{1,2,\ldots,2m+1\}\setminus \{i_1,j_1,\ldots,i_m,j_m\}$. Then all the elements $\{p_{i_s}(b)q_{i_s}(b),a_s,p_{j_s}(b)q_{j_s}(b)\}$ lie in $\{1-\tfrac{1}{\a_{\mu}}b,J,1-\tfrac{1}{\a_{\mu}}b\}=K_{\mu}$, and hence $a\in S^{\la 2d+1\ra}(K_{\mu}).$ The inner ideal $K_{\mu}$ is modular. Indeed, if $(0)\neq I\triangleleft J$ then $b\in H_{\0}\subseteq I$, therefore  $K_{\mu}+I=J$.  By part 1 of lemma \ref{lem2.1} 
 $S(K_{\mu})^{\la 2d+1\ra}=S^{\la 2d+1\ra}(K_{\mu})=(0)$, which  contradicts the  assumption $0\neq a\in S^{\la 2d+1\ra}(K_{\mu})$.

\ctd
\begin{cor}
The algebra $H_{\0}$ is algebric over $F$, that is, for every $a\in H_{\0}$ there  exists a polynomial $f(x)\in F[x]$ such that $f(a)=0$.  Besides, if $n=\max\{|Spec\,(b)|, b\in H_{\0}\}$ then the algebra $H_{\0}$ does not contain $n+1$ pairwise orthogonal idempotents.
\end{cor}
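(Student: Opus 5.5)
The corollary has two parts, and I will derive both from Lemma \ref{lem2.3}. Let $n=\max\{|Spec\,(b)|\,:\,b\in H_{\0}\}$; this is finite by the lemma.

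\emph{Algebraicity.} Fix $a\in H_{\0}$ and consider the subalgebra $F[a]\subseteq J_{\0}$. It is associative, because Jordan algebras are power-associative, and it is a homomorphic image of $F[x]$. Suppose $a$ is transcendental, so that $F[a]\cong F[x]$. I then need infinitely many $\a$ with $\a\cdot 1-a$ not invertible in $J$.

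Suppose instead that $\a\cdot 1-a$ is invertible in $J$. Its inverse then lies in the closed subalgebra generated by $1$ and $a$, and the subalgebra generated by $1$, $a$ and $(\a-a)^{-1}$ is associative and commutative, being a power-associative picture. If this held for all but finitely many $\a$, the rational functions $(\a-x)^{-1}$ for $\a\notin Spec\,(a)$ would give a commutative associative subalgebra of $J_{\0}$ containing $F(x)$-type elements $\prod(\a_i-a)^{-1}$ for infinitely many distinct $\a_i$. These elements are linearly independent. The number of such $\a$ is $card\,F$ minus a finite set, so we obtain at least $card\,F$ linearly independent elements, contradicting $card\,F>\dim_F J$.

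Hence $a$ is algebraic. This is the step where the hypothesis $card\,F>\dim_FJ$ is used. Linear independence of the $(\a-a)^{-1}$ for distinct $\a$ is the standard partial-fractions argument, applied in the field of fractions of $F[a]\cong F[x]$. The main point to check is that the inverse in $J$ coincides with the inverse in this commutative associative subalgebra. I will handle this by the standard fact that in a Jordan algebra the inverse of $u$ lies in the unital subalgebra $F[u,u^{-1}]$, which is associative.

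\emph{Orthogonal idempotents.} Suppose $e_1,\dots,e_{n+1}\in H_{\0}$ are nonzero pairwise orthogonal idempotents. Take distinct scalars $\a_1,\dots,\a_{n+1}$, all nonzero, and set $b=\sum_i\a_ie_i\in H_{\0}$.

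For each $j$, the element $\a_j\cdot 1-b$ is not invertible. To see this, let $f=1-\sum_ie_i$, which is also an idempotent orthogonal to all the $e_i$. Then
\[
\a_j 1-b=\a_j f+\sum_{i}(\a_j-\a_i)e_i .
\]
The coefficient of $e_j$ is $0$. In the Peirce decomposition, $U(\a_j1-b)$ kills $J_2(e_j)\ni e_j$, while an invertible element has an invertible (bijective) $U$-operator. So $\a_j1-b$ is not invertible.

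Therefore $|Spec\,(b)|\ge n+1$, which contradicts the definition of $n$. The only delicate point in this part is the standard Peirce computation $e_jU(\a_j1-b)=0$. This holds because $e_j$ is orthogonal to $f$ and to each $e_i$ with $i\neq j$, and it carries over verbatim to the superalgebra setting since everything here is even.
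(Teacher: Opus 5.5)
Your proof is correct and follows the paper's own route: algebraicity by the Amitsur trick, using Lemma~\ref{lem2.3} together with $card\,F>\dim_FJ$, and the idempotent bound via $Spec\,(\sum_i\a_ie_i)\supseteq\{\a_1,\ldots,\a_{n+1}\}$, which you justify more explicitly (by the Peirce computation $e_jU(\a_j1-b)=0$) than the paper does. The only loose spot is your assertion that a single commutative associative subalgebra contains all the $(\a-a)^{-1}$, but you do not need it: multiply a relation $\sum_ic_i(\a_i-a)^{-1}=0$ by $p=\prod_j(\a_j-a)$, and use the associativity of each $F[u_i,u_i^{-1}]$ with $u_i=\a_i-a$ separately (it contains $p$ and $\prod_{j\neq i}u_j$, which are polynomials in $u_i$); this already gives the nonzero polynomial relation $\sum_ic_i\prod_{j\neq i}(\a_j-a)=0$.
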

\prf
The Amitsur trick (see  \cite[chapter 1]{Jac2}) claims that if 
$Card (F\setminus {\rm Spec}\,(a))> \dim_F H_{\0}$ then the element $a$ is algebraic. 
In view of the condition $Card\, F>\dim_FJ$,  the first statement of the Corollary follows from Lemma \ref{lem2.3}.

The second statement follows from Lemma \ref{lem2.3} and the fact that for a set of pairwise  orthogonal idempotents $e_1,\ldots, e_k$ and a set of different elements $\a_1,\ldots \a_k\in F$, for  the element $a=\a_1 e_1+\cdots+\a_ke_k$ we have ${\rm Spec}\,(a)\supset \{\a_1,\ldots,\a_k\}$.

\ctd

\begin{prop}\label{prop2.4}
Let $A$ be a Jordan algebra over an algebraically closed field $F, \ Nil(A)=(0),\ A$ is algebraic and does not contain $n+1$ pairwise orthogonal idempotents. Then $A\cong A'\oplus A_1\oplus\cdots\oplus A_s$, where $\dim_F A'<\infty,\ A_i$'s are Jordan algebras of symmetric bilinear forms.
\end{prop}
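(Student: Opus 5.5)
Since $A$ is algebraic, every nonzero element that is not nilpotent generates (via the subalgebra $F[a]$, a finite-dimensional commutative associative algebra) a nonzero idempotent. The plan is to use idempotents and the Peirce decomposition to control the structure.

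\textbf{Step 1: a maximal orthogonal system of primitive idempotents.} Because $A$ contains no $n+1$ pairwise orthogonal idempotents, we can choose a set of pairwise orthogonal idempotents $e_1,\ldots,e_k$ with $k\le n$ that cannot be refined. This means each $e_i$ is primitive (not a sum of two orthogonal nonzero idempotents). We may also take it maximal, so that $e=e_1+\cdots+e_k$ satisfies: $A_0(e)$ has no nonzero idempotents. Since $A_0(e)$ is algebraic, it is then nil. Because $Nil(A)=(0)$ and Peirce spaces inherit nondegeneracy/radical-freeness, we get $A_0(e)=0$, so $e=1$ and $A$ is unital. Similarly, $A_1(e_i)=A_2(e_i)$ is algebraic with $e_i$ as its only nonzero idempotent. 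So every element of it has the form $\alpha e_i+z$ with $z$ nilpotent, and the nil radical is trivial there too (the radical of a Peirce subspace $U_{e_i}A$ is contained in $Nil(A)$). Hence $A_{ii}=Fe_i$. So $A=\sum_{i\le j}A_{ij}$ with one-dimensional diagonal, and $A$ has finite capacity.

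\textbf{Step 2: decomposition into simple components.} Define $i\sim j$ if $A_{ij}\ne0$. Using the Peirce relations and $A_{ii}=Fe_i$, for $x\in A_{ij}$ we get $x^2=\alpha e_i+\beta e_j$. Nondegeneracy ($Nil(A)=0$ implies no absolute zero divisors for algebraic $A$) forces $A_{ij}\ne0$ to be connected, and $\sim$ is an equivalence relation. Summing the Peirce spaces over each equivalence class gives an ideal decomposition $A=B_1\oplus\cdots\oplus B_r$. Each $B_l$ is a simple nondegenerate unital Jordan algebra of finite capacity. This is the standard Jacobson–McCrimmon argument, and I would invoke it.

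\textbf{Step 3: classification.} By the classical structure theorem for simple Jordan algebras of finite capacity over an algebraically closed field, each $B_l$ is one of the following: $F$; a Jordan algebra of a nondegenerate symmetric bilinear form (capacity 2); $H(D_m,*)$ or $M_m(F)^{(+)}$ with $m\ge3$ (which over algebraically closed $F$ with $D$ the possible coordinate algebras are finite-dimensional); or an Albert algebra. Components of capacity $\ge3$ are coordinatized by $F$, $F\oplus F$, $M_2(F)$, or octonions, all finite dimensional. Collect these, together with the capacity-one components $F$, into $A'$. The remaining components are the $A_i$.

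\textbf{Main obstacle.} The delicate point is Step 1's claim that $A_{ii}=Fe_i$ and $A_0(e)=0$, i.e. transferring $Nil(A)=0$ to Peirce subalgebras. I would handle it as follows: an element of $U_{e}A$ that is nilpotent and generates a nil ideal of $U_eA$ is an absolute zero divisor–type element. I would then use Zelmanov's fact that for algebraic (hence locally nilpotent-radical) Jordan algebras, $Nil(A)=0$ implies nondegeneracy, together with $McCrimmon$'s theorem that nondegeneracy passes to inner ideals $U_eA$. In capacity 2 one must check that the coordinate case gives exactly a bilinear-form algebra of arbitrary dimension, which is why infinite dimensionality can occur only there.
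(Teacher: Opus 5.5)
Your outline is sound, and your Steps 1 and 3 match the first and last steps of the paper's proof. Your Step 2, however, takes a genuinely different route.

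\textbf{Comparison of Step 2.} After obtaining $\{e_i,A,e_i\}=Fe_i$ and $e=1$, the paper proceeds in three moves:
\begin{itemize}
\item it invokes Lemma~\ref{lem0}, which rests on Zelmanov's nilpotency of algebras generated by absolute zero divisors, to make $A$ locally finite dimensional;
\item it shows that $I_i\cdot I_j\neq(0)$, for the ideals generated by $e_i$ and $e_j$, forces strong connection. It does this by passing to a finite-dimensional subalgebra, splitting off its radical with the Wedderburn principal theorem, and using that idempotents in a frame of a finite-dimensional simple algebra are strongly connected;
\item only then does it coordinatize.
\end{itemize}
You get strong connection directly from $A_{ij}\neq(0)$. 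Since $A_{ii}=Fe_i$, every $u\in A_{ij}$ has $u^2=\alpha e_i+\beta e_j$. Comparing $u^2\cdot u^2$ with $u^3\cdot u$ forces $\alpha=\beta$. If this coefficient vanished identically on $A_{ij}$, the Peirce rules would give $U_uA=(0)$, contradicting nondegeneracy.

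This avoids local finiteness and Lemma~\ref{lem0} altogether, and it makes the ideal decomposition by connection classes immediate. In capacity $2$ it even exhibits the bilinear-form structure by hand, as $F(e_i+e_j)\oplus\bigl(F(e_i-e_j)+A_{ij}\bigr)$ with $A_{ij}\cdot A_{ij}\subseteq F(e_i+e_j)$, so Osborn's theorem is not needed there. The paper's route instead reduces the connectivity question to classical finite-dimensional theory.

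\textbf{Two repairs in Step 1.}

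First, $A_0(e)=(0)$ does not by itself give $e=1$; you must also kill $A_1(e)$. The paper does this as follows. For $b=\{1-e,x,e\}$ the $A_0(e)$-part of $b^2$ vanishes, so $b^2\in F\cdot(1-e)U_xU_e$ and $AU_{b^2}\subseteq AU_eU_xU_{1-e}U_xU_e=(0)$. Nondegeneracy then gives $b^2=0$, hence $A_1(e)\cdot A_1(e)=(0)$. So $A_1(e)$ is a square-zero ideal, and it vanishes because $Nil(A)=(0)$.

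Second, the mechanism you propose for the ``main obstacle'' does not work. Nondegeneracy of $U_eA$ does not exclude nonzero nil ideals: for example, $R^{(+)}$ for a semiprime nil associative algebra $R$ is nondegenerate and nil. Likewise, a nilpotent element generating a nil ideal is not an absolute zero divisor in general. The equality $Nil(U_eA)=U_eA\cap Nil(A)$, which the paper uses without comment, should instead be obtained through the quasi-invertible (Jacobson--McCrimmon) radical. That radical is inherited by Peirce inner ideals, $\mathrm{Rad}(U_eA)=U_eA\cap\mathrm{Rad}(A)$, and for algebraic algebras over an algebraically closed field it coincides with the nil radical. The same radical also gives nondegeneracy of $A$ without appealing to Zelmanov, since absolute zero divisors lie in $\mathrm{Rad}(A)$.
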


We will need the following lemma.
\begin{lem}\label{lem0}
Let $A$ be a unital Jordan algebra and an element $a\in A$ satisfies the condition $\{a,A,a\}\subseteq F\cdot a$. Then the ideal $id_A\la a\ra$ generated by $a$ in $A$ is  locally finite dimensional.
\end{lem}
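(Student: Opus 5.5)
The strategy is to show that the span of elements of the form $aR(x_1)\cdots R(x_k)$ collapses to finitely many ``shapes'', each a finite-dimensional space of coefficients. The key remark is that the hypothesis $\{a,A,a\}\subseteq F a$ makes $a$ behave like a ``rank one'' element. Recall that in a unital Jordan algebra the ideal generated by $a$ is
$$id_A\la a\ra = aR\la A\ra = \sum_k aR(A)^k.$$
It therefore suffices to show that every finitely generated subalgebra of $id_A\la a\ra$ sits inside a finite-dimensional subspace. Equivalently, for a finite set $X\subseteq A$, the span of all $aR(x_1)\cdots R(x_k)$ with $x_i$ in the subalgebra generated by $X\cup\{a\}$ should be contained in a finite-dimensional subspace.

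\textbf{Step 1: reduce to a bounded-length problem.} First I would use the operator identity \eqref{OSJ} for algebras. Together with \eqref{D2}, it expresses $R(x)R(y)R(z)$ through operators of length $\le 2$ and operators $R(x)D(y,z)$. I would then use linearizations of the hypothesis: $\{a,x,a\}=\lambda(x)a$ with $\lambda$ linear. Linearizing in the outer variable gives expressions like $\{a,x,b\}$ only when $b$ lies in $aR\la A\ra$. The useful form is the operator identity
$$U(a)R\la A\ra U(a)\subseteq F\,U(a),$$
obtained from $U(a)U(x)U(a)=U(\{a,x,a\})=\lambda(x)U(a)$ and the standard fact that $U$-operators span the multiplication algebra in degree-controlled combinations (Macdonald). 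From this I get that $U(a)$ has ``rank one'' behaviour: $aU(c)$ and $\{a,c,a\}$ lie in $Fa$ for every $c$.

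\textbf{Step 2: finite generation of the ideal as a module over a finite-dimensional piece.} I would show that $id_A\la a\ra$ is spanned by elements $aR(x)$, $\{a,x,y\}$ and $\{x,a,y\}$, using the Jordan identity to pull long products back. Any element of $aR(A)^k$ multiplied again by $a$ lands in $U(a)$-images modulo shorter terms. The finite-dimensional control comes from $V_a=\{a,A,a\}$, which is contained in $Fa$. Next, $\{a,A,A\}$ multiplied by elements of the ideal returns to $a$ up to scalars. Concretely, for $u=aW$ and $v=aW'$ with $W,W'\in R\la A\ra$, Macdonald's theorem expresses the product $uv$ through $U$-type operators in which $a$ appears twice. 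Hence $uv$ lies in $F$-combinations of $aW''$, where $W''$ is built from the factors of $W$ and $W'$. So a subalgebra generated by $u_1,\dots,u_n\in id_A\la a\ra$ is contained in $\sum F\, aR(y_{i})$ and $\sum F\{a,y_i,y_j\}$, where the $y_i$ range over the finitely many elements of $A$ occurring in the chosen expressions of the $u_i$. That sum is finite-dimensional.

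The main obstacle is Step 2: proving the closure claim that products of two elements of $id_A\la a\ra$ collapse, via two occurrences of $a$, into this finite span rather than generating new $y$'s. My first attempt would be to work in the special case by Macdonald/Shirshov–Cohn. Identities in at most three variables hold in special algebras, and any relation involving $a,x,y$ linearly lifts to $A$. There $a$ behaves like an element with $axa\in Fa$, so $xa\,y\,a z=x(aya)z\in F\,xaz$. This makes the span of $x a z$, for $x,z$ words in the generators, closed under multiplication and finite-dimensional once the generators are fixed. The remaining task is then to bound the lengths of the words that actually appear, using $\{a,x,a\}\in Fa$ to absorb anything between two copies of $a$.
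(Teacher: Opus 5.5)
There is a genuine gap, and it sits exactly at the step you call the main obstacle. Moreover, it cannot be closed with the tools you propose.

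The hypothesis allows $\lambda\equiv 0$: every absolute zero divisor $a$ of a unital $A$ satisfies $\{a,A,a\}=(0)\subseteq Fa$. In that case $id_A\la a\ra$ is spanned by the elements $aU(u_1)\cdots U(u_k)$, since $R(w)=U(1,w)$. By the fundamental formula these are again absolute zero divisors. So the lemma then asserts that finitely many absolute zero divisors of this kind generate a finite-dimensional (hence nilpotent) subalgebra. That is a case of Zelmanov's theorem on absolute zero divisors \cite{Zel1}, a genuinely deep result, and the elementary manipulations you propose cannot be expected to reprove it.

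Concretely, your Step 2 breaks down in two places:
\begin{itemize}
\item Macdonald's theorem only handles Jordan polynomials in three variables of degree at most one in one of them. A product $aW\cdot aW'$ involves $a$ together with all the factors of $W$ and $W'$, so Macdonald does not apply.
\item In the special picture, $x(aya)z\in F\,xaz$ requires $y\in A$. But between two occurrences of $a$ you get associative words $y_1y_2\cdots$ of the envelope. Only symmetrizations such as $a(y_1\circ y_2)a$ are controlled by the hypothesis. Already $\{x,a,y\}\circ\{z,a,w\}$ contains the term $xa(yz)aw$.
\end{itemize}
So new ``$y$'s'' keep appearing, and ``bounding the lengths of the words that actually appear'' is the entire content of the lemma, not a remaining detail.

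Two auxiliary claims are also unsupported or false. The spanning of $id_A\la a\ra$ by $aR(x)$, $\{a,x,y\}$, $\{x,a,y\}$ is only asserted. And $aU(c)=\{c,a,c\}$ need not lie in $Fa$: take $a=e_{11}$ in $M_n(F)^{(+)}$, where $\{e_{11},x,e_{11}\}=x_{11}e_{11}$ but $aU(c)=ce_{11}c$.

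The paper's proof supplies the two ideas you are missing.
\begin{enumerate}
\item Let $X=\{b\in A\,|\,\{b,A,b\}\subseteq Fb\}$. The fundamental formula $U(bU(u))=U(u)U(b)U(u)$ gives $\{u,X,u\}\subseteq X$. Since $R(w)=U(1,w)$, the span of $X$ is an ideal containing $a$. Hence it suffices to show that the subalgebra $B$ generated by finitely many $a_1,\dots,a_n\in X$ is finite dimensional.
\item Filter $B$ by degree in the $a_i$. Since $\{a_i,x,a_i\}\in Fa_i$ has degree $1$, the images of the $a_i$ in $\mathrm{gr}\,B$ are absolute zero divisors generating it. By Zelmanov's theorem $\mathrm{gr}\,B$ is nilpotent and hence finite dimensional, and $\dim B=\dim\mathrm{gr}\,B$.
\end{enumerate}
Thus the ``rank one'' condition is converted into absolute zero divisors, and the deep nilpotency theorem does the work your word-bounding was meant to do.
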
  
\prf
Consider the ser
$$
X=\{a\in A\,|\,\{a,A,a\}\subseteq Fa\}.
$$
For an arbitrary element $u\in A$ we have
$$
\{u,X,u\}\subseteq X.
$$
Since the algebra $A$ contains 1,  it follows that the linear $F$-span of $X$ is an ideal of $A$.  Now it remains to show that the subalgebra generated by arbitrary elements $a_1,\ldots,a_n\in X$ is finite dimensional.
 
Let us assume that $A$ is generated by the set $M=\{1,a_1,\ldots,a_n\}$.  Consider the ascending filtration on $A$: 
\bes
A_1\subseteq A_2\subseteq\cdots\subseteq A_k\subseteq \cdots \subseteq A=\cup_kA_k,\ \ (*)
\ees
where $A_k=M^k$. Let $gr A$ be the graded algebra associated with filtration (*), that is, $gr A=\oplus_k A_k/A_{k-1}$, with the multiplication defined by $(x+A_i)(y+A_k)=xy+A_{i+k}$.
Then $gr A$ is a Jordan algebra, and the elements $a_i\in A_1,\ i=1,\ldots,k,$ satisfies the condition $\{a_i,gr A,a_i\}=0$, that is, $a$ are {\em absolute zero divisors} in $gr A$. By \cite{Zel1}, the algebra  $gr A$ is nilpotent and finite dimensional.   Since $\dim A=\dim gr A$, this proves the lemma.

\ctd

{\em The proof of the proposition}.
Let $e_1,\ldots,e_r$ be a maximal system of pairwise orthogonal idempotents in $A$, $r\leq n$. Then each $\{e_i,A,e_i\}$ does not contain a proper idempotent. Since it is algebraic, the field $F$ is algebraically closed and $Nil(\{e_i,A,e_i\})=Nil(A)\cap \{e_i,A,e_i\}=(0)$, it follows that $\{e_i,A,e_i\}=F\cdot e_i$. Denote $e=e_1+\cdots +e_r$. Then the Pierce component $AU_{1-e}$ is algebraic and does not contain idempotents, and $Nil(AU_{1-e})=(0)$. This implies that $AU_{1-e}=(0)$. For an arbitrary element $b=\{1-e,x,e\}\in\{1-e,A,e\}$ we have in the associative envelope of the special Jordan algebra 
$alg\la e,x\ra$  
$$
b^2=ex(1-e)xe=(1-e)U_xU_e
$$
 and hence 
 $$
 AU_{b^2}=AU_eU_{x}U_{1-e}U_xU_e=(0).
 $$
  Since the algebra $A$ is nondegenerate, we get $\{1-e,A,e\}^2=(0)$, hence $\{1-e,A,e\}$ is a nilpotent ideal in $A$ and $\{1-e,A,e\}=(0)$, $e$ is the identity element of $A$.

From $\{e_i,A,e_i\}=F\cdot e_i$ it follows by lemma \ref{lem0}  that $e_i$ generates a locally finite dimensional ideal in $A$, hence the algebra $A$ is locally finite dimensional.

Denote by $I_k$ the ideal of $A$ generated by the idempotents $e_k,\ k=1,\ldots,r$.  We claim that if $I_i\cdot I_j\neq (0)$ then the idempotents $e_i,e_j$ are strongly connected. Indeed, if $I_i\cdot I_j\neq (0)$ then there exist a nonnilpotent element $c\in I_i\cdot I_j$. This follows from the fact that $I_i\cdot I_j=(I_i\cdot I_j)\cdot I_j+I_i\cdot I_j^2$ (since $I_k=I_k^2$),  which is a nonzero ideal of $A$ and hence contains a nonnilpotent element (recall that $Nil\, A=(0)$. Let
\bes
c=\sum_k(e_iR(a_{k1})\cdots R(a_{kp_k}))\cdot(e_j R(b_{k1}\cdots R(b_{kq_k}))
\ees
be such a nonnilpotent element. Consider the finite dimensional subalgebra $A'$ of $A$ generated by $e_i,e_j, a_{k\mu},b_{k\nu}$ for all $k,\mu,\nu$. Then $c\in A'\setminus Nil(A')$. By the Principal Wedderburn Theorem for Jordan algebras (see \cite{Jac}), there exists a decomposition $A'=A'_s+A'_n$, where $A'_s$ is semisimple and $A'_n$ is nilpotent; $e_i,e_j\in A'_s$. We have $(I_i\cap A'_s)\cdot (I_j\cap A'_s)\neq (0)$. Hence the problem has been reduced to a finite dimensional semisimple algebra $A'_s$, the idempotents $e_i,e_j$ are primitive idempotents. Here we can further assume that $e_i,e_j$ lie in the same simple component and, in fact, are included in a same frame. It is known \cite{Jac} that all idempotents in a frame of a finite dimensional simple Jordan algebra over an algebraically closed field are strongly connected.

\smallskip
Without loss of generality, we can assume that the algebra $A$  is not decomposed into a direct sum. Hence the idempotents $e_1,\ldots,e_r$ are pairwise strongly connected.  If $r=1$ then $A=F\cdot e_1$.  If $r\geq 2$ then then  by the Coordinatization Theorems of Jacobson and Osborn   \cite{Jac} $A$ is a Jordan algebra of a bilinear form or  $A$ is isomorphic to the algebra of $*$-hermitian $n\times n$ matrices over an associative  or alternative (for $n\leq 3$) algebra with involution $(B,*)$.   Since $\{e_i,A, e_i\}=F\cdot e_i$,  we have $H(B.*)=F$,  which implies that  $\dim_FA\leq \dim_FB<\infty$.

\ctd

\begin{cor}\label{cor0}
$H_{\0}/Nil\,H_{\0}\cong   A'\oplus A_1\oplus\cdots\oplus A_s$, where $\dim_F A'<\infty,\ A_i$'s are Jordan algebras of symmetric bilinear forms.
\end{cor}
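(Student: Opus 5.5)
The plan is to apply Proposition \ref{prop2.4} to the Jordan algebra $A=H_{\0}/Nil\,H_{\0}$. To do so we must check its three hypotheses: $A$ is algebraic, $A$ has no nonzero nil ideals, and $A$ contains no $n+1$ pairwise orthogonal idempotents.

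\emph{Algebraicity.} By the Corollary to Lemma \ref{lem2.3}, $H_{\0}$ is algebraic over $F$. Homomorphic images of algebraic algebras are algebraic, so $A$ is algebraic.

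\emph{No nil ideals.} Here $Nil\,H_{\0}$ is the largest nil ideal of the Jordan algebra $H_{\0}$. An extension of a nil ideal by a nil ideal is nil: if $x^k\in Nil\,H_{\0}$, then some power of $x$ is $0$, using power-associativity. Hence $Nil(A)=(0)$.

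\emph{Bounded orthogonal idempotents.} Suppose $\bar e_1,\ldots,\bar e_{n+1}$ are pairwise orthogonal idempotents in $A$. The standard lifting of idempotents modulo a nil ideal in a power-associative (Jordan) algebra gives idempotents $e_1,\ldots,e_{n+1}\in H_{\0}$ lifting them. We then need to make the lifts pairwise orthogonal. The cleanest way is lifting inductively: having orthogonal $e_1,\ldots,e_k$ with sum $f$, we lift $\bar e_{k+1}$ inside the Peirce space $H_{\0}U_{1-f}$. This is legitimate because $\bar e_{k+1}$ lies in the image of that Peirce space, since $\bar e_{k+1}=\bar e_{k+1}U_{1-\bar f}$. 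The Peirce subspace is a subalgebra, and its intersection with $Nil\,H_{\0}$ is nil.

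A technical point is that $1$ may not lie in $H_{\0}$. To handle this we work in $J_{\0}$, which is unital, or we use $U_{1-f}=\mathrm{Id}-2R(f)+\ldots$ expressed through $f\in H_{\0}$; the Peirce $0$-space of $f$ in $H_{\0}$ is still a subalgebra. This yields $n+1$ pairwise orthogonal idempotents in $H_{\0}$, contradicting the Corollary to Lemma \ref{lem2.3}.

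With the three hypotheses verified, Proposition \ref{prop2.4} applies: $F$ is algebraically closed by standing assumption, and it gives $A\cong A'\oplus A_1\oplus\cdots\oplus A_s$ with $\dim_F A'<\infty$ and each $A_i$ a Jordan algebra of a symmetric bilinear form. This is the claimed decomposition.

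The only step needing care is the orthogonal lifting of idempotents modulo $Nil\,H_{\0}$. For a single idempotent, lifting is standard for algebraic elements: $\bar e$ is the image of an algebraic $x$, and the subalgebra $F[x]$ is a finite-dimensional commutative associative algebra, where idempotents lift modulo the nilradical. Orthogonality is then obtained by the Peirce-space induction described above.
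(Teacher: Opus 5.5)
Your proof is correct and follows the paper's route: the corollary is Proposition \ref{prop2.4} applied to $H_{\0}/Nil\,H_{\0}$, with algebraicity and the bound on orthogonal idempotents taken from the Corollary to Lemma \ref{lem2.3}. The paper leaves implicit how that bound passes to the quotient, and your inductive lifting of orthogonal idempotents inside the Peirce $0$-space of $f$ supplies exactly that step. (Minor slip: $U_{1-f}=\mathrm{Id}-3R(f)+2R(f)^2$, not $\mathrm{Id}-2R(f)+\ldots$; it is still a polynomial in $R(f)$, so the argument goes through without $1\in H_{\0}$.)
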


\section{The structure of the heart $h(J)$ (radical part).}

\hspace{\parindent}

Recall that $X=X(J)$ is the $T$-ideal of the free Jordan algebra introduced in \cite{Zel5} and in the Introduction.  There exists $d\geq 1$ such that in an arbitrary Jordan algebra $A$ we have
\bes
R_A\la X(A)\ra=\sum_{i=1}^d R(X(A))^i.
\ees
It is straightforward that this assertion extends to superalgebras.

\begin{lem}\label{lem3.00}
Let $J$ be a primitive Jordan superalgebra with a modular ideal $K$ such that $X(J)\neq (0)$. Then 
\bes
R\la J\ra=(F\cdot Id+R_J\la K\ra) \sum_{i=0}^dR(X(J))^i.
\ees
\end{lem}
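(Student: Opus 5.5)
The plan is to exploit the fact that $X(J)$ is a nonzero ideal of $J$, so by modularity $K+X(J)=J$. Then $R(J)=R(K)+R(X(J))$, and $R\la J\ra$ is spanned by $Id$ and words $R(a_1)\cdots R(a_m)$ with each $a_i\in K\cup X(J)$. I will show that every such word can be rearranged, modulo words of the same shape, so that all factors from $K$ stand to the left of all factors from $X(J)$. That gives
\bes
R\la J\ra=(F\cdot Id+R_J\la K\ra)(F\cdot Id+R_J\la X(J)\ra).
\ees
Finally, I will invoke the property of $X$ quoted before the lemma, $R_J\la X(J)\ra=\sum_{i=1}^dR(X(J))^i$, valid in the super setting. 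Absorbing the term $F\cdot Id$ as $i=0$ gives the stated formula.

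For the rearrangement, the basic move is to write $R(x)R(k)=(-1)^{\bar x\bar k}R(k)R(x)+D(x,k)$ for $x\in X(J)$ and $k\in K$. The new term $D(x,k)$ has to be pushed through as well. For this I will use \eqref{D0}: for any $y\in X(J)$ and any $c\in J$,
\bes
R(y)D(x,c)=\pm D(x,c)R(y)+R(yD(x,c)),
\ees
and $yD(x,c)\in X(J)$ because $X(J)$ is an ideal. Hence
\bes
R_J\la X(J)\ra D(x,c)\subseteq D(x,c)R_J\la X(J)\ra+R_J\la X(J)\ra.
\ees
Inside this move I will apply the same rewriting to the inner derivation $D(x,c)=R(x)R(c)\mp R(c)R(x)$ itself. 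Its summand $\mp R(c)R(x)$ already has the $K$-factor on the left. Its summand $R(x)R(c)$ lies in $R_J\la X(J)\ra R(J)$, the object being analysed.

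I will organise the induction on the pair (total length of the word, number of inversions, i.e.\ pairs consisting of an $X$-factor standing left of a $K$-factor), ordered lexicographically. Each elementary move replaces a word by terms that are shorter, or of equal length with fewer inversions. The only exception is a term of type $R_J\la X(J)\ra R(c)$ of the same length, which is where the argument threatens to loop back on itself.

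I expect this last point, controlling the commutator terms $D(x,k)$ so that the induction genuinely decreases, to be the main obstacle. My first attempt at closing the loop is to use the operator identity \eqref{OSJ} and its consequence \eqref{D2} with one argument taken in $X(J)$. This should express $R(x)R(k)R(z)$ through operators of length $2$ and $R(x)D(k,z)$, and the resulting rewriting should strictly decrease the measure above. If that fails, the fallback is to prove directly that $\mathcal{R}=(F\cdot Id+R_J\la K\ra)(F\cdot Id+R_J\la X(J)\ra)$ is stable under right multiplication by $R(K)$. Since it is clearly stable under right multiplication by $R(X(J))$, this would make $\mathcal{R}$ a subalgebra containing $R(J)$, and hence equal to $R\la J\ra$.
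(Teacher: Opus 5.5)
Your reduction to the shape $(F\cdot Id+R_J\la K\ra)(F\cdot Id+R_J\la X(J)\ra)$ is fine, and so is the final appeal to $R_J\la X(J)\ra=\sum_{i=1}^dR(X(J))^i$. However, the step you flag as the main obstacle is a genuine gap, and neither of your remedies closes it.

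With the decomposition $J=K+X(J)$ the trouble already occurs at length $2$. To handle $R(x)R(k)$ with $x\in X(J)$ and $k\in K$, you need the mixed derivation $D(x,k)$ to lie in $\mathcal R=(F\cdot Id+R_J\la K\ra)(F\cdot Id+R_J\la X(J)\ra)$. Expanding $D(x,k)=R(x)R(k)\mp R(k)R(x)$ just returns the word you started with, so your measure does not drop.

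Identity \eqref{D2} does not help. It says nothing about length $2$. For longer words it produces derivations $D(u_i,u_j)$ that may again be mixed, of the form $D(k,x)$. Moving such a derivation to either end of the word via \eqref{D0} still leaves an $X(J)$-factor to the left of a $K$-factor. Your fallback, stability of $\mathcal R$ under right multiplication by $R(K)$, is only a reformulation of the claim, since it amounts to $R_J\la X(J)\ra R(K)\subseteq\mathcal R$.

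The missing idea is to decompose $J$ along a smaller ideal, so that mixed derivations become pure $X$-operators.

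\begin{itemize}
\item By Lemma \ref{lem2.2'}, $X(J)^3$ is an ideal. It is nonzero, since otherwise $X(J)$ would be a nonzero ideal with nil even part, which the argument of Lemma \ref{lem2.2} rules out. So modularity gives $J=K+X(J)^3$.
\item For $a,b\in X(J)$ and $c\in K$, identity \eqref{D1} gives $D(ab,c)=D(a,bc)\pm D(b,ac)$ with $bc,ac\in X(J)$. Hence $D(X(J)^3,K)\subseteq D(X(J),X(J))\subseteq R(X(J))^2$.
\item Take all factors in $K\cup X(J)^3$. The length-$2$ case becomes $R(X(J)^3)R(K)\subseteq R(K)R(X(J))+R(X(J))^2$.
\item For length $p\ge 3$, every derivation produced by \eqref{D2} lies either in $D(K,K)$ or in $D(X(J),X(J))$. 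Move the first kind to the left end and the second kind to the right end, both modulo shorter operators by \eqref{D0}.
\end{itemize}

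A plain induction on length then finishes, with $R_J\la X(J)\ra=\sum_{i=1}^dR(X(J))^i$ absorbing the extra right factor from $R(X(J))^2$. This is exactly the paper's proof.
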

\prf
We have $X(J)^3\neq (0)$ and therefore 
\bes
K+X(J)^3=J.
\ees
Consider a multiplication operator $U=R(u_1)\cdots R(u_p),\, u_i\in J$. We assume that
 every multiplication operator of length $<p$ lies in the  right hand side of the equality in the lemma.
 
 Without loss of generality we assume that all $u_i\in K\cup X(J)^3$.  If $p\geq 3$ then by \eqref{D2} the operator $U$ is a linear combination of operators of length $<p$ and operators of length $p$ containing an inner derivation $D(u_i,u_j)=D$. If the derivation $D$ lies in $D(X(J)^3,X(J)^3)$ then by \eqref{D0} move it to the right side modulo shorter operators. If $D$ lies in $D(K,K)$ then similarly  move it to the left side modulo shorter operators. If $D$ lies in $D(K,X(J)^3)$ then move it to the right side and notice that by \eqref{D1} $D(K,X(J)^3)\subseteq D(X(J),X(J))$.
 
 \smallskip
 
 If $p=1$ then the assertion is obvious.  If $p=2$ then we need to consider only
 \bes
 R(X(J)^3R(K)&\subseteq& R(K)R(X(J)^3)+D(X(J)^3,K)\\
 &\subseteq& R(K)R(X(J))+R(X(J))^2.
 \ees
 This completes the proof of the lemma.
 
 \ctd
 
 Let $FJ$ be the free Jordan superalgebra of countable rank. 
 Denote by  $T_n$ be the ideal of identities satisfied by all algebras of the proposition \ref{prop2.4} not containing $n+1$ pairwise orthogonal idempotents. Clearly, $T_n\neq (0)$.

 Let $g=g(x_1,\ldots,x_m)$ be a multilinear element of degree $m$, and let $f\in g(S^{\la 2d+1\ra})$. Let 
 $$
 P=id_{FJ}(f)_{\0}\cap T_{2m+1}((S^{\la 2d+1\ra})_{\0})\cap (X(FJ))_{\0}.
 $$
  
 \begin{lem}\label{lem3.01}
 The ideal $P$ is a nil ideal.
 \end{lem}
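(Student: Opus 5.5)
We argue by contradiction, reducing the statement to the structure results of the previous section (Lemma \ref{lem2.3}, its Corollary and Corollary \ref{cor0}), applied to a suitable primitive homomorphic image of $FJ$. Suppose $p\in P$ is not nilpotent. First we extend scalars to a field $\tilde F\supseteq F$ that is algebraically closed and satisfies $card\,\tilde F>\dim_F FJ$. Then we pass to a quotient in which $p$ survives and becomes "essential". By Zorn's lemma choose an ideal $I\triangleleft FJ\otimes\tilde F$ maximal with respect to $p^k\notin I$ for all $k\geq 1$. In $B=(FJ\otimes\tilde F)/I$ the image $\bar p$ is not nilpotent, and every nonzero ideal of $B$ contains a power of $\bar p$. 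In particular $B$ is subdirectly indecomposable, and $\bar p$ is not nilpotent modulo any nonzero ideal.

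Next we make $B$ primitive. Since $\bar p$ is a non-nilpotent even element, we take a scalar $\lambda$ such that $1-\lambda\bar p$ is not invertible. Such a $\lambda$ exists by the Amitsur-type argument using $card\,\tilde F>\dim B$: otherwise $\bar p$ would have empty spectrum away from $0$, forcing quasi-invertibility of all multiples of $\bar p$, and then nilpotency via the grading of the free algebra. We then take a maximal proper inner ideal $K\supseteq\{1-\lambda\bar p,B,1-\lambda\bar p\}$. If $(0)\neq I'\triangleleft B$, then $I'$ contains a power of $\bar p$, hence $1\in K+I'$ and so $K+I'=B$. Thus $K$ is a proper modular inner ideal and $B$ is primitive.

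We then verify the standing hypotheses of Sections 3--4 for $B$. Since $\bar p\in X(B)\neq(0)$, Lemma \ref{lem3.00} gives condition \eqref{id000}. Since $f$ is a multilinear element of $S^{\la 2d+1\ra}$ and $\bar p\in id_B\la \bar f\ra$ is nonzero, we have $S(B)\neq(0)$. As in Lemma \ref{lem2.1} the heart $H=h(B)$ is nonzero and contains $S(B)^{\la 2d+1\ra}$. The proof of Lemma \ref{lem2.3}, run with this specific $f$ of degree $m$, bounds every spectrum on $H_{\0}$ by $2m+1$. By the Corollary and Proposition \ref{prop2.4}, $H_{\0}/Nil\,H_{\0}$ is a direct sum of algebras of the type in Corollary \ref{cor0} with at most $2m+1$ orthogonal idempotents, so it satisfies $T_{2m+1}$. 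The element $\bar p$ is a value of an element of $T_{2m+1}$ on even elements of $S^{\la 2d+1\ra}(B)\subseteq H$, all lying in $H_{\0}$. Hence $\bar p\in Nil\,H_{\0}$, so $\bar p$ is nilpotent, which is a contradiction.

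The main obstacle is the primitivity step: producing a proper modular inner ideal in $B$ from a non-nilpotent element of the free algebra. That requires showing $1-\lambda\bar p$ is non-invertible for some $\lambda$, and that a maximal inner ideal containing $\{1-\lambda\bar p,B,1-\lambda\bar p\}$ is proper. I would try first to use the $\Z$-grading of $FJ$ by degree, which makes homogeneous non-nilpotent elements non-quasi-invertible. If that fails, I would replace $p$ by its homogeneous components.
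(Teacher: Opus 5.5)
Your argument is correct, and its core is the paper's: pass to a primitive image where Lemma \ref{lem3.00} gives the standing hypotheses of Section 3, use Lemma \ref{lem2.3} and Corollary \ref{cor0} to get $T_{2m+1}(\mathcal H_{\0})\subseteq Nil(\mathcal H_{\0})$, and conclude that the image of your element is nilpotent. You differ in how the primitive image is built and where Amitsur's trick is used.

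\textbf{The paper's route.} It first uses uncountability of $F$ to reduce ``nil'' to ``quasi-invertible''. Then, for $a\in P$ with $1-a$ non-invertible, it enlarges $FJ\,U(1-a)$ to a maximal inner ideal $K$ and factors out the largest ideal $I(K)\subseteq K$. Modularity follows from maximality of $K$, and the contradiction is that $1-a$ becomes invertible in $FJ/I(K)$.

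\textbf{Your route.} You keep a non-nilpotent $p$ and factor out an ideal maximal among those avoiding all powers of $p$. You then use Amitsur's trick inside $B$ to find $\lambda$ with $1-\lambda\bar p$ non-invertible. Modularity of $BU(1-\lambda\bar p)$ is then immediate, with no maximality of $K$ needed, and the contradiction is directly with non-nilpotency.

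The paper's version is shorter and uses the standard primitive-quotient construction. Yours skips the separate reduction to quasi-invertibility and makes modularity transparent. Your scalar extension is harmless but unnecessary, since the paper already works over an uncountable algebraically closed $F$.

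\textbf{Slips to fix.}
\begin{itemize}
\item The sentence ``$\bar p$ is not nilpotent modulo any nonzero ideal'' states the opposite of what you have and later use. In fact $\bar p$ is nilpotent modulo every nonzero ideal.
\item ``In particular $B$ is subdirectly indecomposable'' does not follow from that property alone, since the power of $\bar p$ lying in an ideal can grow from ideal to ideal. That $h(B)\neq(0)$ comes from Lemma \ref{lem2.1} once primitivity is established, which is how you actually use it.
\item The step you call the main obstacle is not one, and the $\Z$-grading of $FJ$ is the wrong tool there. Your ideal $I$ is not homogeneous, so $B$ has no degree grading, and splitting $p$ into homogeneous components would not control nilpotency of the sum. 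Amitsur's trick alone suffices. Since $\dim_{\tilde F}B\le\aleph_0<card\,\tilde F$, if $\alpha-\bar p$ were invertible for every $\alpha\neq 0$, then $\bar p$ would be algebraic. The factors $\alpha_i-\bar p$ of its minimal polynomial at nonzero roots are invertible already in $\tilde F[\bar p]$: a zero divisor $c\in\tilde F[\bar p]$ would satisfy $c\,U(\alpha_i-\bar p)=0$, contradicting invertibility of $U(\alpha_i-\bar p)$. Hence $\bar p$ would be nilpotent.
\end{itemize}
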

 \prf
 Since $F$ is uncountable, it suffices to show that the ideal $P$ is quasi-invertible. Let $0\neq a\in P$ such that $(FJ)U(1-a)$ is a proper inner ideal. Then $(FJ)U(1-a)\subseteq K$ for a maximal
inner ideal $K$ in $FJ$.  Let $I(K)$ be the maximal ideal of $FJ$ contained in $K$. The superalgebra $J=FJ/I(K)$ is primitive.  Since $a\in S$  and $a\in X(FJ)$ it follows that $S(J)\neq (0)$ and $X(J)\neq (0)$. By Lemma \ref{lem3.00} the primitive superalgebra $J$ satisfies  the conditions \eqref{id000} of the section 3.  The heart ideal of the superalgebra $J$ is $\mathcal H=S^{\la 2d+1\ra}(J)$.
 
 Since $f\in g(S^{\la 2d+1\ra})$ we conclude that $g(\mathcal H)\neq (0)$. Hence by Lemma \ref{lem2.3} and Corollary \ref{cor0} the algebra $\mathcal H_{\0}/Nil(\mathcal H_{\0})$ satisfies $T_{2m+1}=(0)$.  Since $a\in T_{2m+1}(\mathcal H_{\0})$, it follows that $a\in Nil\,(\mathcal H_{\0})$, hence  $JU(1-a)=J$, a contradiction.
 
 This completes the proof of the lemma.
 
 \ctd


Recall that the {\em McCrimmon radical} $\mathcal M(A)$ of a Jordan algebra $A$ is defined as the smallest ideal $I$ of $A$ with the property that the factor-algebra $A/I$ is nondegenerate.  

\begin{lem}\label{lem2.6}
The ideal $P$ lies in the McCrimmon radical $\mathcal M(FJ_{\0})$ of $FJ_{\0}$.
\end{lem}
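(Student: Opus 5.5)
We need to show that $P\subseteq\mathcal M(FJ_{\0})$. Set $A=FJ_{\0}/\mathcal M(FJ_{\0})$; this is a nondegenerate Jordan algebra, and it suffices to prove that the image $\bar P$ of $P$ in $A$ is zero. By Lemma \ref{lem3.01}, $P$ is a nil ideal of $FJ_{\0}$ (note that $P$ is an ideal of $FJ_{\0}$, being an intersection of ideals of $FJ_{\0}$), so $\bar P$ is a nil ideal of the nondegenerate algebra $A$. Nil ideals need not vanish in a nondegenerate algebra in general, so we must use the extra structure of $P$, namely that $P\subseteq X(FJ)_{\0}$.

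The plan is as follows.

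\emph{Step 1 (bounded index).} We first show that $\bar P$ consists of elements of bounded nil index. The defining property of $X$ is that $R_B\la X(B)\ra=\sum_{i=1}^dR(X(B))^i$ for every Jordan (super)algebra $B$. Take $a\in P$ and the one-generated subalgebra $F[a]$. The powers $a^k$ all lie in $X(FJ)$, so the operators $R(a)^k$ lie in $\sum_{i\le d}R(X)^i$. I would rather apply this to the subalgebra generated by $a$ itself, where $X(F[a])$ may be smaller. The better route is the known consequence from \cite{Zel5}: the identities in $X$ force elements to be algebraic of bounded degree modulo the McCrimmon radical. Concretely, I would use that the ideal $X(B)$ of any Jordan algebra $B$ has bounded multiplicative length $d$. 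For a nil element $a\in X(B)$ this implies $R(a)^{N}$ is expressible through products of length $\le d$ in $R(X(B))$. Combined with nilness, I would aim to show $U(a^{N})=0$, i.e.\ $a^N$ is an absolute zero divisor, for some $N=N(d)$ independent of $a$.

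\emph{Step 2 (Zelmanov's theorem).} Once Step 1 gives that $\bar P$ is a nil ideal of bounded index $N$ in the nondegenerate Jordan algebra $A$, we apply Zelmanov's theorem \cite{Zel1}: a Jordan nil algebra of bounded index is solvable, and indeed nilpotent modulo the McCrimmon radical, so it is locally nilpotent and contains absolute zero divisors. Hence a nonzero ideal $\bar P$ of $A$ would contain a nonzero absolute zero divisor of $\bar P$. The standard fact that an absolute zero divisor of an ideal produces one of the whole algebra (if $U_z(\bar P)=0$ then $U_{z'}$ vanishes on $A$ for $z'=zU_y$ or $z$ itself) contradicts the nondegeneracy of $A$. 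Therefore $\bar P=0$.

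\emph{Main obstacle.} The main obstacle is Step 1, obtaining a uniform bound on the nil index from the finite multiplicative length of $X$. If this direct route fails, I would fall back on a primitive-image argument as in Lemma \ref{lem3.01}. Suppose $\bar P\neq0$. Since $A$ is nondegenerate, it is a subdirect product of primitive (strongly prime) nondegenerate algebras. By \cite{Zel5}, a prime nondegenerate algebra in which $X$ is nonzero is either of Clifford type or Albert type, or has a nonzero ideal of bounded multiplicative length and hence a nonzero central-closure image where nil ideals of $X$ are zero. In each of these cases a nil ideal contained in $X$ must vanish, which again gives $\bar P=0$.
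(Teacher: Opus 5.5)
There is a genuine gap. Step 1 carries the whole content of the lemma, and it is neither proved nor provable by the mechanism you indicate, because bounded multiplicative length of $X(B)$ gives no control over nil index. Here is a counterexample. By the paper's remark on $Var$, $H_3(F)\notin Var$, so $X(H_3(F))=H_3(F)$. Take $B=H_3(F)\otimes_F F[t]/(t^m)$. Then $X(B)$ contains $1$, hence $X(B)=B$, and its multiplicative length is $\le d$ for every $m$. Yet $a=t\cdot 1\in X(B)$ satisfies $1\,U(a^N)=a^{2N}=t^{2N}\cdot 1\neq 0$ whenever $2N<m$. So no $N=N(d)$ with $U(a^N)=0$ can exist.

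The fallback is not an argument either. Strongly prime Jordan algebras can themselves be nil, e.g.\ $R^{(+)}$ for a prime nil associative ring $R$. So the vanishing of the image of $P$ in every strongly prime quotient is exactly what would have to be shown, and the clause ``nil ideals of $X$ are zero'' just asserts it. You are right that extra structure of $P$ is needed, but the property $P\subseteq X(FJ)_{\0}$ plays no role in the paper's proof of this lemma.

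What you are missing is that $P$ is a nil ideal of a \emph{free} algebra, and freeness gives bounded index by substitution.
\begin{itemize}
\item Given $a\in P$, pick an even free generator $y$ not occurring in $a$. Then $\{a,y,a\}\in P$, so $\{a,y,a\}^N=0$ for some $N$.
\item Apply the endomorphism of $FJ$ that sends $y$ to an arbitrary $b\in FJ_{\0}$ and fixes the other generators; it fixes $a$. This gives $\{a,b,a\}^N=0$ for all $b$.
\item Hence the inner ideal $K=\{a,FJ_{\0},a\}$ is nil of bounded index, so $K=\mathcal M(K)$ by \cite[Lemma 17]{Zel1}.
\item Then $\{K,K,K\}=\{K,\mathcal M(K),K\}\subseteq\mathcal M(FJ_{\0})$ by \cite[Corollary of Lemma 15]{Zel1}.
\item Since $\{K,K,K\}$ contains a fixed power of $a$ ($a^6$ in the paper), the image of $P$ in $FJ_{\0}/\mathcal M(FJ_{\0})$ is nil of \emph{uniformly} bounded index, even though $N$ depended on $a$.
\end{itemize}
From there your Step 2 is correct (a nil ideal of bounded index in a nondegenerate Jordan algebra vanishes, by Zelmanov), and it finishes the proof exactly as the paper does.
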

\prf
Let $a\in P$, and let $y$ be a free even variable that is not involved in $a$. The element $\{a,y,a\}$ is nilpotent. Hence the inner ideal $K=\{a,FJ_{\0},a\}$ is nil of bounded degree and by \cite[Lemma 17]{Zel1} $K=\mathcal M(K)$.  Now by \cite[Corollary of Lemma 15]{Zel1} we have $\{K,K,K\}=\{K,\mathcal M(K),K\}\subseteq \mathcal M(FJ_{\0})$.

Hence $a^6\in \mathcal M(FJ_{\0})$ and $P$ is nil of index 6 modulo $\mathcal M(FJ_{\0})$.  By \cite{Zel1}  again,  $P\subseteq \mathcal M(FJ_{\0})$.

\ctd
\begin{cor}\label{corPsubsetM}
$P(J_{\0})\subseteq {\cal {M}}(J_{\0})$.
\end{cor}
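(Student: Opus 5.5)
The corollary is a specialization argument: we evaluate the free-algebra inclusion of Lemma \ref{lem2.6} at elements of $J$.

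Here $P$ is an ideal of $FJ_{\0}$ built from polynomials. It is an intersection of $id_{FJ}(f)_{\0}$, the $T$-ideal evaluation $T_{2m+1}((S^{\la 2d+1\ra})_{\0})$, and $X(FJ)_{\0}$. Accordingly, $P(J_{\0})$ is understood as the set of values in $J_{\0}$ of elements of $P$ under all parity-preserving homomorphisms $\varphi: FJ\to J$. Such $\varphi$ are exactly the substitutions of the free variables by homogeneous elements of $J$, since $FJ$ is free of countable rank. Let $\varphi$ be one of them.

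\textbf{Step 1.} The restriction $\varphi|_{FJ_{\0}}: FJ_{\0}\to J_{\0}$ is a homomorphism of Jordan algebras, since even parts are subalgebras. Its image $\varphi(FJ_{\0})$ is a subalgebra $B$ of $J_{\0}$.

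\textbf{Step 2.} We use the standard fact about the McCrimmon radical: it is functorial under surjections, $\psi(\mathcal M(A))\subseteq \mathcal M(\psi(A))$. This holds because $\mathcal M$ is the union of a transfinite chain of ideals, each generated by absolute zero divisors modulo the previous one. Surjective homomorphisms map absolute zero divisors modulo an ideal to absolute zero divisors (or zero) modulo the image ideal, so induction along the chain gives the inclusion. Combined with Lemma \ref{lem2.6}, this yields $\varphi(P)\subseteq \mathcal M(B)$.

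\textbf{Step 3.} It remains to pass from the subalgebra $B$ to $J_{\0}$. This is the main obstacle, because $\mathcal M$ is not monotone under subalgebras in general. I would avoid the issue by choosing $\varphi$ so that $B=J_{\0}$. This is possible because $FJ$ has countably many variables, while $J$ may be of larger dimension. So I would instead regard $P$ as defined in the free superalgebra on a set of variables of cardinality $\ge \dim J$. The proofs of Lemmas \ref{lem3.01} and \ref{lem2.6} do not use countability of rank, only that $F$ is uncountable, $card\, F>\dim$, and the cited results of \cite{Zel1}. This is legitimate since, for the fixed elements of $P$ being evaluated, only finitely many variables occur.

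Then we take $\varphi$ sending the even variables onto a spanning set of $J_{\0}$ and the odd variables onto $J_{\1}$. This makes $\varphi|_{FJ_{\0}}$ surjective onto $J_{\0}$, and Step 2 gives $\varphi(P)\subseteq\mathcal M(J_{\0})$. Every value of an element $p\in P$ at some substitution is also obtained by such a surjective $\varphi$: rename $p$'s finitely many variables to variables not used for the spanning set. Since $P$ is invariant under such renamings (substitutions), this shows $P(J_{\0})\subseteq \mathcal M(J_{\0})$.
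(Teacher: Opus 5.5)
Your argument is correct in substance but takes a different route from the paper, which states the corollary without proof right after Lemma \ref{lem2.6}. The issue you isolate in Step 3 is exactly what has to be bridged: a specialization $FJ_{\0}\to J_{\0}$ need not be onto, and $\mathcal M$ is not monotone under subalgebras.

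The paper keeps countable rank, and the natural bridge there is \cite{Zel4}, which it invokes in the very next lemma. That result says $c\in\mathcal M(J_{\0})$ if and only if every $m$-sequence $c_1=c,\ c_{k+1}=\{c_k,b_k,c_k\}$ is eventually zero. Take $p\in P$ and a substitution $\psi$. An $m$-sequence starting at $\psi(p)$ uses only countably many $b_k$, so send fresh even variables $y_k$ not occurring in $p$ to the $b_k$. The sequence $p,\{p,y_1,p\},\ldots$ in $FJ_{\0}$ vanishes because $p\in\mathcal M(FJ_{\0})$, hence so does its image.

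Your route avoids this characterization theorem. It uses only the elementary inclusion $\psi(\mathcal M(A))\subseteq\mathcal M(\psi(A))$ for surjective $\psi$, which also follows in one line from minimality: $A/\psi^{-1}(\mathcal M(B))\cong B/\mathcal M(B)$ is nondegenerate. The price is re-running Lemmas \ref{lem3.01} and \ref{lem2.6} in rank $\kappa$.

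That re-run works, but $\kappa$ cannot be taken arbitrarily large, as your phrase ``cardinality $\geq\dim J$'' allows. The Amitsur step in Lemma \ref{lem3.01}, and the application of Section 3 to the quotients $FJ/I(K)$, need $card\,F>\kappa$. So take $\kappa=\max(\aleph_0,\dim_FJ)$, which is below $card\,F$ under the standing assumptions.

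The passage back to the paper's countable-rank $P$ should also be justified properly. Write $P_{\aleph_0}$ and $P_\kappa$ for $P$ built in ranks $\aleph_0$ and $\kappa$. Then $P_{\aleph_0}\subseteq P_\kappa$, since $id(f)$ and the verbal parts only grow with the rank, and every substitution of the smaller free superalgebra extends to the larger one. This, not the finiteness of the variable set, is what makes the reduction legitimate.

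One claim in your last step is false: $P$ is not invariant under renaming variables. The factor $id_{FJ}(f)$ is the ideal generated by one fixed element $f$, and renaming the variables of $p$ (which typically include those of $f$) carries it to $id(\sigma(f))$. No renaming is needed, though. Given $p$ and $\psi$, let $\varphi$ agree with $\psi$ on the finitely many variables of $p$, send the remaining $\kappa$ even variables onto a basis of $J_{\0}$, and send the odd ones arbitrarily. Then $\varphi(p)=\psi(p)$ and $\varphi|_{FJ_{\0}}$ is onto $J_{\0}$.
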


\begin{lem}\label{lem2.7}
$Nil(H_{\0})=\mathcal M(H_{\0})$.
\end{lem}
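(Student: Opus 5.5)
We must prove $Nil(H_{\0})=\mathcal M(H_{\0})$. The inclusion $\mathcal M(H_{\0})\subseteq Nil(H_{\0})$ is standard: the McCrimmon radical of any Jordan algebra is nil. More precisely, it is the union of an ascending chain of ideals, each step generated by absolute zero divisors, and absolute zero divisors generate nil ideals by \cite{Zel1}. So the real content is the inclusion $Nil(H_{\0})\subseteq\mathcal M(H_{\0})$. That is, we must show $H_{\0}/\mathcal M(H_{\0})$ has no nonzero nil ideals.

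My plan is to realize $Nil(H_{\0})$, at least up to elements we can control, as evaluations of the ideal $P$ of Lemma \ref{lem3.01}, and then apply Corollary \ref{corPsubsetM}. Fix $g$ multilinear of degree $m$ with $g(H)\neq (0)$; it can be taken from $S^{\la 2d+1\ra}$, as in the proof of Lemma \ref{lem2.3}. By Lemma \ref{lem2.3} and Corollary \ref{cor0}, $H_{\0}/Nil(H_{\0})$ is a sum of algebras as in Proposition \ref{prop2.4} with at most $2m+1$ orthogonal idempotents. Hence $T_{2m+1}(H_{\0})\subseteq Nil(H_{\0})$. Choosing $f\in g(S^{\la 2d+1\ra})$ with nonzero value in $J$, the evaluation $P(J)$ contains
$$
id_J\la f(J)\ra_{\0}\cap T_{2m+1}(H_{\0})\cap X(J)_{\0}.
$$
Since $H$ is the heart, $id_J\la f(J)\ra\supseteq H$. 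Under the standing assumptions $X(J)\neq(0)$ (the setting of Lemma \ref{lem3.00}), we also have $X(J)\supseteq H$. So $P(J)\supseteq T_{2m+1}(H_{\0})\cap H_{\0}$, and then Corollary \ref{corPsubsetM} gives $T_{2m+1}(H_{\0})\subseteq \mathcal M(J_{\0})$.

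Next I would pass to $\bar H=H_{\0}/\mathcal M(H_{\0})$, which is nondegenerate. Here I need $\mathcal M(J_{\0})\cap H_{\0}\subseteq \mathcal M(H_{\0})$; this holds because $H_{\0}$ is an ideal of $J_{\0}$ and the McCrimmon radical behaves well on ideals \cite{Zel1}. In $\bar H$ the T-ideal $T_{2m+1}$ vanishes, so $\bar H$ is a nondegenerate PI Jordan algebra; it satisfies a nontrivial identity because $T_n\neq(0)$. By Zelmanov's theory of nondegenerate PI Jordan algebras (\cite{Zel1}, \cite{Zel2}), such an algebra is a subdirect product of primitive PI algebras with nonzero centers. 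Those are central orders in finite dimensional simple algebras or in algebras of bilinear forms, and these have no nonzero nil ideals. Therefore $Nil(\bar H)=0$, i.e. $Nil(H_{\0})\subseteq\mathcal M(H_{\0})$.

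The main obstacle is the identification in the second step: we must check that the ideal $P$, a free-algebra object, evaluates in $J$ to something containing $T_{2m+1}(H_{\0})$. This requires care that the substitutions of $S^{\la 2d+1\ra}$ cover all of $H$ (using $S(J)^{\la 2d+1\ra}=H$) and that $X(J)\supseteq H$. If that containment is awkward, I would instead argue directly that each nil element of $H_{\0}$ is nil of bounded index modulo $T_{2m+1}$-values. Then I would invoke \cite{Zel1} (nil of bounded index implies lying in $\mathcal M$), as in the proof of Lemma \ref{lem2.6}.
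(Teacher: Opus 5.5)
Your strategy is different from the paper's and could work, but the step carrying all the weight is not proved. The strategy is to show $T_{2m+1}(H_{\0})\subseteq\mathcal M(H_{\0})$, then use that a nondegenerate PI Jordan algebra has no nonzero nil ideals.

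\textbf{The gap.} You assert that $P(J)$ contains $id_J\la f(J)\ra_{\0}\cap T_{2m+1}(H_{\0})\cap X(J)_{\0}$. Evaluation does not commute with intersection. An element of $P(J)$ is the image of a \emph{single} free element lying at once in $id_{FJ}(f)$, in $T_{2m+1}((S^{\la 2d+1\ra})_{\0})$ and in $X(FJ)$. So a priori you only have the reverse inclusion $P(J)\subseteq id_J\la f(J)\ra\cap T_{2m+1}(H_{\0})\cap X(J)$. Moreover $id_{FJ}(f)$ is not a $T$-ideal, so knowing separately that $id_J\la f(J)\ra\supseteq H$ and $X(J)\supseteq H$ gives no common preimage.

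Without this inclusion nothing forces $T_{2m+1}(H_{\0})$ into $\mathcal M(H_{\0})$, and $H_{\0}/\mathcal M(H_{\0})$ is not known to be PI. Your fallback does not close the gap. Nothing in the setup gives a uniform nil index, and nil PI algebras need not have one. Even with such a bound you would still have to pass from $T_{2m+1}$-values back into $\mathcal M$, which is the same problem. You also silently assume $X(J)\neq(0)$. The case $X(J)=(0)$ has to be split off, as the paper does, although there $H_{\0}$ is PI and the claim is immediate.

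\textbf{How to repair it.} The inclusion can be made to hold, but it needs a construction.
\begin{itemize}
\item Fix $a_1,\ldots,a_m$ with $a=f(a_1,\ldots,a_m)\neq 0$. Consider the values of $Q=id_{FJ}(f)\cap X\cap S^{\la 2d+1\ra}$ under substitutions with $x_i\mapsto a_i$ fixed.
\item These values form an ideal of $J$: multiply by a fresh variable, and use that $Q$ is stable under renaming the remaining variables.
\item This ideal is nonzero. It contains values of $(q\cdot z)\cdot s$ with $q\in id_{FJ}(f)$, $z\in X$, $s\in S^{\la 2d+1\ra}$, hence $h^3$ for a non-nilpotent $h\in H_{\0}$ (Lemma~\ref{lem2.2}). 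So it equals $H$.
\item Then for multilinear $t\in T_{2m+1}$ and $q_i\in Q_{\0}$ in disjoint variables, $t(q_1,\ldots,q_k)\in P$.
\end{itemize}

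\textbf{How the paper avoids this.} It needs less: only $\{a,T_{2m+1}(H_{\0}),a\}\subseteq P(J_{\0})$. Sandwiching by the value $a$ of $f$ itself takes care of $id_{FJ}(f)$, the one non-$T$-ideal in the definition of $P$. It then argues elementwise with Zelmanov's $m$-sequence characterization of $\mathcal M$ \cite{Zel4}:
\begin{itemize}
\item $(Nil(H_{\0})+T_{2m+1}(H_{\0}))/T_{2m+1}(H_{\0})$ is nil PI, so every $m$-sequence from $a$ enters $T_{2m+1}(H_{\0})$.
\item Two steps later it lies in $\{a,T_{2m+1}(H_{\0}),a\}\subseteq\mathcal M(J_{\0})\cap H_{\0}=\mathcal M(H_{\0})$, by the fundamental formula.
\item Hence the sequence vanishes.
\end{itemize}
Once repaired, your global route yields the stronger statement $T_{2m+1}(H_{\0})\subseteq\mathcal M(H_{\0})$. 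The paper's elementwise route gets by with the weaker sandwiched inclusion.
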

\prf
Recall that $H_{\0}=(S(J)^{\la 2d+1\ra})_{\0}$.  If $X(J_{\0})=(0)$ then $H_{\0}$ is a Jordan $PI$-algebra, and the result follows from \cite{Zel1,Zel4}.  Hence we may assume that $X(J_{\0})\neq (0)$ and then $X(J_{\0})\supseteq H_{\0}$.
Let $0\neq a\in Nil(H_{\0})$ be a value of an element $f\in S^{\la 2d+1\ra}$ of total degree $m$. By lemma \ref{lem2.6} we have 
$$
\{a,T_{2m+1}(H_{\0}),a\}\subseteq P(J_{\0})\subseteq H_{\0}\cap \mathcal M(J_{\0})=\mathcal M(H_{\0}).
$$
Also, $(Nil(H_{\0})+T_{2m+1}(H_{\0}))/T_{2m+1}(H_{\0})$ is a nil $PI$-algebra, hence it coincides with its  McCrimmon radical.

Consider an $m$-sequence in the algebra $H_{\0}$ that starts with $a$:
\bes
a_1=a,\, a_2=\{a,u_1,a\},\, a_3=\{a_2,u_2,a_2\},
\ees
and so on. Due to \cite{Zel4}, every $m$-sequence that starts with an element from McCrimmon radical vanishes. Hence some member $a_i$ lies in $T_{2m+1}(H_{\0})$, hence it lies in $P(J_{\0})$,  hence it lies in $\mathcal M(J_{\0})$. Now it follows that the $m$-sequence above vanishes.

\ctd
\smallskip

Let us fix a maximal system of pairwise orthogonal idempotents in $H_{\0}:\, e_1,\ldots,e_r$.  Let also $e_0=1-(e_1+\cdots +e_k)$. By {\em Pierce element} we mean an element in
\bes
(\sum_{i=0}^r\{e_i,J,e_i\})\cup (\bigcup_{0\leq i\neq j\leq r}\{e_i,J,e_j\}).
\ees
A product of two Pierce elements is a Pierce element as well. Observe also that if $a\in H$ then the Pierce elements $\{e_0,a,e_i\}, \, \{e_0,a,e_0\}\in H$.

As it was mentioned above, $\{e_i,H_{\0},e_i\}\subseteq F\cdot e_i+Nil(H_{\0})$ for any $i>0$. For an even element $a=\sum_{i=1}^r\a_i e_i+\sum_{i\neq j}\{e_i,a,e_j\}$ mod $Nil(H_{\0})$ we define $tr(a)=\sum_{i=1}^r \a_i$.

\smallskip
We call a system of operators $w_n=R(a_{n1})\cdots R(a_{nr_n})$, where $a_{ij}$'s are Pierce elements from $H_{\0}$, a {\em $w$-system}, if
\bes
w_{n+1}=w_nR(b_1)\cdots R(b_k)R(a_{nr_n}')\cdots R(a_{n1}'),
\ees
 where $0\leq k\leq 2$, $b_i$ are Pierce elements from $Nil(H_{\0})$, and for all $i,\,1\leq i\leq r_n,$ except, may be, one, $a_{ni}'=a_{ni}$. The element $a_{ni}'$ that is not equal to $a_{ni}$ is a Pierce element lying in $a_{ni}D(J_{\1},J_{\1})$.

A Pierce element $a\in I_{\0}$ is called an {\em $w$-element} if there exists $N=N(a)\geq 1$ such that for every $w$-sequence $w_1,w_2,\ldots$ that starts with $w_1=R(a),$ we have $w_{N(a)}=0$.

The following lemma was implicitly proved in \cite{Kap1} and \cite{MarZel}.
\begin{lem}\label{lem2.8}
Let $a\in H_{\0}$ be a Pierce $w$-element. Then $tr((a\cdot J_{\1})J_{\1})=(0)$.
\end{lem}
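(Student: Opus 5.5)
The plan is to argue by contradiction, following Kaplansky's and Martínez–Zelmanov's idea. Assume $t:=tr((a\cdot x)y)\neq 0$ for some odd $x,y\in J_{\1}$. From this we will build a $w$-sequence starting with $w_1=R(a)$ that never vanishes, which contradicts $w_{N(a)}=0$.

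\textbf{Step 1: reduce to a Pierce situation.} By linearity, and because products of Pierce elements are Pierce elements, we may assume $x,y$ are Pierce elements relative to $e_0,\ldots,e_r$. We may also assume $(a\cdot x)y$ has a nonzero scalar component $\beta e_i$ modulo $Nil(H_{\0})$ in some $\{e_i,J,e_i\}$ with $i\geq 1$. The key identity is that the derivation $D=D(x,y)\in D(J_{\1},J_{\1})$ gives
$$aD=(a\cdot x)\cdot y+(a\cdot y)\cdot x,$$
up to the sign conventions in $D(a,b)$. Pierce components of $aD$ are again Pierce elements, and their trace parts are controlled by $t$. So $a'=aD$, or a suitable Pierce component of it, is a legitimate replacement element in the definition of a $w$-system.

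\textbf{Step 2: the iterative step.} Starting from $w_n=R(a_{n1})\cdots R(a_{nr_n})$, we use the Jordan operator identity \eqref{OSJ} together with \eqref{D0} to move $D$ through the product. This gives, up to terms in which some factor is hit by $D$ twice or which fall into $Nil(H_{\0})$,
$$w_n R(b_1)\cdots R(b_k)\,R(a_{nr_n}')\cdots R(a_{n1}')\neq 0,$$
where exactly one $a'_{ni}=a_{ni}D$ and $b_1,b_2$ are Pierce elements from $Nil(H_{\0})$ that absorb the corrections. These corrections lie in $Nil(H_{\0})$ because $\{e_i,H_{\0},e_i\}\subseteq Fe_i+Nil(H_{\0})$ and by Lemma~\ref{lem2.7}. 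The nonvanishing is checked by evaluating on $e_i$, or on a suitable Pierce element. The coefficient $\beta\neq 0$ enters multiplicatively, so each new operator stays nonzero modulo operators that kill the relevant Pierce space. In effect we reproduce the Kaplansky computation: in $K_3$- or $D_t$-like configurations, $R(a)R(x)R(y)$ acting on $e_i$ returns a nonzero scalar multiple of $e_i$, and iterating shows the sequence never dies.

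\textbf{Main obstacle.} The hard part is the bookkeeping in Step 2. We must show that the error terms, which arise from $Nil(H_{\0})$ and from applying $D$ to the other factors, either fit the allowed shape (at most two nil factors $b_1,b_2$ and only one modified $a_{ni}$) or can be discarded without destroying the nonvanishing. I would first handle the nonvanishing modulo the ideal generated by $Nil(H_{\0})$. There the heart's even part is, by Corollary~\ref{cor0}, a finite-dimensional algebra plus bilinear-form algebras, and the trace computation is explicit. I would then lift this back to $J$, using that each $w_n$ is defined only up to the nil factors the definition permits.

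With a nonvanishing $w$-sequence from $R(a)$ in hand, we contradict $w_{N(a)}=0$, which proves $tr((a\cdot J_{\1})J_{\1})=(0)$.
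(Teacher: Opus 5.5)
Your overall strategy (argue by contradiction and build a nowhere-vanishing $w$-sequence starting at $R(a)$ from Pierce elements $x,y$) matches the paper. However, there is a genuine gap: Step 2, which is the whole content of the lemma, is asserted rather than proved, and the mechanism you propose cannot produce what the definition of a $w$-system requires.

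Knowing only that $w_n\neq 0$ gives no reason why $w_nR(b_1)\cdots R(b_k)R(a'_{nr_n})\cdots R(a'_{n1})\neq 0$; the phrase ``$\beta$ enters multiplicatively'' has no justification. Commuting a derivation $D$ past $w_n$ by \eqref{D0} only gives $w_nD=Dw_n+\sum_i R(a_{n1})\cdots R(a_{ni}D)\cdots R(a_{nr_n})$. This keeps the length essentially unchanged and never produces the second, reversed copy of $w_n$.

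The missing idea is to carry the stronger invariant $tr(xw_n\cdot y)\neq 0$ for odd Pierce elements $x,y$, with $y$ in a single Pierce space. Write $xw_n\cdot y=\sum_i\a_ie_i+b$ with $b\in Nil(H_{\0})$. Multiplying by $R(y)$ gives $x'w_n+xw_n'=\tilde\a y+by$, where $'=R(y)^2=\tfrac12D(y,y)$ and $\tilde\a\neq 0$ by the Pierce position of $y$. Since $b$ is nilpotent, $(\tilde\a\, Id+R(b))^{-1}=\sum\a_{ij}R(b^i)R(b^j)$, so $y=\sum\a_{ij}(x'w_n+xw_n')R(b^i)R(b^j)$.

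Substitute this expression, which itself contains $w_n$, back into $tr(xw_n\cdot y)$. This substitution is where the second copy of $w_n$ comes from. Then use $tr(cD(u,v))=0$ for $c\in H_{\0}$, $u,v\in J_{\1}$, which lets you move multiplication operators from one odd argument of the trace to the other in reversed order. You obtain a nonzero term of one of two forms:
$tr(xw_nR(b^j)R(b^i)R(a_{nr_n})\cdots R(a_{n1})\cdot x')$, or
$tr(xw_nR(b^j)R(b^i)R(a_{nr_n})\cdots R(a_{nk}R(y)^2)\cdots R(a_{n1})\cdot x)$.
This is exactly the source of the reversal, of the at most two nil factors, and of the single factor modified by an element of $D(J_{\1},J_{\1})$ in the definition of a $w$-system. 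It also reproduces the invariant for $w_{n+1}$, so no $w_n$ vanishes.

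Two further points in your sketch would fail as written. First, the modified factor is $a_{nk}R(y)^2$, not an element $aD(x,y)$ chosen in advance. In fact $tr(aD(x,y))=0$ for every $a\in H_{\0}$, so that element is not controlled by $t$ at all.

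Second, the reduction ``modulo the ideal generated by $Nil(H_{\0})$'' is not available. If it means an ideal of $J$, then when $Nil(H_{\0})\neq (0)$ it contains the heart $H$ and hence $a$. On the other hand, $Nil(H_{\0})$ by itself is only an ideal of $H_{\0}$, so the odd multiplications $R(x),R(y)$ do not act on $H_{\0}/Nil(H_{\0})$, and Corollary~\ref{cor0} cannot make the trace computation explicit. The nil part has to be handled inside $J$, which is what the inversion of $\tilde\a\, Id+R(b)$ does.
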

\prf
Choose Pierce elements $x,y\in J_{\1}$ such that $tr(ax\cdot y)\neq 0$. Since $ax\cdot y$ is a Pierce element then
\bes
ax\cdot y=\sum_{i=1}^r \a_i'e_i+b',\ b'\in Nil(H_{\0}),\ \sum_i\a_i'\neq 0.
\ees
More generally, let $w_1=R(a),\ldots,w_n$ be a $w$-sequence and $tr(xw_n\cdot y)\neq 0$.  If $y\in \sum_i\{e_i,J,e_i\}$ then we can assume that $y\in \{e_i,J,e_i\}$.  Arguing as above we get
\bes
xw_n\cdot y=\sum_{i=1}^n\a_ie_i+b,\ b\in Nil(H_{\0}),\  \sum_i\a_i\neq 0.
\ees
Multiplying both sides of this equality by $R(y)$, we get
\bes
x'w_n+xw_n'=\tilde\a y+by,
\ees
where $'=R(y)^2,\,\tilde\a\in F$.

We claim that $\tilde\a\neq 0$. Indeed, if $y=2\{e_i,y,e_j\},\, i\neq j$, then $xw_n\cdot y=\a_ie_i+\a_je_j+b,\, b\in Nil(H_{\0}),\, \a_i+\a_j\neq 0.$
Then $\tilde\a=\tfrac12(\a_i+\a_j)\neq 0$. If $y=\{e_i,y,e_i\}$ then $xw_n\cdot y=\a_ie_i+b,\, b\in Nil(H_{\0})$, and $\tilde\a=\a_i\neq 0$.

The operator $\tilde\a\cdot Id+R(b)$ is invertible, let
\bes
(\tilde\a\cdot Id +R(b))^{-1}=\sum\a_{ij}R(b^i)R(b^j),
\ees
where $R(b^0)=Id$. Now
\bes
y=\sum_{i,j}\a_{ij}(x'w_n+xw_n')R(b^i)R(b^j).
\ees
For arbitrary elements $u,v\in J_{\1},\, c\in J_{\0}$ we have $tr(cD(u,v))=0$, hence $tr(u\cdot vc)=tr(v\cdot cu)$. Let $w_n=R(a_{n1})\cdots R(a_{nr_n}),\, a_{ij}$ are Pierce elements from $H_{\0}$.
Suppose that $tr(xw_n\cdot x'w_nR(b^i)R(b^j))\neq 0$.  Using the remark above we get
\bes
tr(xw_n\cdot x'w_nR(b^i)R(b^j))=tr(xw_nR(b^j)R(b^i)R(a_{nr_n})\cdots R(a_{n1})\cdot x')\neq 0.
\ees
Let $w_{n+1}=w_nR(b^j)R(b^i)R(a_{nr_n})\cdots R(a_{n1})$,  then  we have $w_{n+1}\neq 0$.
 Now let $tr(xw_n\cdot xw_n'R(b^i)R(b^j))\neq 0$.  We have
\bes
w_n'=\sum_{k=1}^{r_n}R(a_{n1})\cdots R(a_{nk}R(y)^2)\cdots R(a_{nr_n}).
\ees
Hence there exists $k,\, 1\leq k\leq r_n,$ such that
\bes
tr(xw_nR(b^j)R(b^i)R(a_{nr_n})\cdots R(a_{nk}R(y)^2)\cdots R(a_{n1})\cdot x)\neq 0.
\ees
Let $w_{n+1}=w_{n}R(b^j)R(b^i)R(a_{nr_n})\cdots R(a_{nk}R(y)^2)\cdots R(a_{n1})$, then again $w_{n+1}\neq 0.$  Since $a$ is a $w$-element, we have got a contradiction.

\ctd

Recall that an ideal $I$  of an algebra $A$ is called {\em strongly nilpotent of index $k$} if  any product of elements of $A$ that involves at least $k$ factors from $I$ is zero.

\begin{lem}\label{lem2.9}
Let $A$ be a Jordan algebra and $I\idl A$. If $I^{2k+1}=0$ then $I^3$ is strongly nilpotent of index $\leq k+1$.
\end{lem}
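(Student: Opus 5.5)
We want: if $I\idl A$ and $I^{2k+1}=(0)$, then every product of elements of $A$ (any bracketing) containing at least $k+1$ factors from $I^3$ vanishes. The plan is to reduce arbitrary products to an "absorption" statement for powers of $I$ and then count.

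\emph{Step 1 (absorption).} Write $I^{(n)}$ for the span of all products of at least $n$ elements of $I$, with arbitrary bracketing and arbitrary further factors from $A$ allowed. A priori $I^{(n)}$ could be larger than $I^n$, so the first task is to show that $I^n$, or at least $I^{\la 2j+1\ra}$, absorbs multiplications by $A$. By Lemma \ref{lem2.2'}, $I^{\la 2j+1\ra}$ is an ideal of $A$ contained in $I^{2j+1}$. Hence any product containing one factor from $I^{\la 2j+1\ra}$ already lies in $I^{\la 2j+1\ra}$, whatever the other factors from $A$ are. So it suffices to show that a product of $A$-elements with $k+1$ factors from $I^3=I^{\la 3\ra}$ lies in some $I^{\la 2j+1\ra}$ with $2j+1\ge 2k+1$, i.e.\ in $I^{2k+1}=(0)$.

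\emph{Step 2 (merging factors).} Argue by induction on the number $s$ of factors from $I^3$ in a product $p$, with the claim: $p$ lies in the ideal $I^{\la 2s+1\ra}$. For $s=1$ this holds because $I^3=I^{\la 3\ra}$ is an ideal. For the inductive step, look at the smallest subproduct $q$ of $p$ containing two $I^3$-factors. Then $q=q_1q_2$, where each $q_i$ contains exactly one $I^3$-factor, so $q_i\in I^{\la 3\ra}$ by the ideal property. The aim is to show $q\in I^{\la 5\ra}$, and more generally that $I^{\la 2s-1\ra}\cdot I^{\la 3\ra}\subseteq I^{\la 2s+1\ra}$. Then the product with one fewer ``marked'' factor (replacing $q$ by a single marked factor from a higher $I^{\la\cdot\ra}$) is handled by induction, using that each $I^{\la 2j+1\ra}$ is an ideal.

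\emph{Step 3 (key inclusion).} The heart of the argument is
\[
I^{\la 2j+1\ra}\cdot I^3\subseteq I^{\la 2j+3\ra}.
\]
Write $I^3=I^2\cdot I$, so an element of $I^{\la 2j+1\ra}\cdot I^3$ is a sum of terms $x\cdot(yz)$ with $x\in I^{\la 2j+1\ra}$, $y\in I^2$ and $z\in I$. Use the linearized Jordan identity (the operator form \eqref{OSJ}, or the derivation relations \eqref{D0}, \eqref{D1}) to rewrite $R(yz)$ as a combination of $R(y)R(z)$, $R(z)R(y)$ and triple-product terms. This gives
\[
x(yz)\in (xI^2)I+(xI)I^2+\ldots,
\]
and each of these terms lies in $I^{\la 2j+1\ra}\cdot I^2+(I^{\la 2j+1\ra}I)I=I^{\la 2j+3\ra}$, possibly after further absorption by the ideal $I^{\la 2j+1\ra}$. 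Splitting $I^2$ as $I\cdot I$ and applying the identity once more should let every resulting term be recognized in this form.

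I expect Step 3 to be the main obstacle: one must check that the identity never produces a term where an $I$-factor is ``wasted'', for example by being multiplied into $x$ before the needed two further $I$-factors arrive. If this direct approach fails, the fallback is to use the weaker inclusion $I^{\la 2j+1\ra}\cdot I^3\subseteq I^{\la 2j+3\ra}$ obtained just from $I^3\subseteq I^2$ together with the ideal property. That suffices, since with $k+1$ marked factors we only need $2(k+1)-1=2k+1$, which is exactly why the index is $k+1$. Finally, $I^{\la 2k+1\ra}\subseteq I^{2k+1}=(0)$ completes the proof.
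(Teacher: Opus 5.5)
\textbf{There is a genuine gap, and it is in Step 2, not Step 3.}

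The inclusion you call the heart of the argument is immediate. Since $I^3=I^2I\subseteq I^2$, the definition gives $I^{\la 2j+1\ra}\cdot I^3\subseteq I^{\la 2j+1\ra}\cdot I^2\subseteq I^{\la 2j+3\ra}$. Your ``fallback'' is this same inclusion, and it is exactly what the paper uses.

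The problem is the induction on the number $s$ of factors from $I^3$. After you replace the minimal subproduct $q=q_1q_2$ by a single factor, that factor lies in $I^{\la 5\ra}$. Your induction hypothesis, however, only speaks about products whose marked factors lie in $I^3$. Applied to the new product it gives one level less than you claim, with either target $I^{\la 2s+1\ra}$ or $I^{\la 2s-1\ra}$.

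To remember the extra level you would need a statement for marked factors taken from various $I^{\la 2j+1\ra}$. At some point you must then multiply two factors that both already carry several levels, e.g.\ $p=p_1p_2$ at the root of a balanced bracketing. For that you would need something like $I^{\la 2i+1\ra}\cdot I^{\la 2j+1\ra}\subseteq I^{\la 2(i+j)+1\ra}$, which you do not have. The Step 3 inclusion plus the ideal property only give $I^{\la 2i+1\ra}\cdot I^{\la 2j+1\ra}\subseteq I^{\la 2\max(i,j)+3\ra}$. For a balanced bracketing of $2^m$ factors from $I^3$ this yields only $I^{\la 2m+3\ra}$, while $I^{\la 2^{m+1}-1\ra}$ is needed. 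Already for $m=3$ you get $I^{\la 9\ra}$ versus the required $I^{\la 15\ra}$. So arbitrary bracketings are not handled by the merging scheme.

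The missing idea is to linearize the bracketing with the Jordan operator identity, as the paper does. Write the product as $w=iR(a_1)\cdots R(a_n)$ with $i\in I^3$. Using \eqref{OSJ}, which expresses $R((yt)z)$ through products of $R$'s of shorter monomials, you may assume each $a_j$ involves at most two factors; this does not change the number of factors from $I^3$.

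Now induct on $k$ by peeling off the last $R(a_m)$ that contains a factor from $I^3$. The later operators are absorbed because $I^{\la 2k+1\ra}$ is an ideal.
\begin{itemize}
\item If $a_m$ contains one such factor, then $a_m\in I^3$. The earlier part lies in $I^{\la 2k-1\ra}$ by induction, and $I^{\la 2k-1\ra}I^3\subseteq I^{\la 2k+1\ra}$.
\item If $a_m\in I^3\cdot I^3$, the earlier part lies in $I^{\la 2k-3\ra}$. You must then check $I^{\la 2k-3\ra}R(I^3\cdot I^3)\subseteq I^{\la 2k+1\ra}$, by expanding $R(bc)$ once more with \eqref{OSJ}.
\end{itemize}
In this operator form each step adds at most two marked factors to a single growing element, so the levels genuinely add up. That is exactly what your merging tree cannot guarantee.
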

\prf
It suffices to prove that any product in $A$ that involves $k+1$ factors from $I^3$ lies in $I^{\la 2k+1\ra}$.  
This evidently holds for $k=1$.  Let $k>1$ and assume that the statement is true for the smallest values of $k$.
Let $w$ be a product that involves  $k+1$ factors from $I^3$. We write $w$ in the form $w=iW$, where $i\in I^3$ and $W=R(a_1)R(a_2)\cdots R(a_n)$ where the subproducts $a_1,\ldots,a_n$ all together  contain $k$ elements from $I^3$. By the Jordan operator identity \eqref{OSJ} we may assume that all the subproducts $a_i$ have at most two factors. Let $m$ be the biggest number for which $a_m$  contains factors from $I^3$,
then either $a_m=i'a,\, i'\in I^3,\,a\in A^{\sharp},$ or $a_m\in (I^3)^2$. In the first case the product    $w'=iR(a_1)R(a_2)\cdots R(a_{m-1})$ contains at least $k$ factors from $I^3$ and by induction $w'\in I^{\la 2(k-1)+1\ra}$. Then $w'R(a_m)\in I^{\la 2k-1\ra)}I^3\subseteq I^{\la 2k+1\ra}$ and $w\in I^{\la 2k+1\ra}$. In the second case, by the Jordan operator identity we have $R(a_m)\in (R(I^2))^3+\sum_{k=4}^6 (R(I))^k$.
In this case the product  $w'=iR(a_1)R(a_2)\cdots R(a_{m-1})$ contains $k-1$ factors from $I^3$ and by induction $w'\in I^{\la 2(k-2)+1\ra}=I^{\la 2k-3\ra}$. Then 
\bes
w'R(a_m)&\in& (I^{\la 2k-3\ra} I^2)I^2 +(((I^{\la 2k-1\ra}I)I)I)I\\
& \subseteq& I^{\la 2k-1\ra}I^2+(I^{\la 2k-1\ra}I)I\subseteq I^{\la 2k+1\ra}.
\ees
\ctd

\begin{lem}\label{lem2.10}
Let $L$ be a nilpotent ideal of $H_{\0},\ L^{2k+1}=(0)$. Let $w_n=R(a_{n1})\cdots R(a_{n{r_n}})$ be a $w$-sequence.  Suppose that at least one of the elements $a_{11},\ldots,a_{1r_1}$ lies in $(L^3)^3$. Then $w_{k+1}=0$.
\end{lem}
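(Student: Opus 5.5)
We will prove Lemma~\ref{lem2.10} by tracking, along the $w$-sequence, how many factors from the ideal $L^3$ of $H_{\0}$ are forced into the products $x w_n$, and then invoking Lemma~\ref{lem2.9}. By Lemma~\ref{lem2.9}, applied to the Jordan algebra $H_{\0}$ and its ideal $L$ with $L^{2k+1}=(0)$, the ideal $L^3$ is strongly nilpotent of index $\leq k+1$. Hence any product of elements of $H_{\0}$ involving at least $k+1$ factors from $L^3$ vanishes. The plan is to show that $w_{k+1}$ is a sum of operators $R(c_1)\cdots R(c_s)$ such that every product built from them contains at least $k+1$ factors from $L^3$. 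We use $(L^3)^3$ rather than $L^3$ in the hypothesis to absorb the extra multiplications coming from the derivations $D(J_{\1},J_{\1})$.

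\emph{Step 1 (stability of $L^3$ under the operations of a $w$-system).} By definition, $w_{n+1}$ is obtained from $w_n$ by appending $R(b_1)\cdots R(b_k)$ with $b_i\in Nil(H_{\0})$, and then the reversed string $R(a'_{nr_n})\cdots R(a'_{n1})$. Here all $a'_{ni}=a_{ni}$ except possibly one, which lies in $a_{ni}D(J_{\1},J_{\1})$. The key observation is that $L^3$ and its powers are invariant under even derivations of $J$ that preserve $H_{\0}$. Indeed, $L$ is an ideal of $H_{\0}$, and $H$ is an ideal of $J$, so $D(J_{\1},J_{\1})$ maps $H_{\0}$ into itself. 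We will check, using \eqref{D0} and the Leibniz rule, that $D(J_{\1},J_{\1})$ maps $(L^3)^3$ into $L^3$. This is the main obstacle: $L$ itself need not be $D$-invariant. We would first try the inclusion $(L^3)^3 D\subseteq L^2\cdot L^3 D+\dots$. If that fails, we would instead use that $D$ maps $L$ into the ideal of $H_{\0}$ it generates, so that $L^3D\subseteq L^2\cdot (LD)$, which still lies in $L^2$. After the cube this gives containment in $L^3$, and this is exactly why the hypothesis is placed in $(L^3)^3$.

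\emph{Step 2 (counting).} Fix the index $i_0$ with $a_{1i_0}\in(L^3)^3$. By induction on $n$, the string $w_n$ contains at least $n$ occurrences of $R(c)$ with $c\in L^3$ (coming from $a_{1i_0}$ or its modified copy). This holds because each passage $w_n\to w_{n+1}$ appends a reversed copy of the previous Pierce factors, so that copy contains a factor from $L^3$ by Step~1. Moreover, at most one factor per step gets modified, and the original factors of $w_n$ remain untouched. Each factor from $(L^3)^3\subseteq L^3$ or its $D$-image thus supplies a factor from $L^3$.

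\emph{Step 3 (conclusion).} Then for any $u\in J$, the product $uw_{k+1}$ involves at least $k+1$ factors from $L^3$. If $u\in H_{\0}$, strong nilpotency of index $\leq k+1$ from Lemma~\ref{lem2.9} gives zero. For general $u\in J$, we use that $L^3$ lies in the ideal $H$ and pass through the $H_{\0}$-bimodule structure, rewriting via \eqref{OSJ} and \eqref{D2} so that the factors from $L^3$ multiply each other inside $H_{\0}$. This yields $w_{k+1}=0$.
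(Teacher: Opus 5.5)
Your route is the paper's: count factors from $L^3$ along the $w$-sequence using $(L^3)^3D(J_{\1},J_{\1})\subseteq L^3$, then invoke the strong nilpotency from Lemma~\ref{lem2.9}. Your Step~2 is correct, and in fact leaner than the paper's count. Since $w_1$ is a prefix of every $w_n$, each appended reversed block contains a copy of the original factor $a_{1i_0}\in(L^3)^3$, possibly hit by one derivation. So you gain at least one factor from $L^3$ per step, which is all that is needed; the paper instead tracks one factor in $(L^3)^3$ plus $2(n-2)$ further factors in $L^3$.

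The gap is in Step~1, the only nontrivial input: your sketch does not establish $(L^3)^3D\subseteq L^3$. The asserted invariance of $L^3$ and its powers under such $D$ is unjustified. The inclusion $L^3D\subseteq L^2\cdot(LD)\subseteq L^2$ is also not correct. Leibniz gives $L^3D\subseteq (L^2D)L+L^2(LD)$, and $LD$ is only known to lie in $H_{\0}$. Hence the term $L^2(LD)\subseteq L^2\cdot H_{\0}$ is only known to lie in $L$.

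The missing idea is to apply Leibniz not to $L$ but to the cube of the ideal $I=L^3$ of $H_{\0}$, which is an ideal by Lemma~\ref{lem2.2'}. Since $H\idl J$ and $D=D(x,y)$, $x,y\in J_{\1}$, is an even derivation of $J$, it restricts to a derivation of $H_{\0}$. Hence $ID\subseteq H_{\0}$ and
\[
(I^3)D\subseteq ((ID)I)I+(I(ID))I+(II)(ID)\subseteq I .
\]
This is exactly $(L^3)^3D(J_{\1},J_{\1})\subseteq L^3$, the inclusion the paper states without proof.

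In Step~3, Lemma~\ref{lem2.9} speaks only about products inside $H_{\0}$, so it yields $H_{\0}w_{k+1}=0$ immediately. Your sentence about handling $u\notin H_{\0}$ ``by rewriting via \eqref{OSJ} and \eqref{D2}'' is not an argument. It does not say how the $L^3$-factors would get multiplied together when the word acts on an element outside $H_{\0}$. The paper itself passes from strong nilpotency to $w_{k+1}=0$ in one line without discussing this point. So you are not behind the paper there, but that sentence should not be presented as a proof.
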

\prf
We claim that for any $n\geq 1$ at least one of the elements $a_{n1},\cdots,a_{nr_n}$ lies in $(L^3)^3$ and at least $n-1$ other elements lie in $L^3$. This assertion is true for $n=1$. Let $n=2$. Then
\bes
w_2=w_1R(b_1)\cdots R(b_k)R(a_{1r_1}')\cdots R(a_{11}')
\ees
 and it remains to notice that $(L^3)^3D(J_{\1},J_{\1})\subseteq L^3$.

Let $n\geq 3, w_n=w_{n-1}R(b_1)\cdots R(b_k)R(a_{n-1,r_{n-1}}')\cdots R(a_{n-1,1}')$. There are $\geq n-1$ elements from $L^3$ among $a_{n-1,1},\cdots,a_{n-1,r_{n-1}}$. At most one of them was affected by a derivation from $D(J_{\1},J_{\1})$. Hence $w_n$ involves an element from $(L^3)^3$ (in $w_{n-1}$) and $\geq 2(n-2)\geq n-1$ elements from $L^3$.

If $L^{2k+1}=(0)$ then by Lemma \ref{lem2.9} the ideal $L^3$ is strongly nilpotent of degree $\leq k+1$, which implies $w_{k+1}=0$.

\ctd

\begin{cor}\label{cor2.1}
Let $L$ be a solvable ideal of $H_{\0}$. Then
\bes
tr(((L^3)^3)^3\cdot J_{\1})J_{\1})=(0).
\ees
\end{cor}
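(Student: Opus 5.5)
The plan is to reduce the corollary to Lemma \ref{lem2.8} by showing that every Pierce element of $((L^3)^3)^3$ is a $w$-element, with Lemma \ref{lem2.10} supplying the required bound $N$.

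First we pass from solvable to nilpotent. Since $L$ is a solvable ideal of $H_{\0}$, it should be nilpotent in the sense needed by Lemma \ref{lem2.10}. Solvability gives $L^{(s)}=(0)$ for some $s$. The deepest iterate $M=((L^3)^3)^3$ sits inside a derived-type term of $L$, and if $L$ itself is not nilpotent we can replace $L$ by a suitable nilpotent ideal. Our first attempt is to use that every solvable ideal lies in $Nil(H_{\0})=\mathcal M(H_{\0})$ (Lemma \ref{lem2.7}); from this we want a finite-index ideal $L'$ with $(L')^{2k+1}=(0)$ and $((L^3)^3)^3\subseteq (L'^3)^3$.

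If that fails, we use the standard fact that for a solvable ideal $L$ of a Jordan algebra, $L^3$ generates a nilpotent ideal. This is the Zhevlakov-type result that solvable ideals have a nilpotent ideal $L^{\la 3\ra}$-chain. Applying it twice, $(L^3)^3$ lies in a nilpotent ideal $L_1$ of $H_{\0}$ with $L_1^{2k+1}=(0)$, and $((L^3)^3)^3\subseteq (L_1^3)^3$. This reduction is the step we expect to be the main obstacle. Matching exactly three cube operations to the hypotheses of Lemma \ref{lem2.10} needs care: one cube is used to get nilpotency, and two are used by Lemma \ref{lem2.10} itself.

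Next, take a Pierce element $a\in ((L^3)^3)^3\subseteq (L_1^3)^3$. Any $w$-sequence starting with $w_1=R(a)$ has $a_{11}=a\in (L_1^3)^3$. By Lemma \ref{lem2.10}, $w_{k+1}=0$, so $a$ is a $w$-element with $N(a)=k+1$. Lemma \ref{lem2.8} then gives $tr((a\cdot J_{\1})J_{\1})=(0)$.

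Finally, we pass from Pierce elements to arbitrary ones. The ideal $((L^3)^3)^3$ of $H_{\0}$ is invariant under the Peirce projections $\{e_i,\cdot,e_j\}$, because these are multiplication operators by the $e_i\in H_{\0}$. So every element of it is a sum of Pierce elements belonging to it. Similarly, $J_{\1}$ is spanned by Pierce elements. By linearity of $tr$ and of the products, $tr(((L^3)^3)^3\cdot J_{\1})J_{\1})=(0)$, which proves the corollary.
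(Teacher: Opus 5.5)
Your argument is the paper's proof. The paper takes $L'=L^3$, which is nilpotent by the Medvedev--Zelmanov theorem \cite{MedZel}. It then applies Lemma \ref{lem2.10} to $L'$, so that every Pierce element of $((L')^3)^3=((L^3)^3)^3$ is a $w$-element, and concludes with Lemma \ref{lem2.8}. Your final step, splitting into Pierce components, correctly fills in what the paper leaves implicit.

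The reduction from solvable to nilpotent needs cleaning up, because as written it does not give the inclusion you claim.

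\begin{itemize}
\item Apply the nilpotency result once and set $L_1=L^3$. This is an ideal of $H_{\0}$ by Lemma \ref{lem2.2'} and nilpotent by \cite{MedZel}, and it gives $(L_1^3)^3=((L^3)^3)^3$ exactly, which is your own count of one cube for nilpotency and two for Lemma \ref{lem2.10}.
\item ``Applying it twice'' would give $L_1=(L^3)^3$. Lemma \ref{lem2.10} then only controls $(L_1^3)^3=(((L^3)^3)^3)^3$, which is too small.
\item More generally, a nilpotent ideal $L_1$ that merely contains $(L^3)^3$ does not yield $((L^3)^3)^3\subseteq (L_1^3)^3$. For that you need $L_1\supseteq L^3$.
\item Drop the first attempt via Lemma \ref{lem2.7}. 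Lying in $Nil(H_{\0})=\mathcal M(H_{\0})$ gives no nilpotency bound of the form $L_1^{2k+1}=(0)$, so that route cannot feed Lemma \ref{lem2.10}.
\end{itemize}
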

\prf
For a solvable ideal $L\idl H_{\0}$ the ideal $L'=L^3$ is nilpotent \cite{MedZel}.
By lemma \ref{lem2.10} every Pierce element from $((L')^3)^3$ is a $w$-element. Now the assertion follows from lemma \ref{lem2.8}.

\ctd

\begin{lem}\label{lem2.10'}
Let $J=J_{\0}+J_{\1}$ be a Jordan superalgebra,  $I\idl J,\ P\idl I_{\0}$.   Then
\bes
P^{\la 2k+1\ra}\cdot J_{\0}&\subseteq& P^{\la 2k-1\ra},\\
P^{\la 2k+1\ra}D(J_{\1},J_{\1})&\subseteq& P^{\la 2k-1\ra},\\
(P^{\la 2k+1\ra}J_{\1})J_{\0}&\subseteq &(P^{\la 2k-3\ra}J_{\1})P+P^{\la 2k-3\ra}J_{\1}.
\ees
\end{lem}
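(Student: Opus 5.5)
We argue by induction on $k$, using the recursive definition $P^{\la 2k+1\ra}=P^{\la 2k-1\ra}\cdot P^2+(P^{\la 2k-1\ra}P)P$ from Lemma \ref{lem2.2'} applied in the Jordan algebra $I_{\0}$. The key facts are these.

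First, $P\idl I_{\0}$ and $I_{\0}$ is an ideal of $J_{\0}$, so $P$ is a subspace of $J_{\0}$ that is an ideal of $I_{\0}$ but not necessarily of $J_{\0}$. Hence products of elements of $P$ with arbitrary elements of $J_{\0}$ land only in $I_{\0}$, not in $P$.

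Second, every even derivation of $J$ of the form $D(x,y)$ with $x\in I$ or $y\in I$ maps $J_{\0}$ into $I_{\0}$. Also, $D(J_{\0},J_{\0})$ and $D(J_{\1},J_{\1})$ restrict to derivations of $J_{\0}$.

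For the first inclusion, take $p\in P^{\la 2k-1\ra}$, $q,r\in P$ and $u\in J_{\0}$. We use the Jordan identity \eqref{OSJ} and the derivation relation \eqref{D0}, in the form
\[
(pR(q)R(r))R(u)=pR(q)R(r)R(u),
\]
and rewrite $R(q)R(r)R(u)$ by \eqref{D2} as a combination of operators of length two and of $R(q)D(r,u)$. A term $pR(q)D(r,u)$ equals $\pm p D(r,u)R(q)+pR(qD(r,u))$. Here $qD(r,u)\in I_{\0}$, so the middle factor leaves $P$ only in one place, and every surviving term contains at least $P^{\la 2k-1\ra}$ times an element of $I_{\0}$, times an element of $P$. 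Similarly, $pR(q\cdot r)R(u)$ can be written using $R(q\cdot r)R(u)=R(u)R(q\cdot r)+D(q\cdot r,u)$, and by \eqref{D1} we have $D(q\cdot r,u)\subseteq D(q,r\cdot u)+D(r,q\cdot u)$. Each such term shows $P^{\la 2k+1\ra}\cdot J_{\0}\subseteq P^{\la 2k-1\ra}\cdot I_{\0}+P^{\la 2k-1\ra}D(P,I_{\0})+\cdots$. Since $P^{\la 2k-1\ra}$ is an ideal of $I_{\0}$ by Lemma \ref{lem2.2'}, all of these lie in $P^{\la 2k-1\ra}$.

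The second inclusion follows the same pattern and is easier. Write $D=D(x,y)$ with $x,y\in J_{\1}$. Then $D$ is a derivation of $J_{\0}$ preserving $I_{\0}$. Applying it to $(pq)r$ or $p(qr)$, the Leibniz rule gives terms in which $D$ hits one factor. The result lands in a product of the form $P^{\la 2k-1\ra}D\cdot P^2$, or with the derivative on a factor from $P$. In the latter case we obtain an element of $P^{\la 2k-1\ra}$ multiplied by elements of $I_{\0}$, which lies in $P^{\la 2k-1\ra}$. In the former case induction reduces the argument to $P^{\la 2k-1\ra}D\subseteq P^{\la 2k-3\ra}$ multiplied by $P^2$, which lies in $P^{\la 2k-1\ra}\subseteq P^{\la 2k-3\ra}$.

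The third inclusion is the main obstacle. Take $z\in J_{\1}$ and $u\in J_{\0}$, and consider $((pq\cdot r)z)u$ with $p\in P^{\la 2k-3\ra}$ (two levels down), so that the element has the form $P^{\la 2k-3\ra}$ multiplied twice by $P$. We expand it using \eqref{OSJ} with $R(q)R(r)R(z)R(u)$, moving $R(z)$ next to $p$ and pushing $q,r\in P$ to the right. This produces:
\begin{itemize}
\item terms $(pz)\cdot(\text{element of }P)$, which lie in $(P^{\la 2k-3\ra}J_{\1})P$;
\item terms in which $u$ is absorbed into $P$, which give $(pz)\cdot(P\cdot J_{\0})$; here $P\cdot J_{\0}\subseteq I_{\0}$, not $P$;
\item terms $p D(\cdot,\cdot)z$.
\end{itemize}
To handle the second kind of term, we use the first two inclusions already proved, shifting $J_{\0}$ onto $p$ at the cost of one level: $P^{\la 2k-1\ra}$ becomes $P^{\la 2k-3\ra}$. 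The result lies in $P^{\la 2k-3\ra}J_{\1}$. I expect the delicate point to be the bookkeeping that ensures every term lands either in $(P^{\la 2k-3\ra}J_{\1})P$ or in $P^{\la 2k-3\ra}J_{\1}$, without losing more than two levels.
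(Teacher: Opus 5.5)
\textbf{Second and first inclusions.} Your argument for the second inclusion is the paper's argument: the Leibniz rule for the even derivation $D(x,y)$, $x,y\in J_{\1}$, plus induction.

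The first inclusion, routed through \eqref{D2} and \eqref{D0}, has a step that does not close. In $pR(q)D(r,u)=pD(r,u)R(q)+pR(qD(r,u))$, the term $pD(r,u)R(q)$ contains $((pr)u)q$ with $u\in J_{\0}$ arbitrary. So $pD(r,u)$ is not ``$P^{\la 2k-1\ra}$ times an element of $I_{\0}$''. Induction only gives $((pr)u)q\in P^{\la 2k-3\ra}P$. This is again $p$ multiplied by two elements of $P$ and one element of $J_{\0}$, which is exactly the kind of element you are trying to control.

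The fix is to apply \eqref{OSJ} directly:
$((pq)r)u=-((pu)r)q-p((qu)r)+(pq)(ru)+(pr)(qu)+(pu)(qr)$.
Each term lies in $P^{\la 2k-1\ra}$, because $pu\in P^{\la 2k-3\ra}$ by induction and $P^{\la 2k-1\ra}\idl I_{\0}$. The paper does the same computation with associators: $((ap)q)x=(ap)(qx)+(a,q,px)+(p,q,ax)$.

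\textbf{Third inclusion: the genuine gap.} Here you list term types but give no argument, and two things go wrong.
First, your starting element $((pq)r)$ with $p\in P^{\la 2k-3\ra}$ and $q,r\in P$ lies only in $P^{\la 2k-1\ra}$. You therefore discard one of the two levels that $P^{\la 2k+1\ra}$ provides, and the argument that works uses both.
Second, the first two inclusions concern even elements of $P^{\la m\ra}$ multiplied by $J_{\0}$ or hit by $D(J_{\1},J_{\1})$. They say nothing about odd elements such as $(pz)\cdot i$ with $i\in I_{\0}$. So ``shifting $J_{\0}$ onto $p$'' is not available, and the terms $pD(\cdot,\cdot)z$ are left untreated.

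The missing idea is to pass to associators,
$(P^{\la 2k+1\ra}J_{\1})J_{\0}\subseteq (P^{\la 2k+1\ra},J_{\1},J_{\0})+P^{\la 2k+1\ra}J_{\1}$,
and to apply the linearized Jordan identity $(ab,z,u)=(a,z,bu)+(b,z,au)$ (with $a,b,u$ even and $z$ odd) twice.
\begin{itemize}
\item Using $P^{\la 2k+1\ra}\subseteq P^{\la 2k-1\ra}P$, $PJ_{\0}\subseteq I_{\0}$ and the first inclusion, the associator lies in $(P^{\la 2k-1\ra},J_{\1},I_{\0})+(P,J_{\1},P^{\la 2k-3\ra})$.
\item Then $P^{\la 2k-1\ra}\subseteq P^{\la 2k-3\ra}P$, $PI_{\0}\subseteq P$, $P^{\la 2k-3\ra}I_{\0}\subseteq P^{\la 2k-3\ra}$ and $(a,z,c)=-(c,z,a)$ put everything into $(P^{\la 2k-3\ra},J_{\1},P)$.
\item Finally $(P^{\la 2k-3\ra},J_{\1},P)\subseteq (P^{\la 2k-3\ra}J_{\1})P+P^{\la 2k-3\ra}J_{\1}$, while $P^{\la 2k+1\ra}J_{\1}\subseteq P^{\la 2k-3\ra}J_{\1}$ trivially.
\end{itemize}
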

\prf
We have $P^3\cdot J_{\0}\subseteq (PJ_{\0})P^2+(PJ_{\0}\cdot P)P\subseteq I_{\0}\cdot P\subseteq P$, which gives a base for induction for the first inclusion.
Now by induction, for any $a\in P^{\la 2k-1\ra};\, p,q\in P,\,x\in J_{\0}$ we have
\bes
(a\cdot pq)x&=&(ax)(pq)+(pq,a,x)=(ax)(pq)+(p,a,qx)+(q,a,px)\\
&\in& P^{\la 2k-3\ra}\cdot P^2+P^{\la 2k-1\ra}\cdot I_{\0}\subseteq P^{\la 2k-1\ra},\\
((a\cdot p)q)x&=&(ap)(qx)+(ap,q,x)=(ap)(qx)+(a,q,px)+(p,q,ax)\\
&\in& P^{\la 2k-1\ra}\cdot I_{\0}+P^{\la 2k-3\ra}\cdot P^2+(P^{\la 2k-3\ra}\cdot P)P\subseteq P^{\la 2k-1\ra}.
\ees
This proves the first inclusion.

Now, let $x,y\in J_{\1}, \, D=D(x,y)$, then we have
\bes
D(P)\subseteq I_{\0},\, D(P^3)\subseteq D(P)P^2+(D(P)P)P\subseteq P,
\ees
which gives a base for induction for the second inclusion.  Furthermore, for any  $a\in P^{\la 2k-1\ra},\, p,q\in P, $ we have
\bes
D(a\cdot p^2)&=&D(a)\cdot p^2+2a(p\cdot D(p))\in P^{\la 2k-3\ra}\cdot P^2+ P^{\la 2k-1\ra}P\subseteq P^{\la 2k-1\ra},\\
D(ap\cdot q)&=&(D(a)p)q+(aD(p))q+(ap)D(q)\in (P^{\la 2k-3\ra} P)P+P^{\la 2k-1\ra} I\\
&\subseteq& P^{\la 2k-1\ra}.
\ees
This proves the second inclusion.

Finally,  we have
\bes
(P^{\la 2k+1\ra}J_{\1})J_{\0}&\subseteq& (P^{\la 2k+1\ra},J_{\1},J_{\0})+P^{\la 2k+1\ra}J_{\1}.
\ees
Consider
\bes
(P^{\la 2k+1\ra},J_{\1},J_{\0})&\subseteq&(P^{\la 2k-1\ra}\cdot P,J_{\1},J_{\0})\subseteq (P^{\la 2k-1\ra},J_{\1},I_{\0})+(P,J_{\1},P^{\la 2k-3\ra})\\
&\subseteq &(P^{\la 2k-3\ra}\cdot P,J_{\1},I_{\0})+(P,J_{\1},P^{\la 2k-3\ra})\subseteq (P^{\la 2k-3\ra},J_{\1},P).
\ees
Since $P^{\la 2k+1\ra}J_{\1}\subseteq P^{\la 2k-3\ra}J_{\1}$, this proves the lemma.

\ctd

Let $P\subseteq Nil(H_{\0})$ be an ideal of $H_{\0}$ that is spanned by $w$-elements.
\begin{lem}\label{lem2.11}
Let $u\in \underbrace{R(J)\cdots R(J)}_{\text{k}}$ be an even multiplication operator of length $k\geq 2$. Then $tr(P^{\la 4k-3\ra}u)=(0)$.
\end{lem}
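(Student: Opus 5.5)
We argue by induction on the length $k$ of $u$. The plan is to split the even operator $u$ by the parities of its factors. Either all arguments are even, so $u\in R(J_{\0})^k$, or the odd arguments occur in pairs.

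For the base of the induction and for the purely even factors, the main tool is the first inclusion of Lemma \ref{lem2.10'}, taken with $I=H$ and $P\idl H_{\0}$. It gives $P^{\la 2j+1\ra}\cdot J_{\0}\subseteq P^{\la 2j-1\ra}$. Hence an even factor $R(c)$, $c\in J_{\0}$, lowers the index $2j+1$ by at most $2$ and keeps us inside a power of $P$. Since $P$ is spanned by $w$-elements and $P^{\la m\ra}\subseteq P$, every element of $P^{\la m\ra}$ that we reach is again a sum of Pierce $w$-elements; by Lemma \ref{lem2.8} it then has trace zero modulo $Nil(H_{\0})$. We must also check that $tr$ vanishes on $P$ itself. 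This holds because $P\subseteq Nil(H_{\0})$ and $tr$ is defined modulo $Nil(H_{\0})$, which handles the case where the operator leaves us in $P$.

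For the odd factors, first use \eqref{D2} and the operator identity \eqref{OSJ}. They let us rewrite $u$, modulo shorter operators (which are covered by the induction hypothesis), as a product in which each pair of odd factors is either adjacent, $R(x)R(y)$ with $x,y\in J_{\1}$, or combined into a derivation $D(x,y)$. A derivation $D(x,y)$ acting on $P^{\la 2j+1\ra}$ lands in $P^{\la 2j-1\ra}$ by the second inclusion of Lemma \ref{lem2.10'}, again costing $2$ in the index. For a block $R(x)R(y)$ we would use the third inclusion, $(P^{\la 2j+1\ra}J_{\1})J_{\0}\subseteq (P^{\la 2j-3\ra}J_{\1})P+P^{\la 2j-3\ra}J_{\1}$, to push even factors past an odd one at a cost of $4$. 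The index $4k-3$ is presumably calibrated exactly so that each of the $k$ factors costs at most $4$, leaving at least $P^{\la 1\ra}=P$ at the end.

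The terminal step is the heart of the matter. After all reductions, the expression becomes a sum of terms of two kinds: elements of $P$, which have trace zero, and terms $(p\,x)\,y$ with $p$ a $w$-element and $x,y\in J_{\1}$ (possibly Pierce-decomposed). For the latter, Lemma \ref{lem2.8} gives $tr((p\cdot J_{\1})J_{\1})=0$ directly. There is also a mixed form $((p\,x)q)y$ with $q\in P$. To handle it, use the trace symmetry $tr(u\cdot vc)=tr(v\cdot cu)$ for $u,v\in J_{\1}$, $c\in J_{\0}$, established inside the proof of Lemma \ref{lem2.8}. This moves $q$ onto $y$, giving $tr(px\cdot (yq))$, which vanishes by Lemma \ref{lem2.8} since $yq\in J_{\1}$.

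The main obstacle is the bookkeeping in the reordering step. We must ensure that when \eqref{OSJ} is applied, the terms produced involve either fewer factors or a controlled drop in the $\la\cdot\ra$-index, so that the budget of $4$ per factor is never exceeded. We must also ensure that odd factors always end up paired in a configuration where Lemma \ref{lem2.8} or the trace symmetry applies. I would first try to organize the induction so that the operator is always peeled from the right: write $u=u'R(a)R(b)$ with $u'$ of length $k-2$, and apply the inductive hypothesis to the element $P^{\la 4k-3\ra}u'\subseteq P^{\la 4(k-2)+5\ra}$-type sets.
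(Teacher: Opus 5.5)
Your toolkit is the right one: induction on $k$, the three inclusions of Lemma~\ref{lem2.10'}, \eqref{D2}/\eqref{OSJ}, and Lemma~\ref{lem2.8} plus the trace symmetry at the end. On short words such as $R(x)R(c)R(y)$ it even suffices, since your trick for $((px)q)y$ is fine. But there is a genuine gap for general $k$.

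The inclusions of Lemma~\ref{lem2.10'} only act while the running element lies in some $P^{\la m\ra}$, or in $P^{\la m\ra}J_{\1}$ for the third one. Once two odd factors have acted, you are in sets like $(PJ_{\1})J_{\1}$, which need not lie in any $P^{\la m\ra}$. So the picture of "cost $4$ per factor, ending in $P$" is not an actual argument.

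The hard case is the alternating word $R(x_1)R(c_1)R(x_2)R(c_2)\cdots$ with $x_i$ odd and $c_i$ even. Modulo shorter operators, \eqref{OSJ} only permutes factors within the same positional parity. So, contrary to your second paragraph, the odd factors can never be made adjacent or absorbed into a $D(J_{\1},J_{\1})$. After one use of the third inclusion you are left with terms like $(((px_1)q)x_2)c_2\cdots$, and nothing in your plan controls their trace.

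Note also that the third inclusion concerns $R(x)R(c)$ with $x$ odd and $c$ even, not $R(x)R(y)$. Its effect is to replace $c$ by an element of $P$, not to push $c$ past $x$.

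Your last paragraph does not repair this. $P^{\la 4k-3\ra}u'$ is not contained in a power of $P$ once $u'$ has odd factors; it even lies in $J_{\1}$ if $u'$ is odd. Moreover, the induction hypothesis asserts that traces of complete products vanish; it is not a containment, so it cannot be applied to a prefix $u'$.

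The missing idea, which is the paper's route, has three steps.

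First, take a minimal counterexample $u=R(a_1)\cdots R(a_k)$ and use minimality to normalize it.
\begin{itemize}
\item $a_1$ is odd. Otherwise the first inclusion gives index $4k-5\ge 4(k-1)-3$ on a word of length $k-1$.
\item There are no three consecutive odd factors. Otherwise \eqref{D2} and \eqref{D0} put a $D(J_{\1},J_{\1})$ first, which the second inclusion handles.
\item There are no two consecutive even factors; these are moved to the front by \eqref{OSJ}.
\item An adjacent odd pair is moved to the right end.
\end{itemize}
So $u$ alternates odd/even, possibly ending with two odd factors.

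Second, apply the third inclusion once: $P^{\la 4k-3\ra}R(a_1)R(a_2)\subseteq P^{\la 4k-7\ra}(R(a_1)R(P)+R(J_{\1}))$. The second summand is covered by induction, since $4k-7=4(k-1)-3$.

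Third, by \eqref{OSJ} the operator $R(a_1)R(q)R(a_3)\cdots R(a_k)$, with $q\in P$, is skew-symmetric in its even factors modulo operators of length $\le k-1$, which are again covered by induction. So $R(q)$ can be moved to the last even slot. The result then lies in $P$, $(PJ_{\1})J_{\1}$ or $(J_{\1}P)J_{\1}$ according to the parity pattern, and Lemma~\ref{lem2.8} finishes.

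Relocating the single $P$-factor to the end, instead of trying to carry a power of $P$ along the word, is exactly the step your proposal lacks.
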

\prf
For $k=2$ the assertion follows from lemma \ref{lem2.8}.  Suppose that the assertion is not true for $u=R(a_1)\cdots R(a_k)$ and $k$ is minimal with this property.

If $a_1\in J_0$ then by lemma \ref{lem2.10'} $P^{\la 4k-3\ra}\cdot a_1\subseteq P^{\la 4k-5\ra}\subseteq P^{\la 4(k-1)-3\ra}$,  which contradicts the induction assumption on $k-1$.  Hence $a_1\in J_{\1}$.

If there are 3 consecutive odd elements in $a_1,\ldots,a_k$ then by \eqref{D2} $u$ is a linear combination of operators of length $k$ containing $D(J_{\1},J_{\1})$ and of operators of length $\leq k-1$.    Move $D(J_{\1},J_{\1})$ by \eqref{D0} to the left, then again by lemma \ref{lem2.10'} $P^{\la 4k-3\ra}D(J_{\1},J_{\1})\subseteq P^{\la 4k-5\ra}$ and we can again use the induction assumption.

If there are 2 consecutive even elements then using the Jordan operator  identity \eqref{OSJ}  move them to the left end which leads to the earlier case $a_1\in J_{\0}$.

If there are 2 consecutive odd elements then using \eqref{OSJ} move them to the right end.

So, we have 3 possibilities:
\begin{itemize}
\item
$k=2q$ is even, $a_{2i}\in J_{\0},\, a_{2i-1}\in J_{\1},\ 1\leq i\leq q$,
\item
$k=2q, \,a_{2i}\in J_{\0},\, a_{2i-1}\in J_{\1},\ 1\leq i\leq q-1,$ and $a_{2q-1},a_{2q}\in J_{\1}$,
\item
$k=2q+1, \, a_{2i}\in J_{\0},\, a_{2i+1}\in J_{\1},\ 0\leq i\leq q$.
\end{itemize}

Now,  by lemma \ref{lem2.10'}
$$
P^{\la 4k-3\ra}R(a_1)R(a_2)\subseteq P^{\la 4k-7\ra}(R(a_1)R(P)+R(J_{\1})).
$$
By induction,
\bes
tr(P^{\la 4k-7\ra}R(J_{\1})R(a_3)\cdots R(a_k))=tr(P^{\la 4(k-1)-3\ra}R(J_{\1})R(a_3)\cdots R(a_k))=0.
\ees
On the other hand,  by the Jordan operator identity \eqref{OSJ}, the operators of type  $R(a_1)R(P)R(a_3)\cdots R(a_k)$ are skew-symmetric in all even factors modulo operators of length $\leq k-1$, hence we can move $R(P)$ to the right as far as possible.
Let
\bes
V&=&P^{\la 4k-7\ra }R(a_1)R(P)R(a_3)\cdots R(a_k), \\
 W&=&P^{\la 4k-7\ra}\hbox{ (operators of length $\leq k-1$)},
 \ees
 then  according  to the three possibilities above we have the three cases
$$
V\subseteq P+W, \ V\subseteq (PJ_{\1})J_{\1}+W,\ V\subseteq (J_{\1}P)J_{\1}+W.
$$
It remains to use lemma \ref{lem2.8} and the induction assumption.

 \ctd

 \begin{lem}\label{lem2.12}
 Let $P\subseteq Nil(H_{\0})$ be an ideal of $H_{\0}$ spanned by $w$-elements. Then $P^{\la 4d-3\ra}=(0)$.
 \end{lem}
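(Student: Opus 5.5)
The plan is to combine Lemma \ref{lem2.11} with the finite-length hypothesis \eqref{id000} and the primitivity of $J$. Put $Q=P^{\la 4d-3\ra}$. By Lemma \ref{lem2.2'}, $Q$ is an ideal of $H_{\0}$ contained in $P\subseteq Nil(H_{\0})$.

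First I will show that the ideal of $J$ generated by $Q$ is proper, i.e.\ does not contain $1$. Every element of $id_J\la Q\ra$ lies in $Q\,R\la J\ra$. Modulo operators involving elements of a modular inner ideal, \eqref{id000} lets us write $R\la J\ra$ through $R(J)^{\le d}$. The cleanest route avoids $K$: under finite multiplicative length $R\la J\ra=R(J)^d$, and since $J$ is unital we may pad with $R(1)=Id$. Then every even operator is a sum of even operators of length exactly $k$ for any $k$ with $2\le k\le d$; I will take $k=d$, enlarging $d$ to be at least $2$ if necessary. Since $4k-3=4d-3$, Lemma \ref{lem2.11} gives $tr(Q\,u)=0$ for every even $u\in R\la J\ra$. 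Thus $tr$ vanishes on $(id_J\la Q\ra)\cap H_{\0}$, at least on the Pierce components where $tr$ is defined. The element $tr(e_i)=1$, so $e_i\notin id_J\la Q\ra$.

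If only the weaker hypothesis \eqref{id000} is available, I will handle the factor $R_J\la K\ra+F\cdot Id$ by choosing the modular inner ideal $K=\{1-\tfrac1\alpha b,J,1-\tfrac1\alpha b\}$ for a suitable $b\in H_{\0}$, as in Lemma \ref{lem2.3}. The hope is that the left factor preserves an ideal on which $tr$ vanishes, but I expect this to need real care. I would first try to reduce to the finite-length case.

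Second, I will use that $J$ is primitive with nonzero heart $H$. If $Q\neq(0)$, then $id_J\la Q\ra$ is a nonzero ideal, so it contains $H$. Then $H_{\0}\subseteq id_J\la Q\ra$, and therefore $tr$ vanishes on all Pierce elements of $H_{\0}$.

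This is a contradiction provided $r\ge1$, since $e_1\in H_{\0}$ and $tr(e_1)=1$. If $r=0$, then $H_{\0}/Nil(H_{\0})=0$ by Corollary \ref{cor0}, so $H_{\0}$ is nil. This contradicts Lemma \ref{lem2.2}, because $H$ is a nonzero ideal of $J$.

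The main obstacle is the first step: making $tr$ well defined and zero on all of $(Q\,R\la J\ra)\cap H_{\0}$, and not just on individual Pierce elements. This means decomposing $Q\,u$ into Pierce components and using linearity of $tr$ modulo $Nil(H_{\0})$. It also requires passing from the weak hypothesis \eqref{id000} to operators of bounded length, which is where the length bound $4k-3$ with $k=d$ is used.
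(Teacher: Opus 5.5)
Your proposal is correct and is essentially the paper's proof. Apply Lemma \ref{lem2.11} to even operators of length exactly $d$ (padding with $R(1)=Id$), use finite multiplicative length to get that $tr$ vanishes on $id_J\la P^{\la 4d-3\ra}\ra\cap H_{\0}$, and then note that this ideal, if nonzero, contains $H_{\0}\ni e_1$ with $tr(e_1)=1$. The paper assumes finite multiplicative length at this point, so your detour through the weaker hypothesis \eqref{id000} is unnecessary.
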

 \prf
 By lemma \ref{lem2.11} for an arbitrary multiplication even operator $w$ of length $d$ we have
 $tr(P^{\la 4d-3\ra}w)=(0)$. Since the superalgebra $J$ has multiplicative length $d$, it follows that $tr(id_J\la P^{\la 4d-3\ra}\ra)=(0)$.  But $id_J\la P^{\la 4d-3\ra}\ra$, if nonzero, contains $H_{\0}$,  a contradiction.

 \ctd

 Now  let $L$ be an arbitrary solvable ideal of $H_{\0}$.   It was noted in  the proof of Corollary \ref{cor2.1}  that every Pierce element from $((L^3)^3)^3$ is a $w$-element. Hence by  lemma \ref{lem2.12} we have $(((L^3)^3)^3)^{\la 4d-3\ra}=(0)$, and then  by lemma \ref{lem2.2'} 
 $$
 (((L^3)^3)^3)^{8d}\subseteq (((L^3)^3)^3)^{\la 4d-3\ra}=(0).
 $$
 Therefore,  $H_{\0}$ contains a largest solvable ideal.  From now on let $L$ denote the largest solvable ideal of $H_{\0}$.

 \smallskip

 Now our aim is to prove that $L=\mathcal M(H_{\0})=Nil(H_{\0})$, that is, the factor-algebra $H_{\0}/L$ is nondegenerate.

Let $J$ be an arbitrary Jordan algebra.  Recall that an element $b\in J$ is called {\em an absolute zero divisor of rank $n$} if for any $a_1,\ldots,a_{2n+1}\in \hat J$ we have
 \bes
 bV(a_1,a_2)V(a_3,a_4)\cdots V(a_{2n-1},a_{2n})V(a_{2n+1},b)=0,
 \ees
 where $xV(y,z)=\{x,y,z\}$.
 An ordinary absolute zero divisor is a zero divisor of zero rank. In \cite{Zel1} it was proved that a degenerate Jordan algebra contains nonzero absolute zero divisors of rank $n$ for any $n\geq 0$.

In what follows $J^{\sharp}$ means the {\em  unital hull} of an algebra $J$; that is,  $J^{\sharp}=J$ if $J$ has the unit element, and $J^{\sharp}=F\cdot  1+J$ if $J$ has no unit element.
 \begin{lem}\label{lem2.13}
If $b$ is an absolute zero divisor of rank $n$ then for an arbitrary element $x\in J^{\sharp}$ the element $bU(x)$ is also an absolute zero divisor of rank $n$.
\end{lem}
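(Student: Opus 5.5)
We have to show: if $b$ is an absolute zero divisor of rank $n$ and $x\in J^{\sharp}$, then for all $a_1,\ldots,a_{2n+1}\in \hat J$
$$
bU(x)V(a_1,a_2)\cdots V(a_{2n-1},a_{2n})V(a_{2n+1},bU(x))=0 .
$$
The plan is to move $U(x)$ to the left end of this operator product, using the standard identities relating $U$ and $V$ operators in Jordan algebras. Once it sits directly after $b$, the expression becomes $b$ times an operator of the shape appearing in the definition of rank $n$ (with possibly different arguments), and the hypothesis kills it. Since all these identities are Macdonald-type (they involve at most three variables, one of them linearly), it suffices to verify them in special Jordan algebras. There $V(y,z)$ acts by $w\mapsto wzy+yzw$ (up to the factor $\tfrac12$ conventions) and $U(x)$ by $w\mapsto xwx$.

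\textbf{Step 1: the last factor.} Rewrite the last factor with the fundamental formula, in its linearized form
$$
V(a,cU(x))=U(x)\,V(aU(x),c)\quad\hbox{(applied to elements of the form }wU(x)\hbox{)},
$$
i.e. $\{wU(x),a,cU(x)\}=\{w,aU(x),c\}U(x)$, which is the identity $U(x)V(a U(x),c)=V(a,cU(x))U(x)$... In fact the cleanest formulation is
$$
U(x)V(y,z)=V(yU(x)\ldots)
$$
is not available in that form. Instead I will use the commutation identity
$$
U(x)V(y,z)=V(y',z)U(x)\hbox{-type relations}
$$
given by $V(xU(y),?)$... Since the exact bookkeeping is uncertain, I will organise the argument around the one identity I can rely on:
$$
U(x)V(y,x')=V(\{x,y,x\},\,\cdot\,)\ldots
$$
with a more robust fallback described next.

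\textbf{Fallback (preferred): induct on the position of $U(x)$.} Use the Macdonald identity
$$
U(x)V(y,z)=V(\{x,y,z\},x)U\hbox{-free terms},
$$
more precisely $U(x)V(y,z)+V(z,y)U(x)=U(\{x,y,z\}\,,\,x)$ (linearized $U$), together with $U(\{x,y,z\},x)=V(x,\{x,y,z\})$-type expressions. In associative language this reads
$$
x(wz y+yzw)x+(zxwxy+yxwxz)=\ldots,
$$
so that $U(x)V(y,z)\equiv -V(z,y)U(x)$ modulo operators in which $x$ has been absorbed into the arguments of a $V$. Apply this repeatedly to push $U(x)$ through $V(a_1,a_2),\ldots,V(a_{2n-1},a_{2n})$. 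Each error term is $b$ followed by a product of $n$ operators $V(\cdot,\cdot)$ with new arguments in $\hat J$, then $V(a_{2n+1},bU(x))$. Finally, treat $V(a_{2n+1},bU(x))$ applied after $U(x)$ via
$$
U(x)V(a,bU(x))=V(aU(x),b)U(x)\hbox{-dual}\quad\hbox{i.e.}\quad \{wU(x),a,bU(x)\}=\{w,aU(x),b\}U(x),
$$
which is the linearized fundamental formula $U(\{x,a,x\}\ldots)$; associatively, $xwx\,a\,xbx=x(w\,(xax)\,b)x$. This converts the last factor to $V(aU(x),b)$ and moves $U(x)$ to the far right, where it is harmless. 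The leftover terms are of the form $b\,V\cdots V\,V(c,b)\,U(x)$ with exactly $n$ inner $V$'s, hence zero by the rank $n$ hypothesis.

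\textbf{Main obstacle.} The hard part is the bookkeeping in the commutation step: the identity moving $U(x)$ past $V(a_{2i-1},a_{2i})$ must produce only terms that are still products of exactly $n$ operators $V$, with arguments in $\hat J$ (using that $J^{\sharp}$ is allowed, so $x=1$ or $x$ appearing as an argument is fine), and not terms with $U$'s in the middle. I would verify the needed identity
$$
U(x)V(y,z)=V(y,z)'\ldots
$$
directly in the free special Jordan algebra on three generators, and invoke Macdonald's theorem to transfer it to an arbitrary $J$. Rather than moving $U(x)$ right, it may be simpler to move it to the \emph{left} onto $b$, using $V(y,z)U(x)=U(x)V(\ldots)$ read backwards, so that all terms start with $bU(x)$, and reduce to the case $n-1$ by induction on $n$.
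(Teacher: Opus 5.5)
There is a genuine gap. You never obtain a valid way to move $U(x)$ past the middle factors $V(a_{2i-1},a_{2i})$ while keeping the shape the rank-$n$ hypothesis needs; your ``Main obstacle'' paragraph concedes this.

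The identity you settle on, $U(x)V(y,z)+V(z,y)U(x)=U(\{x,y,z\},x)$ up to normalization, is correct. But its error term is a linearized operator $U(p,x)$ with $p=\{x,y,z\}$, not a $V$-operator ``with $x$ absorbed into its arguments.'' That operator still has to be carried through the remaining middle factors and then through the last factor $V(a_{2n+1},bU(x))$.

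Your identity $U(x)V(a,bU(x))=V(aU(x),b)U(x)$ is also correct and needs no invertibility. However, it applies only when the operator in front is exactly $U(x)$. In the error terms the right-hand $b$ stays wrapped in $U(x)$ while the front operator is $U(p,x)$, and linearizing the identity in $x$ just reproduces the same mismatch. So you never reach an expression $b\,V(\cdot,\cdot)\cdots V(\cdot,b)$.

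Rewriting $U(p,x)$ as a combination of products of $V$'s does not rescue this. The rank-$n$ hypothesis controls only words with at most $n$ middle factors (fewer ones by inserting $V(1,1)=Id$), and the final argument would still be $bU(x)$ rather than $b$. The claim that all leftover terms have the required form is therefore unsupported. The closing suggestion to move $U(x)$ ``to the left onto $b$'' is empty, since $U(x)$ already stands there.

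The missing idea is invertibility. For invertible $x$ there is an exact conjugation formula with no error terms, valid in every Jordan algebra:
\[
U(x)V(y,z)=V(yU(x),\,zU(x^{-1}))\,U(x)
\]
Here operators act on the right. In a special algebra both sides send $w$ to $\tfrac12(xwxyz+zyxwx)$, because $(xyx)(x^{-1}zx^{-1})x=xyz$.

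This formula carries $U(x)$ through every middle factor. At the last factor it gives
\[
U(x)V(a_{2n+1},bU(x))=V(a_{2n+1}U(x),\,bU(x)U(x^{-1}))\,U(x)=V(a_{2n+1}U(x),b)\,U(x).
\]
The whole expression then becomes $b\,V(\cdot,\cdot)\cdots V(\cdot,b)\,U(x)=0$.

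For arbitrary $x\in J^{\sharp}$, pass to $J^{\sharp}[[t]]$:
\begin{itemize}
\item $b$ remains a zero divisor of rank $n$ there, since the condition is multilinear in the $a_i$;
\item $1+tx$ is invertible;
\item $bU(1+tx)=b+2t\,bx+t^2\,bU(x)$.
\end{itemize}
Apply the invertible case with $a_i\in J^{\sharp}$ and read off the coefficient of $t^4$. This gives exactly the required identity for $bU(x)$, and it is the route the paper takes.
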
\prf
Notice that the following identity holds in $J^{\sharp}$ for an invertible element $x$:
\bes
V(a_1,a_2)U(x)=U(x)V(a_1U(x),a_2U(x^{-1}).
\ees
One can easily check that it holds in special algebras, and since its degree is 5, it holds in arbitrary Jordan algebras \cite{Glennie}.   Therefore,  if $x$ is invertible then
\bes
&bU(x)V(a_1,a_2)\cdots V(a_{2n-1},a_{2n})V(a_{2n+1},bU(x))=&\\
& bV(a_1U(x^{-1}),a_2U(x))\cdots V(a_{2n-1}U(x^{-1}),a_{2n}U(x))V(a_{2n+1}U(x^{-1}),b)U(x)=0.&
\ees
Consider the algebra of infinite series $J^{\sharp}[[t]]$. The element $b$ is an absolute zero divisor of rank $n$ in $J^{\sharp}[[t]]$. The element $1+xt$ is invertible in $J^{\sharp}[[t]]$, hence for any
$a_1,\ldots,a_{2n+1}\in J^{\sharp}$
\bes
bU(1+xt)V(a_1,a_2)\cdots V(a_{2n-1},a_{2n})V(a_{2n+1},bU(1+tx))=0.
\ees
The coefficient at $t^4$ is
\bes
bU(x)V(a_1,a_2)\cdots V(a_{2n-1},a_{2n})V(a_{2n+1},bU(x))=0.
\ees
\ctd
\begin{cor}\label{cor2.2}
The span of the set of all zero divisors of rank $n$ is an ideal of $J$.
\end{cor}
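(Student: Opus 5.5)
To prove Corollary \ref{cor2.2}, let $Z_n$ be the linear span of all absolute zero divisors of rank $n$ in $J$. I will show that $Z_n$ is a subspace closed under multiplication by arbitrary elements of $J$. Being a span, $Z_n$ is already a subspace, so it suffices to check that $b\cdot a\in Z_n$ whenever $b$ is a zero divisor of rank $n$ and $a\in J$. This works even though a sum of two zero divisors of rank $n$ need not itself be one.

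The key tool is Lemma \ref{lem2.13}: for every $x\in J^{\sharp}$ the element $bU(x)$ is again an absolute zero divisor of rank $n$. I will express $R(a)$ as a linear combination of operators $U(x)$ with $x\in J^{\sharp}$. Since $1\in J^{\sharp}$ and
\[
U(1+a)=2R(1+a)^2-R((1+a)^2)=2(Id+R(a))^2-R(1+2a+a^2),
\]
expanding gives
\[
U(1+a)=Id+2R(a)+2R(a)^2-R(a^2).
\]
On the other hand $U(a)=2R(a)^2-R(a^2)$ and $U(1)=Id$. Subtracting, we obtain
\[
2R(a)=U(1+a)-U(a)-U(1).
\]
Therefore
\[
b\cdot a=\tfrac12\bigl(bU(1+a)-bU(a)-b\bigr),
\]
and each of the three terms on the right is an absolute zero divisor of rank $n$ by Lemma \ref{lem2.13}. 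Hence $b\cdot a\in Z_n$.

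By linearity this extends from single zero divisors $b$ to all elements of $Z_n$, so $Z_n\cdot J\subseteq Z_n$ and $Z_n$ is an ideal of $J$.

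The only step that needs attention is the linearization. It is valid because $U$ is quadratic in its argument and we work in $J^{\sharp}$, where $1+a$ is available as an argument for $U$. The characteristic $\neq 2$ assumption is what allows division by $2$. All the real content is already contained in Lemma \ref{lem2.13}.
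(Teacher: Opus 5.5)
Your proof is correct and follows the same route as the paper. The paper states this corollary as an immediate consequence of Lemma \ref{lem2.13} and gives no further proof. Your linearization $2R(a)=U(1+a)-U(a)-U(1)$ in $J^{\sharp}$ is precisely the unwritten step that turns invariance under the operators $U(x)$, $x\in J^{\sharp}$, into closure of the span under multiplication by $J$.
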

It was shown in \cite{MedZel} that there exists $N\geq 1$ such that for any Jordan algebra and any ideal $I\idl J$
\bes
(R((I^3)^3)R(J))^N\subseteq R(J)R((((I^{10})^3)^3)^3)R(J).
\ees
Let $r$ be an even number such that $2^r\geq N$. Let $b_{ij}\in H_{\0}$ be absolute zero divisors of rank $2^r(9(r+1)+2)$ modulo $L$; $1\leq i\leq r+1,\ 1\leq j\leq 9$.  
Let $c_i=(((b_{i1}b_{i2})b_{i3})((b_{i4}b_{i5})b_{i6})((b_{i7}b_{i8})b_{i9}),\ 1\leq i\leq r+1$, and let $c$ be a product $c_1\cdots c_{r+1}$ with arbitrary brackets.  By Lemma \ref{lem2.13} for an  absolute zero divisor $b$ of rank $n$ its Pierce coordinates $\{e_i,b,e_j\}$ are absolute zero divisors of rank $n$ as well.  Hence without loss of generality we may assume that all $b_{ij}, c_i$ and $c$ are Pierce elements. 

\begin{lem}\label{lem2.14}
For an arbitrary $w$-sequence $w_1=R(c),w_2,\ldots,w_n,\ldots $ we have $w_{r+1}\in R(H_{\0})R(((L^3)^3)^3)R(H_{\0})$.
\end{lem}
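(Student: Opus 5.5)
We follow the evolution of the ``dangerous'' factors along the $w$-sequence, in the spirit of Lemma \ref{lem2.10}, but now working modulo $L$ and using the rank of the absolute zero divisors in place of nilpotency. By definition $w_1=R(c)$. Each step $w_n\mapsto w_{n+1}$ appends $R(b_1)\cdots R(b_k)$ with $b_i\in Nil(H_{\0})$ and a reversed copy of $w_n$. In that copy at most one factor $a_{ni}$ is replaced by $a_{ni}D(x,y)$ with $x,y\in J_{\1}$. So $w_n$ is a product of roughly $2^{n-1}$ copies of the original factor $c$, apart from a bounded number of modified factors and nil insertions. In particular, after $r$ steps $w_{r+1}$ contains at least $2^r\ge N$ factors $R(c')$, where each $c'$ is either $c$ or an image of $c$ under at most $r$ derivations from $D(J_{\1},J_{\1})$. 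The plan is to show that these factors are, modulo $L$, still products built from absolute zero divisors of large rank. Then the identity from \cite{MedZel},
\bes
(R((I^3)^3)R(J))^N\subseteq R(J)R((((I^{10})^3)^3)^3)R(J),
\ees
applied to a suitable ideal $I$, pushes $w_{r+1}$ into $R(H_{\0})R(((L^3)^3)^3)R(H_{\0})$.

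First, we control the effect of derivations. Let $Z_n$ denote the span of absolute zero divisors of rank $n$ modulo $L$ in $H_{\0}/L$; by Corollary \ref{cor2.2} it is an ideal. Take $D\in D(J_{\1},J_{\1})$, which acts as a derivation of $J_{\0}$ preserving $H_{\0}$ and $L$. We show that for $b\in Z_n$ we have $bD\in Z_{n'}$ for $n'$ roughly $n/2$. The way to do this is to apply $D$ to the vanishing expression $bV(a_1,a_2)\cdots V(a_{2n+1},b)$ and expand by the Leibniz rule; the terms in which $D$ falls on only one copy of $b$ still contain a long enough chain. This is where the rank $2^r(9(r+1)+2)$ comes from. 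Each derivation halves the rank, and the blocks $c_i$ use nine factors each, so after $r$ derivations there is still rank at least $9(r+1)+2$ in each $b_{ij}$. Moreover, a derivation hitting the product $c=c_1\cdots c_{r+1}$ lands on a single $c_i$ by the Leibniz rule. Since there are $r+1$ blocks and at most $r$ derivations, the remaining blocks are untouched.

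Second, we use the bracket structure. Each $c_i$ is a product of three triple products, so it lies in $(I_i^3)^3$, where $I_i$ is the ideal of $H_{\0}$ generated by $b_{i1},\dots,b_{i9}$ modulo $L$, and hence by Corollary \ref{cor2.2} it lies in the ideal of absolute zero divisors of high rank. Choose $I$ to be the ideal of $H_{\0}$ generated, modulo $L$, by the absolute zero divisors of the surviving rank. This ideal is locally nilpotent modulo the McCrimmon-type radical and, in the bounded-rank setting, solvable modulo $L$ by \cite{Zel1}. Then the $\ge N$ factors $R(c')$, interleaved with other multiplications, give an element of $(R((I^3)^3)R(J))^N$. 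By \cite{MedZel} this lies in $R(J)R((((I^{10})^3)^3)^3)R(J)$. Since $I$ is solvable modulo $L$ and $L$ is the largest solvable ideal, $I^{10}\subseteq L$, and we obtain the claimed form. The nil factors $b_i$ and the multiplications by $J$ between the copies are absorbed into the outer $R(H_{\0})$ and $R(J)$ terms using the fact that all elements are Pierce elements in $H_{\0}$.

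The main obstacle is the first step, namely quantitatively showing that $D(J_{\1},J_{\1})$ maps absolute zero divisors of rank $2n$ (modulo $L$) to absolute zero divisors of rank $n$. The derivation does not preserve $H_{\0}/L$-rank conditions directly, since $V(a,b)$ and $D$ do not commute. We will try to write $[V(a_1,a_2),D]=V(a_1D,a_2)+V(a_1,a_2D)$, and then split the chain of length $2n$ into two halves so that in each term one half is free of $D$ and vanishes. This is the place where the exact rank bound $2^r(\cdots)$ must be checked.
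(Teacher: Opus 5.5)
There is a genuine gap, and it sits in the step that is supposed to close the argument. You get $I^{10}\subseteq L$ from the claim that the ideal $I$, generated modulo $L$ by absolute zero divisors of the ``surviving rank'', is solvable modulo $L$ by \cite{Zel1}. Nothing in \cite{Zel1} gives this; there, absolute zero divisors are controlled through the McCrimmon radical, not through solvability. The claim is also circular. Since $L$ is solvable and is the largest solvable ideal of $H_{\bar 0}$, solvability of $I$ modulo $L$ would give $I\subseteq L$ outright. Then all $b_{ij}$ would already lie in $L$ and the lemma would be vacuous. That inclusion, $\tilde L=L$, is exactly Lemma~\ref{lem2.15}, whose proof uses the present lemma.

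Your first step is also left open. The rank-halving of absolute zero divisors under $D(J_{\bar 1},J_{\bar 1})$ is unproven. Differentiating $bV(a_1,a_2)\cdots V(a_{2n+1},b)\in L$ once only gives $(bD)V(a_1,a_2)\cdots V(a_{2n+1},b)\equiv -\,bV(a_1,a_2)\cdots V(a_{2n+1},bD)$ modulo $L$ (assuming $L$ is $D$-invariant). It does not give the vanishing of chains with $bD$ at both ends. Finally, the constant $2^r(9(r+1)+2)$ does not come from halving the rank $r$ times.

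The missing idea makes both of your steps unnecessary. You note that each factor carries at most $r$ derivations, but the count that matters is global. In $w_{r+1}=\cdots R(cD_{11}\cdots D_{1\mu_1})\cdots R(cD_{2^r,1}\cdots D_{2^r,\mu_{2^r}})\cdots$ one has $\mu_1+\cdots+\mu_{2^r}\le r$. Expand each $cD_{j1}\cdots D_{j\mu_j}$ by the Leibniz rule over the blocks $c_1,\dots,c_{r+1}$. Then each summand touches at most $r$ blocks in total, so some block $c_i$ is untouched in \emph{every} one of the $2^r$ factors. Let $Q$ be the ideal of $H_{\bar 0}$ generated by the nine original elements $b_{i1},\dots,b_{i9}$. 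Every factor then lies in $R((Q^3)^3)$, and no information about $b_{ij}D$ is needed. There are $2^r\ge N$ such factors, so the Medvedev--Zelmanov inclusion places the summand in $R(H_{\bar 0})R((((Q^{10})^3)^3)^3)R(H_{\bar 0})$.

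The passage from $Q^{10}$ to $L$ is a pigeonhole argument, not solvability. A product of ten elements of $Q$ involves some $b_{ij}$ at least twice. Its length is at most the total degree of $w_{r+1}$, which satisfies $d(w_1)=9(r+1)$ and $d(w_{j+1})\le 2d(w_j)+2$, so $d(w_{r+1})\le 2^r(9(r+1)+2)$. This degree bound is where the rank comes from. The rank hypothesis modulo $L$ then forces these products into $L$.

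This also shows why one common block is essential. With the ideal generated by all $9(r+1)$ elements $b_{ij}$, the pigeonhole would need $9(r+1)+1$ factors, while the Medvedev--Zelmanov inclusion only supplies $Q^{10}$.
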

\prf
Observe that every operator $w_n$ contains $2^{n-1}$ multiplication operators
\bes
w_n=\cdots R(cD_{11}\cdots D_{1\,\mu_1})\cdots R(cD_{2^{n-1},1}\cdots D_{2^{n-1},\,\mu_{2^{n-1}}})\cdots,
\ees
where $D_{ij}\in D(J_{\1},J_{\1}),\ \mu_1+\cdots+\mu_{2^{n-1}}\leq n-1$. Consider the operator $w_{r+1}$. Each element $cD_{j1}\cdots D_{j\,\mu_j}$ in $w_{r+1}$ can be represented as
$cD_{j1}\cdots D_{j\,\mu_j}=\sum_{\nu}c_{j\nu}$, where $c_{i\nu}$ is obtained from $c=c_1\cdots c_{r+1}$ by application of derivations $D_{j1},\ldots,D_{j\,\mu_j}$ to the elements $c_1,\ldots,c_{r+1}$.
Since $\mu_1+\cdots+\mu_{2^r}\leq r$, it follows that in each summand $A=\cdots R(c_{1\nu_1})\cdots R(c_{2^r\nu_{2^r}})\cdots$ at least one element $c_i$ stays untouched in all products $c_{1\nu_1},\ldots,c_{2^r\nu_{2^r}}$.

Consider the ideal $Q$ of $H_{\0}$ generated by $b_{i1},\ldots,b_{i9}$. Then $c_i\in (Q^3)^3$. Since the element $c_i$ occurs $2^r\geq N$ times in $A$, it follows that
\bes
A\in (R(H_{\0})R((Q^3)^3)R(H_{\0}))^N\subseteq R(H_{\0})R((((Q^{10})^3)^3)^3)R(H_{\0}).
\ees
Every product from $Q^{10}$ contains some element $b_{ij},\ 1\leq j\leq 9,$ at least twice. The length of this product is less then total degree of $w_{r+1}$.  Let $d(w_j)$ be a maximal total degree of $w_j$. Then $d(w_1)=9(r+1),\, d(w_{j+1})\leq 2d(w_j)+2$, which implies $d(w_{r+1})\leq 2^r(9(r+1)+2)$. Since all $b_{ij}$'s are absolute zero divisors of rank $2^r(9(r+1)+2)$ modulo $L$, it follows that the arising products from $Q^{10}$ lie in $L$. Hence $A\in R(H_{\0})R(((L^3)^3)^3)R(H_{\0})$.  Since $A$ is an arbitrary summand of $w_{r+1}$, we conclude that $w_{r+1}\in  R(H_{\0})R(((L^3)^3)^3)R(H_{\0})$.

\ctd

Let $\tilde L$ be the span of all absolute zero divisors of rank $2^r(9(r+1)+2)$ of $H_{\0}$ modulo $L$.

\begin{lem}\label{lem2.15}
$\tilde L=L$.
\end{lem}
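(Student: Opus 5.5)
The inclusion $L\subseteq\tilde L$ is immediate: every element of $L$ is zero modulo $L$, so it is trivially an absolute zero divisor of any rank modulo $L$. The content of the lemma is therefore $\tilde L\subseteq L$. By Corollary \ref{cor2.2}, applied to $H_{\0}/L$, the span $\tilde L$ is an ideal of $H_{\0}$ containing $L$. The plan is to show that $\tilde L$ is solvable. Maximality of $L$ then forces $\tilde L=L$.

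First we produce many $w$-elements in $\tilde L$. Take $b_{ij}\in\tilde L$, $1\leq i\leq r+1$, $1\leq j\leq 9$, and form $c_i$ and $c$ as above. By Lemma \ref{lem2.13} we may take all of them to be Pierce elements. Let $w_1=R(c),w_2,\ldots$ be an arbitrary $w$-sequence. By Lemma \ref{lem2.14}, $w_{r+1}$ lies in $R(H_{\0})R(((L^3)^3)^3)R(H_{\0})$, and every continuation of $w_{r+1}$ still involves a factor from $((L^3)^3)^3$. Now $L$ is solvable, so $L^3$ is nilpotent by \cite{MedZel}; say $(L^3)^{2k+1}=(0)$. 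Applying the argument of Lemma \ref{lem2.10} with the nilpotent ideal $L^3$ in place of $L$, starting from the operator $w_{r+1}$ (which contains an entry from $((L^3)^3)^3$), shows that the sequence vanishes after $k+1$ further steps.

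The main obstacle is here. Lemma \ref{lem2.10} is stated for a sequence whose first operator contains such an entry. We need the same counting for an operator sandwiched as $R(H_{\0})R(\cdot)R(H_{\0})$, and the neighbouring letters must be arranged to be Pierce elements. If they cannot be, we reorganize using \eqref{OSJ}. Granting this, $c$ is a $w$-element with $N(c)\leq r+k+2$.

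Hence the ideal $P'$ of $H_{\0}$ spanned by all such products $c$ (with arbitrary bracketings) consists of $w$-elements. It lies in $Nil(H_{\0})$, since absolute zero divisors modulo $L$ are nilpotent modulo the nilpotent-by-solvable $L$, and Lemma \ref{lem2.7} identifies $Nil(H_{\0})$ with $\mathcal M(H_{\0})$. Lemma \ref{lem2.12} then gives $P'^{\la 4d-3\ra}=(0)$, and Lemma \ref{lem2.2'} gives $P'^{8d}=(0)$. Since $P'\supseteq ((((\tilde L^3)^3)^{\ldots})$-type products of length $9(r+1)$ of $\tilde L$, some fixed power of $\tilde L$ (a derived/solvable term of bounded depth) lies in $P'$. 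Therefore $\tilde L$ is solvable. We conclude $\tilde L\subseteq L$, and so $\tilde L=L$.
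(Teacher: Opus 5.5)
Your frame is the paper's: $\tilde L$ is an ideal of $H_{\0}$ containing $L$ (Corollary \ref{cor2.2} modulo $L$), you show it is solvable, and you conclude by maximality of $L$. But the central step has a genuine gap, and it is precisely the one you grant: that the products $c$ are $w$-elements.

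Lemma \ref{lem2.14} does not say that some literal entry of $w_{r+1}$ lies in $((L^3)^3)^3$. It says that the operator $w_{r+1}$, after rewriting via the Medvedev--Zelmanov inclusion, is a linear combination of operators $T_\lambda$, each containing a factor $R(\ell)$ with $\ell\in((L^3)^3)^3$.

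The count in Lemma \ref{lem2.10} is a count of literal entries. Its engine is the rule $w_{n+1}=w_nR(b_1)\cdots R(b_k)R(a'_{nr_n})\cdots R(a'_{n1})$, which appends the reversed list of entries of $w_n$ and so duplicates the entries lying in the ideal. If you continue from $w_{r+1}$, the appended tail is the reversal of the original entries of $w_{r+1}$, not of any $T_\lambda$. Those entries are elements $cD_{j1}\cdots D_{j\mu_j}\in\tilde L$ and elements $b\in Nil(H_{\0})$, none of which is known to lie in $L^3$.

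Thus each term of $w_{r+2}=\sum_\lambda T_\lambda R(b_1)\cdots R(b_k)R(a'_{r+1,r_{r+1}})\cdots R(a'_{r+1,1})$ acquires no new factors from $L^3$. The doubling never starts, and the strong nilpotency of $(L^3)^3$ (Lemma \ref{lem2.9}) cannot be applied. Reorganizing with \eqref{OSJ} does not help either. The rewritten operators are not members of a $w$-system whose continuation is the given one, so the obstacle is not whether the neighbouring letters are Pierce elements.

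The missing idea is that you need much less than the $w$-element property. The proofs of Lemmas \ref{lem2.11} and \ref{lem2.12} use only two facts: $P\subseteq Nil(H_{\0})$, and the conclusion of Lemma \ref{lem2.8}, $tr((J_{\1}P)J_{\1})=(0)$. This trace condition for $c$ follows directly (take $c$ Pierce, as Lemma \ref{lem2.13} allows).

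\begin{itemize}
\item Use $tr(u\cdot vh)=tr(v\cdot hu)$ for $u,v\in J_{\1}$, $h\in J_{\0}$ to move the outer factors of $R(H_{\0})R(\ell)R(H_{\0})$ across the final odd product. Then Lemma \ref{lem2.14} and Corollary \ref{cor2.1} give $tr(J_{\1}w_{r+1}\cdot J_{\1})=(0)$ for every $w$-sequence starting at $R(c)$.
\item The construction in the proof of Lemma \ref{lem2.8} shows that if $tr((J_{\1}c)J_{\1})\neq(0)$, then some $w$-sequence from $R(c)$ has $tr(J_{\1}w_i\cdot J_{\1})\neq(0)$ for all $i$. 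This contradicts the previous point, so $tr((J_{\1}c)J_{\1})=(0)$.
\item The argument of Lemma \ref{lem2.12} then makes the relevant power of $\tilde L$ nilpotent, so $\tilde L$ is solvable.
\end{itemize}

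This is the paper's proof.

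Separately, your reason for $P'\subseteq Nil(H_{\0})$ only shows that each absolute zero divisor of rank $n$ modulo $L$ is nilpotent. You need the ideal they span to be nil, for instance by locating it in $\mathcal M(H_{\0})$ and then using Lemma \ref{lem2.7}.
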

\prf
If an element $a$ is an absolute zero divisor of rank $2^r(9(r+1)+2)$ then by Corollary \ref{cor2.2} every Pierce element $\{e_i,a,e_j\}$ is an absolute zero divisor of rank $2^r(9(r+1)+2)$.

In the proof of lemma \ref{lem2.8} we showed that if $a$ is a Pierce element and $tr((J_{\1}a)J_{\1})\neq (0)$ then there exists a $w$-sequence $w_1=R(a),w_2,\ldots$ such that for any $i\geq 1$ we have $tr(J_{\1}w_i\cdot J_{\1})\neq (0)$.
Let $b_{ij},\,1\leq i\leq r+1,\,1\leq j\leq 9$, be absolute zero divisors of rank $2^r(9(r+1)+2)$; let $b_{ij}'$ be their Pierce components, let
\bes
c_i'=(((b_{i1}'b_{i2}')b_{i3}')((b_{i4}'b_{i5}')b_{i6}'))((b_{i7}'b_{i8}')b_{i9}')
\ees
and let $c'=c_1'\cdots c_{r+1}'$ with arbitrary brackets. If $w_1=R(c'),\,w_2,\ldots $ is a $w$-sequence then, by Corollary  \ref{cor2.1} and Lemma \ref{lem2.14}  $tr(J_{\1}w_{r+1}\cdot J_{\1})=(0)$. This implies $tr(J_{\1}c'\cdot J_{\1})=(0)$
and therefore $tr(J_{\1}c\cdot J_{\1})=(0), \ tr((J_{\1}((\tilde L)^3)^3)^{r+1})J_{\1})=(0)$.

In the proof of lemma \ref{lem2.12} we showed that if $P$ is an ideal of $H_{\0}$ lying in  $Nil(H_{\0})$ and $tr((J_{\1}\cdot P)J_{\1})=(0)$ then $P^{\la 4d-3\ra}=(0)$. Hence $(((\tilde L^3)^3)^{r+1})^{\la 4d-3\ra}=(0)$, the ideal $\tilde L$ is solvable and therefore $\tilde L=L$.
\ctd

\smallskip

We proved that the ideal $L=Nil(H_{\0})=\mathcal M(H_{\0})$ is solvable.

\section{The superalgebra $J$ coincides with its heart $J=h(J)=H$.}

Denote $L=Nil(H_{\0})$.
 \begin{lem}\label{lem2.16}
 $tr((L\cdot J_{\1})J_{\1})=(0)$.
 \end{lem}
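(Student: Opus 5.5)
We have just established that $L=Nil(H_{\0})=\mathcal M(H_{\0})$ is solvable, and that it is the largest solvable ideal of $H_{\0}$. The plan is to reduce Lemma \ref{lem2.16} to Corollary \ref{cor2.1} and Lemma \ref{lem2.12}. That is, we show that the ideal spanned by the Pierce elements of $L$ whose ``trace pairing'' with $J_{\1}$ vanishes is all of $L$.

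\textbf{Step 1.} Since $L$ is solvable, Corollary \ref{cor2.1} gives
$$
tr((((L^3)^3)^3\cdot J_{\1})J_{\1})=(0).
$$
Set $L_1=((L^3)^3)^3$. This is an ideal of $H_{\0}$ contained in $Nil(H_{\0})$. By the argument in the proof of Corollary \ref{cor2.1} (via Lemma \ref{lem2.10}), its Pierce elements are $w$-elements.

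\textbf{Step 2.} Define
$$
P_0=\{a\in L\,|\, tr((aD_1\cdots D_s\cdot J_{\1})J_{\1})=0 \hbox{ for all } D_i\in D(J_{\1},J_{\1}),\ s\geq 0\}.
$$
We show that $P_0$ is an ideal of $H_{\0}$ spanned by $w$-elements. The stability of $L$ under $D(J_{\1},J_{\1})$ and under multiplication by $H_{\0}$ comes from the Pierce decomposition and from the fact that $Nil$ is characteristic. For a Pierce $a\in P_0$, the proof of Lemma \ref{lem2.8} runs in reverse. Whenever a $w$-sequence $w_1=R(a),w_2,\ldots$ survives, the construction there produces nonzero traces $tr(xw_n\cdot y)$. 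These traces can be rewritten, using $tr(u\cdot vc)=tr(v\cdot cu)$, as traces of the form $tr((a'\cdot J_{\1})J_{\1})$ with $a'\in aD(J_{\1},J_{\1})^{*}$. Hence vanishing of these traces forces every $w$-sequence to die. More carefully, Lemma \ref{lem2.11} only used the trace condition and Lemma \ref{lem2.10'}. So the argument of Lemma \ref{lem2.12} applies directly to $P_0$ and yields $P_0^{\la 4d-3\ra}=(0)$.

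\textbf{Step 3.} We show $P_0=L$. Consider the chain
$$
L\supseteq L^3\supseteq (L^3)^3\supseteq L_1.
$$
We show by descending induction that each term lies in $P_0$. Step 1 gives $L_1\subseteq P_0$, since $L_1$ is $D$-stable by Lemma \ref{lem2.10'}. For the inductive step, suppose $M\idl H_{\0}$ with $M^3\subseteq P_0$, and consider $tr((m\cdot x)y)$ for $m\in M$. The idea is to use that $\bar M=M/(M\cap P_0)$ satisfies $\bar M^3=0$, so $\bar M$ consists of elements that behave like $w$-elements modulo $P_0$. We then rerun Lemma \ref{lem2.8} modulo $P_0$, using Lemma \ref{lem2.9}: $M$ is strongly nilpotent of bounded index modulo $P_0$. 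A $w$-sequence started at $m\in M$ accumulates factors from $M$ (as in Lemma \ref{lem2.10}), so after boundedly many steps it lands in $R(J)R(P_0)R(J)$. The trace of such terms vanishes by Step 2 together with Lemma \ref{lem2.11}. Therefore $tr((M\cdot J_{\1})J_{\1})=0$ and $M\subseteq P_0$.

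\textbf{Main obstacle.} The delicate point is Step 3: making the modified Lemma \ref{lem2.8} work modulo an ideal, and checking that the terms $R(b^i)R(b^j)$ with $b\in Nil(H_{\0})$ do not destroy the counting of $M$-factors. If this fails, a fallback is available. We have $P_0^{\la 4d-3\ra}=0$, and $L$ is the largest solvable ideal. Then $L/P_0$ is solvable, and the quotient argument of Lemma \ref{lem2.15} can be repeated to conclude $L=P_0$.
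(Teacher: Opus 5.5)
Your proposal has a genuine gap: nothing in it carries the trace-vanishing from the deep power $L_1=((L^3)^3)^3$, which is all Corollary \ref{cor2.1} gives, back up to $L$ itself.

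The inductive Step~3 fails as stated, for three reasons.
\begin{itemize}
\item From $M^3\subseteq P_0$ you cannot conclude that $M$ is strongly nilpotent modulo $P_0$. Lemma \ref{lem2.9} gives strong nilpotency of $I^3$ for a nilpotent ideal $I$, not of $I$ itself. In a Jordan algebra $I^3=0$ does not make $I$ strongly nilpotent, because $I^2$ need not be an ideal. This is exactly why the paper has to pass to $((L^3)^3)^3$.
\item $P_0$ is not an ideal of $J$, so ``running $w$-sequences modulo $P_0$'' has no meaning.
\item Lemma \ref{lem2.11} concerns $tr(P^{\la 4k-3\ra}u)$ for high powers of an ideal $P$. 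It says nothing about traces $tr(J_{\1}u\cdot J_{\1})$ with $u\in R(J)R(P_0)R(J)$.
\end{itemize}

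Step~2 also rests on unjustified claims.
\begin{itemize}
\item You never prove $P_0\idl H_{\0}$. Stability of $L$ under $R(H_{\0})$ and $D(J_{\1},J_{\1})$ is not the same as stability of the trace conditions.
\item Lemma \ref{lem2.8} only shows that a nonzero trace produces a surviving $w$-sequence. It does not give the converse, so it cannot be ``run in reverse.''
\item The base case $L_1\subseteq P_0$ needs $L_1D(J_{\1},J_{\1})\subseteq L_1$. Lemma \ref{lem2.10'} only gives $P^{\la 2k+1\ra}D(J_{\1},J_{\1})\subseteq P^{\la 2k-1\ra}$, which drops a level.
\end{itemize}
In any case, $P_0^{\la 4d-3\ra}=(0)$ contributes nothing toward $P_0=L$. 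The Lemma \ref{lem2.15} fallback does not apply either: that argument deduces solvability of an ideal from trace-vanishing on its deep powers. Here solvability of $L$ is already known, and what is needed is trace-vanishing on $L$ itself.

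The missing idea is the Medvedev--Zelmanov theorem the paper uses. For a solvable Jordan algebra $L$, the ideal of $R\la L\ra$ generated by $R(L^2)$ is nilpotent, so $(R(L)R(L^2))^r=(0)$ for some $r$. By \eqref{OSJ}, $R(L)(FR(a)R(b)+FR(b)R(a))^3\subseteq R(L)R(L^2)$.

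The paper combines this with the self-similar shape of a $w$-sequence starting at $R(a)$, $a\in L$. Each $w_{n+1}$ contains $w_n$ followed by an almost reversed copy of it, with all factors in $L$. Hence $w_{3+\nu}$ contains at least $2^{\nu}+1$ factor pairs $R(a_i)R(b_i)$. Once $2^{\nu}+1\geq 2(3r-1)$, some pair occurs at least $3r$ times, and $w_{3+\nu}=0$.

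So every Pierce element of $L$ is itself a $w$-element. The argument of Lemma \ref{lem2.8} then gives $tr((L\cdot J_{\1})J_{\1})=(0)$ directly, with no climbing through powers of $L$.
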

 \prf
 In \cite {MedZel} it was shown that if $J$ is a solvable Jordan algebra  then  the ideal generated by $R(J^2)$ in the algebra of multiplications  $R\la J\ra$ is nilpotent.  Since the algebra $L$ is solvable, there exists $r\geq 1$ such that $(R(L)R(L^2))^r=0$.

\smallskip

 Let $a_1,a_2,\ldots\in L$. If $a_1,a_3,a_5,\ldots,a_{2k+1},\ldots$ are not all different then $R(a_1)R(a_2)\cdots \in R(L)R(L^2)$. 
 \smallskip
 
 Let $a,b\in L$. Then by \eqref{OSJ}
 \bes
 R(L)(FR(a)R(b)+FR(b)R(a))^3\subseteq R(L)R(L^2).
 \ees
 Hence
 \bes
 (R(L)(FR(a)R(b)+FR(b)R(a))^{3r}=(0).
 \ees
 If $tr((LJ_{\1})J_{\1})\neq (0)$ then there exists an $w$-sequence starting with $R(a),\, a\in L$, such that $tr((J_{\1}w_n)J_{\1})\neq (0)$ for any $n$. We have $w_3=R(a_{31})\cdots R(a_{3r_3})$, $a_{ij}\in L,\ r_3\geq 4$.
Changing notation: $w_3=R(a_1)R(b_1)R(a_2)R(b_2)\cdots$. The operator $w_{3+\nu}$ involves $\geq 2^{\nu}+1$ pairs $R(a_i)R(b_i)$. If $2^{\nu}+1\geq 2(3r-1)$ then some pair $R(a_i)R(b_i)$ or $R(b_i)R(a_i)$ occurs $\geq 3r$ times, which implies $w_{3+\nu}=0$.

\ctd

\begin{lem}\label{lem2.17}
$(L\cdot J_{\1})J_{\1}\subseteq L$.
\end{lem}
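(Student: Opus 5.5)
We need to show $(L\cdot J_{\1})J_{\1}\subseteq L$, where $L=Nil(H_{\0})=\mathcal M(H_{\0})$ is the largest solvable ideal of $H_{\0}$. First note that $(L J_{\1})J_{\1}\subseteq H_{\0}$, since $H$ is an ideal of $J$ and $L\subseteq H$. So it suffices to show that the image of $(LJ_{\1})J_{\1}$ in $\bar H=H_{\0}/L$ is zero. By Corollary \ref{cor0} and Lemma \ref{lem2.7}, $\bar H\cong A'\oplus A_1\oplus\cdots\oplus A_s$ is nondegenerate, with $A'$ finite dimensional semisimple and the $A_i$ Jordan algebras of symmetric bilinear forms. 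By Lemma \ref{lem2.16}, $tr\big(((LJ_{\1})J_{\1})\big)=(0)$. The idea is to upgrade this trace vanishing to membership in $L$ by using that $M=(LJ_{\1})J_{\1}+L$ is "almost an ideal" of $H_{\0}$.

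\emph{Step 1: $M$ is an ideal of $H_{\0}$.} For $h\in H_{\0}$, $\ell\in L$ and $x,y\in J_{\1}$, expand $((\ell x)y)h$ via \eqref{D0}. Write $R(y)R(h)=R(h)R(y)+D(y,h)$ and move the even derivation $D(y,h)$ (and then $D(x,h)$) to the left, using that $LD(\cdot,\cdot)\subseteq$ the ideal generated by $L$ in $J$ intersected with $H_{\0}$. This gives
\[
((\ell x)y)h\in ((\ell h)x)y+((\ell x)(yh))+((\ell (xh))y),
\]
and each term lies in $(LJ_{\1})J_{\1}$ because $\ell h\in L$ and $xh, yh\in J_{\1}$. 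Hence $M\idl H_{\0}$, and $\bar M=M/L$ is an ideal of $\bar H$ on which $tr$ vanishes identically.

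\emph{Step 2: an ideal of $\bar H$ with zero trace is zero.} Every nonzero ideal of $A'\oplus\bigoplus A_i$ is a sum of some simple summands. Each simple summand contains one of the idempotents $e_i$ (images of the fixed maximal orthogonal system), and $tr(e_i)=1\neq 0$. So $tr(\bar M)=0$ forces $\bar M=0$, and therefore $M=L$.

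The main obstacle is Step 2's compatibility of $tr$ with the decomposition. The function $tr$ was defined through Pierce components relative to $e_1,\dots,e_r$, and this makes sense modulo $Nil(H_{\0})=L$ because $\{e_i,H_{\0},e_i\}\subseteq Fe_i+L$. We still need to check that a nonzero ideal of $\bar H$ contains some $\bar e_i$, or at least an element with nonzero trace. We also need that the idempotents are well distributed among the summands, which the proof of Proposition \ref{prop2.4} gives: $\sum e_i$ is the unit, and each summand is the span of its Pierce pieces. If $\bar M\neq 0$, pick $0\neq m\in\bar M$ and a Pierce component $\{e_i,m,e_j\}\neq0$. If $i=j$, it equals $\alpha e_i$ with $\alpha\neq0$, which has nonzero trace. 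If $i\neq j$, then $\{e_i,m,e_j\}^2$, or its product with a suitable element, has a nonzero diagonal part by nondegeneracy/strong connectedness, and again the trace is nonzero — a contradiction. Because $\bar M$ is an ideal, the Pierce components and these products stay in $\bar M$. This completes the argument.
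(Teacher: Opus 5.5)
Your proof is correct. It takes a slightly different route from the paper in how the vanishing of $tr$ is turned into membership in $L$.

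\textbf{The paper's route.} The paper uses nondegeneracy of the bilinear trace form $(u,v)\mapsto tr(uv)$ on $H_{\0}/L$, which it asserts without proof. With that, it only has to check $tr\big(((\ell x)y)a\big)=0$ for all $a\in H_{\0}$. It expands $R(x)R(y)R(a)$ by \eqref{OSJ} and kills each resulting term either with Lemma \ref{lem2.16} or because the term lies in $L$.

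\textbf{Your route.} You use the same expansion (with $a=h$), but only to show that $(LJ_{\1})J_{\1}+L$ is an ideal of $H_{\0}$. You then need only a weaker fact than nondegeneracy: a nonzero ideal of $\bar H$ contains the unit $1_k$ of some simple summand. Since $\{e_i,\bar H,e_i\}=F\bar e_i$, each $\bar e_i$ lies in a single summand, so $tr(1_k)$ is the number of $\bar e_i$ in that summand, which is nonzero in characteristic $0$.

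\textbf{Comparison.} Your version derives everything it needs transparently from Corollary \ref{cor0}, whereas the paper's is shorter but leans on an unproved claim. Your last paragraph on off-diagonal Pierce components is unnecessary once you have the ``sum of simple components'' argument.

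\textbf{A repair in Step 1.} $D(y,h)$, with $y$ odd and $h$ even, is an odd derivation, not an even one. Moving it as you describe produces terms such as $((\ell y)h)x$ that you do not control, and your displayed inclusion omits two terms. Apply \eqref{OSJ} directly instead:
\[
((\ell x)y)h=((\ell h)y)x-\ell\big((xh)y\big)+(\ell x)(yh)-(\ell y)(xh)+(\ell h)(xy).
\]
The first, third and fourth terms lie in $(LJ_{\1})J_{\1}$. The second and fifth lie in $L$: $(xh)y\in H_{\0}$ and $L$ is an ideal of $H_{\0}$, while $L=\mathcal M(H_{\0})=H_{\0}\cap\mathcal M(J_{\0})$ is stable under multiplication by $J_{\0}$. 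Since your $M=(LJ_{\1})J_{\1}+L$ contains $L$, the conclusion that it is an ideal of $H_{\0}$ stands.
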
\prf
The trace form is nondegenerate on $H_{\0}/L$. Hence it is sufficient to prove that for any $x,y\in J_{\1},\ \forall a\in H_{\0}\setminus L$ we have
\bes
tr(LR(x)R(y)R(a))=0.
\ees
But
\bes
R(x)R(y)R(a)&=&-R(xa\cdot y)+R(a)R(y)R(x)\\
&+&R(x)R(ya)-R(y)R(xa)+R(a)R(xy),
\ees
which by lemma \ref{lem2.16} completes the proof.

\ctd

By lemma \ref{lem2.2} $H_{\0}\neq L.$ Let $0\neq a\in J_{\0}\cup J_{\1}$ be an arbitrary nonzero element. Then $id_J\la a\ra\supset H$. Hence there exists an operator $w=R(a_1)\cdots R(a_n)$ such that $aw\in H_{\0}\setminus L$. We call $w$ a {\em rescuing operator} for $a$ if

1) $n$ is a minimal with this property,

2) all elements $a_i$ lie in $J_{\0}\cup J_{\1}$ and are Pierce elements with respect to $e$ ($e$ is an identity of $H_{\0}\ mod\ L)$.

\begin{lem}\label{lem2.18}
Let $w$ be a rescuing operator. Then
\begin{enumerate}
\item
$a_n$ is odd,
\item
the sequence $a_1,\ldots,a_n$ does not contain two consecutive elements of the same parity,
\item
all even elements are skew-symmetric $mod\ L$ and all odd elements are symmetric $mod\ L$,
\item
if $w$ contains an even element $a_i=u^2$ then for any $j\neq i$ we have the equality 
\bes
&&aR(a_1)\cdots R(u^2)\cdots R(a_j)\cdots R(a_n)\\
&=&2aR(a_1)\cdots R(u)\cdots R(ua_j)\cdots R(a_n)\ (mod\ L).
\ees
\end{enumerate}
\end{lem}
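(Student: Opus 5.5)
Throughout, $w=R(a_1)\cdots R(a_n)$ is a rescuing operator for $a$ and $n$ is minimal. Every item is proved by contradiction: assuming it fails, we use the operator identities \eqref{OSJ}, \eqref{D0}, \eqref{D1} to rewrite $aw$ modulo $L$ as a combination of elements $aw'$ with $w'$ shorter than $w$, plus terms that lie in $L$ or are shown to land in $L$. By minimality each $aw'$ with $w'$ of length $<n$ lies outside $H_{\0}\setminus L$. The facts we rely on are that $L$ is an ideal of $H_{\0}$, that $(L\cdot J_{\1})J_{\1}\subseteq L$ (Lemma \ref{lem2.17}), and that $H_{\0}/L$ is a finite sum of a finite dimensional semisimple algebra and algebras of bilinear forms with identity $e$ (Corollary \ref{cor0}).

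\emph{Item 1.} Suppose $a_n$ is even. Write $b=aR(a_1)\cdots R(a_{n-1})$; then $b\notin H_{\0}\setminus L$ by minimality, while $ba_n\in H_{\0}\setminus L$.
\begin{itemize}
\item If $b\in H$, then $b$ is even, hence $b\in L$, and $ba_n\in L\cdot J_{\0}$. To conclude we need $L\cdot J_{\0}\subseteq L$ on Pierce elements, using the Peirce decomposition relative to $e$ and the fact that $H_{\0}$ is an ideal of $J_{\0}$. This step is delicate, since $L$ is only known to be an ideal of $H_{\0}$.
\item If $b\notin H$, then since $ba_n\in H$ and $a_n$ is a Pierce element, we argue through $U$-operators: writing $R(a_n)$ via $U(e)$-components gives $ba_n\in\{e,J,e\}$-parts already reached earlier.
\end{itemize}
I expect item 1 to be the main obstacle, and I would first try to reduce it to items 2 and 3 by moving $R(a_n)$ leftward.

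\emph{Item 2.} Two consecutive odd factors $R(a_i)R(a_{i+1})$ with $i<n-1$ are handled as follows. Apply \eqref{OSJ} to the triple $a_i,a_{i+1},a_{i+2}$. This rewrites the product modulo shorter operators as a sum of terms containing $D(a_i,a_{i+1})$, and \eqref{D0} moves that derivation to the far right. Since $D(J_{\1},J_{\1})$ preserves $H_{\0}$ and $L$ (Lemma \ref{lem2.10'}-type arguments), the image lies in $H_{\0}\setminus L$ already after fewer steps, contradicting minimality. Two consecutive even factors are handled the same way, replacing them by $R(a_ia_{i+1})$ plus a derivation term.

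\emph{Item 3.} Split each even $a_i$ into its symmetric and skew-symmetric parts relative to the involution induced on $H_{\0}/L$ (the Jordan trace form/reversal). The symmetric part of an even element sitting between two odd factors can be absorbed using the skew-symmetry of $R(x)R(c)R(y)$ modulo shorter operators given by \eqref{OSJ}, together with Lemma \ref{lem2.16}. Odd elements are handled dually, using $tr(u\cdot vc)=tr(v\cdot cu)$.

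\emph{Item 4.} Expand $R(u^2)=2R(u)^2-U(u)$, then linearize \eqref{OSJ} to transport one copy of $u$ onto $a_j$. The error terms are shorter, hence lie in $L$ after multiplication, which gives the stated congruence modulo $L$.
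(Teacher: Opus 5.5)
There are genuine gaps in all four items. The missing ingredient is the Peirce dichotomy relative to $e$. Since $e\in H$ and $H\idl J$, one has $\{e,J,e\}+\{e,J,1-e\}\subseteq H$. Since $e$ is the identity of $H_{\0}$ modulo $L$, one also has $\{e,H_{\0},1-e\}+\{1-e,H_{\0},1-e\}\subseteq L$. So a Pierce element outside $H$ lies in $\{1-e,J,1-e\}$. Relatedly, terms coming from shorter operators do not ``lie in $L$''. They lie in $L+J_{\1}+\{1-e,J_{\0},1-e\}$, a subspace meeting $H_{\0}$ exactly in $L$, and that is what justifies computing modulo shorter operators.

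\emph{Item 1.} The case $b\in H$ is not delicate: $L=\mathcal M(H_{\0})=H_{\0}\cap\mathcal M(J_{\0})$ (see the proof of Lemma \ref{lem2.7}) is an ideal of $J_{\0}$. The real case is $b\notin H$, and your sentence about ``$\{e,J,e\}$-parts already reached earlier'' does not handle it. What works: $b\in\{1-e,J_{\0},1-e\}$, so the Peirce rules give $ba_n\in\{1-e,J,1-e\}+\{e,J,1-e\}$. Hence $(ba_n)U(e)=0$ and $ba_n\in L$. Reducing item 1 to items 2--3 would be circular, since item 2 needs item 1.

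\emph{Item 2.} Your argument fails. Moving $D(a_i,a_{i+1})$ to the right by \eqref{D0} leaves a main term $yD(a_i,a_{i+1})$ that still has length $n$. The fact that $D(J_{\1},J_{\1})$ preserves $H_{\0}$ and $L$ points the wrong way. You would need $yD\in H_{\0}\setminus L\Rightarrow y\in H_{\0}\setminus L$, which is false: if $a_i\in H$, then $yD\in H$ for every $y$. Expanding $yD(a_i,a_{i+1})=yR(a_i)R(a_{i+1})+yR(a_{i+1})R(a_i)$ just returns you to an odd pair at the end. That is the case $i=n-1$ you excluded, and it carries the whole difficulty.

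The paper's argument runs as follows. By \eqref{OSJ}, which interchanges $a_k$ and $a_{k+2}$ modulo shorter operators, move the pair to the end. An even pair is then excluded by item 1. For an odd pair, $c=aR(a_1)\cdots R(a_{n-2})\notin L$ by Lemma \ref{lem2.17}, and $c\notin H$ by minimality, so $c\in\{1-e,J_{\0},1-e\}$. This forces $a_{n-1},a_n\in\{e,J_{\1},1-e\}$. Interchanging $a_{n-2}$ and $a_n$ then forces $a_{n-2}\in\{e,J_{\1},1-e\}\subseteq H$, so $c\in H$, a contradiction. Your even-pair step is also not an identity: $R(a)R(b)+R(b)R(a)\ne R(ab)$ in general.

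\emph{Item 3.} You have misread the claim: no involution on $H_{\0}/L$ is involved. It says that $aw$ modulo $L$ is skew-symmetric under interchanging two even factors and symmetric under interchanging two odd factors. Once item 2 gives alternating parities, same-parity factors are two apart. Then \eqref{OSJ} yields $R(y)R(z)R(t)\equiv(-1)^{\bar y+1}R(t)R(z)R(y)$ modulo operators of length $\le 2$ whenever $\bar y=\bar t\ne\bar z$, and that is the entire proof.

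\emph{Item 4.} $U(u)=2R(u)^2-R(u^2)$ is not a shorter operator, so your expansion gains nothing, and ``shorter, hence in $L$'' is not a valid principle. The mechanism is \eqref{D1} with $a=b=u$, i.e.\ $D(u^2,a_j)=2D(u,ua_j)$. By item 3, place $a_j$ immediately after $u^2$ when $a_j$ is odd. When $a_j$ is even, place it two positions after, with an odd $x$ in between; apply the odd case to $x$, then \eqref{D1} again to $D(ux,a_j)$. Write $R(u^2)R(a_j)=D(u^2,a_j)+R(a_j)R(u^2)$. Then discard the reordered terms by items 2 and 3: each either produces two adjacent factors of the same parity or is killed by skew-symmetry.
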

\prf
1. If $a_n$ is even, consider the element $b=aR(a_1)\cdots R(a_{n-1})$. Then $b\in J_{\0},\,b\not\in L,\, (ba_n)U(e)=ba_n\ (mod\ L)$. If $b\not\in H$, then $b\in\{1-e,J_{\0},1-e\}$. But in this case the equality $(ba_n)U(e)=ba_n\ (mod\ L)$ is impossible for any Pierce element $a_n\in J_{\0}$. Hence $b\in H$ and $w'=R(a_1)\cdots R(a_{n-1})$ is already a rescuing operator, contradicting minimality of $w$.
\smallskip

2. By the Jordan operator identity \eqref{OSJ} we may assume that the two consecutive elements of the same parity  are $a_{n-1},a_n$. By the part 1, they can not be even. If they are odd, consider the element $c=aR(a_1)\cdots R(a_{n-2})$. We have $c\in J_{\0}$ and $c\notin L$ by lemma \ref{lem2.17}. If $c\in H$ then the operator $w'=R(a_1)\cdots R(a_{n-2})$ would be  rescuing, a contradiction. Hence $c\in\{1-e,J_{\0},1-e\}$ and both $a_{n-1}, a_n\in \{e,J_{\1},1-e\}$. By \eqref{OSJ} again,  the elements $a_{n-2}$ and $a_n$ can be permuted, hence by the previous arguments $a_{n-2}\in\{e,J_{\1},1-e\}\subseteq H$. 
Hence $c\in H$, a contradiction.   
\smallskip

3.  It follows from part 2 and the Jordan operator identity  \eqref{OSJ}. 
\smallskip

4. By part 3 we can assume that $j=i+1$ if $a_j$ is odd and $j=i+2$ if $a_j$ is even. In the first case the part 2 and \eqref{D1} imply
\bes
aw&=&a\cdots R(u^2)R(a_j)\cdots\equiv a\cdots (D(u^2,a_j)+R(a_j)R(u^2))\cdots \\
&\equiv&a\cdots 2D(u,ua_j)\cdots \equiv 2a\cdots R(u)R(ua_j)\cdots  \ (mod\ L).
\ees
In the second case $a_{i+1}=x$ is odd, and we have again by \eqref{D1}
\bes
aw&=&a\cdots R(u^2)R(x)R(a_j)\cdots\equiv 2a\cdots R(u)R(ux)R(a_j)\cdots \\
&\equiv &2a\cdots R(u)(D(ux,a_j)+R(a_j)R(ux))\cdots\\
& \equiv& 2a\cdots R(u)(D(u,xa_j)+D(x,ua_j))\cdots\\
& \equiv& 2a\cdots (-R(u)R(xa_j)R(u)+R(u)R(x)R(ua_j))\cdots\\
& \equiv& 2a\cdots R(u)R(x)R(ua_j)\cdots  \ (mod\ L).
\ees

\ctd

\begin{lem}\label{lem2.19}
For any $ 0\neq a\in J_{\0}\cup J_{\1}$ there exists a rescuing operator $w=R(a_1)\cdots R(a_n)$ such that all $a_1,\ldots,a_n\in \{e,J,e\}\cup\{e,J,1-e\}$.
\end{lem}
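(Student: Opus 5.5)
We start from an arbitrary rescuing operator $w=R(a_1)\cdots R(a_n)$ for $a$, which exists by the definition preceding Lemma \ref{lem2.18}. Each $a_i$ is already a Pierce element with respect to $e$ and $1-e$. So each $a_i$ lies in one of $\{e,J,e\}$, $\{e,J,1-e\}$ or $\{1-e,J,1-e\}$. The task is to eliminate the factors from $\{1-e,J,1-e\}$ without increasing the length $n$ and without destroying the property $aw\in H_{\0}\setminus L$. The plan is an induction on the number of factors lying in $\{1-e,J,1-e\}$, among all rescuing operators for $a$ of minimal length.

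\textbf{Step 1: such factors must be even.} By Lemma \ref{lem2.18}(2) the parities of $a_1,\ldots,a_n$ alternate and $a_n$ is odd. Suppose some $a_i\in\{1-e,J,1-e\}$. Recall from the proof of Proposition \ref{prop2.4} that $\{1-e,H_{\0},1-e\}\subseteq L$. By Lemma \ref{lem2.18}(3) the even factors are skew-symmetric mod $L$ and the odd factors are symmetric mod $L$, i.e. they may be permuted among themselves up to sign. Hence we may move $a_i$ to the last position of its parity class. If $a_i$ is odd, it can be moved to position $n$. Write $b=aR(a_1)\cdots R(a_{n-1})$; then $b\cdot a_i$ is a Pierce element whose $e$-component vanishes unless $b\in\{e,J,1-e\}$. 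In that case a Peirce multiplication-rule argument, as in part 1 of Lemma \ref{lem2.18}, shows $ba_i\in\{e,J,1-e\}$, whose even part lies in $H$ only mod $L$ through $b$ itself. This would contradict minimality, so odd factors in $\{1-e,J,1-e\}$ cannot occur. The Peirce bookkeeping here uses $e\cdot\{1-e,J,1-e\}=0$ and the multiplication rules for Peirce spaces of the superalgebra.

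\textbf{Step 2: express even factors through $H$.} For an even $a_i=u\in\{1-e,J_{\0},1-e\}$ we cannot simply multiply by $e$. Instead we use that $J_{\0}$ is spanned by squares, $u=\tfrac12((u+1)^2-u^2-1)$. The term with $1$ shortens $w$ and so is harmless by minimality, which leaves the terms $R(v^2)$. Lemma \ref{lem2.18}(4) then replaces $R(v^2)R(a_j)$ by $2R(v)R(va_j)$ mod $L$. Choosing $a_j$ to be an adjacent odd factor from $\{e,J,1-e\}$ (one exists by alternation and Step 1), the product $va_j$ lies in $\{e,J,1-e\}$. This pushes the "outside" part onto the neighbouring factor. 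Iterating, and using the skew-symmetry of Lemma \ref{lem2.18}(3) to reorder, we reduce the number of factors outside $\{e,J,e\}\cup\{e,J,1-e\}$.

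\textbf{Main obstacle.} The main difficulty is Step 2. The element $v$ introduced by writing $u$ as a linear combination of squares may itself lie in mixed Peirce components. We therefore have to decompose $v$ into its Peirce components and check that the cross terms either lie in the admissible spaces or produce rescuing operators of smaller length, which are excluded by minimality. We also have to ensure that the count of bad factors strictly decreases, so the induction terminates.
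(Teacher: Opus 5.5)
Your Step 1 reaches the right conclusion, and it is the paper's argument, but the justification needs fixing. If $a_i\in\{1-e,J,1-e\}$, the $\{e,J,e\}$-component of $b\cdot a_i$ vanishes for \emph{every} Pierce element $b$, not only for $b\notin\{e,J,1-e\}$. Hence, after moving $a_i$ to the last position, $awU(e)=0$. This contradicts $aw\equiv awU(e)\not\equiv 0$ modulo $L$. The contradiction is with $aw\notin L$, not with minimality.

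The genuine gap is Step 2. What you call the ``main obstacle'' is in fact the whole content of the lemma, and your squares device cannot overcome it.

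\begin{itemize}
\item Writing $u=\tfrac12((u+1)^2-u^2-1)$ and applying Lemma \ref{lem2.18}(4) replaces $R(v^2)R(a_j)$ by $2R(v)R(va_j)$, with $v=u$ or $v=u+1$. In both cases the surviving factor $R(v)$ still has a component in $\{1-e,J_{\0},1-e\}$, namely $u$ or $u+(1-e)$. So the number of bad factors does not go down.
\item In linearized form, your decomposition is \eqref{D1} with the auxiliary element $1$. Since $u\cdot 1=u$, this yields only a trivial identity.
\item Lemma \ref{lem2.18}(4) is a statement about rescuing operators. The individual summands obtained by splitting $R(u)$ into squares need not be rescuing, so applying it to them is unjustified.
\item Likewise, minimality does not make the term with $R(1)$ ``harmless'': it only says that the shorter product is not in $H_{\0}\setminus L$.
\end{itemize}

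The missing idea is to use \eqref{D1} with the idempotent $e$ as the middle element, exploiting $u\cdot e=0$. By alternation and Step 1, the neighbouring odd factor $a_{i+1}$ lies in $\{e,J,e\}\cup\{e,J,1-e\}$, so $a_{i+1}=a_{i+1}\cdot e$ or $a_{i+1}=2a_{i+1}\cdot e$. Then \eqref{D1} gives
\[
D(u,a_{i+1}\cdot e)=D(u\cdot a_{i+1},e).
\]
Since $aw\equiv aw'D(u,a_{i+1})w''$ modulo $L$ by Lemma \ref{lem2.18}, the pair $R(u)R(a_{i+1})$ can be traded for $\pm R(e)R(u\cdot a_{i+1})$. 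Here $e\in\{e,J,e\}$ and $u\cdot a_{i+1}\in\{e,J,1-e\}$. (If $a_{i+1}\in\{e,J,e\}$, then $u\cdot a_{i+1}=0$ and one gets a contradiction instead.)

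This keeps the length $n$, so the new operator is still rescuing. It also lowers the number of factors from $\{1-e,J_{\0},1-e\}$ by exactly one, which is what makes your induction terminate.
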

\prf
Let $w=R(a_1)\cdots R(a_n)$ be a rescuing operator for $a$, all elements $a_1,\ldots,a_n$ lie in $\{e,J,e\}\cup\{e,J,1-e\}\cup\{1-e,J,1-e\}$. Since $aw\in H_{\0}\setminus L$, it follows that $aw=awU(e)\ mod\ L$. If one of the odd elements in $a_1,\ldots,a_n$ lies in $\{1-e,J,1-e\}$ then by lemma \ref{lem2.18} we can assume that $a_n\in \{1-e,J,1-e\}$; but then $awU(e)=0$, a contradiction.
Let $a_i$ be even and $a_i\in\{1-e,J,1-e\}$. By lemma \ref{lem2.18} $i<n$. Hence by lemma \ref{lem2.18} the element $a_{i+1}$ is odd and therefore $a_{i+1}=a_{i+1}\cdot e$ or $a_{i+1}=2a_{i+1}\cdot e$.
Let $w'=R(a_1)\cdots R(a_{i-1}),\ w''=R(a_{i+2})\cdots R(a_n)$. We have $aw=aw'D(a_i,a_{i+1})w''\ mod\ L$. By \eqref{D1}, 
$D(a_i,a_{i+1}\cdot e)=D(a_i\cdot a_{i+1},e)$ since $a_i\cdot e=0$. Therefore, $aw=\pm aw'R(e)R(a_i\cdot a_{i+1})w''$. The number of elements from $\{1-e,J_{\0},1-e\}$ went down by 1.

\ctd
\begin{lem}\label{lem2.20}
$\{1-e,J,1-e\}=(0)$.
\end{lem}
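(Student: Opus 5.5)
We prove that the Peirce component $\{1-e,J,1-e\}$ generates a proper ideal; since $J$ is subdirectly irreducible with heart $H$, such an ideal must vanish. Here $e$ is the identity of $H_{\0}$ modulo $L$. The natural choice is
$$
I=\{1-e,J,1-e\}+\{1-e,J,e\}\cdot\{1-e,J,1-e\}+\cdots ,
$$
and the efficient route is to argue by contradiction using rescuing operators.

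Suppose $0\neq a\in\{1-e,J,1-e\}$ is homogeneous. By Lemma \ref{lem2.19} there is a rescuing operator $w=R(a_1)\cdots R(a_n)$ for $a$ with all $a_i\in\{e,J,e\}\cup\{e,J,1-e\}$, so that $aw\in H_{\0}\setminus L$. We then track the Peirce component of $aR(a_1)\cdots R(a_k)$ with respect to the idempotent $e$ (modulo $L$) along the word, using the Peirce multiplication rules:
\begin{itemize}
\item $\{1-e,J,1-e\}\cdot\{e,J,e\}=0$;
\item $\{1-e,J,1-e\}\cdot\{e,J,1-e\}\subseteq\{e,J,1-e\}$;
\item $\{e,J,1-e\}\cdot\{e,J,1-e\}\subseteq \{e,J,e\}+\{1-e,J,1-e\}$.
\end{itemize}
Starting from $a$ in the $(1-e)$-component, the first factor $a_1$ must lie in $\{e,J,1-e\}$, since otherwise the product vanishes modulo $L$. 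By Lemma \ref{lem2.18}, $a_1$ must also be odd; if $a$ is even and $a_1$ were even, part~2 of that lemma would be violated.

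The core step is to show that the product never reaches $\{e,J,e\}$ with a nonzero image modulo $L$ unless $a$ itself can be brought into $H$ by a shorter word. Once $aR(a_1)\in\{e,J,1-e\}$, the next factor $a_2$ is even (Lemma \ref{lem2.18}, part~2) and belongs to $\{e,J,e\}$ or $\{e,J,1-e\}$.

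If $a_2\in\{e,J,1-e\}$, we use part~4 of Lemma \ref{lem2.18} together with \eqref{D1} to rewrite $R(a_1)R(a_2)$. The aim is to express $aR(a_1)R(a_2)$ as $\pm aR(a_1a_2)R(e)$, or as $a\,D(\cdot,\cdot)$ with an argument in $\{1-e,J,1-e\}$. This would produce either a shorter rescuing word, contradicting minimality, or a word in which some factor has moved into $\{1-e,J,1-e\}$. The latter is excluded by the reduction in Lemma \ref{lem2.19}.

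If instead $a_2\in\{e,J,e\}$, we continue by induction on $n$. The invariant to maintain is that every initial product $aR(a_1)\cdots R(a_k)$ lies in $\{e,J,1-e\}+\{1-e,J,1-e\}+L$. Granted this invariant, the final value $aw$ has no $\{e,J,e\}$-component modulo $L$, which contradicts $aw\in H_{\0}\setminus L$ together with $aw=awU(e)$ modulo $L$. This gives $\{1-e,J,1-e\}\subseteq L$.

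Finally, to pass from $\{1-e,J,1-e\}\subseteq L$ to $\{1-e,J,1-e\}=0$, consider $\{1-e,J,1-e\}\cap H$. Its even part lies in $L$, which is nil and solvable, and its odd part squares into $L$ by Lemma \ref{lem2.17}. Hence the ideal generated by $\{1-e,J,1-e\}$ has nil even part, and Lemma \ref{lem2.2} forces it to be zero.

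I expect the main obstacle to be preserving the invariant across an even factor $a_i\in\{e,J,1-e\}$. There the product can jump from $\{e,J,1-e\}$ into $\{e,J,e\}$, and the identities \eqref{D1} and \eqref{OSJ} must be used carefully to show that this jump either shortens $w$ or is killed modulo $L$ by Lemma \ref{lem2.17}.
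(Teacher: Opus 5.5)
Your proposal has a genuine gap: the step you call the core is never proved. Your invariant, that no initial product $aR(a_1)\cdots R(a_k)$ acquires a component in $\{e,J,e\}$ outside $L$, cannot follow from Peirce multiplication rules alone. An \emph{odd} factor from $\{e,J,1-e\}$ maps $\{e,J,1-e\}$ into $\{e,J,e\}+\{1-e,J,1-e\}$, and such odd factors do occur. So the jump you worry about happens at odd factors, not even ones, and nothing in your sketch controls it.

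The missing idea is the Peirce operator relation $D(a,b)=0$, i.e.\ $R(a)$ and $R(b)$ commute, for $a\in\{1-e,J,1-e\}$ and $b\in\{e,J,e\}$. With it the argument stops at the second letter and needs no induction. First, $a_1$ is odd and lies in $\{e,J,1-e\}$. If $n=1$, then $aa_1\in\{e,J_{\0},1-e\}\subseteq L$, a contradiction. If $n\geq 2$, then $a_2$ is even, hence lies in $\{e,J,e\}$. Evaluating $D(a,a_2)=0$ on $a_1$ gives $(aa_1)a_2=\pm a(a_1a_2)$. So $R(a_1a_2)R(a_3)\cdots R(a_n)$ is a shorter rescuing word, contradicting minimality.

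The other missing ingredient is the reason even factors lie in $\{e,J,e\}$. First, $\{e,J_{\0},1-e\}$ lies in $H_{\0}$ (as $e\in H$), and hence in $L$, since $e$ is the identity of $H_{\0}$ modulo $L$. Second, no even factor of a minimal rescuing word lies in $L$. By part 3 of Lemma \ref{lem2.18} you can move such a factor next to the final odd factor $a_n$, and then $aw\in (J_{\1}L)J_{\1}\subseteq L$ by Lemma \ref{lem2.17}. This disposes of your case $a_2\in\{e,J,1-e\}$ outright. It also gives the correct reason $a_1$ is odd: an even $a_1$ would lie in $\{e,J,e\}$, whence $aa_1=0$. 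Your stated reason misreads part 2 of Lemma \ref{lem2.18}, which concerns consecutive factors $a_i,a_{i+1}$ of $w$, not the pair $a,a_1$.

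Finally, drop your last paragraph. The rescuing argument gives $a=0$ directly, since every nonzero element is rescuable: $id_J\la a\ra\supseteq H$ and $H_{\0}\neq L$. As written that paragraph is also incorrect. Odd elements cannot lie in $L\subseteq H_{\0}$. Moreover, ``even part in $L$, odd part squaring into $L$'' does not make the \emph{ideal generated} by $\{1-e,J,1-e\}$ have nil even part. Every nonzero ideal contains $H$, whose even part is not nil.
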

\prf
We will show that a nonzero element from $\{1-e,J,1-e\}$ can not be rescued. Let $0\neq a\in \{1-e,J,1-e\}$, let $w=R(a_1)\cdots R(a_n)$ be a rescuing operator, $a_i\in \{e,J,e\}\cup\{e,J,1-e\}$.

First we notice that $n>1$. Indeed, if $w=R(a_1),\ a_1\in J_{\1}$, then $a_1\in \{e, J_{\1},1-e\}$ and $aR(a_1)\in \{e,J_{\0},1-e\}\subseteq L$, a contradiction.

If $a_1$ is even then  $a_1\notin L$, hence $a_1=\{e,a_1,e\}$ and therefore $aa_1=0$.

Let $a_1$ be odd, $a_2$ be even. Then again $a_2=\{e,a_2,e\},\ D(a,a_2)=0,$ hence $(aa_1)a_2=a(a_1a_2)$. This contradicts minimality of $n$.

\ctd

\begin{cor}\label{cor2.3}
$J=h(J)=H$ is a simple unital superalgebra.
\end{cor}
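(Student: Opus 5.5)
The plan is to show that $1\in H$, and to obtain this from the Peirce decomposition of $J$ relative to the idempotent $e$. Here $e$ is the identity of $H_{\0}$ modulo $L$, and it can be lifted to an actual idempotent of $H_{\0}$ because $L$ is nil.

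The first step is to record the decomposition
\[
J=\{e,J,e\}+\{e,J,1-e\}+\{1-e,J,1-e\}.
\]
By Lemma \ref{lem2.20} the last component vanishes, so
\[
J=\{e,J,e\}+\{e,J,1-e\}=JR(e).
\]
This is the usual Peirce computation: $U(1-e)$ kills $J$, so $R(e)$ acts with eigenvalues $1$ and $\tfrac12$ only.

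Next I would use that $e\in H$ and that $H$ is an ideal. Together with the previous step these give
\[
J=JR(e)\subseteq J\cdot H\subseteq H.
\]
Hence $J=H$, and in particular $1\in H$.

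Simplicity then follows directly. Since $H=h(J)$ is the intersection of all nonzero ideals of $J$, every nonzero ideal $I\idl J$ satisfies $I\supseteq H=J$. So $J$ has no proper nonzero ideals. It is unital by the standing assumption, and $J^2\neq 0$ because $J$ contains $1$.

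The only point needing care is the lifting of $e$ to a genuine idempotent of $H_{\0}$. I would handle this by the standard lifting of idempotents modulo a nil ideal in the power-associative algebra $F[b]$, applied to a representative $b$ of $e$. The rest of the argument is formal once Lemma \ref{lem2.20} is in hand.
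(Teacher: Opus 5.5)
Your proof is correct and matches the paper, where the corollary is read off directly from Lemma \ref{lem2.20}: $\{1-e,J,1-e\}=(0)$ gives $J=JR(e)\subseteq H$ (equivalently, $1-e=\{1-e,1,1-e\}=0$, so $1=e\in H$), and minimality of the heart then yields simplicity. The lifting step is unnecessary, because the $e$ in Lemma \ref{lem2.20} is already the genuine idempotent $e_1+\cdots+e_r\in H_{\0}$, built from the fixed maximal system of orthogonal idempotents of $H_{\0}$.
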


We will need  more information on the ideal $L=Nil\,(J_{\0})=M(J_{\0})$.  We know from section 4 that it is solvable.   Now we want to prove  that $L^{4d+1}=(0)$.

Let $V\subseteq J$ be an $J_{\0}$-submodule of $J$.  Define 
\bes
(V,L)_0=V, \ (V,L)_1=(VL)L+V\cdot L^2,
\ees
and for $i\geq 1$ 
\bes
(V,L)_{i+1}=((V,L)_{i}L)L+(V,L)_{i}\cdot L^2.
\ees
\begin{lem}\label{lem2.21}
(a) $(V,L)_n$ is a $J_{\0}$-submodule of $J$ for any $n$;

(b) $((V,L)_{n}\cdot J_{\1})J_{\0}\subseteq (V,L)_{n-1}\cdot J_{\1}+((V,L)_{n-1}J_{\1})\cdot L.$
\end{lem}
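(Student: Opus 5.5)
Both parts go by induction on $n$, using $L\idl J_{\0}$ (with $J_{\0}=H_{\0}$ by Corollary \ref{cor2.3}), Jordan associator identities, and the super-versions of the linearized Jordan identity \eqref{SJ2}. The mechanism is the one in Lemma \ref{lem2.10'}: multiplying by $J_{\0}$ or $L$ either stays inside an $L$-product, or lands in $L\cdot J_{\0}\subseteq L$, so the level drops by at most one.

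\emph{Part (a).} Set $V_n=(V,L)_n$. Since $V_0=V$ is a $J_{\0}$-submodule, it suffices to show that $V_{n+1}=(V_nL)L+V_n\cdot L^2$ is stable under $R(x)$, $x\in J_{\0}$, assuming $V_n$ is. For the term $(v\cdot pq)x$ with $v\in V_n$ and $p,q\in L$, I would use the Jordan identity in the form
\[
(v\cdot pq)x=(vx)(pq)+(pq,v,x),\qquad (pq,v,x)=(p,v,qx)+(q,v,px),
\]
valid since $x$, $p$, $q$ are even. Then $vx\in V_n$, so $(vx)(pq)\in V_n\cdot L^2$. Next $(p,v,qx)=(pv)(qx)-p(v(qx))$ with $qx\in L$; this lies in $(V_nL)L$, because $p(v\cdot l)=(v\cdot l)p$ up to sign. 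For the term $((vp)q)x$, I would expand $((vp)q)x=(vp)(qx)+(vp,q,x)$ and use $(vp,q,x)=(v,q,px)+(p,q,vx)$. Here $(v,q,px)\in (V_nL)L+V_n\cdot L^2$ and $(p,q,vx)\in (V_nL)L+V_n\cdot L^2$, using $px\in L$ and $vx\in V_n$. If $V$ contains odd elements, the signs change but the pattern is the same, since $x,p,q$ are even.

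\emph{Part (b).} I take $u\in V_n$, $y\in J_{\1}$, $x\in J_{\0}$ and write
\[
(uy)x=(ux)y+(u,y,x)'
\]
(a suitable associator with signs). Then $(ux)y\in V_n\cdot J_{\1}\subseteq V_{n-1}\cdot J_{\1}$, since $V_n\subseteq V_{n-1}$ by (a) and $L$ acting inside a submodule. For the associator I would first treat $u=w\cdot pq$ and then $u=(wp)q$, with $w\in V_{n-1}$ and $p,q\in L$. I would use the linearized identity for associators: $(ab,y,x)$ is a sum of terms $(a,y,bx)$ and $(b,y,ax)$ plus terms of the form $\pm (a,?,?)\,b$. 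This is exactly the computation in the third inclusion of Lemma \ref{lem2.10'}:
\begin{itemize}
\item $(w\cdot pq,y,x)\subseteq (w,y,L)+(pq,y,V_{n-1})$, and after further expansion this lies in $V_{n-1}J_{\1}+(V_{n-1}J_{\1})L$;
\item a term such as $(w,y,l)=(wy)l-w(yl)$ gives $(V_{n-1}J_{\1})L+V_{n-1}J_{\1}$, using $yl\in J_{\1}$;
\item a term like $(l,y,w)$ is handled the same way, after symmetry of the associator.
\end{itemize}

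\emph{Main obstacle.} I expect the main obstacle to be bookkeeping of the associator terms where $x$ hits an element of $L$ and $w$ sits in the middle slot. Each of these must be shown to land in $(V_{n-1}J_{\1})L$ and not merely in $(V_{n-1}J_{\1})J_{\0}$. I would handle this by always using the identity $(a,b,c)=-(c,b,a)$ together with the linearization $(ab,y,x)=(a,y,bx)+(b,y,ax)$ (graded form), just as in the proof of Lemma \ref{lem2.10'}, arranging that the factor from $L$ ends up multiplied last.
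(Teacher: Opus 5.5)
Your proposal is correct and is essentially the paper's proof. Both parts rest on the linearized Jordan identity $(ab,c,d)=(a,c,bd)+(b,c,ad)$, which is exact up to signs in the super case with no extra $\pm(a,?,?)\,b$ terms. Your case split in (b) amounts to the inclusion $V_n\subseteq V_{n-1}\cdot L$, so you may take $u=wl$ with $w\in V_{n-1}$, $l\in L$ directly. In (a), your handling of $V_n\cdot L^2$ via $(pq,v,x)$ is a harmless variant of the paper's use of $(v,a,b)x=(v,ax,b)-(v,x,b)a$.

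The only slip is the opening identity in (b). It should be $(uy)x=u(yx)+(u,y,x)$, not $(ux)y+(u,y,x)'$. This changes nothing, since $u(yx)\in V_n\cdot J_{\1}\subseteq V_{n-1}\cdot J_{\1}$. Also, because $y$ always stays in the middle slot, the obstacle you anticipate (an element of $V_{n-1}$ in the middle slot) never arises.
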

\prf
(a) Since $V=(V,L)_0$ is $J_{\0}$-invariant, we may use induction on $n$.  Let $v\in (V,L)_n;\ a,b\in L; x\in J_{\0}$. Modulo $(V,L)_{n+1}$ the following comparisons hold
\bes
((va)b)\cdot x&\equiv& (va,b,x)=(v,b,ax)+(a,b,vx)\equiv 0,\\
(v(ab))\cdot x&=&(v,a,b)x+((va)b)x\equiv (v,a,b)x\\
&=&(v,ax,b)-(v,x,b)a\equiv 0,
\ees
which proves (a).
\smallskip

(b) Notice that $(V,L)_n\subseteq (V,L)_{n-1}\cdot L$.  Let $v\in (V,L)_{n-1},\, l\in L,\, a\in J_{\0},\, x\in J_{\1}$. We have
\bes
(vl)x\cdot a&=&(vl)(xa)+(vl,x,a)=(vl)(xa)+(v,x,al)+(l,x,va)\\
&\in& (V,L)_{n-1}\cdot J_{\1}
+(V,L)_{n-1}\cdot J_{\1}\cdot L.
\ees
This completes the proof of the lemma.

\ctd

\begin{lem}\label{lem2.22}
$(J,L)_{d}=(0)$.
\end{lem}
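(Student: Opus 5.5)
We want to show $(J,L)_d=(0)$, where $L=Nil(J_{\0})$ is solvable and $J=H$ is simple (Corollary \ref{cor2.3}). The plan is to combine the finite multiplicative length $d$ with the trace argument of Lemmas \ref{lem2.11}--\ref{lem2.12}, applied to the chain of $J_{\0}$-submodules $(J,L)_n$ rather than to powers of an ideal.

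\textbf{Step 1.} Show that $(J,L)_d$ is an ideal of $J$ modulo deeper terms, or at least that the ideal it generates is controlled:
\begin{itemize}
\item By Lemma \ref{lem2.21}(a), each $(J,L)_n$ is a $J_{\0}$-submodule.
\item By Lemma \ref{lem2.21}(b), multiplying $(J,L)_n$ by an odd element and then by an even element drops the index by at most one, up to an extra factor from $L$.
\end{itemize}
Iterating, an operator $R(a_1)\cdots R(a_k)$ of length $k\le d$ maps $(J,L)_{d}$ into a sum of terms of the form $(J,L)_{d-k'}R(\cdot)\cdots$ still containing at least one "$L$-layer". As in the proof of Lemma \ref{lem2.11}, we first normalize the operator word:
\begin{itemize}
\item consecutive even factors are absorbed by Lemma \ref{lem2.21}(a);
\item three consecutive odd factors are replaced via \eqref{D2} and \eqref{D0} by a derivation $D(J_{\1},J_{\1})$, which preserves $J_{\0}$-submodules up to the $L$-action;
\item so we may assume the word alternates odd/even.
\end{itemize}

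\textbf{Step 2.} Use the trace. By Lemmas \ref{lem2.16} and \ref{lem2.17}, $(L J_{\1})J_{\1}\subseteq L$ and $tr((LJ_{\1})J_{\1})=0$, while $tr(L)=0$. Therefore every even element of $(J,L)_1$, and more generally every even element of $(J,L)_n R(a_1)\cdots R(a_k)$ for $n\ge k+1$ and an even operator of length $k$, has zero trace. The inductive mechanism is the same as in Lemma \ref{lem2.11}: each pair $R(\text{odd})R(\text{even})$ costs one level by Lemma \ref{lem2.21}(b), and an even operator of length $\le d$ costs at most $d$ levels. Since $R\langle J\rangle=R(J)^d$, we get $tr\big(id_J\langle (J,L)_d\rangle\cap J_{\0}\big)=0$.

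\textbf{Step 3.} Conclude. $J$ is simple and unital, so if $(J,L)_d\neq(0)$ then $id_J\langle (J,L)_d\rangle=J$, which contains $1=e$ (Lemma \ref{lem2.20} gives $e=1$). But $tr(1)=r\neq 0$, a contradiction. Hence $(J,L)_d=(0)$.

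The main obstacle is the bookkeeping in Step 1–2: controlling how the level index decreases when the derivations produced by \eqref{D2} hit elements of $(J,L)_n$. I would handle this first by proving an analogue of the second inclusion of Lemma \ref{lem2.10'}, namely
\[
(J,L)_{n}D(J_{\1},J_{\1})\subseteq (J,L)_{n-1}+(J,L)_{n-1}\cdot L,
\]
obtained by differentiating the defining products $((vl)l')$ and $v(ll')$ exactly as in that lemma, and using $LD(J_{\1},J_{\1})\subseteq J_{\0}$. If the count of $d$ levels turns out to be insufficient, the fallback is to exploit the slack from $L$ being solvable together with the factor $L$ appearing in Lemma \ref{lem2.21}(b).
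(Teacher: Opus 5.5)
Your strategy is in substance the paper's: Lemma~\ref{lem2.21} makes every level of $(J,L)_n$ cost at least one multiplication, and this is played against multiplicative length $d$ and simplicity. As set up, however, it does not prove the statement.

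\textbf{The count is off by one.} Your Step~2 claims that even elements of $(J,L)_nR(a_1)\cdots R(a_k)$ lie in $L$ (have zero trace) when $n\ge k+1$. Since $id_J\la (J,L)_d\ra=\sum_{k\le d}(J,L)_dR(J)^k$ contains words of length exactly $d$, this only gives $(J,L)_{d+1}=(0)$. Your own count, ``an operator of length $\le d$ costs at most $d$ levels'', leaves you at level $0$, i.e.\ in $J$, where nothing is gained. What you need is the sharper fact that for $k\ge 1$ no word of length $k$ carries $(J,L)_k$ into $J_{\0}\setminus L$. Equivalently, a rescuing operator for a nonzero element of $(J,L)_n$ has length $\ge n+1$. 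The extra $1$ comes from two places. First, the base case: the even part of $(J,L)_1$ lies in $L$ and its odd part in $M\cdot L$, so one factor lands in $L$ or in $(ML)M\subseteq L$ by Lemma~\ref{lem2.17}. Second, in each step Lemma~\ref{lem2.21}(b) trades the two factors $R(x)R(a)$ for one level and one surviving odd factor $R(y)$.

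\textbf{The error term of Lemma~\ref{lem2.21}(b) is not handled.} The term $((J,L)_{n-1}J_{\1})\cdot L$ turns a word of length $k$ on level $n$ into a word $R(y)R(l)R(a_3)\cdots R(a_k)$, $l\in L$, of the \emph{same} length on level $n-1$. An induction on word length therefore does not absorb it. Neither Lemma~\ref{lem2.21}(a) nor your derivation estimate touches it. (That estimate is true, even in the form $(J,L)_nD(J_{\1},J_{\1})\subseteq (J,L)_{n-1}$, because $LD(J_{\1},J_{\1})\subseteq L$ by Lemma~\ref{lem2.17}.)

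\textbf{How the paper avoids this.} It argues with a \emph{minimal} rescuing operator $w$ of length $f(n)$. By $J_{\0}$-invariance, $w$ begins with an odd factor. By Lemma~\ref{lem2.18}, $w$ alternates in parity, and it has no even factor from $L$: skew-symmetry of even factors modulo $L$ would move such a factor next to the final odd one, giving $(ML)M\subseteq L$. Applying Lemma~\ref{lem2.21}(b) to the first two factors then yields a rescuing operator of length $f(n)-1$ for some element of $(J,L)_{n-1}$. This operator need not be normalized, since $f(n-1)$ is a minimum. Hence $f(n)\ge f(n-1)+1$, so $f(n)\ge n+1$, while multiplicative length gives $f(n)\le d$.

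\textbf{If you keep a direct induction instead,} you must start it at $k=1$ as above. You must also dispose of the factors $R(l)$ by skew-symmetry of even factors modulo shorter words, pushing $R(l)$ to the end and using $(ML)M\subseteq L$. The trace is superfluous: everything works modulo $L$, and $1\notin L$.
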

\prf
 Denote by $f(n)$ the minimal length of resquing operators for nonzero elements from $(J,L)_n$.  Since $(J,L)_n$ is $J_{\0}$-invariant, it is clear that any minimal resquing operator $w=R(a_1)\ldots R(a_{f(n)})$ for $(J,L)_n$ should have $a_1\in J_{\1}$.  By the properties of a resquing operator from Lemma \ref{lem2.18} it can not have two consecutive odd operators and cannot have a factor $R(l)$ for $l\in L$ among even elements; if this happens, the length of the operator can be reduced.  It follows now from Lemma \ref{lem2.21}, (b), that $f(n)-1\geq f(n-1)$.  Since $f(0)\geq 1$, we have $f(n)\geq n+1$. On the other hand,  since the multiplicative length of $J$ is equal to $d$, we have $f(n)\leq d$.  This proves that if $n>{d-1}$ then the elements from $(J,L)_n$ can not be resqued, that is $(J,L)_{d}=(0)$.

\ctd

\begin{cor}\label{cor2.4}
$(R_{J}\la L\ra)^{2d}=(0)$ 
\end{cor}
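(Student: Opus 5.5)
Corollary \ref{cor2.4} follows from Lemma \ref{lem2.22} by showing that every long product of multiplication operators from $R_J\la L\ra$ maps $J$ into $(J,L)_d=(0)$. Concretely, I will prove by induction on $n$ that
\bes
J\,(R_J\la L\ra)^{2n}\subseteq (J,L)_n .
\ees
Taking $n=d$ then gives $J(R_J\la L\ra)^{2d}=(0)$, i.e. $(R_J\la L\ra)^{2d}=(0)$.

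The first step is to describe $R_J\la L\ra$ by a convenient spanning set. By the Jordan operator identity \eqref{OSJ} and \eqref{D2}, applied to the even subalgebra $L$, every product $R(l_1)\cdots R(l_m)$ with $l_i\in L$ is a linear combination of:
\begin{itemize}
\item[(i)] products of operators $R(l)$ and $R(l'l'')$;
\item[(ii)] derivations $D(l,l')$ inserted in such products, which \eqref{D0} lets me rewrite back as products of operators of types $R(L)$ and $R(L^2)$.
\end{itemize}
So $R_J\la L\ra$ is spanned by words in $R(L)$ and $R(L^2)$. It is convenient to filter by the number of letters, counting $R(L^2)$ as weight $2$. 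Then $(R_J\la L\ra)^{2n}$ is spanned by words of weight $\ge 2n$, because each generator of $R_J\la L\ra$ has weight at least $1$.

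The second step is the inductive claim: if $V=(J,L)_{n}$ and $W$ is a word of weight $\ge 2$, then $VW\subseteq (J,L)_{n+1}$, modulo terms already handled by induction.
\begin{itemize}
\item If $W$ starts with $R(L^2)$, then $V\cdot L^2\subseteq (V,L)_1$ by definition.
\item If $W$ starts with $R(l)R(l')$, then $(Vl)l'\subseteq (V,L)_1$ by definition.
\item If $W$ starts with $R(l)$ followed by a factor $R(l'l'')$, I use \eqref{OSJ} to express $R(l)R(l'l'')$ through $R(l)R(l')R(l'')$-type terms and $R((ll'')l')$-type terms. Since $L$ is an ideal of $J_{\0}$, these reduce to the two previous shapes.
\end{itemize}
Once the first two weight units are absorbed into $(V,L)_1=(J,L)_{n+1}$, the remaining part of the word acts on a set that is a $J_{\0}$-submodule by Lemma \ref{lem2.21}(a), and in particular is stable under $R(L)$ and $R(L^2)$. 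Hence the remainder stays inside $(J,L)_{n+1}$ and can absorb further pairs. Iterating, a word of weight $\ge 2n$ sends $J$ into $(J,L)_n$.

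The main obstacle is the bookkeeping in the second step: showing that the weight decomposition is compatible with how $(V,L)_{i+1}$ is defined, which uses only $(vl)l'$ and $v\cdot(ll')$ and not the other bracketing. This is exactly where \eqref{OSJ} restricted to even elements of $L$ must be used, together with Lemma \ref{lem2.21}(a) to absorb the leftover factors. Once that is settled, Lemma \ref{lem2.22} finishes the proof.
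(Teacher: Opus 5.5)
Your reduction is the paper's: prove $J(R_J\la L\ra)^{2n}\subseteq (J,L)_n$ and apply Lemma~\ref{lem2.22}. The paper records exactly this inclusion for $n=d$ in one line. Your proof goes through, but most of what you add is unnecessary, and the weight convention you introduce breaks your own count.

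By definition $R_J\la L\ra$ is spanned by the words $R(l_1)\cdots R(l_m)$ with $m\ge 1$ and $l_i\in L$. So \eqref{OSJ}, \eqref{D2} and \eqref{D0} play no role. Since $L^2\subseteq L$, a letter $R(l'l'')$ is simply a letter $R(l)$.

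If you give $R(l'l'')$ weight $2$, a chunk $R(l)R(l'l'')$ has weight $3$ but raises the index of $(J,L)_k$ by only one. A word with letter weights $1,2,1,2$ (total $6$) is then only shown to land in $(J,L)_2$, not $(J,L)_3$. So your claim that ``a word of weight $\ge 2n$ sends $J$ into $(J,L)_n$'' is not justified in that generality.

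Your third bullet is also circular. The \eqref{OSJ} expansion of $R(l)R(l'l'')$ produces the terms $R(l')R(ll'')$ and $R(l'')R(ll')$, which have the same shape. It is not needed anyway, since $(vl)(l'l'')\in (VL)L$ directly.

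Here is the clean version. Because $((J,L)_{k-1}L)L\subseteq (J,L)_k$ by definition, induction gives $JR(l_1)\cdots R(l_{2k})\subseteq (J,L)_k$. A spanning element of $(R_J\la L\ra)^{2d}$ is a word of length $m\ge 2d$. Absorb its first $m-2d$ letters into $J$ (or, as you do, absorb the trailing letters using Lemma~\ref{lem2.21}(a)). The last $2d$ letters then send $J$ into $(J,L)_d=(0)$.

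So the ``main obstacle'' you identify does not exist once every letter is counted with weight $1$.
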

\prf
 We have  $J(R_{J}\la L\ra)^{2d}\subseteq (J,L)_d=(0)$.  

\ctd
\begin{cor}\label{cor2.5}
$L^{4d+1}=(0)$ 
\end{cor}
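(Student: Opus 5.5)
Corollary \ref{cor2.5} should follow from Corollary \ref{cor2.4} by turning powers of the ideal $L$ into products of right multiplication operators. By Corollary \ref{cor2.3} the superalgebra $J$ is unital and coincides with its heart, and $L=Nil(J_{\0})$ is an ideal of $J_{\0}$. The plan is to show that every element of $L^{4d+1}$ has the form $l\,W$, where $l\in L$ and $W$ is a sum of products of at least $2d$ operators $R(l_i)$, $l_i\in L$. Corollary \ref{cor2.4} says $(R_J\la L\ra)^{2d}=(0)$, and any such product lies in that power, so $lW=0$.

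The key step is an inclusion proved by induction on $k$:
\bes
L^{2k+1}\subseteq L\,(R_J\la L\ra)^{k}.
\ees
For $k=0$ this is trivial. For the inductive step, $L^{2k+1}$ is spanned by products $uv$ with $u\in L^i$, $v\in L^j$, $i+j=2k+1$, and one of the two factors has odd degree, say $u\in L^{2s+1}$ with $s\ge0$. Then $uv=u\,R(v)$, and $R(v)\in R_J\la L^{j}\ra$. By the Jordan operator identity \eqref{OSJ}, or more simply by iterating $R(ab)=R(a)R(b)+R(b)R(a)-\ldots$ in linearized form, $R(L^{j})$ lies in the span of products of at least $\lceil j/2\rceil$ operators from $R(L)$ together with operators $R(J_{\0})$. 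Because $L$ is an ideal of $J_{\0}$, those extra $R(J_{\0})$ factors can be absorbed. Applying the induction hypothesis to $u$ and adding $\lceil j/2\rceil\ge\lfloor (2k+1-2s-1)/2\rfloor+\ldots$ gives at least $k$ factors in total.

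A cruder bound that is enough for our purpose also works. $L^{4d+1}$ consists of nonassociative products of $4d+1$ elements of $L$. Any such product $p$ can be written as $l\,R(c_1)\cdots R(c_m)$, where $l\in L$ is its leftmost leaf and each $c_i$ is a product of elements of $L$. Expand each $R(c_i)$ with $c_i\in L^{n_i}$ via \eqref{OSJ} into words in $R(L)$ of length at least about $n_i/2$, working modulo the extra terms. This yields at least $(4d)/2=2d$ operators from $R(L)$, so $p\in J\,(R_J\la L\ra)^{2d}=(0)$.

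The main obstacle is making the bound ``$R(L^n)$ is spanned by words of length at least $n/2$ in $R(L)$, possibly interleaved with $R(J_{\0})$'' precise, and checking that the interleaved $R(J_{\0})$ factors do no harm. They appear from the commutator terms in \eqref{OSJ}, and by \eqref{D0} such a term is $R$ of an element of $L$, which is harmless. I would first establish the cleaner claim $R(L^2)\subseteq R(L)R(L)+R(L)$-type relations: $R(ab)=R(a)R(b)+R(b)R(a)-D(a,b)$-style identities with $D(a,b)\in R(L)R(L)$. Then I would run the induction on the tree of $p$, which gives the length count $\ge 2d$ directly and finishes the proof.
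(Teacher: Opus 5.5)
Your approach works, but it is not the paper's route, and the step you call the main obstacle is simpler than you fear. What you need is: for every product $p$ of $n$ elements of $L$ (any bracketing), $R(p)$ is a linear combination of words $R(l_1)\cdots R(l_m)$ with $l_i\in L$ and $m\ge\lceil n/2\rceil$.

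For $n\le 2$ this is trivial, since $p\in L$. For $n\ge 3$, use commutativity to write $p=(yt)z$ with subproducts $y,t,z$. Since everything is even, \eqref{OSJ} reads
\[
R((yt)z)=R(y)R(zt)+R(z)R(yt)+R(t)R(yz)-R(y)R(z)R(t)-R(t)R(z)R(y).
\]
Each term is a word of length $2$ or $3$ in $R$ of products of elements of $L$, with degrees $<n$ summing to $n$. Induction together with $\lceil a/2\rceil+\lceil b/2\rceil\ge\lceil (a+b)/2\rceil$ finishes the claim.

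Because $L$ is closed under multiplication, no factor $R(J_{\0})$ ever appears, so the absorption question is moot. This matters: Corollary \ref{cor2.4} only concerns the subalgebra generated by $R(L)$ and says nothing about a word like $R(l_1)R(a)R(l_2)$ with $a\in J_{\0}$, so your absorption argument would not have been justified. Also discard the proposed ``cleaner claim'': $R(ab)=R(a)R(b)+R(b)R(a)-D(a,b)$ is false, since its right side is $2R(b)R(a)$. The trivial inclusion $R(L^2)\subseteq R(L)$ is all the count uses.

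With the claim in hand, $p=1\cdot R(p)\in J\,(R_J\la L\ra)^{\lceil n/2\rceil}$. Corollary \ref{cor2.4} then gives $L^{4d+1}=(0)$, and in fact already $L^{4d-1}=(0)$.

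The paper does not go through Corollary \ref{cor2.4}. It uses the stronger Lemma \ref{lem2.22}, $(J,L)_d=(0)$, of which that corollary is a consequence. Directly from the definitions, $(L,L)_n=L^{\la 2n+1\ra}$. Lemma \ref{lem2.2'} then gives $L^{4d+1}\subseteq L^{\la 2d+1\ra}=(L,L)_d\subseteq (J,L)_d=(0)$.

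The paper's argument is a one-liner because the filtration $(V,L)_n$ was built to track exactly the products $((\cdot)L)L$ and $(\cdot)\cdot L^2$. Your version needs only the weaker operator statement and replaces Lemma \ref{lem2.2'} by an elementary degree count via \eqref{OSJ}. The cost is that you must write that induction out, and in exchange it yields a slightly better nilpotency index.
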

\prf
We have $(L,L)_n=L^{\la 2n+1\ra}$,  hence by Lemma \ref{lem2.2'}
\bes
L^{4d+1}\subseteq L^{\la 2d+1\ra}=(L,L)_d=(0).
\ees
\ctd

\section{The case $L=(0)$}
\hspace{\parindent}
Consider first the case when $L=(0)$.

Let $f_1,\ldots,f_r$ be the identity elements of simple components of the algebra $J_{\0}$, $1=f_1+\cdots +f_r$.
In \cite[Proposition 6] {RZ}, it was proved that if $\dim J<\infty$  then $r\leq 2$. The condition $\dim J<\infty$  in fact was not used in the proof, therefore we have $r\leq 2$ in our case as well.  By corollary \ref{cor0} and \cite[lemma 7]{RZ},  only  the following two cases are possible:

(1) $r=1$,  $J_{\0}$ is an algebra of bilinear form or $\dim J_{\0}<\infty$,

(2) $r=2,\ J_{\1}=\{f_1,J_{\1},f_2\}$. In this case $J_{\0}$ is a sum of two simple algebras of bilinear form or a sum of a finite dimensional simple algebra and an algebra of bilinear form.

By \cite{ShZ}, it suffices to consider only the last case.  

\smallskip

Below for a Jordan superalgebra $J=J_{\0}\oplus J_{\1}$ we denote $J_{\0}=A,\ J_{\1}=M$, to avoid  use of  indices.
The product of two homogeneous elements $x,y\in J$ is denoted as $x\circ y$ unless $x,y\in J_{\1}$. In the latter case the product is denoted as $xy$.

\begin{lem}\label{lem5.1}
Let $J=A\oplus M$ be a finite dimensional Jordan superalgebra,  $A=Ff\oplus J(V),\ f^2=f, \ J(V)$ is a Jordan algebra of a symmetric form,  $M=\{f,M,e\}	$, where $e$ is the identity element of $J(V)$.  We do not assume that the form on $V$ is nondegenerate; we only assume that the annihilator of the form has codimention in $V$ $\geq 6$.  Then $M^2\circ V=(0)$.
\end{lem}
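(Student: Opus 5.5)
The plan is to reduce the statement to linear algebra on a Clifford-type module, and then use the codimension hypothesis (at least $6$) to kill the part of $M^2$ that acts nontrivially on $V$.

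\emph{Step 1: Peirce decomposition of $M^2$.} Since $M=\{f,M,e\}$, the space $M$ lies in the Peirce $\tfrac12$-component for both $f$ and $e$. Hence $M^2\subseteq Ff+Fe+V$, and for $x,y\in M$ we can write
$$xy=\alpha(x,y)f+\beta(x,y)e+\psi(x,y),\qquad \psi(x,y)\in V.$$
For $w\in V$ this gives $(xy)\circ w=\beta(x,y)w+(\psi(x,y)\,|\,w)e$. So the lemma is equivalent to two assertions: $\beta\equiv 0$, and $\psi(M,M)$ lies in the annihilator $V_0$ of the form.

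\emph{Step 2: structure of $M$ as a $J(V)$-module.} I will apply the operator identity \eqref{OSJ} with $y=z=v\in V$ and $t=f$ or $t=e$, and then linearize in $v$. Using $v\circ f=0$, $v\circ v=(v|v)e$ and $R(f)=R(e)=\tfrac12$ on $M$, this should give the Clifford relation on $M$:
$$\sigma(v)\sigma(w)+\sigma(w)\sigma(v)=2(v|w)\,Id,\qquad \sigma(v)=2R(v)|_M .$$
Thus $M$ is a module over $Cl(V/V_0)$ extended by an exterior part coming from $V_0$.

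\emph{Step 3: compatibility of the product with $\sigma$.} Next I apply \eqref{SJ2} to $x,y\in M$ and $v,f$ (and also $v,e$), and compare the $f$-, $e$- and $V$-components. I expect relations of the following shape:
\begin{align*}
&\alpha(x\sigma(v),y)=\alpha(x,y\sigma(v)),\\
&\beta(x\sigma(v),y)-\beta(x,y\sigma(v))=\pm(\psi(x,y)\,|\,v)\quad\text{(up to constants)},\\
&\psi(x\sigma(v),y)+\psi(x,y\sigma(v))\ \text{expressed through}\ \beta(x,y)v\ \text{and}\ (\psi(x,y)|v)\text{-terms}.
\end{align*}
Using a second instance of \eqref{OSJ}, with two elements of $V$, I also expect an invariance of the form
$$\psi(x\sigma(u)\sigma(w),y)=\psi(x,y\sigma(w)\sigma(u))$$
twisted by the reflections in $u$ and $w$. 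In other words, $\psi$ and $\beta$ are equivariant for the even Clifford (spin) action.

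\emph{Step 4: the dimension argument (main obstacle).} Choose $u_1,\dots,u_6\in V$ that are pairwise orthogonal and have $(u_i|u_i)=1$; this is possible because $\dim V/V_0\ge 6$ and $F$ is algebraically closed. Iterating the equivariance relations with the commuting involutions $\sigma(u_i)\sigma(u_j)$, I aim to show the following. A nonzero component $(\psi(x,y)|u_k)$ or $\beta(x,y)$ would force a nonzero pairing of $x$ with $y\sigma(u_{i_1})\cdots\sigma(u_{i_s})$ for sign patterns that contradict each other as soon as at least three (in fact six) independent directions are available. The small-dimensional exceptions ($D_t$, the superform superalgebras, $K_{10}$) are exactly where fewer directions exist.

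I expect this step to be the hard one: getting the signs to produce a genuine contradiction rather than a consistent solution. My first attempt will be to restrict to the finite-dimensional Clifford algebra on $\mathrm{span}(u_1,\dots,u_6)$, decompose $M$ into irreducible spinor modules, and check, using the known decomposition of tensor squares of half-spin representations, that there is no nonzero equivariant skew map $M\times M\to V$ or $M\times M\to Fe$ of the required type. The conclusion would then be $\beta=0$ and $\psi(M,M)\subseteq V_0$, which is $M^2\circ V=(0)$.
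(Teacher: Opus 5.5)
Your Steps 1 and 2 are sound. You correctly get $M^2\subseteq Ff+Fe+V$, so the claim is equivalent to $\beta\equiv 0$ together with $\psi(M,M)\subseteq V_0$. The operator identity with $y=z=v$, $t=f$ does give $\sigma(v)^2=(v|v)\,Id$ on $M$. Beyond that the proposal is a programme, not a proof, and it contains errors.

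\textbf{Step 3.} The relations are guessed, and the second one is wrong. $D(x,y)$, $x,y\in M$, is an even derivation vanishing on $e$ and $f$. Applying it to $f\cdot v=0$ and to $u\cdot v=(u|v)e$ gives
$\alpha(x\sigma(v),y)=\alpha(x,y\sigma(v))$ and $\beta(x\sigma(v),y)=\beta(x,y\sigma(v))$.
So your difference is identically $0$. Your version would then force $\psi(M,M)\subseteq V_0$, which is false in the example below. The correct relation comes from the $e$-component of $(xv)y+x(vy)=\{x,v,y\}+(xy)v$, using $\{x,v,y\}\in Ff$: it is $\beta(x\sigma(v),y)=(\psi(x,y)|v)$.

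\textbf{Step 4.} This is the only place where $\dim V/V_0\ge 6$ can enter, and it is not carried out. The tool you name cannot work, because spin-equivariance gives no nonexistence here. Take $\dim V=6$. The spinor module is $S=S^+\oplus S^-$ (the $4\oplus\bar 4$ of $\mathfrak{sl}_4\cong\mathfrak{so}_6$), and $\Lambda^2S\cong V\oplus V\oplus F\oplus\mathfrak{so}(V)$. The spinor form for which every $\sigma(v)$ is self-adjoint is skew in this dimension. Take $\alpha,\beta$ to be multiples of it and define $\psi$ by $(\psi(x,y)|v)=\beta(x\sigma(v),y)$. This satisfies all the corrected relations of your Step 3 with $\beta\neq 0$. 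The obstruction therefore has to come from an identity with three odd arguments, and you never use one.

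\textbf{Sharpness.} Your heuristic that three independent directions already give a contradiction is false, and the bound $6$ is sharp. Consider $H(M_{1|4}(F),\mathrm{osp})$:
its even part is $F\oplus H(M_4(F),\mathrm{sp})\cong Ff\oplus J(V)$ with $\dim V=5$ and a nondegenerate form;
its odd part satisfies $M=\{f,M,e\}$;
$M^2$ projects onto all of $H(M_4(F),\mathrm{sp})$, so $M^2\circ V\neq(0)$.
This example is missing from your list of exceptions.

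\textbf{How your route could be completed.} Use the three-odd identity $(xy,e,z)+(yz,e,x)+(zx,e,y)=0$. It reads
$\sum_{\mathrm{cyc}}\bigl[(\beta-\alpha)(x,y)z+z\sigma(\psi(x,y))\bigr]=0$.
Suppose $V$ is nondegenerate with orthonormal basis $v_1,\dots,v_m$, and put $N=\dim M$. Take the trace in $z$, once directly and once after composing with $\sigma(v_j)$. This gives
$(N-2)(\beta-\alpha)=2m\beta$ and $2(\beta-\alpha)=(N+2m-4)\beta$,
hence $(N-4)(N+2m-2)\beta=0$. Since $N\geq 8$ when $m\geq 6$, this forces $\beta=0$. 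The radical $V_0$ would still need separate treatment.

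\textbf{The paper's route.} The paper avoids computation entirely, using the classification of finite dimensional simple Jordan superalgebras. By that classification $J$ is not simple. A maximal ideal $I$ contains $M$ together with $f$ or $e$. If $f\in I$, one reads off $M^2\subseteq Ff+V_0$. If $e\in I$, the classification of nonunital simple superalgebras applied to $I=J(V)+M$ forces its maximal ideal to be $V_0+M$, so $M^2\subseteq V_0$.
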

\prf
It follows from the classification of simple finite dimensional Jordan superalgebras \cite{Kac,Kantor}  that  the only simple superalgebra that has the even part $A$ of the form $Ff\oplus J(V)$ is the Kac superalgebra $K_{10}$.  But in $K_{10}$ we have $\dim V=5$, hence we conclude that $J$ can not be simple.

Let $I\idl J, I\neq J,$ be a maximal ideal of $J$.  Suppose that $I\cap A\neq (0)$. Then $f\in I$ or $e\in I$ or $I\cap A$ lies in the annihilator of the form on $V$. Passing to the simple superalgebra $J/I$ and using again the classification, we see that the third case is not possible.  Since $M=\{e,M,f\}$, it follows that $M\subseteq I$.   Suppose that $f\in I$.  Then $I=Ff+V'+M$, where $V'$ is the annihilator of the form on $V$.  In this case $M^2\subseteq Ff+V'$, hence $M^2\circ V=(0)$. 

Now let $e\in I,\, I=J(V)+M$. The classification of finite dimensional simple nonunital Jordan superalgebras in \cite{RZ} implies that the Jordan superalgebra $I$ can not be simple.  Let $I'$ be a maximal ideal of $I$. Then $I'\subseteq V'+M$. Since the superalgebra $I/I'$ is simple, the classification in \cite{RZ} implies $I'=M+V'$, hence $M^2\subseteq V'$  and $M^2\circ V=(0)$.  This completes the proof of the lemma.

\ctd

\begin{lem}\label{lem5.2}
As above, we assume that $A=Ff\oplus J(V),\ f^2=f, \ M=\{e,M,f\}$.  For abitrary elements $x,y\in M,\,v\in V$ we have
 $$
 (x\circ v)y+x (y\circ v)\in Ff+Fv.
 $$
\end{lem}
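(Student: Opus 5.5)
The plan is to reduce the claim to the structure of $M$ as a Jordan bimodule over $A=Ff\oplus J(V)$, then pin down the three components of
$$
S(x,y,v)=(x\circ v)y+x(y\circ v)
$$
in the decomposition $A=Ff+Fe+V$. Since $S(x,y,v)$ is even, write $S(x,y,v)=\alpha f+\beta e+w$ with $w\in V$. We must show two things: $\beta=0$, and $w$ is proportional to $v$.

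\textbf{Bimodule structure and Peirce relations.} Because $M=\{e,M,f\}$, we have $x\circ e=x\circ f=\tfrac12 x$ for all $x\in M$. The operator $R(v)$ leaves $M$ invariant, since $V\circ M\subseteq\{e,M,f\}$ by the Peirce rules. I first apply the operator identity \eqref{OSJ} with all three arguments in $J(V)$; this is just the Jordan bimodule identity for the one-sided module $M$ over $J(V)$. It yields the Clifford-type relation
$$
R(v)R(u)+R(u)R(v)=\tfrac12 (v|u)\,Id \quad\text{on } M .
$$
Applied to $R(v)$ itself, the coefficient is $\tfrac14 (v|v)$, up to normalisation. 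This makes $M$ a module over the Clifford algebra of $(V,(\,|\,))$.

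\textbf{Relating $S$ to a derivation.} Next I apply \eqref{SJ2}, equivalently \eqref{D0}, to the quadruple $x,y,v,u$ with $u\in V\cup\{e,f\}$. This expresses $S(x,y,v)\circ u$ through products $xy$, $(x\circ u)y$, and similar terms. Note that $vD(x,y)=(x\circ v)y-x(y\circ v)$ is the "antisymmetric" partner of $S$. The symmetric combination $S$ should instead arise from expanding $(xy)\circ v$ with the linearised Jordan identity. Taking $u=e$ and $u=f$ and using the Peirce relations, I compare $e\circ S$ with $f\circ S$. This isolates $\beta$, and I expect it to show $\beta=0$ because the $e$-parts of the two summands cancel.

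\textbf{The $V$-component.} To show $w\in Fv$, it suffices to prove $(w|u)=0$ for every $u\in V$ with $(u|v)=0$. The coefficient $(w|u)$ is the $e$-component of $S\circ u$. Using the identity from the previous step, I rewrite $S\circ u$ in terms of $x R(v)R(u)$ and $xR(u)R(v)$. By the Clifford relation these anticommute when $u\perp v$, and the $e$-components cancel.

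\textbf{Expected obstacle.} The main difficulty is the sign and parity bookkeeping in \eqref{SJ2} with two odd arguments. In particular, one must make sure the symmetric combination $S$, not $vD(x,y)$, is what appears. If the direct computation does not close, I would fall back on linearising $R(v)^2|_M=\tfrac14(v|v)Id$ inside the identity for $(x\circ v)(y\circ v)$. I would then polarise in $v$.
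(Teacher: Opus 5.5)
Your plan breaks at the claim $\beta=0$. That claim is false, so no combination of \eqref{SJ2}, the Peirce rules and the Clifford relations can prove it. Here is an example satisfying the stated hypotheses ($A=Ff\oplus J(V)$, $M=\{e,M,f\}$). Take $J=B^{(+)}$ with $B=\mathrm{End}(U)$, where $U=Fu_0+(Fu_1+Fu_2)$ with $u_0$ even and $u_1,u_2$ odd, and let $E_{ij}$ be the matrix units. Then $J_{\bar 0}=FE_{00}\oplus M_2(F)^{(+)}=Ff\oplus J(V)$, with $f=E_{00}$, $e=E_{11}+E_{22}$, and $V$ the traceless part of the $\{1,2\}$-block; the form on $V$ is nondegenerate and $\dim V=3$. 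Moreover $M=J_{\bar 1}=\{e,M,f\}$. For $x=E_{10}$, $y=E_{02}$, $v=E_{21}$ one gets
\[
(x\circ v)y=\tfrac14(E_{22}-E_{00}),\qquad x(y\circ v)=\tfrac14(E_{11}-E_{00}),\qquad S=\tfrac14 e-\tfrac12 f\notin Ff+Fv .
\]
The $V$-parts $\mp\tfrac18(E_{11}-E_{22})$ of the two summands cancel and the $e$-parts add, which is the opposite of what you expected. The printed conclusion is a misprint for $Ff+Fe+Fv$. That is what the paper's proof actually gives, and it is the form used in Lemma~\ref{lem5.3}, where the inclusion reads $(M')^2\subseteq\cdots+Ff+Fe+\sum Fu_i$.

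The paper's argument takes two lines, and you should replace your plan with it. By the definition of the triple product, your $S$ equals $\{x,v,y\}+(xy)\circ v$. By the Peirce rules for $e$, $\{x,v,y\}\in\{J_{1/2}(e),J_1(e),J_{1/2}(e)\}\subseteq J_0(e)=Ff$. Also $(xy)\circ v\in A\circ v\subseteq Fe+Fv$.

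This gives your ``$w\in Fv$'' at once, without the detour through $S\circ u$. It also shows that your $\beta$ equals $(w'|v)$, where $w'$ is the $V$-component of $xy$, and nothing local forces this to vanish. Under the extra hypotheses of Lemma~\ref{lem5.1} it does vanish, since there $M^2\circ V=(0)$. But that comes from the classification, whereas Lemma~\ref{lem5.2} is needed in Lemma~\ref{lem5.3} for an arbitrary infinite dimensional $J$. Your fallback of polarising $R(v)^2|_M=\tfrac14(v|v)\,Id$ also cannot help, because it uses only identities that hold in the example above.
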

\prf
We have
\bes
(x\circ v)y+x(v\circ y)&=&v(R(x)R(y)-R(y)R(x)-R(xy))+(xy)\circ v\\
&=&\{x,v,y\}+(xy)\circ v.
\ees
The element $\{x,v,y\}$ lies in $Ff$. The vector space $A\circ v$ lies in $Fe+Fv$.  This completes the proof of the lemma.

\ctd
\begin{lem}\label{lem5.3}
Let $J=A+M$ be a Jordan superalgebra, $A=Ff+J(V),\,f^2=f,\, J(V)$ be a Jordan superalgebra of a symmetric nondegenerate form on $V$,  the space $V$ is infinite dimensional, $M=\{e,M,f\}$.  Then $M^2=(0)$.  
\end{lem}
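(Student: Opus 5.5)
We reduce Lemma \ref{lem5.3} to the finite dimensional Lemma \ref{lem5.1} by a local argument. Since $M=\{e,M,f\}$, we have $M^2\subseteq A$. The Peirce rules give more: $\{f,M^2,e\}$ must vanish because $A$ has no such Peirce component. So for $x,y\in M$ we may write $xy=\alpha f+\beta e+v_0$ with $v_0\in V$. The plan is to show $v_0=0$ and then $\alpha=\beta=0$.

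\textbf{Step 1: $M^2\circ V=(0)$.} Fix $x,y\in M$ and $v\in V$. Take the subsuperalgebra $J'$ generated by $x,y$ together with a finite dimensional subspace $V_1\subseteq V$ containing $v$ and the $V$-component of $xy$. We arrange that the restriction of the form to $V_1$ has nondegenerate part of dimension $\geq 6$; this is possible because $V$ is infinite dimensional and the form is nondegenerate.

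We then need $J'$ to be finite dimensional and still of the shape required by Lemma \ref{lem5.1}. Its even part should be $Ff\oplus J(V_2)$ for some finite dimensional $V_2\supseteq V_1$, and its odd part should lie in $\{e,M,f\}$. To obtain this, let $M'$ be the span of the elements $x\circ w$, $y\circ w$ for $w\in V_2$, and let $V_2$ be spanned by $V_1$ and the $V$-components of $M'M'$. The identity in Lemma \ref{lem5.2} is what controls this closure: it shows that $(x\circ v)y+x(y\circ v)$ only contributes $Ff+Fv$. A filtration argument in the spirit of Lemma \ref{lem0} should then show that the process stabilizes.

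If finite generation proves troublesome, the fallback is to work in the quotient of the subalgebra by the part of $V$ orthogonal to a large finite dimensional subspace. The annihilator of the form is allowed in Lemma \ref{lem5.1}, which is exactly why that lemma permits a degenerate form. Applying Lemma \ref{lem5.1} gives $(xy)\circ v=0$ for the chosen $v$. Since $v$ was arbitrary, we get $M^2\circ V=(0)$.

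\textbf{Step 2: $M^2\subseteq Ff$.} For $v\in V$ we have $v\circ(\alpha f+\beta e+v_0)=\beta v+(v|v_0)e$. This vanishes for all $v$, so $\beta=0$, and nondegeneracy of the form gives $v_0=0$. Hence $M^2\subseteq Ff$.

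\textbf{Step 3: $\alpha=0$.} Now combine Lemma \ref{lem5.2} with $M^2\circ V=0$. Write $x\circ v=x'$, which again lies in $\{e,M,f\}$. Then $x'y+x(y\circ v)$ lies in $Ff$; this is consistent and gives nothing new by itself. To kill $\alpha$, apply the super Jordan identity \eqref{SJ2} with $(x,y,v,v)$, or equivalently expand $\{x,v,y\}$. Because $V\circ V\subseteq Fe$ and $x\circ e=\tfrac12 x$, this yields $(x\circ v)\circ v=\tfrac14 (v|v)x$ up to a constant. Choose $v$ with $(v|v)\neq 0$ and write $xy=\alpha f$. We then obtain $\alpha(v|v)$ as a multiple of terms of the form $(x\circ v)(y\circ v)\circ v$-type expressions, and each of these vanishes because $M^2\circ V=0$ and $f\circ V=0$. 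Checking this computation carefully forces $\alpha=0$, so $M^2=(0)$.

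\textbf{Main obstacle.} The delicate step is Step 1. We must produce a genuinely finite dimensional subsuperalgebra $J'$ (or a suitable quotient of one) whose even part has the form $Ff\oplus J(V_2)$ with form rank $\geq 6$, and in which the elements $x$, $y$, $v$ and $xy$ survive. Closure under products $M'M'$ could a priori generate infinitely many new $V$-components. Lemma \ref{lem5.2} is the tool I expect to bound this growth, since it shows that the relevant combinations only produce new directions in $Ff+Fv$.
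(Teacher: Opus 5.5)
Your proposal has two genuine gaps: Step 1 never produces the finite dimensional subsuperalgebra it needs, and Step 3 as sketched cannot force $\alpha=0$.

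\textbf{Step 1.} You rightly name the finite dimensionality as the crux, but the tools you propose cannot supply it. Lemma \ref{lem5.2} controls only the sum $(x\circ v)y+x(y\circ v)$. It lets you move a factor $R(v)$ from one odd element to the other modulo $Ff+Fv$, so every product reduces to the form $(xR(w_1)\cdots R(w_k))y$. It says nothing about the $V$-components of these products. Those components can point in new directions, they act on the odd part and create new odd elements, and nothing in your sketch stops this iteration. The fallback also fails: $V_1^{\perp}$ (or its trace in your subalgebra) is not an ideal, since its products with odd elements are odd. So there is no quotient of the required shape to feed into Lemma \ref{lem5.1}.

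The missing idea is a careful choice of $x,y$. The paper fixes an orthonormal system $v_1,\dots,v_{2n}$ with $n\geq 3$, and isotropic $u_{2i-1},u_{2i}$ with $u_{2i-1}\circ u_{2i}=e$. The operators $R_M(v_i)$ generate a Clifford algebra $\cong M_{2^n}(F)$. Hence $M$ is a sum of irreducible modules, each generated by an element of the joint eigenspace $S$ of the commuting derivations $D(u_{2i-1},u_{2i})$. For $x,y\in S$, weight bookkeeping gives $(xR(u_{i_1})\cdots R(u_{i_k}))y\in\sum_i Fu_i$, except for the single word $R(u_2)R(u_4)\cdots R(u_{2n})$. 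The products of that word span a space $W\subseteq Ff+Fe+\{u_1,\dots,u_{2n}\}^{\perp}$ annihilated by all $D(u_i,u_j)$. This yields an explicit finite dimensional subsuperalgebra $J'$; its closure, including $M'\circ W\subseteq M'$, is again checked by weights. Lemma \ref{lem5.1} applies to $J'$, and since $M=\sum SR(u_{i_1})\cdots R(u_{i_p})$, one gets $M^2\circ v_i=(0)$. With arbitrary $x,y$, as in your plan, none of this control is available.

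\textbf{Step 3.} Your argument uses only one $v$ with $(v|v)\neq 0$, together with $M^2\subseteq Ff$, $M^2\circ V=(0)$ and $f\circ V=(0)$. These facts do not force $\alpha=0$. Take $D_0$ from Example 5 (the unital hull of $k_3$), put $f=e_1$, and adjoin an idempotent $g$ orthogonal to everything. Set $e=e_2+g$ and $v=e_2-g$. Then $J=D_0\oplus Fg$ has even part $Ff\oplus J(Fv)$ with $(v|v)=1$, odd part $M=Fx+Fy=\{e,M,f\}$, and $M^2\circ V=(0)$. Yet $xy=f\neq 0$.

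What is missing is a third odd element, and this is where infinite dimensionality enters. $M\neq(0)$ is a module over $M_{2^n}(F)$ for every $n$, hence infinite dimensional. So if $xz=\alpha f$, there is $0\neq y\in M$ with $xy=zy=0$. The linearized identity $(xy,e,z)+(yz,e,x)+(zx,e,y)=0$ then reduces to $\alpha(f,e,y)=0$, and $(f,e,y)=-\tfrac14 y\neq 0$ gives $\alpha=0$. Your Step 2 is correct once Step 1 is available.
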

\prf
Let $n\geq 3$. Choose elements $v_1,\ldots,v_{2n}\in V$ such that $v_i^2=e, \, 1\leq i\leq 2n,\,  v_{i}\circ v_{j}=0,\ 1\leq i\neq j\leq 2n$.  Consider also  the elements $u_1,\ldots,u_{2n}\in V, \ u_{2i-1}=\tfrac{1}{\sqrt2}(v_{2i-1}+\sqrt -1 v_{2i}),\, u_{2i}=\tfrac{1}{\sqrt2}(v_{2i-1}-\sqrt -1 v_{2i}),\ 1\leq i\leq n$. Then $u_i^2=0, \,1\leq i\leq 2n, \, u_{2i-1}\circ u_{2i}=e$, in all other cases $u_i\circ u_j=0$.

We have $u_{2i-1}D(u_{2i-1},u_{2i})=-u_{2i-1},\ u_{2i}D(u_{2i-1},u_{2i})=u_{2i}$. Hence the eigenvalues of $D(u_{2i-1},u_{2i})$ on $V$ are $0,\pm 1$. Since $D(v_{2i-1},v_{2i})=\sqrt{-1}D(u_{2i-1},u_{2i})$, the eigenvalues of $D(v_{2i-1},v_{2i})$ on $V$ are $0,\pm \sqrt{-1}$. 

Let $R_M(a),\, a\in A,$ denote  the right multiplication $R_M(a):M\rightarrow  M, \, x\mapsto xa$.  We have $R_M(v_i)^2=-\tfrac14,\ R_M(v_i)\circ R_M(v_j)=0, \, i\neq j$.
Therefore,  the subalgebra of $End_F(M)$ generated by $R_M(v_1),\ldots, R_M(v_{2n})$ is isomorphic to the Clifford algebra on the vector space $\sum_{i=1}^{2n}FR_M(v_i)$, which is isomorphic to the matrix algebra $M_{2^n}(F)$ (see \cite{Jac}).

For an arbitrary element $x\in M$  we have
\bes
xD(u_{2i-1},u_{2i})^2=-xD(v_{2i-1},v_{2i})^2=\tfrac14 x.
\ees
The element 
\bes 
E=\tfrac{1}{2^n}(Id+2D(u_{1},u_{2}))\cdots (Id+2D(u_{2n-1},u_{2n}))\in M_{2^n}(F)
\ees
is a primitive idempotent.   The right ideal $I$ of the algebra $M_{2^n}(F)$ generated by $E$ is an irreducible $M_{2^n}(F)$-module, and every irreducible $M_{2^n}(F)$-module is isomorphic to $I$.  

We have $ED(u_{2i-1},u_{2i})=\tfrac{1}{2}E$.
Let $S\subseteq M$  be the set of elements that belong to the eigenvalue $\tfrac{1}{2}$ with respect to  each derivation $D(u_{2i-1},u_{2i})$.
The $M_{2^n}$-module $M$ is a direct sum of isomorphiic $2^n$-dimensional irreducible modules.  Each of these modules is generated by an element from $S$. 

Choose arbitrary elements $x,y\in S$ and arbitrary indices $1\leq i_1,\ldots,i_k\leq 2n$. Consider the element $z=(xR(u_{i_1})\cdots R(u_{i_k}))y$. If the indices $i_1,\ldots,i_k$ are not all distinct then without loss of generality we can assume that $i_1=i_2$, then $xR(u_{i_1})^2=\tfrac12 xR(u_{i_1}^2)=0$. If $2i-1,2i\in \{i_1,\ldots,i_k\}$ for some $1\leq i\leq n,$ then without loss of generality we assume that  
$i_1=2i-1,\,i_2=2i$. We have
\bes
xR(u_{2i-1})R(u_{2i})&=&\tfrac12 x(D(u_{2i-1},u_{2i})+R(u_{2i-1})R(u_{2i})+R(u_{2i})R(u_{2i-1}))\\
&=&\tfrac12 x(D(u_{2i-1},u_{2i})+R(u_{2i-1}\circ u_{2i}))\\
&=&\tfrac12 x(D(u_{2i-1},u_{2i})+R(e))=\tfrac12 x.
\ees
Now suppose that the indices $i_1,\ldots,i_k$ are all distinct and for each $i,\, 1\leq i\leq n, $ no more than one element out of $u_{2i-1},u_{2i}$ lies among $u_{i_1},\ldots,u_{i_k}$. 

If the eigenvalue of $z$ with respect to at least one derivation $D(u_{2i-1},u_{2i})$ is nonzero, then $z\in \sum_{i=1}^{2n}Fu_i$.   If $zD(u_{2i-1},u_{2i})=0$ for all $i=1,\ldots, n$ then $z=(xR(u_{2})R(u_4)\cdots R(u_{2n}))y$. 
Hence $z\in \sum_{i=1}^{2n}Fu_i$ unless
$z=(xR(u_{2})R(u_4)\cdots R(u_{2n}))y$.

Note also that if $z$ has nonzero eigenvalues with respect to two derivations $D(u_{2i-1},u_{2i}),\, D(u_{2j-1},u_{2j}),\, i\neq j,$ then $z=0$. 

\smallskip
Consider the subspaces $Y=Fx+Fy, \, W=(YR(u_{2})R(u_4)\cdots R(u_{2n}))Y$.
Our aim is to prove that 
\bes 
J'=Ff+Fe+\sum_{i=1}^{2n}Fu_i+W+\sum_{p\geq 0}YR(u_{i_1})\cdots R(u_{i_p})=A'+M'
\ees
is a subsuperalgebra of $J$.

By lemma \ref{lem5.2},  
\bes
(M')^2\subseteq \sum (YR(u_{i_1})\cdots R(u_{i_p}))Y+Ff+Fe+\sum_{i=1}^{2n}Fu_i.
\ees
We proved above that 	
\bes
(YR(u_{i_1})\cdots R(u_{i_p}))Y\subseteq \sum_{i=1}^{2n}Fu_i+W.
\ees
Hence $(M')^2\subseteq A'$. It remains to show that $M'\circ W\subseteq M'$.

It is easy to see that
\bes
M'&=&Y+\sum YD(u_{i_1},u_{j_1})\ldots D(u_{i_k},u_{j_k})\\
&+&\sum YR(u_i)D(u_{i_1},u_{j_1})\ldots D(u_{i_k},u_{j_k}).
\ees
From $WD(u_{2i-1},u_{2i})=(0),\, 1\leq i\leq n,$ it follows that $W\subseteq Ff+Fe+\{u_1,\ldots,u_{2n}\}^{\perp}$. Hence $WD(u_{i},u_{j})=(0),\, 1\leq i,j\leq 2n.$  Therefore, it suffices to prove that 
\bes
(Y+\sum_{i=1}^{2n} YR(u_i))\circ W\subseteq M'.
\ees
We have
\bes
(YR(u_i))\circ W&=&((YR(u_{2})\cdots R(u_{2n}))Y)R(Y\circ u_i)\\
&\subseteq& ((YR(u_{2})\cdots R(u_{2n}))(Y\circ u_i))\circ Y\\
&+& YR(u_{2})\cdots R(u_{2n})D(Y,Y\circ u_i).
\ees
The subspace $(YR(u_{2})\cdots R(u_{2n}))(Y\circ u_i)$ belongs to the same eigenvalues with respect to $D(u_1,u_2),\ldots,D(u_{2n-1},u_{2n})$ as the element $u_i$. Hence 
\bes
(YR(u_{2})\cdots R(u_{2n}))(Y\circ u_i)\subseteq Fu_i,\ u_i\circ Y\subseteq M'.
\ees
The  subspace $u_jD(Y,Y\circ u_i)$ has nonzero eigenvalues with respect to $\geq 2$  derivations $D(u_{2p-1},u_{2p}),\,1\leq p\leq n$. Hence this subspace is zero.  Now, 
\bes
YD(Y,Y\circ u_i)\subseteq Y^2\circ (Y\circ u_i)+(Y(Y\circ u_i))\circ Y.
\ees
Both subspaces $Y^2$ and $Y(Y\circ u_i)$ also belong to nonzero eigenvalues with respect to $\geq 2$ derivations $D(u_{2p-1},u_{2p})$.  Hence 
\bes
Y^2=Y(Y\circ u_i)=(0).
\ees
We proved that $J'$ is a finite dimensional subsuperalgebra of $J$. By lemma \ref{lem5.1},  
\bes
(M')^2\circ v_i=(0),\, 1\leq i\leq 2n. 
\ees
Hence
\bes
((SR(u_{i_1})\cdots R(u_{i_p}))(SR(u_{j_1})\cdots R(u_{j_q})))\circ v_i=(0),
\ees
and therefore $M^2\circ v_i=(0)$.  This easily implies (taking $n$ sufficiently big) that  $M^2\subseteq Ff$.

Suppose that $x,z\in M,\, xz=\a f, \a\neq 0$. Since the vector space $V$ is infinite dimensional,  it follows that the vector space $M$ is infinite dimensional as well. Hence $M\ni y\neq 0$ such that $xy=zy=0$.  Now  by the Jordan identity we have
\bes
0=(xy,e,z)+(yz,e,x)+(zx,e,y)=\a(f,e,y)=-\tfrac{1}{4}\a y,
\ees
a contradiction.  This proves the lemma.

\ctd

\begin{prop}\label{L=0}
There does not exist a simple unital Jordan superalgebra $J=A+M$ with  $A=B\oplus J(V)$,  where $\dim V=\infty$ and $B$ is  a simple  finite dimensional Jordan algebra or the algebra $J(W)$ with $\dim W=\infty$.
\end{prop}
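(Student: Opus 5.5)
Suppose, towards a contradiction, that such a simple unital $J=A+M$ exists. Write $1=f+e$, where $f$ is the identity of $B$ and $e$ the identity of $J(V)$. By the discussion preceding Lemma \ref{lem5.1} (case (2), i.e.\ \cite[lemma 7]{RZ}) we have $M=\{f,M,e\}$: the components $\{f,M,f\}$ and $\{e,M,e\}$ vanish. The aim is to reduce to Lemma \ref{lem5.3} and conclude $M^2=(0)$. Then $M$ together with $M^2=(0)$ makes $A\circ M+M$ an ideal of $J$ contained in $M$, contradicting simplicity (note $M\neq (0)$, otherwise $J=A$ is not simple, being a direct sum).

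The reduction to Lemma \ref{lem5.3} is done by shrinking $B$ to $Ff$. Fix a primitive idempotent $f_1\in B$ (a minimal idempotent of a frame, for $B$ finite dimensional simple, or a rank one idempotent of $J(W)$), and consider the Peirce subsuperalgebra $J_1=\{f_1+e,J,f_1+e\}$. Its even part is $\{f_1,B,f_1\}\oplus J(V)=Ff_1\oplus J(V)$, since $f_1$ is primitive over the algebraically closed field, and its odd part is $M_1=\{f_1,M,e\}$. The Peirce subsuperalgebra $J_1$ is unital and, being of the form $JU(g)$ for an idempotent $g$ of a simple $J$, it is simple provided $M_1\neq (0)$. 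We have $M_1\neq(0)$ because otherwise $f_1\circ M=0$; but $B$ is simple, so the ideal of $B$ generated by $f_1$ is $B$, and the annihilator of $M$ in $B$ (an ideal of $B$, by the Jordan identity) would be all of $B$, giving $M=\{f,M,e\}=0$. Applying Lemma \ref{lem5.3} to $J_1$ (with $f_1$ in place of $f$, $V$ infinite dimensional and the form nondegenerate) gives $M_1^2=(0)$, i.e.\ $\{f_1,M,e\}^2=0$.

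Next I propagate this to all of $M^2$. Since all primitive idempotents in a frame of $B$ are strongly connected (or, for $J(W)$, are conjugate under automorphisms $U$ of invertible elements $1+\ldots$ of $B$), the same conclusion holds for every primitive idempotent, and summing over a frame (or using that $B$ is spanned by its primitive idempotents in the $J(W)$ case) I expect $(M\,U(g))(M\,U(g'))$ to lie in a small space. The mixed products $\{f_i,M,e\}\{f_j,M,e\}$ with $i\ne j$ lie in $\{f_i,B,f_j\}$. Here the step I expect to be the main obstacle is showing that these vanish too. I would use the identity of Lemma \ref{lem5.2} type, $xy=\pm((x\circ c)(y\circ c'))$-rearrangements through connecting elements $c\in\{f_i,B,f_j\}$ with $c^2=f_i+f_j$, writing $x\in\{f_i,M,e\}$ as $2(x'\circ c)$ with $x'\in\{f_j,M,e\}$, to move both factors into the same Peirce space $\{f_j,M,e\}$ modulo operators $D(\cdot,\cdot)$ whose values land in $M_j^2=0$.

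Once $M^2\subseteq A$ is shown to be zero, the contradiction is immediate. The alternative if the mixed terms resist is to argue directly: $M^2$ is an ideal of $A$ intersecting $B$ in an ideal of the simple $B$, and $M^2\cap J(V)=0$ by $M_1^2=0$ and $e$-Peirce considerations. So $M^2$ is $0$ or $B$. If $M^2=B$, then $f\in M^2$, and the Jordan-identity computation at the end of Lemma \ref{lem5.3} (choosing $y$ annihilated by the finitely many relevant $x,z$, using $\dim M=\infty$) yields $y=0$, a contradiction.
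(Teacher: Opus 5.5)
Your reduction to Lemma \ref{lem5.3} is the right one, and it is the paper's. For each idempotent $f_i$ of a frame of $B$ one has $\{f_i,B,f_i\}=Ff_i$. Hence $JU(e+f_i)=Ff_i+J(V)+M_i$, with $M_i=\{f_i,M,e\}$, satisfies the hypotheses of Lemma \ref{lem5.3}, and so $M_i^2=(0)$. Several of your auxiliary steps are unnecessary: the simplicity of $JU(e+f_1)$, the check $M_1\neq(0)$, and transporting the conclusion by strong connectivity. Lemma \ref{lem5.3} has no simplicity hypothesis and applies to every $i$ directly.

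The gap is the step you flag yourself. You aim at $M^2=(0)$, so you need $M_iM_j=(0)$ for $i\neq j$, and the sketched argument does not give it. Write $x=2(x'\circ c)$ with $x'\in M_j$; then for $y\in M_j$ one gets $xy=2\,cD(x',y)+2\,x'(cy)$. The second term is again a mixed product in $M_jM_i$. The first is not controlled by $M_j^2=(0)$, because the inner derivation $D(x',y)$ is not determined by the product $x'y$. As written, the proof is therefore incomplete at its central step.

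The missing observation is that $M^2=(0)$ is not needed. From $M_i^2=(0)$ and the Peirce rule $M_iM_j\subseteq\{f_i,J,f_j\}\subseteq B$ for $i\neq j$ (which you note), you already have $M^2\subseteq B$. Since $B\circ J(V)=(0)$, $B\circ M\subseteq M$ and $J(V)\circ M\subseteq M$, the subspace $B+M$ is a nonzero ideal of $J$ not containing $e$, contradicting simplicity. This is exactly how the paper concludes.

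Your fallback does not substitute for this observation.
\begin{itemize}
\item The assertion that $M^2$ is an ideal of $A$ is unproved, and it fails in general. In a superform superalgebra $J(V)$ with nondegenerate form and $V_{\bar 0}\neq(0)$, the odd part squares to $F\cdot 1$, which is not an ideal of the even part. Here it can only be justified after knowing $M^2\subseteq B$, and at that point you are already done.
\item In the subcase $M^2=B$, choosing $0\neq y$ with $x_ky=z_ky=0$ requires the maps $y\mapsto x_ky$ to have finite dimensional range. That holds for finite dimensional $B$ but not for $B=J(W)$ with $\dim W=\infty$.
\end{itemize}

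Once repaired by the $B+M$ observation, your uniform use of primitive idempotents does handle $B=J(W)$ directly: take $g=\tfrac12(1+w)$ and $1-g$ with $w^2=1$, so that $\{g,J(W),g\}=Fg$. The paper instead refers that case to \cite[theorem 3]{ShZ}.
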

\prf
The case $B=J(W)$ was considered in \cite[theorem 3]{ShZ}, where it was proved that this case is impossible.  Note that the proof of this result in \cite{ShZ} contains a mistake; lemmas 4.4 and 4.5 of \cite{ShZ} should be substituted by lemma \ref{lem5.3} above.

\smallskip
Let now $B$ be a finite dimensional simple Jordan algebra with the identity element $f$.  It was proved in section 1 that we may assume that $f=f_1+\cdots +f_n$ is a sum of orthogonal idempotents $f_i$ such that $BU(f_i)=Ff_i$ for each $i=1,\ldots, n$.
Let $e$ be the identity element of $J(V)$. Then $M=\{e,M,f\}$.    Consider the subsuperalgebra $JU(e+f_i)=Ff_i+J(V)+M_i,\ M_i=\{e,M,f_i\}.$  By lemma \ref{lem5.3} we have $M_i^2=(0)$. Therefore, 
\bes
M^2=\sum_{i\neq j} M_iM_j\subseteq \sum_{i\neq j}\{f_i,M,f_j\}\subseteq B.
\ees
 Hence $B+M$ is an ideal of $J$.  This contradicts  simplicity of $J$.

\ctd

\begin{remark}\label{rem1}
Notice that in the proof of Proposition \ref{L=0} we did not use finiteness of multiplicative length of $J$.
\end{remark}

\section{The case $L\neq (0)$}
\hspace{\parindent}
Consider now the case when $L\neq (0)$.

Let again $1=e_1+\cdots +e_r$ be an orthogonal decomposition that corresponds to the decomposition of the quotient algebra $J_{\0}/L$ into a sum of simple algebras.
Consider the Pierce decomposition 
\bes
J=\oplus_{i\leq j=1}^r  \{e_i,J,e_j\},
\ees
and let $A_{ij}=  \{e_i,A,e_j\},  \ L_{ij}=  \{e_i,L,e_j\}, \ M_{ij}=  \{e_i,M,e_j\}$. Observe that if $i\neq j$ then $A_{ij}=L_{ij}\subseteq L$.

The following  two  lemmas were proved in \cite[lemmas 1.1.4, 1.1.5]{MarZel}.  We give the proof here for the sake of completeness.
\begin{lem}\label{lem7.1}
If $i\neq j$ then $ L_{ij}=(0)$.
\end{lem}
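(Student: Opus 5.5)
We want to show that $L_{ij}=\{e_i,L,e_j\}=(0)$ for $i\neq j$. The approach follows Mart\'inez--Zelmanov: use the Peirce relations together with the structure of $J_{\0}/L$ as a direct sum of simple algebras with identities $\bar e_1,\ldots,\bar e_r$. The decisive input is that $e_i,e_j$ lie in different simple components modulo $L$. Hence $A_{ij}=L_{ij}\subseteq L$, and $A_{ii}\circ A_{ij}$ and $A_{ij}\circ A_{jj}$ stay inside $L_{ij}$.

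\emph{Step 1: $L_{ij}$ is an ideal-like piece.} Consider
$$
I=L_{ij}+L_{ij}\circ A+ (L_{ij}\circ M)+\cdots,
$$
that is, the ideal $id_J\la L_{ij}\ra$. We want to show that this ideal is a proper ideal of the simple superalgebra $J=h(J)$ (Corollary \ref{cor2.3}), and hence zero. It suffices to show $e_i\notin id_J\la L_{ij}\ra$, or more concretely that every Peirce $(i,i)$-component of the ideal lies in $L$.

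\emph{Step 2: Peirce calculus.} Even products behave as follows:
\begin{itemize}
\item $L_{ij}\circ A_{jk}\subseteq L_{ik}$ and $L_{ij}\circ A_{ij}\subseteq A_{ii}+A_{jj}$.
\item $L_{ij}\circ L_{ij}\subseteq \{e_i,L_{ij}^2,e_i\}+\{e_j,L_{ij}^2,e_j\}$.
\end{itemize}
For the last inclusion we use that $\bar e_i,\bar e_j$ are orthogonal central idempotents of $J_{\0}/L$, so the $(i,j)$-Peirce space of $J_{\0}/L$ is zero. This already shows that $L_{ij}$ generates a subideal of $J_{\0}$ contained in $L$.

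For the odd part, $L_{ij}\circ M_{jk}\subseteq M_{ik}$, and then $M_{ik}M_{kl}$ returns to $A$. Using Lemma \ref{lem2.17}, $(L\cdot M)M\subseteq L$, every element of $id_J\la L_{ij}\ra\cap J_{\0}$ lies in $L$. Indeed, it is a sum of images of $L_{ij}$ under multiplication operators, and by \eqref{D2} and \eqref{D0} these reduce to compositions of $R(A)$, which preserve $L$ since $L\idl J_{\0}$, and of pairs $R(M)R(M)$, which preserve $L$ by Lemma \ref{lem2.17}. A single odd operator gives odd elements. So $id_J\la L_{ij}\ra_{\0}\subseteq L$.

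\emph{Step 3: conclusion.} By Lemma \ref{lem2.2}, $J$ has no nonzero ideal with nil even part. Since $L=Nil(J_{\0})$, the ideal $id_J\la L_{ij}\ra$ has nil even part, so it is zero. Therefore $L_{ij}=(0)$.

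The main obstacle is Step 2: controlling the even part of the generated ideal under odd multiplications. Lemma \ref{lem2.17} handles exactly this, but one must check that mixed strings such as $R(x)R(a)R(y)$ with $a\in A$ reduce correctly. We would do this by applying \eqref{OSJ} to rewrite them as $R(a)R(x)R(y)$ plus terms $R(xa\cdot y)$, $R(x)R(ya)$, and so on, each of which maps $L$ into $L$ by Lemma \ref{lem2.17} and $L\idl J_{\0}$.
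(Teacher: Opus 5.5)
There is a genuine gap in your Step 2. The claim that $id_J\la L_{ij}\ra\cap J_{\0}\subseteq L$ is false, and the reduction you use to justify it is not available.

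Even multiplication operators applied to $L$ do not reduce to compositions of $R(A)$ and \emph{adjacent} pairs $R(M)R(M)$. The obstruction is $((lx)a)y$ with $l\in L$, $x,y\in M$, $a\in A$. Write $R(x)R(a)=R(a)R(x)+D(x,a)$ and move the odd derivation $D(x,a)$ past $R(y)$ by \eqref{D0}. Every resulting term lies in $L$ by Lemma \ref{lem2.17}, except $((ly)a)x$. So all you obtain is the symmetry $((lx)a)y\equiv((ly)a)x \pmod L$, and \eqref{OSJ} or \eqref{D2} give nothing more. The underlying reason is that odd derivations $D(x,a)$ send $L$ into $M$, not into $L$, so Lemma \ref{lem2.17} never closes the loop.

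Notice also that nothing in your argument uses $i\neq j$. Run verbatim on $L$, it would give $id_J\la L\ra_{\0}\subseteq L$, and hence $L=(0)$ by Lemma \ref{lem2.2}, in complete generality. That is incompatible with Lemma \ref{lem7.5}, which gives $(ML)A\cdot M\not\subseteq L$ whenever $L\neq(0)$. And $L\neq(0)$ really occurs, e.g.\ for the Kantor double of a Grassmann algebra on $n\geq 2$ generators.

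What is missing is a mechanism by which $i\neq j$ controls the odd multiplications. The paper argues by minimality:
\begin{enumerate}
\item Take $0\neq a\in L_{ij}$ and a rescuing operator $w=R(a_1)\cdots R(a_n)$ of minimal length, so $aw\in A\setminus L$.
\item Minimality together with Lemmas \ref{lem2.17} and \ref{lem2.18} shows that no $a_k$ lies in $L$, that $a_1$ is odd, and that $a_2\in A_{ii}\cup A_{jj}$. (If $a_2\in A_{kk}$ with $k\notin\{i,j\}$, then $D(L_{ij},a_2)=0$, so $R(a_1)R(a_2)$ collapses to $R(a_1a_2)$ on $L_{ij}$.)
\item Using \eqref{OSJ} and skew-symmetry in the even factors modulo $L$, all even factors can be replaced by $e_j$, so $w=R(a_1)R(e_j)R(a_3)$.
\item The Peirce relations eliminate $a_1\in M_{ks}$ with $\{k,s\}\cap\{i,j\}=\emptyset$, $a_1\in M_{ik}+M_{jk}$ with $k\notin\{i,j\}$, and $a_1\in M_{ii}+M_{jj}$.
\item In the remaining case $a_1,a_3\in M_{ij}$ one gets $aw\in M_{jj}M_{ij}\subseteq A_{ij}\subseteq L$, a contradiction.
\end{enumerate}
The decisive fact is that the off-diagonal even Peirce space $A_{ij}$ lies in $L$, because $\bar e_i$ and $\bar e_j$ belong to different simple components of $J_{\0}/L$. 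This has no counterpart for $L_{ii}$. Any correct proof must exploit it in handling the odd multiplications, not only in the even Peirce bookkeeping.
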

\prf
It suffices to prove that nonzero elements from $L_{ij}$ can not be rescued.  Assume that  it is not true and choose an element $0\neq a\in L_{ij}$ with  a rescuing operator $w=R(a_1)\cdots R(a_n)$ of minimal possible length.  Observe that no one of the elements $a_i$ lies in $L$.  In fact if some $a_i\in L$ then by lemma \ref{lem2.18} we have $w=\pm w_1R(a_i)R(a_n)$, where $a_n\in M$.
Note that  $aw_1\in M$,  hence by lemma \ref{lem2.17} $aw\in (LM)M\subseteq L$,  a contradiction.
 
 Clearly, $n\geq 3$.  If $a_1\in A$ then we may assume that $a_1\in A_{kl}$.  If $k\neq l$ then $a_1\in L$. We have shown that it  is impossible.  Hence  $a_1\in A_{ii}\cap A_{jj}$.  But then $aR(a_1)\in L_{ij}$, and this element  has a resquing operator of length $n-1$ which contradicts minimality of $n$.  Hence $a_1\in M$ and consequently $a_2\in A$.  Assume that $a_2\in A_{kk}$ with $k\not\in\{i,j\}$. Then $D(L_{ij},a_2)=0$ and $L_{ij}R(a_1)R(a_2)=L_{ij}R(a_1a_2)$, which again contradicts minimality of $n$.  
 
 Let $k=j$.  By \eqref{OSJ} we have the following comparisons  modulo $L$
 \bes
 L_{ij}w&\equiv&L_{ij}R(e_i)R(a_1)R(a_2)\cdots \\
 &\equiv&L_{ij}(-R(a_2)R(a_1)R_(e_i)+R(e_i)R(a_1a_2)+R(a_2)R(e_ia_1))\cdots \\
 &\equiv&L_{ij}R(a_2)R(a_1)R(e_i)\cdots\equiv L_{ij}R(a_1)R(e_i)\cdots.
 \ees
 Thus, without loss of generality, we may assume that $a_2=e_i$.  By  the skew-symmetry of $w$ on even factors we may assume that $a_{2k}\in\{e_i,e_j\}$ for all $k$.  Moreover,  repeating the above process for $e_i\in A_i$ instead of $a_2\in A_j$, we get
 \bes
 L_{ij} R(a_1)R(e_i)\subseteq =L_{ij}R(a_1)R(e_j)\cdots +L,
 \ees
 which implies that we may assume that $a_{2k}=e_j$ for all $k$, and hence
 \bes
 w=R(a_1)R(e_j)R(a_3).
 \ees
 Now, if $a_1\in M_{ks}$ with $\{k,s\}\cap\{i,j\}=\emptyset$ then $L_{ij}R(a_1)=(0)$.
 Furthermore, let $a_1\in M_{jk}+M_{ik},\,k\neq i,j$, then by \cite[(PD3), p.121]{Jac} we have 
 $\{a_1, L_{ij},e_j\}=(0)$,  which implies
 \bes
 L_{ij}R(a_1)R(e_j)\subseteq L_{ij}R(e_j)R(a_1)+L_{ij}R(a_1e_j),
 \ees
 a contradiction.  
 
 If $a_1\in M_{ii}+M_{jj}$ then $R(a_1)$ commutes with $R(e_j)$,  which gives a contradiction again.  
 
 The only remaining case is $a_1\in M_{ij}$.  In this case,  by the symmetry of odd $a_k$-s  we have also $a_3\in M_{ij}$,  and hence
   \bes
 L_{ij}R(a_1)R(e_j)R(a_3)&\subseteq& (L_{ij}M_{ij})R(e_j)R(a_3)
 \subseteq ((M_{ii}+M_{jj})R(e_j))M_{ij}\\
 &\subseteq& M_{jj}M_{ij}\subseteq A_{ij}\subseteq L,
 \ees
   which  again contradicts the resquing property of $w$.  
   
   This proves the lemma.
  
   \ctd
   
   \begin{lem}\label{lem7.2}
  If $L\neq (0)$ then $A/L$ is simple.
  \end{lem}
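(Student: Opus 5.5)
We argue by contradiction. Suppose $L\neq(0)$ and $A/L$ is not simple, i.e. $r\geq 2$ in the orthogonal decomposition $1=e_1+\cdots+e_r$ attached to the simple components of $A/L$. By Lemma \ref{lem7.1} we have $L_{ij}=(0)$ for $i\neq j$, so $L=\sum_i L_{ii}$, and moreover $A_{ij}=L_{ij}=(0)$ for $i\neq j$. Thus $A=\bigoplus_i A_{ii}$ is a genuine direct sum of ideals, and the idempotents $e_i$ lie in the centre of $A$. The plan is to show that some $L_{ii}\neq(0)$ generates a proper ideal of $J$, contradicting the simplicity of $J$ (Corollary \ref{cor2.3}).

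\emph{First step.} Fix $i$ with $L_{ii}\neq (0)$ and study how $M$ acts on $L_{ii}$ through the Peirce components $M_{ii}$ and $M_{ij}$, $j\neq i$. For $j\neq i$ the product $L_{ii}M_{ij}$ lies in $M_{ij}$. We then compute $(L_{ii}M_{ij})M_{jk}$, which is even and lies in $A_{ik}$. This vanishes for $k\neq i$. For $k=i$ it lands in $A_{ii}$, and by Lemma \ref{lem2.17} it actually lies in $L\cap A_{ii}=L_{ii}$.

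\emph{Second step.} Mimic the rescuing argument of Lemma \ref{lem7.1}. Take $0\neq a\in L_{ii}$ and a rescuing operator $w=R(a_1)\cdots R(a_n)$ of minimal length. By Lemma \ref{lem2.18} the factors alternate in parity, the last factor is odd, and as in Lemma \ref{lem7.1} no factor lies in $L$. The even factors lie in some $A_{kk}$. Using \eqref{OSJ} and the skew-symmetry of even factors modulo $L$, we replace them by idempotents $e_k$, which act as scalars $0$, $\tfrac12$ or $1$ on the Peirce spaces. This should force $w$ to have length $\leq 3$, namely $w=R(x)R(e_k)R(y)$ or $w=R(x)R(y)$ with $x,y$ odd Peirce elements. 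Then $aw\in (L_{ii}M)M\subseteq L$ by Lemma \ref{lem2.17}, contradicting the definition of a rescuing operator. Hence $L_{ii}$ cannot be rescued, so $L_{ii}=(0)$ for every $i$, i.e. $L=(0)$, contrary to the assumption $L\neq(0)$.

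\emph{Main obstacle.} The difficulty is the reduction of the even factors to idempotents. When $a_2\in A_{kk}$ with $k\neq i$, the argument of Lemma \ref{lem7.1} directly gives $D(L_{ii},a_2)=0$, which contradicts minimality. The hard case is $a_2\in A_{ii}\setminus L$. There $D(L_{ii},a_2)$ need not vanish, since $A_{ii}/L_{ii}$ is a nontrivial simple algebra. I would first try to show, via \eqref{D1} and part 4 of Lemma \ref{lem2.18}, that modulo $L$ one may replace $a_2$ by $e_i$ times a scalar (its trace). This uses the fact that the trace form on $A_{ii}/L_{ii}$ is nondegenerate together with the vanishing $tr((L\cdot J_{\1})J_{\1})=(0)$ of Lemma \ref{lem2.16}.

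If this fails, the fallback is to use $r\geq 2$ directly. The set $I=L_{ii}+L_{ii}M+(L_{ii}M)M$ should be closed under multiplication by $J$, by Lemma \ref{lem2.17} and Lemma \ref{lem2.21}(b). It misses $e_j$ for $j\neq i$, so it is a proper nonzero ideal of $J$, again contradicting simplicity.
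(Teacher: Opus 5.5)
\textbf{There is a genuine gap.} Your overall plan, that a nonzero $L_{ii}$ should generate a proper ideal of $J$, is right. The concrete target you set is too strong, however, and your argument for it never uses $r\geq 2$.

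Your second step (``nonzero elements of $L_{ii}$ cannot be rescued'') would show $L=(0)$ outright, for every $r$ including $r=1$. So would your fallback: an ideal $I$ with $I_{\0}=L_{ii}+(L_{ii}M)M\subseteq L$ (by Lemma \ref{lem2.17}) is proper for \emph{any} $r$. For $r=1$ the paper needs Propositions \ref{prop7.1}, \ref{prop8.1} and \cite{ShZ}.

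The step that breaks is the one you flagged. An even factor of a rescuing operator lying in $A_{ii}\setminus L$ cannot in general be replaced modulo $L$ by a scalar multiple of $e_i$. Take the simple Kantor double $Kan(G_2)$ of the Grassmann algebra on $\xi_1,\xi_2$ with $[f,g]=(-1)^{\bar f}\sum_i\partial_if\,\partial_ig$. There:
\begin{itemize}
\item $A=F1+Fv\xi_1+Fv\xi_2+F\xi_1\xi_2$ and $L=F\xi_1\xi_2$, so $A/L\cong J(V)$ is simple;
\item $(LM)M=(0)$;
\item $\xi_1\xi_2R(v)R(v\xi_1)R(v)=\pm v\xi_2\notin L$, and this is a shortest rescuing operator;
\item its even factor $v\xi_1$ is traceless modulo $L$, so your replacement would give $0$.
\end{itemize}
Lemmas \ref{lem2.16}--\ref{lem2.18}, \ref{lem7.1}, the nondegeneracy of the trace form, \eqref{OSJ} and \eqref{D1} all hold there. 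Your sketch uses nothing beyond these, so the reduction cannot follow from them. Compare also $aD(x,v_1)D(x,v_2)=1+u$ with $a\in L$ in the paper's treatment of $A=J(V)+L$.

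The fallback fails for the same reason. $I$ is not stable under $R(A_{ii})$: in the example, $(v\xi_1\xi_2)(v\xi_1)=\pm\xi_2\notin F\xi_1\xi_2+Fv\xi_1\xi_2$. Lemma \ref{lem2.21}(b) lowers the $L$-depth rather than giving closure.

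There is also a smaller slip in your first step. Since $M_{ij}\cdot M_{ij}\subseteq J_{ii}+J_{jj}$, the product $(L_{ii}M_{ij})M_{ij}$ may have an $A_{jj}$-component. That leakage into another block is exactly what the lemma must exclude.

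\textbf{What to prove instead.} The true statement, which the paper proves, is the weaker $id_J\langle L_{11}\rangle\cap A\subseteq A_{11}+L$: elements of $L_{11}$ may be rescued, but only into the first block. Since $A_{11}+L$ is stable under $R(A_{11})$, even factors from $A_{11}$ never need to be eliminated. Take $0\neq a\in L_{11}$ and a shortest $w=R(a_1)\cdots R(a_n)$ with $aw\in A\setminus(A_{11}+L)$.
\begin{itemize}
\item An even factor $a_2\in A_{kk}$, $k\neq 1$, gives $D(a,a_2)=0$. Then $aR(a_1)R(a_2)=aR(a_1a_2)$ shortens $w$.
\item An odd factor in $M_{1k}$, $k\neq 1$, is excluded by the Peirce relations.
\item Consecutive odd factors are handled by \eqref{D2} and \eqref{D0}, using $A_{11}D(M,M)\subseteq A_{11}$ and Lemma \ref{lem2.17}.
\end{itemize}
All remaining factors then lie in $A_{11}\cup M_{11}$, so $aw\in A_{11}+L$, a contradiction. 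Simplicity of $J$ then gives $A=A_{11}+L$, so $A/L\cong A_{11}/L_{11}$ is simple.
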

  \prf
  If $L\neq (0)$, then there exists $i$ such that $L_{ii}\neq (0)$.  We assume that $i=1$.
  
  We will show that $id_J\la L_{11}\ra\cap A\subseteq A_{11}+L$, and consequently $A/L\cong A_{11}+L/L\cong A_{11}/L_{11}$ is simple.
  
  Indeed, assume that assertion is not true.  Then there exists   an element $0\neq a\in L_{11}$  and an operator $w=R(a_1)\cdots R(a_n)$  such that $aw\in A$ but $aw\not\in A_{11}+L$.  Choose such an operator of minimal length.  
  
It is easy see that no two
consecutive elements  $a_k,a_{k+1}$ lie in $A$. 
Suppose that two consecutive elements $a_k, a_{k+1}$ lie in $M$.  If there 
is another odd element $u_i$ among $a_1,\ldots,a_n$ , then again using the Jordan operator identity, we can represent $w$ as a linear combination of operators
$\cdots R(a_k) R(a_{k+1} )R(a_i)\cdots$, $\cdots R(a_{k+1}) R(a_{k} )R(a_i)\cdots$ of length $n$ and operators of length  $<n$.  The identity \eqref{D0} implies that 
for odd elements $x, y, z \in M$, the operator $R(x)R(y)R(z)$ is a linear
combination of operators of length  2 and of operators of the type
$R(x)D(y, z)$.  By \eqref{D1} we can move $D(y, z)$ to the right end,  and it remains to
notice that $A_{11} D(M, M)\subseteq A_{11}$.

Suppose now that $a_k , a_{k+1}$ are the only odd elements among $a_1,\cdots,a_n$.  By \eqref{OSJ}, we can assume $k=1$. 
Then $(L_{11}M)M\subseteq L$ by Lemma \ref{lem2.17}.
  
The element $a_1$ clearly lies in $M$,  hence $a_2\in A$.  If $a_2\in A_{jj},\, j\neq 1$,  then the operators $R(a), R(a_2)$ commute, hence $aR(a_1)R(a_2)=aR(a_1a_2)$, a contradiction.  Thus $a_2\in A_{11}$.  Evidently, $a_1\in M_{1i}$.  If $i\neq 1$ then in view of the equality  $J_{1i}U(J_{11},J_{11})=(0)$ (see \cite[(PD3), p.121]{Jac}) we have 
  \bes
  aR(a_1)R(a_2)=-aR(a_2a_1)+R(a_2)R(a_1),
  \ees
  a contradiction.
  
  Hence we may assume that $a_1\in M_{11}$.  It is easy to see that for arbitrary $c\in  L_{11}$ the expression  $cw+A_{11}+L/A_{11}+L$ is symmetric on odd $a_k$-s and is skew-symmetric on even $a_{k}$-s.  Therefore, we can  assume that all $a_i\in A_{11}\cap M_{11}$.  Consequently,  $L_{11}w\subseteq A_{11}+L$, a contradiction.
  
  This proves the lemma.
  \ctd
 
 \medskip 
 
 Without loss of generality we may assume that $J$ is countable dimensional.
 Observe that any finite subset of elements of the factor-algebra $J _{\0}/L$ is
contained in a semisimple finite dimensional subalgebra. Therefore, the algebra $A=J _{\0}$ satisfies the conditions of \cite[Theorem 1]{Lyu},  and there exists a splitting $A = B\oplus L$, where $B\cong A/L$.

 We have to consider the two cases:
 
 (1) $A/L\cong H_n(C,*), \, n\geq 3$, where $(C,*)$ is a simple alternative algebra with involution $*$;
 
 (2) $A/L\cong J(V,f),$ the algebra of symmetric bilinear form  $f$ on the vector space $V$.
 
 \begin{lem}\label{lem7.3}
 Let $A/L\cong H_n(C,*), \, n\geq 3$, where $(C,*)$ is a simple alternative algebra with involution $*$.  Then either $J\cong H(\O_3)$ or $J\cong H_n(D,*)$, where $(D,*)$ is a simple associative superalgebra with a superinvolution $*$.
 \end{lem}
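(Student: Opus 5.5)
The plan is to use the splitting $A=B\oplus L$ with $B\cong H_n(C,*)$, $n\geq 3$, and run a coordinatization argument for the whole superalgebra $J$.

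\emph{Step 1: a frame in $J$.} The algebra $B$ has a complete system of pairwise orthogonal, strongly connected idempotents $e_1,\dots,e_n$ with $\sum e_i=1$, together with matrix units $e_{ij}$ ($i\neq j$) in $B$ connecting them. Since $1\in B$ is the identity of $J$, these form a frame of $n\geq 3$ connected idempotents in the unital superalgebra $J$. Take the Peirce decomposition $J=\sum_{i\leq j}J_{ij}$. By Lemma \ref{lem7.1}, $L_{ij}=(0)$ for $i\neq j$, so $L=\sum_i L_{ii}$.

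\emph{Step 2: coordinatize.} Apply the super version of the Jacobson--McCrimmon coordinatization theorem. Connectedness with $n\geq 3$ passes to the Grassmann envelope $G(J)$, which is an ordinary Jordan algebra containing the same frame. There we obtain $G(J)\cong H_n(\mathcal D,*)$, where $\mathcal D$ is alternative (associative if $n\geq 4$). Descending the grading gives $J\cong H_n(D,*)$, where $D=D_{\0}+D_{\1}$ is an alternative superalgebra with superinvolution and diagonal coordinates $H(D,*)$. We identify $D$ with $J_{12}$ via the connecting element $e_{12}$.

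\emph{Step 3: simplicity and $L$.} By Corollary \ref{cor2.3}, $J$ is simple, so $(D,*)$ is $*$-simple. The diagonal part $H(D_{\0},*)$ corresponds to $A_{11}=B_{11}+L_{11}$. Off-diagonal $L_{ij}$ vanish, but $L_{11}\neq(0)$ may occur; we only use that $D$ is $*$-simple.

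\emph{Step 4: classify $D$.} We need the classification of $*$-simple alternative superalgebras. If $n\geq 4$, $D$ is associative and we are done. If $n=3$ and $D$ is not associative, then by the classification of simple alternative superalgebras in characteristic $0$ (Zelmanov--Shestakov: prime alternative superalgebras with nonzero odd part are associative), $D_{\1}=(0)$. In that case $D$ is an octonion algebra over a field; since $F$ is algebraically closed and the $*$-symmetric part is scalar on the diagonal, $D=\O$ and $J\cong H(\O_3)$. A $*$-simple $D$ that is not simple decomposes as $D_1\oplus D_1^{op}$ with exchange involution, and the same argument applies to $D_1$.

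The main obstacle is Step 2: a rigorous super-coordinatization for possibly infinite-dimensional $J$ in which $A_{11}$ is not reduced (it carries nilpotent $L_{11}$). The strong connectedness of the frame must be verified inside $J$, not only modulo $L$. I would first try lifting the matrix units of $B$, using that $B$ is a genuine subalgebra from the splitting, so that they are exact. I would then check that the Grassmann envelope argument applies verbatim.
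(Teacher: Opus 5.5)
Your proposal is correct and follows the paper's route. You coordinatize $J\cong H_n(D,*)$ with $D$ an alternative superalgebra whose symmetric elements lie in the nucleus, deduce ($*$-)simplicity of $D$ from simplicity of $J$, and apply \cite{ShZ0} in characteristic $0$ to get $D=\O$ or $D$ associative. The paper simply cites the super coordinatization theorem of \cite{MSZ}, which your Grassmann-envelope reduction reproves. The obstacle you flag does not arise: coordinatization needs only a strongly connected frame summing to $1$, which the subalgebra $B$ supplies exactly, and no reducedness of $J_{11}$ is required.

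Two small corrections.
\begin{itemize}
\item Lemma \ref{lem7.1} concerns the idempotents attached to the simple components of $A/L$, and here there is only one. Its proof uses $A_{ij}=L_{ij}$ for $i\neq j$, which fails for a frame of $H_n(C,*)$. So drop the claim $L=\sum_i L_{ii}$; you never use it.
\item In the octonion case, $D_{\1}=(0)$ gives $J=A$. Then $L$ is an ideal of the simple algebra $J$, so $L=(0)$ and $J=B$. This is what makes your ``scalar diagonal'' step legitimate.
\end{itemize}
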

  \prf
 By the coordination theorem for superalgebras \cite{MSZ}, the superalgebra $J$ is isomorphic to the superalgebra of $*$-hermitian $n\times  n$ matrices $H(D_n,*)$, where $(D,*)$ is an alternative superalgebra with a superinvolution $*$ such that $*$-symmetric elements  lie in the associative center of $D$.  Since $J$ is simple, the superalgebra $D$ should be simple as well.  Recall that $char\,F=0$, hence by \cite{ShZ0} either $D=\O$, the algebra of octonions, or $D$ is associative.
 
 This proves the lemma.
 
 \ctd
 
 From now on we will assume that $A=J(V,f)+L$.
 
 \begin{lem}\label{lem7.4}
$tr((ML)A \cdot M)=(0).$
\end{lem}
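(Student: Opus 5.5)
We need to show $tr(((ML)A)\cdot M)=(0)$, where $A=J(V,f)+L$, $L=Nil(A)$ and $tr$ is the trace of $A/L\cong J(V,f)$. Since $J(V,f)$ is unital with a single idempotent $e=1$, the trace is just the coefficient of $1$ modulo $L+V$. The plan is to reduce the statement to Lemma \ref{lem2.16}, which says $tr((LM)M)=(0)$. The reduction moves the even factor $a\in A$ off the product and absorbs it, using the invariance of the trace under graded inner derivations.

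\textbf{Step 1.} For $x,y\in M$ and $l\in L$, $a\in A$, consider $((xl)a)\cdot y=y\,R(xl)R(a)$ up to sign. By the super-commutativity and the Jordan identity we have
\[
((xl)a)y=(xl)(ay)\pm (xl,a,y)=(xl)(ay)\pm yD(a,xl)\pm\ldots,
\]
that is, $((xl)a)y$ equals $(xl)(ay)$ plus terms $cD(u,v)$ with $c\in A$.

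\textbf{Step 2.} Two facts will be used here.
\begin{itemize}
\item As in the proof of Lemma \ref{lem2.8}, $tr(cD(u,v))=0$ for $u,v\in M$, $c\in A$.
\item More generally, $tr$ vanishes on $AD(A,A)$. This holds because $D(A,A)$ acts as a derivation of $A$ preserving $L$. On $J(V,f)$ it acts as a skew map of $V$, so its image lies in $V+L$, which has zero trace.
\end{itemize}
We write the associator $(xl,a,y)$ as a combination of such derivation terms. Concretely, $(xl)a\cdot y-(xl)\cdot ay=\pm\, xl\, D(a,y)$ modulo reordering. The element $xl\,D(a,y)$ is odd, however, so it is better to write
\[
((xl)a)y=(xl)(ay)+\big(y\,D(xl,a)\big)\text{-type terms}.
\]
Then we pair with the trace form, using the symmetry $tr(u\cdot vc)=tr(v\cdot cu)$ for odd $u,v$ and even $c$, established in Lemma \ref{lem2.8}. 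Applying this symmetry gives
\[
tr(((xl)a)\cdot y)=\pm\, tr((xl)\cdot(ay)).
\]
Since $ay\in M$, the right-hand side is $tr((xl)\cdot y')$ with $y'=ay\in M$.

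\textbf{Step 3.} Now $(xl)y'\in (LM)M$, so Lemma \ref{lem2.16} gives $tr((xl)y')=0$. Hence $tr(((xl)a)y)=0$ for homogeneous generators. By linearity this yields $tr((ML)A\cdot M)=(0)$.

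The main obstacle is Step 2: justifying that the trace identity $tr(u\cdot vc)=tr(v\cdot cu)$ holds in the present setting with $A=J(V,f)+L$. The earlier derivation relied on $tr(cD(u,v))=0$, and here the trace must be the $J(V,f)$-trace modulo $L$. One has to check that $D(u,v)$ maps $J(V,f)$ into $V+L$. This holds because $D(u,v)$ restricted to $A/L$ is a derivation of the simple algebra $J(V,f)$, which kills $1$ and maps $V$ into $V$. With this check, the remaining manipulations are routine uses of \eqref{D0} and \eqref{D1}.
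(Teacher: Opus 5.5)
Your proof is correct, and it takes a genuinely different and much shorter route than the paper. Stripped of the detours, your Steps 1--3 come down to the single identity
\[
((xl)a)y-(xl)(ay)=aD(xl,y),\qquad x,y\in M,\ l\in L,\ a\in A .
\]
It holds because $aD(xl,y)=(a(xl))y+(ay)(xl)$, with $xl$ and $y$ odd. Hence $tr(((xl)a)y)=tr((xl)(ay))$, since $tr(A\,D(M,M))=0$. Finally, $(xl)(ay)\in (L\cdot M)M$ has zero trace by Lemma~\ref{lem2.16}.

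The paper instead reruns the mechanism of Lemma~\ref{lem2.8}:
\begin{itemize}
\item it assumes $tr((xa)b\cdot y)\neq 0$ and writes $y=((xa)b)R(y)^2\sum_i R(u)^i$;
\item it replaces $(xa)b$ by $xD(a,b)$ via Lemma~\ref{lem2.16};
\item it iterates, using $D(a,b)D(a_1,b)\in R\langle A\rangle R(L)$ and Lemma~\ref{lem2.17}, until the surviving trace term is skew-symmetric in repeated copies of $b$.
\end{itemize}
Both arguments rest on the same inputs: Lemmas~\ref{lem2.16} and \ref{lem2.17}, and the vanishing of $tr(cD(u,v))$. So the paper's iteration buys nothing extra here. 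Your argument shows that the lemma is an immediate corollary of Lemma~\ref{lem2.16} together with invariance of the trace form.

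When writing it up, fix the following points.
\begin{itemize}
\item The trace on $A/L\cong J(V,f)$ comes from a frame $\tfrac12(1\pm v_0)$ with $f(v_0,v_0)=1$, not from a single idempotent $1$. It equals twice the coefficient of $1$. The property you actually use, that it vanishes on $V+L$, is correct.
\item The formula $(xl,a,y)=\pm yD(a,xl)$ is wrong; the correct expression is $aD(xl,y)$. Also, $xl\,D(a,y)$ is even, not odd, since $D(a,y)$ is odd. Delete both.
\item For $D(u,v)$ with $u,v\in M$ to induce a derivation of $A/L$, you need $LD(u,v)\subseteq L$. Since $lD(u,v)=(lu)v+(lv)u$, this is exactly Lemma~\ref{lem2.17}, and you should cite it.
\item The correct form of the symmetry is $tr((cu)v)=tr(u(cv))$. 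The version quoted in Lemma~\ref{lem2.8} differs by a sign, which is harmless here because the right-hand side vanishes.
\end{itemize}
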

  \prf
 Let $x,y\in M, \,a\in L,\, b\in A$  be Perice homogenous
elements such that $tr((xa)b\cdot y)\neq 0$.
Then, as in the proof of Lemma \ref{lem2.8}
\bes 
y=((xa)b)R(y)^2(\sum_{i\geq 0}R(u)^i), u\in L.
\ees 
If $i>0$ then $y_i=((xa)b)R(y)^2R(u)^i\in ML$,  hence by Lemma \ref{lem2.17} $((xa)b\cdot y_i\in M(ML)\subseteq L$  and $tr((xa)b\cdot y_i)=0$.  Therefore,  we may assume that 
$$
tr((xa)b\cdot y)=tr((xa)b\cdot ((xa)b)'),
$$
where $u'=uR(y)^2$.
 By Lemma \ref{lem2.16} $tr((xa)b\cdot y)=tr(xD(a,b)\cdot y)$,  and similary  
 $$
 tr((xa)b\cdot y)=tr(xD(a,b)\cdot (xD(a,b))').
 $$
We have 
 \bes
xD(a,b)\cdot (xD(a,b))'&=&(x\cdot (xD(a,b))')D(a,b)\\
&-&x\cdot (x'D(a,b)+xD(a',b)+xD(a,b'))D(a,b).
 \ees
 Clearly,  $tr((x\cdot (xD(a,b))')D(a,b))=0$.  Furthermore, for any $a,a_1\in L$ we have 
 \bes
 R(a)R(a_1)R(b)&=&-R(b)R(a_1)R(a)-R(ab\cdot a_1)\\
 &+&R(b)R(aa_1)+R(a)R(ba_1)+R(a_1)R(ab)\in R(A)R(L),
 \ees
 which easily implies that 
 \bee\label{idR(A)R(L)}
 D(a,b)D(a_1,b)\in R\la A\ra R(L).
 \eee
  Since $a'\in L$, we conclude that $D(a,b)^2, \, D(a,b)D(a',b)\in R(A)R(L)$,  and therefore by Lemma \ref{lem2.17}
  $$
 tr((xa)b\cdot y)=tr(xD(a,b)D(a,b')\cdot x).
 $$
 Repeating the process, and using the inclusion \eqref{idR(A)R(L)} and its linearization on $b$, we get
 \bes
 tr(xD(a,b)D(a,b')\cdot x)=tr(xD(a,b)D(a,b')\cdot xD(a,b)D(a,b')R(x)^2)\\
 =tr(xD(a,b)D(a,b')\cdot (xD(a,b)D(a,(b')'')+xD(a,b)D(a'',b')\\
 +xD(a'',b)D(a,b')+xD(a,b'')D(a,b')+x''D(a,b)D(a,b')),
 \ees
  where $u''=uR(x)^2$,  and hence 
 \bes
 tr((xa)b\cdot y)&=&\sum_{i=1}^4 tr(xD(a,b)D(a,b')D_{i1}D_{i2}\cdot x)\\ &+&tr(xD(a,b)D(a,b')D(a,b')D(a,b)\cdot x''),
 \ees
where for all $i$ one of $D_{i1},D_{i2}$ has form $D(a,b)$ or $D(a,b')$.  In view of \eqref{idR(A)R(L)} and Lemma \ref{lem2.17},
 the last trace expression is skew-symmetric on $b$-s,  hence is zero.  This completes the proof of the lemma.
 
 \ctd
  
 \begin{lem}\label{lem7.5}
 $(ML)A\cdot M\not\subseteq L$.
 \end{lem}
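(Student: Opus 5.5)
We argue by contradiction and assume $(ML)A\cdot M\subseteq L$. The plan is to show that then the subspace
\[
N=L+ML+(ML)A+\cdots
\]
(more precisely the $J$-ideal generated by $L$, computed via rescuing operators) meets $A$ only in $L$. Since $L\neq(0)$ and $J$ is simple by Corollary \ref{cor2.3}, this ideal must be all of $J$, which is impossible because $1\notin L$.

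The first step is to identify the $A$-part of the ideal generated by $L$. Take $0\neq a\in L$ and a rescuing operator $w=R(a_1)\cdots R(a_n)$ of minimal length, so that $aw\in A\setminus L$. By Lemma \ref{lem2.18}, the letters alternate in parity, the last letter $a_n$ is odd, and even letters are skew-symmetric modulo $L$. Since $L\cdot A\subseteq L$, the first letter $a_1$ must be odd, so $aR(a_1)\in ML$. The next letter $a_2$ is even, so the element $aR(a_1)R(a_2)$ lies in $(ML)A$.

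If $n=3$, the pattern is odd, even, odd, and then $aw\in((ML)A)M\subseteq L$ by our assumption. This contradicts the choice of $w$.

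The main obstacle is the case $n>3$. Here we must show that $(ML)A$ is stable enough under further multiplication that the rescuing element still lands in $L$. The natural tool is Lemma \ref{lem2.21}(b) applied to the $J_{\0}$-submodule $V=(ML)A+ML$. I would try to prove that $((ML)A\cdot M)A\subseteq L$. This uses that $L$ is an ideal of $A$. It also uses the identities \eqref{OSJ} and \eqref{D1}, which let us move the even letter next to an $L$-factor. Each step produces either a term in $((ML)A)M$ or a term that still has the shape (element of $ML$)$\cdot A\cdot M$, and all such terms lie in $L$. By induction on $n$, using the alternating form from Lemma \ref{lem2.18}, we get $aw\in L$, a contradiction.

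Hence no nonzero element of $L$ can be rescued, so $L=(0)$. This contradicts the standing assumption $L\neq(0)$ of this section. Lemma \ref{lem7.4} (the trace vanishes) will presumably be combined with the statement proved here in the next step of the paper.
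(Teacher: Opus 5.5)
Your proof is correct, but it takes a different route from the paper. The paper shows directly that, under the assumption $(ML)A\cdot M\subseteq L$, the subspace $I=L+(ML)A$ is an ideal of $J$. The only nontrivial inclusion is $(ML)A\cdot A\subseteq (ML)A$. This follows from the associator computation $(ML)A\cdot A\equiv (ML,A,A)\subseteq (M,A,AL)+(L,A,AM)\subseteq (M,A,L)+(L,A,M)\equiv 0$ modulo $(ML)A$. Simplicity then forces $I=J$, i.e.\ $L=A$.

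You sidestep exactly this inclusion by using the parity alternation in a minimal rescuing operator (Lemma \ref{lem2.18}, parts 1--2). Since two even letters are never adjacent, an element of $(ML)A$ is never multiplied by an element of $A$. The chase $L\to ML\to (ML)A\to L$ then works using only three facts:
\begin{itemize}
\item $L$ is an ideal of $A$;
\item $L\cdot M\subseteq ML$;
\item the hypothesis $((ML)A)M\subseteq L$.
\end{itemize}
It shows that any $aw$ lying in $A$ already lies in $L$.

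The ``main obstacle'' you describe for $n>3$ is not really one. The inclusion $((ML)A\cdot M)A\subseteq L\cdot A\subseteq L$ is immediate, so Lemma \ref{lem2.21}(b), \eqref{OSJ} and \eqref{D1} are not needed. Your claim that $a_1$ must be odd would require minimality over all nonzero elements of $L$, not just for the fixed $a$. It is also unnecessary, because the chase works whatever the parity of the first letter.

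As for the trade-off: the paper's argument is self-contained and exhibits an explicit proper ideal. Yours avoids the associator computation but leans on Lemma \ref{lem2.18}, whose proof itself rests on rearranging letters via \eqref{OSJ} and on Lemma \ref{lem2.17}.
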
 
 \prf
 If $(ML)A\cdot M\subseteq L$ then $I=L+(ML)\cdot A$ is an ideal of the superalgebra $J$.  To prove this we need only to check that $(ML)A\cdot A\subseteq (ML)A$.
 We have the following comparisons    modulo $(ML)A$
 \bes
 (ML)A\cdot A&\equiv& (ML,A,A)\subseteq (M,A,AL)+(L,A,AM)\\
 &\subseteq& (M,A,L)+(L,A,M)\equiv (0).
 \ees
 By the simplicity of $J$, we have $I=J$ and hence $L=A$, a contradiction.  
 This proves the lemma.
 
 \ctd
 \smallskip
 
 It follows from lemmas \ref{lem7.4} and \ref{lem7.5} that there exist elements $x,y\in M,\,a\in L,\, v,w\in V$ such that 
 $$
 tr(((xa)v) y\cdot w)\neq 0.
 $$
 \begin{lem}\label{lem7.31}
 The trace expression $tr(((xa)v) y\cdot w)$ is skew-symmetric on $v,w$.
 \end{lem}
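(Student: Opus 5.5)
We prove the skew-symmetry by showing that the symmetric part vanishes. Since the expression is bilinear in $(v,w)$, it suffices to show that $tr(((xa)v)y\cdot v)=0$ for every $v\in V$, and then polarize. We may assume that $v$ is a Pierce element lying in $B\cong J(V,f)$, using the splitting $A=B\oplus L$. The idea is to move the outer multiplication $R(v)$ inside, next to the inner $R(v)$, so that the two copies of $v$ combine into $v^2=f(v,v)e+(\hbox{element of }L)$.

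\textbf{Step 1.} Write the expression as $tr\bigl((xa)R(v)R(y)R(v)\bigr)$, noting that $(xa)v\cdot y$ is even. Apply the operator identity \eqref{OSJ} with the factors $v,y,v$. This gives
\bes
R(v)R(y)R(v)&=&-R(v)R(y)R(v)-R((vv)y)+R(v)R(yv)+R(y)R(v^2)+R(v)R(vy),
\ees
with signs appropriate to the parities: $y$ is odd and $v$ is even. Hence
\bes
2R(v)R(y)R(v)=2R(v)R(yv)+R(y)R(v^2)-R(v^2y).
\ees

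\textbf{Step 2.} Apply each term to $xa\in ML$ and take the trace.

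The terms containing $v^2$ are $(xa)y\cdot v^2$ and $(xa)\cdot v^2y$. Write $v^2=\lambda e+l$ with $l\in L$, where $e$ is the identity modulo $L$. Then:
\begin{itemize}
\item[] the scalar parts give $\lambda\, tr((xa)y)-\lambda\, tr((xa)y)$, which cancel;
\item[] the $L$-parts give $tr(((xa)y)\cdot l)=0$, since $(ML)M\subseteq L$ by Lemma \ref{lem2.17} and $L$ is an ideal of $A$;
\item[] the remaining $L$-part is $tr((xa)(ly))$, which lies in $(ML)M\subseteq L$ and so has trace $0$.
\end{itemize}

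The remaining term is $2\,tr(((xa)v)(yv))$. Using the cyclicity-type relation $tr(u\cdot cz)=tr(z\cdot uc)$ for odd $u,z$ and even $c$ (established in the proof of Lemma \ref{lem2.8}), rewrite it as $2\,tr((((xa)v)v)\cdot y)$. Now $R(v)^2=\tfrac12 R(v^2)+\tfrac12 D(v,v)$, and $D(v,v)=0$ for even $v$. So this equals $tr(((xa)v^2)y)$. Its scalar part is $\lambda\, tr((xa)y)$, and its $L$-part lies in $(ML)M\subseteq L$, which has trace $0$.

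\textbf{Step 3.} Collecting terms, $2\,tr(((xa)v)y\cdot v)=c\,\lambda\, tr((xa)y)$ for some constant $c$. The main obstacle is to show that $tr((xa)y)=0$ for $a\in L$. This is exactly Lemma \ref{lem2.16}, since $tr((L\cdot J_{\1})J_{\1})=(0)$. Therefore the symmetric part vanishes, and polarizing in $v$ gives the claimed skew-symmetry in $v,w$.

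The delicate part is the bookkeeping of the scalar terms. Even so, every surviving term is a multiple of $tr((xa)y)$ or lies in $(LM)M$, so all of them are killed by Lemmas \ref{lem2.16} and \ref{lem2.17}.
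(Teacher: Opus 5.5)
Your reduction to showing $tr(((xa)v)y\cdot v)=0$ and then polarizing is the same as the paper's. The expansion $2R(v)R(y)R(v)=2R(v)R(yv)+R(y)R(v^2)-R(v^2y)$ obtained from \eqref{OSJ} is correct, and both $v^2$-terms do lie in $L$ by Lemma \ref{lem2.17}. The gap is in the last term. After rewriting $tr(((xa)v)(yv))$ as $tr((((xa)v)v)\cdot y)$, you use $R(v)^2=\tfrac12R(v^2)+\tfrac12D(v,v)$. This is false for even $v$: $D(v,v)=0$ trivially, and the correct relation is $R(v)^2=\tfrac12R(v^2)+\tfrac12U(v)$, which is just the definition of $U(v)$. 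The discrepancy is real. In $J(V,f)$, for $u$ orthogonal to $v$ one has $uR(v)^2=0$ but $\tfrac12uR(v^2)=\tfrac12f(v,v)u$. In the superform algebra of Example 3, every odd $m$ satisfies $mR(v)^2=0\neq\tfrac12(v|v)m$ when $(v|v)\neq0$. So you have silently dropped the term $\tfrac12\,tr(((xa)U(v))\cdot y)$, and nothing in your argument shows that it vanishes. The claim that everything collapses to a multiple of $tr((xa)y)$ is therefore unsupported.

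The missing term can be killed, but this needs an extra idea. The identity of the split copy of $J(V,f)$ is $1$, because $L$ contains no nonzero idempotents. Hence for $f(v,v)\neq0$ one has $v^2=f(v,v)\cdot1$, and $f(v,v)^{-1}U(v)$ is an automorphism of $J$ preserving $M$ and $L$. It follows that $(xa)U(v)\in ML$ and $((xa)U(v))y\in(ML)M\subseteq L$. Finally, $v\mapsto tr(((xa)U(v))y)$ is a quadratic form vanishing off the isotropic cone, so it vanishes identically.

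The paper avoids $R(v)^2$ altogether. Since $(xa)(vy)\in(ML)M\subseteq L$, it writes $((xa)v)y\cdot v\equiv(xa,v,y)v=\tfrac12(xa,v^2,y)\equiv0\pmod L$. This uses the degree-four identity $(p,v,q)v=\tfrac12(p,v^2,q)$ (in special algebras both sides equal $\tfrac18[v^2,[p,q]]$) together with $v^2\in F\cdot1+L$. This gives the stronger conclusion $((xa)v)y\cdot v\in L$ in one line.
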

 \prf
 By Lemma \ref{lem2.17} we have 
 \bes
 ((xa)v) y\cdot v\equiv (xa,v,y)v=\tfrac12 (xa,v^2,y)\equiv 0\ (mod\ L).
 \ees
 Therefore,  $tr(((xa)v) y\cdot v)=0$, which proves the lemma.
 
 \ctd
 
 \begin{prop}\label{prop7.1}
 Let $J=A+M$  be a simple Jordan superalgebra where $A=J(V)+L$ where $L^t=(0)$.  If $\dim\,V>2t+1$ then $L=(0)$ and $J$ is a superalgebra of a superform.
 \end{prop}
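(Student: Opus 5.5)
We argue by contradiction: assume $L\neq(0)$. Then Lemmas \ref{lem7.4}, \ref{lem7.5} and \ref{lem7.31} supply elements $x,y\in M$, $a\in L$, $v,w\in V$ with $tr(((xa)v)y\cdot w)\neq 0$, and this trace is skew-symmetric in $v,w$. The plan is to exploit this skew-symmetry over many vectors of $V$ and to contradict the nilpotency $L^t=(0)$. Concretely, we fix an orthonormal family $v_1,\ldots,v_{m}\in V$ with $m>2t+1$, so that $v_i\circ v_j=\delta_{ij}e$. Such a family exists because $F$ is algebraically closed and $\dim V>2t+1$; one should also reduce to a nondegenerate part of $f$, using that $J$ is simple. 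We then study the multilinear map
\bes
\Phi(v_{i_1},\ldots,v_{i_k})=tr\big((((xa)R(v_{i_1})\cdots R(v_{i_k}))y)\cdot \cdots\big).
\ees
By Lemma \ref{lem2.17} and \eqref{OSJ}, operators $R(v_i)R(v_j)$ acting on $ML$ anticommute modulo $R(A)R(L)$ and $R(v_i)^2\equiv \tfrac14 Id$. Hence, modulo terms that carry extra factors from $L$, the even-factor products behave like a Clifford algebra, exactly as in Lemma \ref{lem5.3}.

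The key step is an induction that trades vectors of $V$ for factors of $L$. First we show that the nonvanishing of $tr(((xa)v)y\cdot w)$ forces $[v,w]$-type elements to produce a nonzero element of $L$. These are the elements $\{x',v,y'\}$ or $(x'v)y'-(x'w)y'$, which lie in $L$ modulo $J(V)$ by skew-symmetry, since $J(V)$ contributes only symmetric $f(v,w)e$. Next we iterate. Each time we use two fresh orthonormal vectors $v_{2i-1},v_{2i}$, the skew-symmetric trace lets us insert one more factor of $L$ while keeping a nonzero trace. To do this we use the Lemma \ref{lem2.8}-style rescue: write $y$ as $((xa)b)R(y)^2\sum R(u)^i$ and drop the $i>0$ terms by Lemma \ref{lem2.17}. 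After $t$ steps, which use $2t$ vectors plus one more for the final pairing, we obtain a nonzero trace of an expression lying in $L^t=(0)$. This is the contradiction, so $L=(0)$.

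Once $L=(0)$, we have $A=J(V)$ with $\dim V>5$. The case of a single bilinear-form even part is then covered by \cite{ShZ} (classification of simple superalgebras whose even part is a Jordan algebra of a nondegenerate form), and by the case analysis of Section~6 this yields that $J$ is a superalgebra of a superform. Here $\dim V>5$ excludes $K_{10}$, and $D_t$ and the like are excluded by the dimension.

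The main obstacle is the induction step. We must verify that each pairing with a new pair of $V$-vectors genuinely raises the $L$-degree by one, and not merely moves existing $L$-factors around. We also must ensure that the correction terms produced by \eqref{OSJ} and \eqref{idR(A)R(L)} either have higher $L$-degree or vanish in trace by Lemmas \ref{lem2.16} and \ref{lem2.17}. We would first attempt this bookkeeping with the Clifford idempotent $E$ of Lemma \ref{lem5.3}, restricted to $v_1,\ldots,v_{2t}$, to isolate the relevant weight spaces.
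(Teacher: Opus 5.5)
There is a genuine gap. The step that carries the whole proof is producing $t$ elements of $L$ whose product is nonzero. You leave this as an unspecified induction (you flag it yourself), and the device you propose for it is not available here.

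In this section $M$ is a unital $A$-bimodule: $1\in J(V)$ acts as the identity. So for $v\in V$ with $v^2=1$ one has $R(v)^2=\tfrac12(Id+U(v))$ on $M$, where $U(v)$ is an involution with both eigenvalues $\pm1$. This is not a scalar, even modulo $L$-terms, and $R(v_i)R(v_j)+R(v_j)R(v_i)$ does not vanish for orthogonal $v_i,v_j$. In the configuration that actually arises, the odd element $x$ satisfies $x\cdot v_i=0$ for all $i$, so $R(v_i)^2x=0$. The Clifford relations and the idempotent $E$ of Lemma~\ref{lem5.3} came from $M=\{e,M,f\}$ being a Peirce $\tfrac12$-space for an idempotent $e\neq 1$, and that structure is absent here.

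Your ``first step'' also aims at the wrong target. $a$ is already a nonzero element of $L$. Nothing in your sketch explains why a fresh pair $v_{2i-1},v_{2i}$ contributes a genuinely new factor of $L$ rather than rearranging existing ones; neither the rescue formula from the proof of Lemma~\ref{lem7.4} nor the skew-symmetry of the trace does this. Note too that $tr$ vanishes on $L$, so the contradiction cannot be a ``nonzero trace of an element of $L^t$''. What you need is a product of $t$ elements of $L$ which, after applying suitable operators, is $\equiv 1\pmod{L}$.

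The missing idea is a grading.
\begin{itemize}
\item Take $n=t+1$ and an orthonormal system $v_1,\dots,v_{2n}$ whose span contains $v,w$. The commuting involutive automorphisms $U(v_1),\dots,U(v_{2n})$ give a $(\Z/2\Z)^{2n}$-grading of $J$, with characters labelled by Clifford monomials.
\item Using Lemma~\ref{lem7.31} and a commutator computation with $D(v_1,v_i)$, the paper shows the weights are forced (Lemma~\ref{lem7.6}): $a\sim v_1v_2$ and $x,y\sim v_1\cdots v_{2n}$, and one may take $x=y$.
\item The $t$ factors are then the rotated copies $a_{2k-1,2k}=aD(v_1,v_{2k-1})D(v_2,v_{2k})\in L$. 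They have pairwise disjoint weights $v_{2k-1}v_{2k}$ and satisfy $a_{ij}D(x,v_i)D(x,v_j)\equiv 1\pmod{L}$.
\item Expand $a_{12}R(a_{34})\cdots R(a_{2t-1,2t})\,D(x,v_1)\cdots D(x,v_{2t})$ by the Leibniz rule. An even element whose weight is an even-length monomial other than $1$ and $v_1\cdots v_{2n}$ must lie in $L$ (Lemma~\ref{lem7.7}). This is where more than $2t$ vectors are needed: otherwise the top weight is carried by elements of $J(V)$ not in $L$.
\item These weight considerations kill every term except the one in which each $a_{2k-1,2k}$ receives exactly $D(x,v_{2k-1})D(x,v_{2k})$. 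So the whole expression is $\equiv 1\pmod{L}$ (Lemma~\ref{lem7.8}), and $L^t\neq(0)$.
\end{itemize}
This weight bookkeeping is exactly what answers your worry about whether a new pair of vectors genuinely raises the $L$-degree.
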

 
 Let  $n=t+1$ and $v_1,\ldots, v_{2n}\in V$ be an orthonormal system such that $v,w\in Span_F\la v_1,\ldots,v_{2n}\ra$.  	
 
 As in \cite{ShZ} we consider  the action of the group $(\Z/2\Z)^{2n}$ generated by the operators $U(v_1),\ldots,U(v_n)$ on $A,M,L$. We consider also the action of  $(\Z/2\Z)^{2n}$ on the Clifford algebra $Cl(v_1,\ldots,v_{2n})$. 
 
 For an element $x\in J$ we write $	x\sim v_{i_1}\cdots v_{i_k}\in Cl(v_1,\ldots,v_{2n}),\,i_1<\cdots< i_k,$  if $x$ and $v_{i_1}\cdots v_{i_k}$ belong to the same eigenfunctional with respect to the action of the group  $(\Z/2\Z)^{2n}$.  For an eigenvector $x$ we denote $|x|=v_{i_1}\cdots v_{i_k}.$
 Without loss of generality we assume that $v=v_1, w=v_2; x,y,a$ are eigenvectors and
  \bes
  tr(((xa)v_1) y\cdot v_2)=1.
  \ees
 We say that an eigenvector $x\in J $ anticommutes with $v_i$ if $x\cdot v_i=0$, which is equivalent to $U(v_i)(x)=-x$.  The vector $x$ commutes with $v_i$ if $D(x,v_i)=0$,  which is equivalent to $U(v_i)(x)=x$. 
 
Lemmas  \ref{lem7.4} and \ref{lem7.5}  imply that $x,y,a$ anticommute with $v_1$ and $v_2$.  
 
 \begin{lem}\label{lem7.6}
 $a\sim v_1v_2; \, x,y\sim v_1,\cdots v_{2n}$.
 \end{lem}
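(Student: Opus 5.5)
The plan is to extract the eigen-labels of $a$, $x$, $y$ from the grading by $(\Z/2\Z)^{2n}$. Every homogeneous element is a sum of eigenvectors, and the Jordan product of eigenvectors is again an eigenvector; its label is the product of the labels in $Cl(v_1,\ldots,v_{2n})$, taken up to sign. The trace $tr$ is nonzero only on elements whose label is $1$, i.e. on elements commuting with every $v_i$ modulo $L$. Indeed, $J_{\0}/L\cong J(V,f)$, $tr$ factors through this quotient, and the eigenvector $v_i$ has label $v_i$. So
$$
tr(((xa)v_1)y\cdot v_2)\neq 0
$$
forces $|x|\,|a|\,v_1\,|y|\,v_2=\pm 1$.

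The first step is to pin down $|a|$. By Lemmas \ref{lem7.4} and \ref{lem7.5}, $a$ anticommutes with $v_1$ and $v_2$. For an even element $a\in A=J(V)+L$, anticommuting with $v_i$ means $a\cdot v_i=0$ and $U(v_i)a=-a$.

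\begin{itemize}
\item I will show that $a$ commutes with every $v_j$ for $j\ge 3$. Since $v_1,v_2$ and $v_j$ lie in a common orthonormal system, the operators $D(v_1,v_j)$ and $D(v_2,v_j)$ act on eigenvectors and permute labels by $v_1\leftrightarrow v_j$, $v_2\leftrightarrow v_j$.
\item If $a$ anticommuted with some $v_j$, $j\ge3$, I would replace $a$ by a suitable $aD(v_1,v_j)$ or $aU(v_j)$-type combination. The aim is to keep the trace nonzero while producing a contradiction with the skew-symmetry of Lemma \ref{lem7.31}, applied to the pair $v_1,v_j$.
\item Alternatively, I would use the fact that the $U(v_i)$-eigenspaces of $L$ inside the Clifford-type module must have even label parity, because $L\subseteq J_{\0}$ and $L\cdot v_i$ lands in $L$.
\end{itemize}

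This gives $a\sim v_1v_2$.

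The second step determines $|x|$ and $|y|$. Given $|a|=v_1v_2$, the label condition becomes $|x|\,|y|=\pm1$, so $|x|=|y|$ up to sign. I must then show that $x$ anticommutes with every $v_i$, i.e. $x\cdot v_i\ne x$-type behaviour fails for all $i$. I would argue by contradiction: suppose $x$ commutes with some $v_k$. If $k\in\{1,2\}$, then $xa$ or $(xa)v_1$ degenerates. If $k\ge 3$, the operators $R_M(v_k)$, which square to $-\tfrac14$ and anticommute as in the proof of Lemma \ref{lem5.3}, let me move $v_k$ across the product. I then use Lemma \ref{lem2.17}, $(LM)M\subseteq L$, together with Lemma \ref{lem7.4}, to show that the trace would factor through $tr((ML)A\cdot M)=(0)$ or vanish by the skew-symmetry of Lemma \ref{lem7.31} in $v_1$ and $v_k$.

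The main obstacle is this second step: tracking how $R(v_k)$ interacts with $x$ inside the nested product without losing control modulo $L$. I would handle it with the Clifford structure $R_M(v_i)^2=-\tfrac14$ and $R_M(v_i)\circ R_M(v_j)=0$ on $M$, which makes $M$ a sum of $2^n$-dimensional spinor modules. In that setting an odd eigenvector is determined by the $\pm$ pattern of its commutation with the $v_i$. Commuting with $v_k$ would force $xR(v_k)=0$, and then the Jordan identity (\ref{OSJ}) applied to $(x,a,v_k)$ would show that the trace expression is symmetric in $v_1,v_k$. Combined with Lemma \ref{lem7.31}, this makes the trace zero, a contradiction. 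Hence $x$ and $y$ anticommute with all $v_i$, which gives $x,y\sim v_1\cdots v_{2n}$.
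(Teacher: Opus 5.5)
Your proposal has genuine gaps in both steps.

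\textbf{First step.} Nothing is actually proved here: the bullets state intentions. The ``even label parity'' alternative cannot work. Membership in $J_{\bar 0}$ says nothing about label parity, since $v_1\in J_{\bar 0}$ has label $v_1$. And even parity would not exclude $a\sim v_1v_2v_jv_k$ anyway.

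Your second bullet points toward the paper's route, but it misdescribes it and omits its ingredients. The paper argues as follows. Suppose $a\cdot v_j=0$ for some $j\ge 3$. The label condition forces exactly one of $x,y$, say $x$, to anticommute with $v_j$. Then $aD(v_1,v_j)=(av_1)v_j-(av_j)v_1=0$, so nothing ``keeps the trace nonzero''. On the other hand, $aD(x,v_j)D(y,v_2)$ has label $\pm v_1v_j$ and so lies in $L$. Hence $aD(v_1,v_j)D(x,v_j)D(y,v_2)\equiv a[D(v_1,v_j),D(x,v_j)D(y,v_2)] \pmod L$. The paper expands this commutator by the derivation rule and uses Lemma~\ref{lem7.31} to evaluate the trace as $-2$.

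Be warned that, with signs tracked, the two contributions cancel. By \eqref{D1} and $v_2\cdot v_j=0$, we have $D((yv_j)v_2,v_j)=D(v_2,(yv_j)v_j)=D(v_2,y)=-D(y,v_2)$. So after the swap given by Lemma~\ref{lem7.31}, the term $tr(aD(x,v_j)D((yv_j)v_2,v_1))$ equals $+tr(aD(x,v_1)D(y,v_2))$. The paper's chain instead writes $-tr(aD(x,v_1)D((yv_j)v_2,v_j))=-tr(aD(x,v_1)D((yv_j)v_j,v_2))$, which has the wrong sign. With the correct sign the total trace is $0$, which only reproduces $aD(v_1,v_j)=0$. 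So this step needs an idea beyond what your sketch, or that computation, supplies.

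\textbf{Second step.} This step rests on facts that are false in this section.
\begin{itemize}
\item The relations $R_M(v_k)^2=-\tfrac14$ and $R_M(v_i)\circ R_M(v_j)=0$ come from Lemma~\ref{lem5.3}, where the identity $e$ of $J(V)$ is not $1$ and $M=\{e,M,f\}$. Here $1\in J(V)$ and $v_k^2=1$, so $U(v_k)=2R(v_k)^2-Id$ is an involutive automorphism and $R(v_k)^2$ has eigenvalues $0,1$ on $M$.
\item You invert the definitions. $x\cdot v_k=0$ means $x$ anticommutes with $v_k$, as you correctly said for $a$; commuting means $(xv_k)v_k=x$.
\item $v_k$ does not occur in $tr(((xa)v_1)y\cdot v_2)$, so ``symmetric in $v_1,v_k$'' has no content.
\end{itemize}

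The missing argument is short. Suppose $x$ commutes with $v_k$, $k\ge 3$. Put $x'=xv_k$, so $x=x'v_k$. Since $a$ commutes with $v_k$, $xa=(x'a)v_k$; this is where the second step depends on the first. Then
\[
(xa)v_1\cdot y=((x'a)v_k)v_1\cdot y=(x'a,v_k,v_1)\,y\equiv(x'a,v_ky,v_1)-v_k(x'a,y,v_1)\equiv 0 \pmod L
\]
by Lemma~\ref{lem2.17}. This contradicts $tr(((xa)v_1)y\cdot v_2)=1$.

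Your transfer of the conclusion from $x$ to $y$ via $|x|\,|y|=\pm1$ is fine. It can replace the paper's use of the symmetry $(xa)v\cdot y\equiv(ya)v\cdot x \pmod L$.
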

 \prf
 We need to show that the element $a$ commutes with all $v_i,\,i\geq 3$. 
 Suppose that $a\cdot v_i=0$.  We have
 \bes
 (((xa)v_1) y)\cdot v_2=aD(x,v_1)D(y,v_2) \ (mod\ L).
 \ees
 The product of eigenvalues of $a,x,y$ with  respect to $U(v_i)$ is 1.  Hence one of the elements $x,y$ anticommutes with $v_i$, the other one commutes with $v_i$.
 Suppose that $x$ anticommutes with $v_i$,  $y$ commutes with $v_i$.  
 
 We will show that 
 \bes
 tr(aD(v_1,v_i)D(x,v_i)D(y,v_2))\neq 0.
 \ees
 That contradicts $a\cdot v_i=0$.
 
 We have $|x||y||a|v_1v_2=\pm 1$, hence $|x||y||a|=\pm v_1v_2$.  Now,
 \bes
 |aD(x,v_i)D(y,v_2)|=\pm |a||x||y|v_iv_2=v_1v_i,
 \ees
 hence $aD(x,v_i)D(y,v_2)\in L$.
 
 Therefore,  we have
 \bes
 aD(v_1,v_i)D(x,v_i)D(y,v_2)=a[D(v_1,v_i),D(x,v_i)D(y,v_2)] \ (mod\ L)
 \ees
 Furthermore,
 \bes
 [D(v_1,v_i),D(x,v_i)]&=&-D(x,v_1),\\ \
 [D(v_1,v_i),D(y,v_2)]&=&-D((yv_i)v_1,v_2)=D((yv_i)v_2,v_1).
 \ees
 Hence
\bes 
 aD(v_1,v_i)D(x,v_i)D(y,v_2)=-a(D(x,v_1)D(y,v_2)-D(x,v_i)D((yv_i)v_2,v_1)).
 \ees
 By Lemma \ref{lem7.31} we have
 \bes
 &tr(aD(x,v_i)D((yv_i)v_2,v_1))=-tr(aD(x,v_1)D((yv_i)v_2,v_i))&\\
 &=-tr(aD(x,v_1)D((yv_i)v_i,v_2))=-tr(aD(x,v_1)D(y,v_2)).&
 \ees
 Therefore,
 \bes
 tr(aD(v_1,v_i)D(x,v_i)D(y,v_2))&=&-2\, tr(a(D(x,v_1)D(y,v_2))\\
 &=&-2\, tr(((xa)v_1) y\cdot v_2)=-2,
 \ees
 a contradiction. We showed that $a\sim v_1v_2$.
 
 Now we have to show that $x$ and $y$ anticommute with any $v_i,\, i\geq 3$.
 Suppose that $x$ commutes with $v_i,\,x'=x\cdot v_i,\,x=x'\cdot v_i$. Then $x\cdot a=x'R(v_i)R(a)=(x'a)v_i$.
 
 By the Jordan identity and Lemma \ref{lem2.17} we have the following comparisons modulo $L$
 \bes
 (xa)v_1\cdot y&=& ((x'a)v_i)v_1\cdot y=(x'a,v_i,v_1)y\\
& \equiv& (x'a,v_iy,v_1)-v_i(x'a,y,v_1)\equiv 0,
\ees
 a contradiction.   We showed that $x\sim v_1\cdots v_{2n}$.  Since $(xa)v\cdot y=(ya)v\cdot x \ (mod\ L),$ we conclude that $y\sim v_1\cdots v_{2n}$ as well.
 This completes the proof of the lemma.
 \ctd
 
 Since the expression $(xa)v \cdot y$ is symmetric in $x,y$ modulo $L$, without loss of generality we will assume that $x = y$.

 Let $a_{12}=a$. For $3\leq i\neq j\leq 2n$ let
 \bes
 a_{ij}=a_{12}D(v_1,v_i)D(v_2,v_j),\ a_{ij}\sim v_iv_j.
 \ees
 It is easy to see using the fact that $x$ anticommutes with all $v_k$  that 
 \bes 
 a_{ij}D(x,v_i)D(x,v_j)=1\ (mod\ L).
 \ees
 We also notice that 
 \bes
 D(x,v_i)D(x,v_j)+D(x,v_j)D(x,v_i)=0
 \ees
 for all $i,j$.
 
 \begin{lem}\label{lem7.7}
 (a) Let $1\leq i_1<\cdots<i_{2m}\leq 2n, \ m<n-1$. Then
 $
 a_{ij}D(x,v_{i_1})\cdots D(x, v_{i_{2m}})\in L
 $
 unless $\{i_1,\ldots,i_{2m}\}=\{i,j\}$;\\
 (b) If $1\leq i_1\leq\cdots\leq i_{2m+1}\leq 2n$  then $a_{ij}D(x,v_{i_1})\cdots D(x,v_{i_{2m+1}})\in M\cdot L$ unless $\{i_1,\ldots,i_{2m+1}\}=\{i\}$ or $\{j\}$.
 \end{lem}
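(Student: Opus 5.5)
The plan is a grading argument with respect to the group $(\Z/2\Z)^{2n}$ generated by $U(v_1),\ldots,U(v_{2n})$, followed by a direct computation in the one case where grading alone does not decide membership. First record the labels of the operators. By Lemma \ref{lem7.6}, $x\sim v_1\cdots v_{2n}$ and $x$ anticommutes with every $v_k$. Since $D(x,v_k)$ combines $R(x)$ with $R(v_k)$, it shifts labels by $|x|v_k$, i.e.\ multiplies the label by $v_1\cdots\widehat{v_k}\cdots v_{2n}$ up to sign, and it reverses parity.

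Consequently, a product of $2m$ such derivations with distinct indices $i_1,\ldots,i_{2m}$ multiplies labels by $\pm v_{i_1}\cdots v_{i_{2m}}$. Here the even number of copies of $v_1\cdots v_{2n}$ cancel. Repeated indices are handled by the anticommutation relation $D(x,v_i)D(x,v_j)=-D(x,v_j)D(x,v_i)$ recorded just before the lemma. For $i=j$ this gives $D(x,v_i)^2=0$, so we may assume all indices are distinct; in (b) the indices are not assumed strictly increasing, which is why this reduction is needed.

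For (a), the element $b=a_{ij}D(x,v_{i_1})\cdots D(x,v_{i_{2m}})$ is even, with label $\pm v_iv_jv_{i_1}\cdots v_{i_{2m}}$. The even part modulo $L$ is $J(V)$, whose eigenvectors in the span of $v_1,\ldots,v_{2n}$ have labels $1$ (for $e$) or a single $v_k$ on one side, or the complement label on the other side, depending on convention. I would pin this down from $v_k\sim v_k$, with $e$ and the orthogonal complement of $\mathrm{Span}\la v_1,\ldots,v_{2n}\ra$ having label $1$. Hence $b\notin L$ forces its label to have length $0$ or $1$ in the Clifford grading. An even product has even length, so the label must be $1$, i.e.\ $\{i_1,\ldots,i_{2m}\}=\{i,j\}$. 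The one genuine alternative is that the symmetric difference of $\{i,j\}$ and $\{i_1,\ldots,i_{2m}\}$ is the whole set $\{1,\ldots,2n\}$, when the full product $v_1\cdots v_{2n}$ may be identified with a scalar-like label. The hypothesis $m<n-1$ gives $2m+2<2n$, which rules this out.

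For (b), the element $c=a_{ij}D(x,v_{i_1})\cdots D(x,v_{i_{2m+1}})$ is odd, and $D(x,v_{i_{2m+1}})$ applied to an even element $b'$ gives $b'D(x,v)=(b'x)v-(b'v)x$. Write the even part of the product before the last factor as an element of $J(V)+L$. Its $L$-component contributes elements of $(LM)A+LM\subseteq ML$; here I would use Lemma \ref{lem2.10'}-type inclusions to show that $(ML)A\subseteq ML+M L$ modulo terms of the same form. Its $J(V)$-component is, by part (a) applied with $2m$ factors and by the label computation, nonzero only when it is a multiple of $e$ or of some $v_k$. The label count then shows that the final odd element can avoid $M\cdot L$ only if the remaining index set reduces to $\{i\}$ or $\{j\}$.

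The main obstacle is (b). Membership in $M\cdot L$ is not a grading statement, so a $J(V)$-component $\lambda v_k$ or $\lambda e$ of the intermediate even element, multiplied by $x$ and $v_{i_{2m+1}}$, can produce odd elements outside $ML$. I would first try to show by induction on $m$ that these components vanish except in the excluded configurations, using the trace normalization $a_{ij}D(x,v_i)D(x,v_j)\equiv 1$ and Lemma \ref{lem2.17} to control the error terms.
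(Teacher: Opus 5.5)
Your part (a) is the paper's argument: the label has even length $<2n$, so it is neither some $v_l$ nor $v_1\cdots v_{2n}$. Part (b), however, has a genuine gap.

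You apply the last factor $D(x,v_{i_{2m+1}})$ as it stands and dispose of the $L$-component $b$ of the preceding even element via $(bx)v\in (LM)A\subseteq ML$. That inclusion is false in this setting. Since $(ML)M\subseteq L$ by Lemma \ref{lem2.17}, the inclusion $(ML)A\subseteq ML$ would give $(ML)A\cdot M\subseteq L$, contradicting Lemma \ref{lem7.5}. So no manipulation of the type in Lemma \ref{lem2.10'} can produce it. Nor can the term be avoided in your ordering. Take the index set $\{k,i,j\}$ with $k<i<j$. Then $b=a_{ij}D(x,v_k)D(x,v_i)$ has label $\pm v_jv_k$, so $b\cdot v_j=0$ and $bD(x,v_j)=(bx)\cdot v_j\in (LM)A$. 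Its membership in $M\cdot L$ is exactly what has to be proved. The $J(V)$-components you single out as the main obstacle are not the problem: a multiple of $1$ is killed since $1\cdot D(x,v)=0$, and a component $\lambda v_l$ is excluded by the parity of the label.

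The missing idea is to choose which derivation acts last. An odd-size set of distinct indices other than $\{i\}$, $\{j\}$ contains some $k\notin\{i,j\}$. Since $D(x,v_p)$ and $D(x,v_q)$ anticommute for $p\neq q$, you may move $D(x,v_k)$ to the end at the cost of a sign. The preceding even element $a'$ then has a label not involving $v_k$, so $a'$ commutes with $v_k$, while $x$ anticommutes with $v_k$. Hence $a'x$ anticommutes with $v_k$, i.e.\ $(a'x)\cdot v_k=0$, and $a'D(x,v_k)=-(a'\cdot v_k)\cdot x$. By the grading argument of (a) there are two cases; note the label of $a'$ omits $v_k$, so it is not $v_1\cdots v_{2n}$. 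Either $a'\in L$, and then $(a'v_k)x\in L\cdot M$. Or the remaining indices are exactly $\{i,j\}$, so $a'=1+b$ with $b\in L$, $b\sim 1$, and $a'D(x,v_k)=bD(x,v_k)=-(bv_k)x\in M\cdot L$. No induction on $m$ or trace argument is needed.

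Two smaller corrections:
\begin{itemize}
\item A vector of $V$ orthogonal to $v_1,\ldots,v_{2n}$ is negated by every $U(v_l)$, so its label is $v_1\cdots v_{2n}$, not $1$. This is exactly why $2m+2<2n$ is needed in (a).
\item $D(x,v_i)^2\neq 0$ in general: from $xD(x,v_i)=0$ and $v_iD(x,v_i)=-x$ one gets $D(x,v_i)^2=-R(x)^2$, which is nonzero already in a superform algebra. So the anticommutation holds only for $i\neq j$, and your reduction of repeated indices fails. Statement (b) should be read for distinct indices, which is all Lemma \ref{lem7.8} uses and what the paper's proof tacitly assumes.
\end{itemize}
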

 \prf
 (a) $a_{ij}D(x,v_{i_1})\cdots D(x,v_{i_{2m}})\sim v_iv_jv_{i_1}\cdots v_{i_{2m}}$.
 Since the length of this product is even, it can not be equal to $\pm v_k,\ 1\leq k\leq 2n.$ Since the length is less then $2n$, it can not be equal to $v_1\cdots v_{2n}$.
 Hence unless $v_iv_jv_{i_1}\cdots v_{i_{2m}}=\pm 1$, the element $a_{ij}D(x,v_{i_1})\cdots D(x,v_{i_{2m}})$ lies in $L$. This completes the proof of the assertion (a).
 
 If $\{i_1,\ldots,i_{2m+1}\}\neq\{i\}$ or $\{j\}$, then there exists $k\in\{i_1,\ldots,i_{2m+1}\},\ k\not\in\{i,j\}$. We have
 \bes
 a_{ij}D(x,v_{i_1})\cdots D(x,v_{i_{2m+1}})=\pm a'D(x,v_k),
 \ees
 where $a'= a_{ij}D(x,v_{i_1})\cdots \widehat{D(x,v_k)}\cdots D(x,v_{i_{2m+1}})$.
 We have 
 $$
 a'\sim v_iv_jv_{i_1}\cdots\widehat{v_{k}}\cdots v_{i_{2m+1}}, 
 $$
 hence $a'$ commutes with $v_k$.  If $\{i_1,\ldots,i_{2m+1}\}\setminus\{v_k\}\neq\{i,j\}$ then $a'\in L$ by (a).  We have 
 \bes
 a'D(x,v_k)=-(a'\cdot v_k)\cdot x\in M\cdot L.
 \ees
 Let $\{i_1,\ldots,i_{2m+1}\}=\{i,j,k\}$.  Then $a'=1+b,\ b\in L,\, b\sim 1$.  Again 
 $a'D(x,v_k)=bD(x,v_k)=-(bv_k)x\in M\cdot L$.  
 
 This completes the proof of the lemma.
 
 \ctd
 
 \begin{lem}\label{lem7.8}
 \bes
 a_{12}R(a_{34})\cdots R(a_{2t-1,2t})D(x,v_1)\cdots D(x,v_t)=1\ (mod\ L).
 \ees
 \end{lem}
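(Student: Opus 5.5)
The plan is to read the left-hand side as a product of the elements $a_{12},a_{34},\dots,a_{2t-1,2t}$ and to expand the action of the derivations by the graded Leibniz rule. I read the statement with the $2t$ derivations $D(x,v_1),\dots,D(x,v_{2t})$, which is what the indices of the $a$'s suggest. Let $c=a_{12}R(a_{34})\cdots R(a_{2t-1,2t})$. The operators $D(x,v_k)$ are odd superderivations of $J$, since $x$ is odd and $v_k$ is even. Hence $cD(x,v_1)\cdots D(x,v_{2t})$ is a signed sum of terms. Each term is obtained from the bracketed product $c$ by distributing the derivations among the factors $a_{2s-1,2s}$, keeping their order within each factor.

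\emph{Step 1: the main term.} Consider the distribution in which $a_{2s-1,2s}$ receives exactly $D(x,v_{2s-1})D(x,v_{2s})$, for every $s$. By the relation $a_{ij}D(x,v_i)D(x,v_j)\equiv 1\pmod L$ noted just before Lemma~\ref{lem7.7}, each factor becomes $1+b_s$ with $b_s\in L$. Since $L$ is an ideal of $A$, the product of these factors is $\equiv 1 \pmod L$. For the sign I will use that each factor receives an even number of odd derivations. The reordering needed to bring $D(x,v_{2s-1})D(x,v_{2s})$ next to $a_{2s-1,2s}$ then passes odd operators past pairs, which gives sign $+1$. The anticommutation $D(x,v_i)D(x,v_j)=-D(x,v_j)D(x,v_i)$ takes care of the order within a pair.

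\emph{Step 2: all other terms lie in $L$.} If some factor $a_{2s-1,2s}$ receives no derivation, it stays in $L$. By Lemma~\ref{lem7.6} and the Clifford bookkeeping, the whole term is then an even element of the form $L\cdot(\text{stuff})$. This lies in $L$ by Lemma~\ref{lem2.17} when odd pieces occur, and because $L\idl A$ otherwise. Now suppose every factor receives at least one derivation but the distribution is not the main one.
\begin{itemize}
\item If some factor receives an even set different from its own pair, Lemma~\ref{lem7.7}(a) puts that factor in $L$, and we conclude as before.
\item Otherwise some factor receives an odd number of derivations. By Lemma~\ref{lem7.7}(b) it lies in $M\cdot L$, unless it received only $D(x,v_{2s-1})$ or only $D(x,v_{2s})$.
\end{itemize}
A factor in $M\cdot L$ is absorbed into $L$ once it meets another odd factor, by Lemma~\ref{lem2.17}. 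Even parity of the total product forces such odd factors to come in pairs, so these terms vanish mod $L$. The bound $m<n-1$ in Lemma~\ref{lem7.7}(a) holds because $n=t+1$ and each factor receives at most $2t$ derivations.

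\emph{Step 3: the main obstacle.} The remaining bad terms are those where several factors each receive a single derivation $D(x,v_k)$ with $k$ from their own pair. Each such factor is an odd element $\sim v_{k'}$, where $k'$ is the other index of the pair. Such factors are odd but not obviously in $M\cdot L$, so Lemma~\ref{lem2.17} does not apply directly. To dispose of them I would first compare Clifford labels via the $(\mathbb Z/2\mathbb Z)^{2n}$-grading. The product of the labels of all factors must be $\pm 1$ for the term to be a nonzero element mod $L$ (otherwise it is $\sim$ a nonscalar monomial and lies in $L$). The labels $v_{k'}$ coming from distinct pairs are distinct, so they cannot cancel against each other. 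They also cannot cancel against the labels of the remaining factors, since those factors carry disjoint pairs of indices. This pushes every such term into $L$. If this counting argument leaves residual cases, I would fall back on skew-symmetry. In that case I would use Lemma~\ref{lem7.31} and the symmetry of $(xa)v\cdot y$ in $x,y$ to show that these terms cancel in pairs.
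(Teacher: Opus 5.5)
Your Step~1 and the overall plan (expand the $2t$ odd derivations over the factors, take the main term from $a_{ij}D(x,v_i)D(x,v_j)\equiv 1$, kill the rest with Lemma~\ref{lem7.7} and Lemma~\ref{lem2.17}) follow the paper. You are also right that the delicate terms are those where a factor receives one derivation from its own pair; the paper's case split for the last factor $a_{2t-1,2t}D_{\pi_t}$ does not list them either.

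\textbf{Step 3 fails.} Your label argument cannot discard any term.
\begin{itemize}
\item Since $x\sim v_1\cdots v_{2n}$, each $D(x,v_k)$ multiplies labels by $v_1\cdots v_{2n}v_k$. So $a_{2s-1,2s}D(x,v_{2s-1})\sim v_1\cdots\widehat{v_{2s}}\cdots v_{2n}$, not $v_{2s}$.
\item More decisively, products and the $D(x,v_k)$ respect the $(\Z/2\Z)^{2n}$-grading. Hence every term of the expansion carries the label of the left-hand side, $\pm v_1\cdots v_{2t}(v_1\cdots v_{2n})^{2t}v_1\cdots v_{2t}=\pm 1$.
\item Your claim that the singleton labels cannot cancel against the other factors is false, because those factors must absorb the partner derivations.
\item The fallback via Lemma~\ref{lem7.31} is not an argument.
\end{itemize}

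\textbf{The missing idea is counting, not labels.} Suppose $a_{2s-1,2s}$ receives only $D(x,v_{2s-1})$. Then $D(x,v_{2s})$ lands on another factor $a_{2r-1,2r}$, whose index set contains $2s\notin\{2r-1,2r\}$. By Lemma~\ref{lem7.7} that factor lies in $L$ or in $M\cdot L$. So every non-main term has a factor in $L\cup ML$, and your Step~3 case is empty.

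\textbf{Step 2's absorption also needs more.} You must show a term containing such a factor lies in $L$. ``Absorbed once it meets another odd factor, by Lemma~\ref{lem2.17}'' is not enough in a left-normed product. Lemma~\ref{lem7.5} gives $(ML)A\cdot M\not\subseteq L$. Also, the singleton factors $a_{ij}D(x,v_i)=(a_{ij}x)v_i$ are not in $ML$, since their product with $x$ is $\equiv\pm v_j\pmod L$. What you should use instead:
\begin{itemize}
\item Every even factor that occurs lies in $F\cdot 1+L$, by its label (Lemma~\ref{lem7.7}(a)).
\item A product of two singleton factors is even with label $v_kv_{k'}$, $k\neq k'$, hence lies in $L$.
\item $L+ML$ is stable under right multiplication by $F\cdot1+L$ and by $M$, because $(LM)M\subseteq L$.
\end{itemize}
With these, the partial product just before the first factor from $L\cup ML$ lies in $F\cdot1+L$, in $Fs+ML$ ($s$ a singleton factor), or already in $L+ML$. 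After that factor it lies in $L+ML$ and stays there. The final product is even, so it lies in $L$.

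The paper's right-to-left peeling makes the generic case immediate: a last factor in $L$ or $ML$ gives a term in $A\cdot L$ or $M\cdot(ML)$. For a singleton last factor, though, it needs exactly this counting-plus-stability step.

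A minor point: a factor receiving all $2t$ derivations has $m=n-1$, which is outside the hypothesis $m<n-1$ of Lemma~\ref{lem7.7}(a). This case is harmless, since all other factors then lie in $L$.
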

 \prf
 For a subset $\pi=\{1\leq i_1<\cdots<i_m\leq 2t\}$ denote $D_{\pi}=D(x,v_{i_1})\cdots D(x,v_{i_m})$. Then
 \bes
 a_{12}R(a_{34})\cdots R(a_{2t-1,2t})D(x,v_1)\cdots D(x,v_t)\\
=\sum (a_{12}D_{\pi_1})R(a_{34}D_{\pi_2})\cdots  R(a_{2t-1,2t}D_{\pi_t}),
\ees
where the summation is done over decompositions $\pi_1\dot\cup\pi_2\dot\cup\cdots\dot\cup\pi_d=\{1,2,\ldots,t\}$. By Lemma \ref{lem7.7} $a_{2t-1,2t}D_{\pi_t}\in L$ or $a_{2t-1,2t}D_{\pi_t}\in M\cdot L$ or $\pi_t=\{2t-1,2t\}$.
In the first two cases 
$$
 (a_{12}D_{\pi_1})R(a_{34}D_{\pi_2})\cdots  R(a_{2t-1,2t}D_{\pi_t})\in L.
 $$
   In the third case this element is equal to
$$
 (a_{12}D_{\pi_1})R(a_{34}D_{\pi_2})\cdots  R(a_{2t-3,2t-2}D_{\pi_{t-1}})\ (mod\  L).
 $$ 

We repeat the above argument for the element $a_{2t-3,2t-2}D_{\pi_{t-1}}$ and so on.  After $t-1$ steps we get 
\bes
 a_{12}R(a_{34})\cdots R(a_{2t-1,2t})D(x,v_1)\cdots D(x,v_t)\\
 \equiv a_{12}D(x,v_1)D(x,v_2)\equiv 1\ (mod\ L).
 \ees
  This completes the proof of the lemma.

\ctd

It follows from Lemma \ref{lem7.8} that $a_{12}R(a_{34})\cdots R(a_{2t-1,2t})\neq (0)$, hence $L^t\neq (0)$, a contradiction.

This proves the Proposition \ref{prop7.1}.

\ctd

\section{The case $L\neq (0),\ \dim V<\infty$.}
\hspace{\parindent}
We continue our consideration of the simple unital Jordan superalgebra $J=A+M$ where $A=L+J(V),\  L\triangleleft A,\ L^{4d+1}=(0)$.  In this section we will prove 

\begin{prop}\label{prop8.1}
If $\dim_F V<\infty$ then $\dim_F A<\infty$.
\end{prop}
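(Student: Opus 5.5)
We must show $\dim_F L<\infty$. Indeed $A=J(V)+L$ (splitting $A=B\oplus L$ by \cite{Lyu}), and $\dim J(V)=\dim V+1<\infty$ by hypothesis.

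The plan is to use the nilpotency $L^{4d+1}=(0)$ from Corollary \ref{cor2.5} together with the rescuing machinery of Lemmas \ref{lem2.18}--\ref{lem2.22}. Each nonzero element of $L$ must be carried by a short multiplication operator into $A\setminus L$, and we will exploit the fact that $A/L\cong J(V)$ is finite dimensional. Concretely, fix an orthonormal basis $v_1,\ldots,v_m$ of $V$. Decompose $J$, and in particular $L$ and $M$, into eigenspaces for the finite group $(\Z/2\Z)^m$ generated by $U(v_1),\ldots,U(v_m)$, exactly as in the proof of Proposition \ref{prop7.1}. There are at most $2^m$ eigen-types $|x|\in Cl(v_1,\ldots,v_m)$, so it suffices to bound the dimension of each eigencomponent $L_\chi$.

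\textbf{Step 1: a nondegenerate pairing.} By Lemma \ref{lem2.22} and Corollary \ref{cor2.4}, the multiplicative length is $d$ and $(J,L)_d=(0)$. Following the proof of Lemma \ref{lem2.12}, for every $0\neq a\in L$ there is a rescuing operator $w$ of length $\le d$ with $tr(aw)\neq 0$. By Lemma \ref{lem2.18}, $w$ alternates odd and even factors, its even factors can be taken in $V\cup\{1\}$ modulo $L$, and its odd factors can be reduced. The key point is that only a bounded number of factors can lie in $L$, since $R_J\la L\ra^{2d}=0$. This yields a linear map
\[
L\to \Big(\textstyle\bigoplus_{k\le d} (M\otimes V)^{\otimes k}\Big)^*
\]
that is injective, given by $a\mapsto\bigl(tr(aR(x_1)R(u_1)\cdots)\bigr)$.

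\textbf{Step 2: finite generation of $M$.} Since the domain of that dual space is infinite dimensional whenever $M$ is, we must also show that $M$ is a finitely generated $A$-module and in fact finite dimensional. Here we use the structure of $M$ as a module over the Clifford algebra generated by the $R_M(v_i)$, as in Lemma \ref{lem5.3}. That algebra is a finite dimensional matrix algebra, so $M$ decomposes into copies of one irreducible module, with multiplicity space $M_0$ on which $L$ acts nilpotently. The trace pairing $(x,y)\mapsto tr(\ldots x\cdots y)$ on $M_0$ will be shown nondegenerate on a complement of $LM+ML$ by the argument of Lemmas \ref{lem7.4} and \ref{lem7.5}. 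Combined with simplicity, this gives that $M_0/(M_0\cap LM)$ is finite dimensional. Nakayama-type filtration by $(M,L)_i$ (Lemma \ref{lem2.21}), terminating after $d$ steps, then makes $M$ finite dimensional.

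\textbf{Step 3: conclusion.} With $M$ and $V$ finite dimensional, the injection of Step 1 bounds $\dim L$, hence $\dim_F A<\infty$.

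The main obstacle is Step 2. Showing that the multiplicity space of $M$ modulo $LM$ is finite dimensional needs a genuinely new argument: a nondegenerate bilinear form induced by $tr$ on $M/LM$ whose radical generates a proper ideal. I would try first to prove that the radical $\{x\in M : tr(xM)=0\}$ of the pairing, together with $L$, spans an ideal of $J$ (mimicking Lemma \ref{lem7.5}), and then conclude that its image $M/\mathrm{rad}$ embeds into the finite dimensional space $(A/L)^*$-valued forms.
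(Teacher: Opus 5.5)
There is a genuine gap, and it sits where you suspect: Step~2 is the whole difficulty, it is not proved, and the mechanism you suggest for it does not apply here. The Clifford structure in Lemma~\ref{lem5.3} comes from $M=\{e,M,f\}$ being the Peirce-$\tfrac12$ space of the unit $e$ of $J(V)$. That forces $U(v_i)=0$ on $M$ and makes $R_M(v_i)^2$ a nonzero scalar. In the present setting the unit of $J(V)$ is the unit of $J$. So $U(v_i)$ is an involution of $M$, and $R_M(v_i)^2=\tfrac12(U(v_i)+Id)$ is not a scalar. In fact the paper shows (Lemma~\ref{lem8.2}) that $M\cdot V\subseteq (ML)A$, a proper subspace of $M$. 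Thus $V$ acts trivially on $M$ modulo $(ML)A$, just as $M\cdot V=(0)$ in a superalgebra of a superform. There is therefore no decomposition of $M$ into copies of a $2^n$-dimensional irreducible Clifford module, and the pairing on a ``multiplicity space'' has nothing to rest on. Note also that you try to prove $\dim M<\infty$ before $\dim L<\infty$. The paper gets $\dim J<\infty$ only at the end, from $\dim A<\infty$ via \cite{ShZ}.

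The missing idea is that you never need $\dim M<\infty$. You only need the odd factors of rescuing operators to range over a finite-dimensional space.
\begin{itemize}
\item The paper uses the element $aD(x,v_1)D(x,v_2)=1+u$, $u\in L$, obtained in the analysis of the case $L\neq(0)$ (after Lemma~\ref{lem7.6}). Since $1+u$ is invertible, $M$ is generated as an $A$-module by the single element $x$ (Lemma~\ref{lem8.1}).
\item It follows that $M=Fx+(ML)A$ (Lemma~\ref{lem8.2}).
\item The ideal of $R_M\la A\ra$ generated by $R(L)$ is nilpotent (Lemma~\ref{lem8.3}), which gives $((ML)A)^2\subseteq L$ (Lemma~\ref{lem8.4}).
\item Combined with Lemma~\ref{lem2.18} (alternating parities, even factors skew-symmetric and odd factors symmetric mod $L$), every rescuing operator of an element of $L$ is a scalar multiple of one of finitely many operators $p=R(x)R(a_1)R(x)\cdots R(a_k)R(x)$. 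Here the $a_j$ are distinct vectors of a fixed basis of $J(V)$.
\item For each such $p$, the subspace $L_p=\{u\in L\mid up\in L\}$ has finite codimension in $L$ because $\dim A/L<\infty$. Since $\bigcap_p L_p=(0)$, it follows that $\dim L<\infty$.
\end{itemize}
Your Step~1 injection has the right shape. But with its odd slots ranging over all of $M$ it bounds nothing; reducing those slots to $Fx$ is what makes the argument finite.
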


Notice that the case $\dim A<\infty$ was considered in \cite{ShZ}. In this case $\dim J<\infty$ as well.

\begin{lem}\label{lem8.1}
The odd part $M$ is a one-generated bimodule over $A$.
\end{lem}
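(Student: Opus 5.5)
We have to show that there is an element $x\in M$ with $M=x R\langle A\rangle$, i.e. $M$ is the $A$-submodule generated by a single odd element. The idea is to use the element $x$ found in the previous section. Recall that, by Lemmas \ref{lem7.4}, \ref{lem7.5} and \ref{lem7.31}, there are $x\in M$, $a\in L$ and orthonormal $v_1,v_2\in V$ such that $tr(((xa)v_1)x\cdot v_2)\neq 0$. After Lemma \ref{lem7.6} we may assume that $x\sim v_1\cdots v_{2n}$ is an eigenvector for the group generated by the $U(v_i)$.

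Since now $\dim V<\infty$, I would take $v_1,\ldots,v_{N}$ to be a full orthonormal basis of $V$, so that the grading by $(\Z/2\Z)^N$ is defined on all of $J$. Let $N_x=xR\langle A\rangle$, the $A$-submodule generated by $x$. Consider the subspace
\bes
I=N_x+N_x\cdot M \ (\subseteq M+A)
\ees
together with the $A$-span of $N_xN_x$. The plan is to prove that $I':=N_x + A\cdot(N_x M)+ N_xM$ is an ideal of $J$. If so, simplicity forces $I'=J$, and comparing odd parts gives $M=N_x+(\hbox{odd part of } A\cdot(N_xM))$. The last summand has to be controlled as well: odd elements of the form $(N_xM)\circ A$ vanish because $N_xM\subseteq A$, so the odd part of $I'$ is exactly $N_x$ provided $I'$ is closed under multiplication. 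The ideal check is as follows.
\begin{itemize}
\item $N_x\cdot A\subseteq N_x$ holds by definition.
\item $N_x\cdot M\subseteq I'$ holds by definition.
\item $(N_xM)\cdot A$: we use the operator identity \eqref{D0} to write $R(m)R(b)=R(b)R(m)+R(mD(\cdot))$ type terms. This reduces $(N_xM)A$ to $(N_xA)M+N_x(MA)\subseteq N_xM$, because $D(N_x,M)$ restricted to $A$ is a derivation, so $(uv)b=(ub)v\pm u(vb)+uD(v,b)$-type expansions stay in $N_xM$.
\item $(N_xM)\cdot M$ must lie in $N_x$. This is the crucial point, and the only place where the special choice of $x$ and the structure of $A=J(V)+L$ enter.
\end{itemize}

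For the key step I would use the Jordan superidentity \eqref{SJ2} with three odd arguments. By \eqref{D2}, the operator $R(m_1)R(m_2)R(m_3)$ can be written through $R(m_1)D(m_2,m_3)$ and products of length $2$. This gives
\bes
(N_xM)M\subseteq N_x R(A)+ N_x D(M,M)+ (N_xR(M))R(M)R(M).
\ees
Now $D(M,M)$ acts as a derivation preserving $A$, and $x D(M,M)$ is odd, so I need $xD(m,m')\in N_x$. Here I would invoke the nondegeneracy of trace on $A/L\cong J(V)$ together with the computation of Lemma \ref{lem2.8}. The relation $tr((xa)v_1\cdot x\cdot v_2)\neq0$ lets one express any element $y\in M$ whose $(\Z/2\Z)^N$-weight matches that of $x$ as
\bes
y=\sum (x\,w)R(b^i)R(b^j),
\ees
with $w\in R\langle A\rangle$ and $b\in L$, exactly as $y$ was recovered from $x'w_n+xw_n'$ in that proof. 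Elements of other weights are reached by multiplying by the $v_i$'s, which are invertible in $J(V)$. Altogether this shows that every homogeneous weight component of $M$ lies in $N_x+L\cdot M$.

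The main obstacle is absorbing the error term $L\cdot M$. Here I would use the nilpotency $L^{4d+1}=(0)$ from Corollary \ref{cor2.5}, or equivalently $(R_J\langle L\rangle)^{2d}=0$ from Corollary \ref{cor2.4}. Iterating $M\subseteq N_x+ML$ gives
\bes
M\subseteq N_x+M\,(R_J\langle L\rangle)^{k}
\ees
for every $k$, and taking $k=2d$ yields $M=N_x$. In carrying this out, the step I expect to be delicate is producing, for an arbitrary weight, the invertible scalar-plus-nil coefficient, i.e. showing $\tilde\alpha\neq0$ as in Lemma \ref{lem2.8}. I would handle it by pairing $x$ with itself using Lemma \ref{lem7.7}, which gives $a_{ij}D(x,v_i)D(x,v_j)\equiv 1 \pmod L$ for all $i\neq j$ simultaneously.
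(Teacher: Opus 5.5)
\paragraph{Gap in the central step.} Your proposal does not prove its key claim, that every weight component of $M$ lies in $N_x+L\cdot M$. The relation $tr(((xa)v_1)x\cdot v_2)\neq 0$ pairs $x$ only with itself. So for an arbitrary $y\in M$ you have no operator $w\in R\langle A\rangle$ with $(xw)\cdot y$ invertible modulo $L$. Even where such a $w$ exists, the recovery trick of Lemma~\ref{lem2.8} multiplies $xw\cdot y$ by $R(y)$, and so expresses $y$ through $xR(y)^2w+xw'$. The term $xR(y)^2=\tfrac12\,xD(y,y)$ is exactly the element you said you needed to place in $N_x$, so the argument is circular.

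The ideal-check part does not close the gap either. Writing $(uv)b=(ub)v+uD(v,b)$ only restates the product, so it does not give $(N_xM)\cdot A\subseteq N_xM$. Your inclusion for $(N_xM)M$ also contains $(N_xR(M))R(M)R(M)$, the very quantity being bounded.

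\paragraph{The missing idea.} Reverse the roles: build the unit from $x$, and let the arbitrary element act on it. Put $c=aD(x,v_1)D(x,v_2)=1+u$ with $u\in L$; this is the relation $a_{12}D(x,v_1)D(x,v_2)\equiv 1 \pmod L$ that you already invoke at the end.
\begin{itemize}
\item First, $AD(x,A)\subseteq N_x$, because $bD(x,b')=(bx)b'-(bb')x$.
\item Take any $y\in M$ and set $z=aD(x,v_1)\in N_x$. Identity \eqref{D0} gives $y\cdot c=cR(y)=\pm\, zR(y)D(x,v_2)\pm zR(yD(x,v_2))$.
\item Since $zR(y)\in A$, the first term lies in $AD(x,A)\subseteq N_x$.
\item Since $yD(x,v_2)\in A$, the second term lies in $N_xR(A)\subseteq N_x$.
\end{itemize}
Thus $M\cdot(1+u)\subseteq N_x$, which is exactly the inclusion $M\subseteq N_x+M\cdot L$ you wanted. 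Your nilpotency step then gives $M=N_x$; equivalently, $Id+R(u)$ is invertible on $M$ by Corollary~\ref{cor2.4}. This is the paper's proof, and it needs no weight decomposition, trace argument or ideal check.
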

\prf
We have proved in the previous section that there exist $a\in L,\, x\in M,\, v_1,v_2\in V$ such that 
\bes
aD(x,v_1)D(x,v_2)=1 +u,\ u\in L.
\ees

Let $M'$ be the $A$-sumodule of $M$  generated by $x$.  Then $AD(x,A),  AD(x,A)\subseteq M'$. Hence for any $m\in M=M(1+u)$ we have
\bes
M&=&M(1+u)\subseteq AD(x,A)D(x,A)R_M\subseteq AD(x,A)R_MD(x,A)\\&+&AD(x,A)R_{(x,M,A)}
\subseteq 
AD(x,A)+AD(x,A)R_A\subseteq M'.
\ees
\ctd

\begin{lem}\label{lem8.2}
$M=F\cdot x +(ML)A$.
\end{lem}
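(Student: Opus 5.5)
Starting from Lemma \ref{lem8.1}, I will write $M=x\cdot R\la A\ra$, i.e. $M$ is spanned by elements $xR(b_1)\cdots R(b_k)$ with $b_i\in A=J(V)+L$, and reduce these modulo $Fx+(ML)A$. First I check that $N:=Fx+(ML)A$ is stable under $R(A)$; the computation is the one in Lemma \ref{lem7.5}. Modulo $(ML)A$ we have
\[
(ML)A\cdot A\subseteq (ML,A,A)\subseteq (M,A,L)+(L,A,M),
\]
and these associators land in $(ML)A+ML$. I also need $xA\subseteq N$, which reduces the statement to the inclusion $xR(A)\subseteq Fx+(ML)A$.

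Since $A=J(V)+L$ and $xL\subseteq ML\subseteq (ML)A$ (using $1\in A$), it remains to treat $xR(J(V))=Fx+xR(V)$. From Lemma \ref{lem7.6} and the normalization $x=y$, the element $x$ anticommutes with every $v_i$ of the orthonormal system. I will choose the system $v_1,\dots,v_{2n}$ to be a full orthonormal basis of $V$, which is possible because $\dim V<\infty$; Proposition \ref{prop7.1} then forces $\dim V\le 2t+1$, and I need to recheck that Lemma \ref{lem7.6} applies in this setting. Anticommuting gives $x\cdot v_i=0$ for all $i$, hence $xR(V)=0$ and $xR(J(V))=Fx$. Every word is then reduced: an $L$-letter sends the product into $(ML)A$, and the $J(V)$-letters act on $x$ by scalars.

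The main obstacle is justifying $x\cdot V=0$ for all of $V$. The earlier argument only yields this for those $v_i$ in the chosen system, and it relies on the hypothesis $\dim V>2t+1$ from Proposition \ref{prop7.1}, which fails here. If that route breaks down, I will fall back on the $(\Z/2\Z)^{\dim V}$-grading and replace $x$ by its homogeneous component $\sim v_1\cdots v_m$. This component is still a generator modulo $(ML)A$, because the relation $aD(x,v_1)D(x,v_2)=1+u$ only involves that component modulo $L$. For an eigenvector of this type, $x\cdot v\ne 0$ would put $x\cdot v$ in a different eigenspace, and the trace identity together with Lemma \ref{lem2.17} should force $(x\cdot v)\in (ML)A$. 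This gives $xR(V)\subseteq (ML)A$, which is all the argument needs.
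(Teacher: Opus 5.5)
Your primary route works and differs from the paper's, but as written the decisive step is left open, and your diagnosis of the obstacle is wrong.

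\textbf{Why the obstacle is not real.} The hypothesis $\dim V>2t+1$ enters the proof of Proposition \ref{prop7.1} only to supply $2t+2$ orthonormal vectors for Lemma \ref{lem7.8}. The proof of Lemma \ref{lem7.6} never uses it. That proof treats each $v_i$, $i\ge 3$, separately, using only $v_1,v_2,v_i$, trace computations, Lemmas \ref{lem2.17} and \ref{lem7.31}, and the fact that the $U(v_j)$ are commuting involutive automorphisms.

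The preliminary claim that $x,y,a$ anticommute with $v_1,v_2$ is elementary:
\begin{itemize}
\item If $a$ commuted with $v_1$, then $(xa)v_1=(xv_1)a\in ML$, so $((xa)v_1)y\in L$ by Lemma \ref{lem2.17}, and the trace expression would vanish.
\item If $a$ anticommutes with $v_1$ but $x$ commutes, then $xa$ anticommutes, so $(xa)v_1=0$.
\item Lemma \ref{lem7.31} lets you exchange $v_1$ and $v_2$.
\end{itemize}

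\textbf{How to close your route.} Take $v_1,\dots,v_m$ to be an orthonormal basis of $V$. It exists because $F$ is algebraically closed and the form is nondegenerate ($A/L$ is simple). Run the normalization of Proposition \ref{prop7.1} with respect to this basis. You get $U(v_i)x=-x$, i.e.\ $x\cdot v_i=0$, for every $i$, hence $x\cdot V=0$.

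This $x$ still satisfies $aD(x,v_1)D(x,v_2)=c\cdot 1+u$ with $c\neq 0$ and $u\in L$, since the element has trivial character and nonzero trace. So Lemma \ref{lem8.1} applies to it, and your reduction $xA\subseteq Fx+ML$ finishes the proof.

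When you recheck Lemma \ref{lem7.6}, record eigenvalue data as characters of the group generated by the $U(v_j)$, not as Clifford monomials. The integer $m$ may be odd, and then $v_1\cdots v_m$ commutes with every $v_j$, so ``$x\sim v_1\cdots v_m$'' says the opposite of what you need. For $m=3$ the character of $v_1v_3$ equals that of $v_2$. Then $aD(x,v_3)D(y,v_2)$ need not lie in $L$, but the paper only uses it after applying $D(v_1,v_3)$, which kills $v_2$, so the argument survives.

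Drop the fallback. It is circular: the existence of a component with the all-anticommuting character that still carries the trace relation is exactly Lemma \ref{lem7.6}. Once you have such an $x$, $x\cdot v_i=0$ holds by definition, so no trace argument is needed.

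\textbf{Comparison with the paper.} The paper uses no special property of $x$. It proves the stronger fact $M\cdot V\subseteq (ML)A$, i.e.\ $V$ acts trivially on $M/(ML)A$:
\begin{itemize}
\item By Lemmas \ref{lem7.4} and \ref{lem7.5}, $(ML)A\cdot M\subseteq V+L$ and $(ML)A\cdot M\not\subseteq L$.
\item Invariance under $D(V,V)$ gives $V\subseteq (ML)A\cdot M+L$.
\item An associator computation with Lemma \ref{lem2.17} gives $((ML)A\cdot M)M\subseteq (ML)A$.
\end{itemize}
This works for any $A$-generator of $M$. Your argument is shorter once the machinery of Proposition \ref{prop7.1} is available, and it controls only the single element $x$, which is all Lemma \ref{lem8.2} and Proposition \ref{prop8.1} need.

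Your route also avoids the paper's appeal to irreducibility of $V$ under $D(V,V)$, which fails when $\dim V=2$. There $D(V,V)$ is one-dimensional and $V$ splits into two isotropic lines, so the paper's step additionally needs that $(ML)A\cdot M$ is $U(v_1)$-invariant.
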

\prf
Notice first that $(ML)A$ is an $A$-submodule of $M.$  In fact, we have the following comparisons  modulo the subspace $(ML)A$:
\bes
(ML)A\cdot A\equiv  (ML,A,A)\subseteq (M,A,LA)+(L,A,MA)\equiv (0).
\ees
The factor-module $\overline M=M/(ML)A$ is in fact a module over the  algebra $J(V)=A/L$.   Let us show that $\overline M\cdot V=(0)$.  It follows from Lemmas \ref{lem7.4} and \ref{lem7.5} that $(ML)A\cdot M\not\subseteq L$ and $tr((ML)A\cdot M)=0$.
Therefore,  $(ML)A\cdot M\subseteq V+L$.  Since $(ML)A\cdot M$ is invariant with respect to $D(V,V)$ and $V$ is irreducible with respect to $D(V,V)$ we have that 
$V\subseteq (ML)A\cdot M$.  Now by Lemma \ref{lem2.17} we have modulo the subspace $(ML)A$
\bes
 ((ML)A\cdot M)M&\subseteq& ((ML)A\cdot A+((ML)A,M,M)\\
&\equiv& (ML,M,MA)+(A,M,M(ML))\\
&\equiv& (A,M,L)\equiv (0).
\ees
This proves that $M\cdot V\subseteq (ML)A$.  In particular,  $x\cdot J(V)\subseteq F\cdot x+(ML)A$ and 
$$
x\cdot A=x\cdot (J(V)+L)\subseteq  F\cdot x+(ML)A.
$$
Since $M$ is generated by $x$ as an $A$-module, this proves the lemma.

\ctd

Let $R_M\la A\ra$  be the subalgebra of $End\, M$
generated by multiplications $R(a): M\rightarrow M, \, a\in A.$
Denote by $I$ the ideal of $R_M\la A\ra$ generated by $R(L)$.

\begin{lem}\label{lem8.3}
The ideal $I$ is nilpotent.
\end{lem}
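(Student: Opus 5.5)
The plan is to derive nilpotency of $I$ from the already established nilpotency results, Corollary \ref{cor2.4} and Corollary \ref{cor2.5}, together with the structure of operators on $M$. The most direct route is Corollary \ref{cor2.4}: $(R_J\la L\ra)^{2d}=(0)$, where $R_J\la L\ra$ is the subalgebra of $R\la J\ra$ generated by $R(a)$, $a\in L$. Restricting to the $A$-invariant subspace $M$ gives $(R_M\la L\ra)^{2d}=(0)$. This alone is not enough, because $I$ also contains elements such as $R(b)R(l)R(c)$ with $b,c\in A$ not in $L$, so I must show that $I$ is controlled by $R_M\la L\ra$ up to outer factors.

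\emph{Step 1.} Show that $R_M\la A\ra=(F\cdot Id+R_M\la L\ra)\,R_M\la J(V)\ra\,(F\cdot Id+R_M\la L\ra)$, or at least that every element of $I$ is a sum of products in which the $R(L)$-factors can be collected. The tool is \eqref{OSJ} restricted to even arguments: for $a,b\in A$ and $l\in L$,
\[
R(a)R(l)R(b)\equiv -R(b)R(l)R(a)\pmod{R(A)R(L)+R(L)R(A)+R(L)},
\]
with correction terms $R(al\cdot b)$, $R(a)R(lb)$ and similar. Since $L\idl A$, every product involving $L$ stays in $L$. One then proves by induction on length that
\[
I^k\subseteq \sum R_M\la A\ra^{\sharp}\,(R_M\la L\ra)^{k}\,R_M\la A\ra^{\sharp}.
\]
Equivalently, a word containing at least $N$ letters from $L$ reduces, modulo words with at least as many $L$-letters, to a word where $N'$ of those letters are consecutive.

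\emph{Step 2.} Control the $J(V)$-letters. Because $\dim V<\infty$ (the standing assumption of Proposition \ref{prop8.1}), $R_M\la J(V)\ra$ is a homomorphic image of the universal multiplication envelope of the finite-dimensional simple Jordan algebra $J(V)$. That envelope is finite-dimensional semisimple; for $J(V)$ it is a quotient of a Clifford-type algebra. Hence $R_M\la J(V)\ra$ is finite-dimensional, spanned by words of bounded length $c$. Using the splitting $A=J(V)\oplus L$, each element of $R_M\la A\ra$ is a sum of alternating words $W_0R(l_1)W_1R(l_2)\cdots$ with $W_i\in R_M\la J(V)\ra$.

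\emph{Step 3.} Commute the $W_i$ past the $R(l_j)$. Here $[W,R(l)]\in R(L)$-terms plus $R(L)R(A)$-terms, because $D(A,A)$ acts as derivations preserving $L$ and $[R(a),R(l)]=D(a,l)$. After moving them, a word with many $L$-letters becomes a word with many consecutive $L$-letters, which vanishes once that number reaches $2d$. The bounded length $c$ of the $W$'s keeps the count finite: each commutation cannot lower the number of $L$-letters, since $D(J(V),L)$ maps into $L$-type operators via \eqref{D0}. I therefore expect $I^{N}=(0)$ with $N$ roughly $2d(c+1)$.

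The main obstacle is Step 1/3: showing rigorously that commutation does not destroy $L$-letters. The danger is terms like $R(al)$ with $a\in J(V)$ merging two letters into one $L$-letter and reducing the count. My first attempt would be to measure words not by the number of $L$-letters but by $L$-degree in the filtration $L\supseteq L^2\supseteq\cdots$, using $L^{4d+1}=(0)$ (Corollary \ref{cor2.5}) together with Lemma \ref{lem2.10'}, which gives $P^{\la 2k+1\ra}\cdot A\subseteq P^{\la 2k-1\ra}$. The total degree is then preserved up to a bounded loss, and vanishing follows from Corollary \ref{cor2.4} once the total exceeds $2d\cdot(4d+1)$.
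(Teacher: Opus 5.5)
Your plan has a genuine gap. The whole content sits in the Step~1 claim that $I^k$ lies in the ideal $I_k$ of $R_M\la A\ra$ generated by $R(L)^k$. At $k=2d$ this would finish at once by Corollary~\ref{cor2.4}, since $I_{2d}=(0)$, but Steps~2--3 do not prove it.

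\emph{Commutation cannot collect $L$-letters.} You have $[R(a),R(l)]=D(a,l)=R(a)R(l)-R(l)R(a)$, which is not an ``$R(L)$ plus $R(L)R(A)$'' term. So moving $R(a)$, $a\in J(V)$, past $R(l)$ just gives back $R(a)R(l)$. Only derivations commute usefully: $[D(a,b),R(l)]\in R(L)$ by \eqref{D0}. After pushing the derivations out with \eqref{D2}, you are still left with $L$-letters separated by one or two $J(V)$-letters.

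\emph{Finite dimensionality of $R_M\la J(V)\ra$ cannot close this by itself.} In $M_2(F)$ one has $e_{12}^2=0$ but $(e_{12}e_{21})^n=e_{11}\neq 0$ for all $n$. So a square-zero generator together with a finite-dimensional algebra of separating words need not generate a nilpotent ideal.

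\emph{The $L$-degree fallback has the same defect.} Lemma~\ref{lem2.10'} controls elements of $L$, not operator words whose $L$-letters are not adjacent.

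What is missing is a Jordan-specific alternation argument. You did write the right relation in Step~1, but it becomes useful only when the triple is flanked by $L$-letters on both sides.
\begin{itemize}
\item By \eqref{OSJ}, $I=R(A)R(L)R_M\la A\ra$, so $I^s\subseteq(R(A)R(L))^sR_M\la A\ra$.
\item Take a word $R(a_1)R(u_1)R(a_2)R(u_2)\cdots R(a_s)R(u_s)$ with $u_i\in L$. Swapping $a_i,a_{i+1}$ around $u_i$ by \eqref{OSJ} produces the correction terms $R((a_ia_{i+1})u_i)$, $R(a_i)R(u_ia_{i+1})$, $R(u_i)R(a_ia_{i+1})$ and $R(a_{i+1})R(a_iu_i)$.
\item Each of these, once flanked by $R(u_{i-1})$ and $R(u_{i+1})$, contains two adjacent $L$-letters, so it lies in $I_2$.
\item Hence, modulo $I_2$, the word is alternating in its $s-1$ sandwiched letters. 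Their $L$-components land in $I_2$ directly, so the letters may be taken in $J(V)$.
\item Since $\dim J(V)=\dim V+1$, this gives $I^{\dim V+3}\subseteq I_2$.
\end{itemize}
This is where $\dim V<\infty$ is really used: through $J(V)$ itself, not through its multiplication envelope.

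The second missing ingredient is the absorption $I_2\subseteq R(L)R_M\la A\ra$. It follows from \eqref{D2}, $[D(a,b),R(n)]=R((a,n,b))$ and \eqref{OSJ}. It yields $I_kI_2\subseteq I_{k+1}$, hence $I_2^{2d-1}\subseteq I_{2d}=(0)$, and therefore $I^{(\dim V+3)(2d-1)}=(0)$.
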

\prf
Let $I_k,\,k\geq 1$ be the ideal of $R_M\la A\ra$ generated by $R(L)^k$.  By Corollary 
\ref{cor2.4} $I_{2d}=(0)$.  Let us prove that for any $s\geq 1$
\bes
I^s\subseteq (R(A)R(L))^sR_M\la A\ra.
\ees
By the Jordan operator identity \eqref{OSJ} we have $I=R(A)R(L)R_M\la A\ra.$ Assume that 
\bes
I^{s-1}\subseteq (R(A)R(L))^{s-1}R_M\la A\ra.
\ees
Then
\bes
I^s&\subseteq&(R(A)R(L))^{s-1}R_M\la A\ra I\subseteq (R(A)R(L))^{s-1} I\\
&=&(R(A)R(L))^{s-1} R(A)R(L)R_M\la A\ra=(R(A)R(L))^sR_M\la A\ra.
\ees
For arbitrary elements $u_1,\ldots,u_s\in L;\,a_1,\ldots,a_{s-1}\in A$ the operator
\bes
R(u_1)R(a_1)R(u_2)\cdots R(a_{s-1})R(u_s)
\ees
is skew-symmetric on $a_1,\ldots,a_{s}$ modulo $I_2$. Choosing $s=\dim V+3$, we get
\bes
I^{\dim V+3}\subseteq I_2.
\ees
Let us prove now that
\bee\label{id8.1}
I_2\subseteq R(L)R_M\la A\ra.
\eee
Notice first that for any $a,b\in A,\, n\in L$ we have by \eqref{D0}
$$
[D(a,b),R(n)]=R((a,n,b))\in R(L),
$$
hence $D(a,b)R(L)^2\subseteq R(L)^2R_M\la A\ra$.  Now, by  \eqref{D2}
any operator $R(a_1)R(a_2)\cdots R(a_n), \,n\geq 3,$ can be represented as a sum of
operators of the form $R(b_1)R(b_2)D_1\cdots D_k$,  where every $D_i=D(c_i,d_i)$ for some $b_i,c_i,d_i\in A$.

Therefore, it suffices to prove that 
\bes 
R(A)^2R(L)^2\subseteq R(L)R_M\la A\ra.
\ees
The Jordan operator identity implies
$$
R(A)R(L)^2\subseteq R(L)^2R(A)+R(L^2)R(A). 
$$
 Repeating the process, we get 
\bes
R(A)^2R(L)^2\subseteq R(L)R\la A\ra+R(A)R(L^2)R(A).
\ees
Finally,  by linearization of the operator identity $R(a)R(a^2)=R(a^2)R(a)$ we have
\bes
R(A)R(L^2)\subseteq R(L)^2+R(L^2)R(A),
\ees
which finishes the proof of \eqref{id8.1}.

The identity \eqref{id8.1} implies that for any $k\geq 1$
\bes 
I_kI_2&\subseteq &R_M\la A\ra R(L)^kR_M\la A\ra R(L)^2R_M\la A\ra\\
&\subseteq &R_M\la A\ra R(L)^k R(L)R_M\la A\ra\subseteq I_{k+1}.
\ees
Therefore,  $I_2^{2d-1}\subseteq I_{2d}=(0),$ and
\bes
I^{(\dim V+3)(2d-1)}\subseteq I_2^{2d-1}=(0).
\ees

\ctd

\begin{lem}\label{lem8.4}
$((ML)A)^2\subseteq L$.
\end{lem}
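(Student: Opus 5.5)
The plan is to work with the bilinear map $\varphi:M\times M\to A/L\cong J(V)$, $\varphi(m,n)=mn+L$, and to show that $\varphi$ vanishes on $(ML)A\times (ML)A$. Two facts proved earlier reduce this to killing a $V$-component. First, by Lemmas \ref{lem7.4} and \ref{lem7.5} (as already used in the proof of Lemma \ref{lem8.2}), $(ML)A\cdot M\subseteq V+L$, so $\varphi((ML)A,(ML)A)\subseteq V$ modulo $L$. Second, $(ML)A=M\,I$, where $I$ is the ideal of $R_M\la A\ra$ generated by $R(L)$: indeed $I=R(A)R(L)R_M\la A\ra$ by (\ref{OSJ}), as in the proof of Lemma \ref{lem8.3}. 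By Lemma \ref{lem8.3}, $I^N=(0)$ for some $N$. So it suffices to prove $\varphi(MI^k,MI^j)=0$ for all $k,j\geq 1$, and I would do this by descending induction on $k+j$, starting from $k+j\geq 2N$, where one of the two factors is already zero.

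The key tool is a transfer identity, the analogue of the computation in Lemma \ref{lem5.2}. For $m,n\in M$ and $a\in A$,
$$
(m\circ a)n+m(n\circ a)=\{m,a,n\}+(mn)\circ a .
$$
When $a=l\in L$, both terms on the right lie in $L$: the first is $lD(m,n)$ up to terms in $(LM)M$, which lie in $L$ by Lemma \ref{lem2.17}, and the second lies in $A\circ L\subseteq L$. Hence $\varphi(mR(l),n)=-\varphi(m,nR(l))$, so an $R(l)$ factor can be moved from one argument to the other at the cost of a sign. For a general $a\in A$, moving $R(a)$ produces the correction terms $\{m,a,n\}+(mn)\circ a$ modulo $L$.

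I would then take $m T$ and $n S$ with $T\in I^k$ and $S\in I^j$, write $T=R(b)R(l)T'$ with $b\in A$, $l\in L$ and $T'\in R_M\la A\ra$, and transfer the letters of $T$ one by one onto $nS$. The main term becomes $\varphi(m, nST^{\ast})$, where $T^{\ast}$ is the reversed word. It contains at least $k+j$ factors from $I$, so by the induction hypothesis it is either zero or reduces to a case already treated. To organise the count, I would use the skew-symmetry of these words on even letters modulo $I_2$, as in Lemma \ref{lem8.3}.

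The correction terms are the main obstacle. Each has the form $\{m',a,n'\}+(m'n')\circ a$, where $m'$ and $n'$ still carry $I$-factors, one fewer on the side from which $a$ was moved. By the induction hypothesis, or because the trace of $(ML)A\cdot M$ is zero, $m'n'\in V+L$, so $(m'n')\circ a$ lies in $V\circ J(V)+L\subseteq F\cdot 1+V+L$ and must be controlled. To handle this I would use that $\varphi(MI,M)$ is $D(V,V)$-equivariant and that $V$ is an irreducible $D(V,V)$-module; I would also try to show, via the second identity in Lemma \ref{lem2.17} applied to $\{m',a,n'\}$, that the surviving components are forced into $L$. If this bookkeeping does not close up, the fallback is the argument in the proof of Lemma \ref{lem8.2}: compute $(((ML)A)\cdot M)M$ modulo $(ML)A$, which shows that $(ML)A\cdot(ML)A$ annihilates $M$ modulo $(ML)A$. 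A nonzero $V$-component $v$ would then give $x\cdot v\in(ML)A$, which should contradict $tr(aD(x,v_1)D(x,v_2))=1$ from the previous section.
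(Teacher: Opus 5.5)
Your proposal has a genuine gap: neither the descending induction nor the fallback can close.

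\emph{The induction is circular.} Transferring letters never raises the total $I$-degree $k+j$. Moving $R(a)$, $a\in A$, keeps the two arguments in $MI^k$ and $MI^j$, since these are $R_M\langle A\rangle$-modules. Moving $R(l)$, $l\in L$, only shifts one $I$-factor from one side to the other. Moreover, for such letters both $\varphi(m'R(l),n')$ and $\varphi(m',n'R(l))$ are already $0$ by Lemma~\ref{lem2.17}, because $(ML)M\subseteq L$.

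So your main term $\varphi(m,nST^{*})$ has no more $I$-factors than $\varphi(mT,nS)$. Its first argument lies in $M$, outside the range $k,j\geq 1$ of your induction; note that $\varphi(M,MI)$ does contain the image of $V$, by the proof of Lemma~\ref{lem8.2}. In fact this term vanishes at the first $L$-letter. Hence $\varphi(mT,nS)$ is just the sum of the correction terms $\{m',a,n'\}+(m'n')\circ a$ with $m'\in MI^k$, $n'\in MI^j$. These are exactly the quantities you are trying to kill, so the induction hypothesis is never used. You flag this as the main obstacle but supply no mechanism for it.

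\emph{The fallback fails too.} The proof of Lemma~\ref{lem8.2} already establishes $M\cdot V\subseteq (ML)A$ unconditionally; indeed $x\cdot v_i=0$ by Lemma~\ref{lem7.6}. So deriving $x\cdot v\in (ML)A$ from a nonzero $V$-component contradicts nothing, in particular not $tr(aD(x,v_1)D(x,v_2))=1$.

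\emph{The missing idea} is to use a nonzero $V$-component as a source of new $I$-factors instead of trying to cancel it. Argue by contradiction.
\begin{itemize}
\item Modulo $L$ one has $((ML)A)^2\equiv (MD(L,A))^2\equiv MD(L,A)^2\cdot M$.
\item If this is not contained in $L$, then Lemma~\ref{lem7.4} and invariance under inner derivations give $V\subseteq MD(L,A)^2\cdot M+L$.
\item Substitute $v=\sum m_in_i+l$ (with $m_i\in MD(L,A)^2$, $n_i\in M$) into $(mu)v$ for $m\in M$, $u\in L$. This gives $(ML)A\subseteq MD(L,A)^2R(M)R(ML)+ML$.
\item Using $R(M)R(ML)\subseteq D(M,M)+R(ML)R(M)$ together with Lemma~\ref{lem2.17}, you get $MD(L,A)\subseteq MD(L,A)^2+ML$, hence $MD(L,A)\subseteq MD(L,A)^k+ML$ for every $k$. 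This is the step that actually raises the $I$-degree.
\item Nilpotency of $I$ (Lemma~\ref{lem8.3}) then forces $(ML)A=ML$. So $ML+L$ is a proper nonzero ideal of $J$, contradicting simplicity (equivalently, Lemma~\ref{lem7.5}).
\end{itemize}
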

\prf
By Lemma \ref{lem2.17} 
\bes
((ML)A)^2=(MD(L,A))^2=MD(L,A)^2\cdot M \ (modulo\ L).
\ees
Assume that $MD(L,A)^2\cdot M\not\in L$.  It follows from Lemma \ref{lem7.4} and the invariance of $MD(L,A)^2\cdot M$  under all inner derivation that in this case $MD(L,A)^2\cdot M=V$ modulo $L$.

For abitrary elements $x\in M, \,u\in L,\, a\in A,$ we let $a=v+\alpha 1,\,\alpha\in F$. Then $v\in MD(L,A)^2\cdot M+L$ and we have
\bes
(xu)a=(xu)v\ (mod\ ML).
\ees

Hence	,
\bes
(ML)A\subseteq MD(L,A)^2R(M)R(ML)+ML.
\ees
We have
\bes
R(M)R(ML)\subseteq D(M,M)+R(ML)R(M),
\ees
hence by Lemma \ref{lem2.17}
\bes
(ML)A&\subseteq& MD(L,A)^2+(MD(L,A)^2)(ML)\cdot M+ML\\
&=&MD(L,A)^2+ML.
\ees
We proved that 
\bes
MD(L,A)\subseteq MD(L,A)^2+ML.
\ees
Hence
\bes
MD(L,A)\subseteq MD(L,A)^k+ML \hbox{ for any } k\geq 2.
\ees
By Lemma \ref{lem8.3} $MD(L,A)\subseteq ML,\ (ML)A=ML$.
This implies that $ML+L$ is an ideal in $J$, a contradiction.

This completes the proof of the Lemma.

\ctd

{\bf Proof of the Proposition \ref{prop8.1}}\\

Let $e_1,\ldots,e_n$ be a basis of $J(V)$, and let $x\in M$ be an element such that $M=F\cdot x +(ML)A$. 
Consider the finite set $\mathcal P$ of  operators
\bes
R(x)R(a_1)R(x)\cdots R(a_k)R(x),
\ees
where $a_j$-s are distinct elements from $\{e_1,\ldots,e_n\}$. 
It follows from  Lemmas \ref{lem2.18} and \ref{lem8.4} that for any $u\in L$ the resquing operator for  $u$ is a scalar multiple of some operator in $\mathcal P$.  In other words, for any $u\in L$ there exists $p\in\mathcal P$ such that $up\in A\setminus L$.

For any $p\in \mathcal P$  consider the subspace 
$L_p=\{u\in L\,|\,up\in L\}.$ Since $\dim A/L<\infty, $ it follows that $L_p$ has a finite codimension in $L$.  Hence $\cap_{p\in\mathcal P}L_p$ has finite codimension in $L$.  It was noticed above that $\cap_{p\in\mathcal P}L_p=(0)$. Hence $\dim L<\infty, \ \dim A<\infty,$ and  by \cite{ShZ} $\dim J<\infty$.

This finishes the proof of the Proposition \ref{prop8.1}.
\ctd
\medskip

Now the Main Proposition  follows from Corollories \ref{cor0}, \ref{cor2.3}, \ref{cor2.5},  Lemma \ref{lem7.3}, and Propositions \ref{L=0}, \ref{prop7.1},  \ref{prop8.1}, and the results of  \cite{ShZ}.\\[3mm]

{\bf Proof of the Main Theorem.}\\

Let $J$ be a simple unital Jordan superalgebra over an algebraically closed field $F$. 
We may assume that $X(J)\neq (0)$, otherwise $J\in Var$.   Then $J=X(J)$ and by \cite{Zel5} $J$ has a finite multiplicative length.  It is clear that $J$ is primitive, hence by the Main Proposition either $S(J)=(0)$ or $J$ is isomorphic to $H(\O_3)$  or to $K_{10}$.
Notice that the $T$-ideal $X$ lies in the {\em hermitian} or {\em teatrad-eater} ideal $T$ introduced in \cite{Zel2}.  The ideal $T$ has the following property: if $B$ is an $i$-special Jordan (super)algebra in which $T(B)\neq (0)$ then there exists an associative (super)algebra $C$ with (super)involution $*$ such that  $T(B)\cong H(C,*)$.  Since $J=G(J)=T(J)$,   if $S(J)=(0)$ then we have $J\cong H(C,*)$ for some associative superalgebra with superinvolution $(C,*)$.

\ctd


\begin{thebibliography}{99}

\bibitem{CanKac} Cantarini, \,N.  and Kac,\,V. G.:  Classification of linearly compact simple Jordan and generalized Poisson superalgebras,  J. Algebra,  313, 100--124 (2007).
\bibitem{ChengKac} Cheng, S.J.  and Kac, V.G.: A new $N=6$ superconformal algebra, Comm. Math. Phys 186 (1997),  no. 1,  219--231.
\bibitem{Glennie} Glennie, C. M.: Some identities valid in special Jordan algebras but not valid in all Jordan algebras, Pacific J. Math. 16 (1966), 47--59.
\bibitem{GA} G\'omez-Ambrosi, C. : 
On the simplicity of Hermitian superalgebras,  Nova J. Algebra Geom. 3, no 3, 193--198 (1995).
\bibitem{GA-M} G\'omez-Ambrosi, C.,  Montaner, F. : On Herstein's constructions relating Jordan and associative superalgebras, Comm. Algebra 28 (2000),  no. 8, 3743--3762.
\bibitem{GLS} Grozman, P.,  Leites, D.  and Shchepochkina, I.: Lie superalgebras of string theories,  Acta Math. Vietnam 26 (2001),  no. 1,  27--63.
\bibitem{Jac} Jacobson, Nathan: {\em Structure and representations of Jordan algebras.}  American Mathematical Society,  Providence, RI, 1968, x+453 pp.
\bibitem{Jac2} Jacobson, Nathan: {\em Structure  of Rings.}  American Mathematical Society,  Providence,  RI, 1956, x+299 pp.
\bibitem{Kac}  Kac, V. G.: Classification of simple $\Z$-graded Lie superalgebras and simple Jordan superalgebras,  Comm. Algebra, 5,  no. 13, 1375--1400 (1977).
\bibitem{KMZ} Kac, V. G., Martinez, C., and Zelmanov E.: Graded Simple Jordan Superalgebras of Growth One,  Amer. Math. Soc., Providence (2001) (Mem. Amer. Math. Soc.; V. 150,  no. 711).
\bibitem{Kantor} Kantor, I.L.: Jordan and Lie superalgebras determined by a Poisson algebra, in: Algebra and Analysis (Tomsk, 1989),  pp. 55--80; Amer. Math. Soc. Transl. Ser. 2, 151,  Amer. Math. Soc., Providence, 1992.
\bibitem{Kap1} Kaplansky, I.: Superalgebras,  Pacific J. Math. 86 (1980), no.1, 93--98.
\bibitem{Kap2} Kaplansky,  I.: Graded Jordan algebras I, preprint.
\bibitem{KingMC} King, D.,  McCrimmon, K.: The Kantor construction of Jordan superalgebras,  Comm. Algebra,  20,  no. 1, 109--126 (1992).

\bibitem{Lyu} Lyu, Shao-syu\`e.: On the splitting of infinite algebras.  Mat. Sb. (N.S.) 42/84 (1957), 327--352.
\bibitem{MSZ} Martinez, C.,  Shestakov, I., Zelmanov, E.:  Jordan bimodules over the superalgebras  $P(n)$  and  $Q(n)$.
Trans.  Amer.  Math.  Soc. 362 (2010),  no. 4,  2037--2051.
\bibitem{MarShZ} Martinez, C.,  Shestakov, I.,  Zelmanov, E.: Jordan superalgebras defined by brackets. J. London Math. Soc. (2) 64(2001),  no.2, 357--368.

\bibitem{MarZel} Martinez, C.,  Zelmanov, E.: Simple finite-dimensional Jordan superalgebras of prime characteristic.  J. Algebra 236(2), 575--629 (2001).
\bibitem{McC} McCrimmon, K. : Speciality and non-speciality of two Jordan superalgebras.  J. Algebra 149,  no. 2, 3 26--351 (1992).

\bibitem{MedZel}  Medvedev, Yu. A., Zelmanov,  E. I.: Solvable Jordan algebras.  Comm. Algebra 13 (1985), no. 6, 1389--1414.
\bibitem{MedZel1}  Medvedev, Yu. A., Zelmanov,  E. I. : Some counterexamples in the theory of Jordan algebras.  Nonassociative algebraic models (Zaragoza, 1989), 1--16. Nova Science Publishers, Inc.,  Commack, NY, 1992

\bibitem{NS} Neveu, A.,  Schwarz,J.H. :
Factorizable dual models of pions
Nucl. Phys. B, 31 (1971),  86--112.

\bibitem{RZ} Racine, M. L., Zelmanov, E. I.:  Simple Jordan superalgebras with semisimple even part.  J. Algebra 270 (2003), no. 2, 374--444.

\bibitem{Ram} Ramond, P.:
Dual theory for free fermions
Phys. Rev. D, 3 (1971),  2415--2418.

\bibitem{Sh5} Shestakov, I. P. : Superalgebras and Counterexamples, Sibirsk. Matem. Zhurnal 32 (1991), no.6, 187--196; English transl. in Siberian Math. J. 32 (1991), no.6, 1052--1060 (1992).
\bibitem{Sh3} Shestakov, I. P. : Prime alternative superalgebras of arbitrary characteristic.
(Russian) Algebra i Logika,  36, no. 6 (1997), 675--716; English transl.: Algebra and Logic 36, no. 6 (1997), 389--420.
\bibitem{Sh4} Shestakov, I. P. : Quantization of Poisson superalgebras and speciality of Jordan Poisson superalgebras,  Algebra and Logic (5) (1993) 309--317.

\bibitem{ShZ} Shestakov, I. P., Zelmanov, E. I.: Simple Jordan superalgebras with the even parts of Clifford type,  Contemporary Mathematics,  to appear; arXiv:2503.08164.
\bibitem{Zel2} Zelmanov, E. I.: On prime Jordan algebras.(Russian) Algebra i Logika.18 (1979),  no.2, 162--175.
\bibitem{Zel6} Zelmanov, E. I.: On prime Jordan algebras II.(Russian) Siberian Math. J.  24 (1983) 89--104.
\bibitem{Zel1} Zelmanov, E. I.: Absolute zero divisors and algebraic Jordan algebras.(Russian) Siberian Math. J.,23 (1982),  no.6, 100--116, 206.
\bibitem{Zel4} Zelmanov, E. I.: Characterization of the McCrimmon radical. (Russian) Siberian Math. J.,  25  (1984), No. 5 (147), 190--192.
\bibitem{Zel5} Zelmanov, E. I.: , Birepresentations of infinite-dimensional Jordan algebra. (Russian) Siberian Math. J., 27:6 (1986), 79–94; English transl. in Siberian Math. J., 27:6 (1986), 849–862.  
\bibitem{ShZ0} Zelmanov, E. I. , Shestakov, I.P.: Prime alternative superalgebras and the nilpotency of the radical of a free alternative algebra.(Russian)
Izv.  Akad.  Nauk SSSR Ser. Mat. 54 (1990), no. 4, 676--693; translation in
Math. USSR-Izv. 37 (1991), no. 1, 19--36.
\bibitem{Zhel2} Zhelyabin, V. N.: New examples of simple Jordan superalgebras over an arbitrary field of characteristic zero.  St. Petersburg Math. J., 24,  no. 4, 591--600 (2013).
\bibitem{Zhel4} Zhelyabin, V.N.: Examples of prime Jordan superalgebras of vector type and superalgebras of Cheng-Kac type,  Siberian Math. J., 54, no.1  (2013),  33--39.
\bibitem{ZhSh} Zhelyabin, V. N.  and Shestakov, I. P.: Simple special Jordan superalgebras with associative even part,  Sib. Math. J., 45,  no. 5, 860--882 (2004).

\bibitem{ZhZ} Zhelyabin, V. N.  and Zakharov, A. S. : The superalgebras of Jordan brackets defined by the $n$-dimensional sphere,  Siberian Math. J., 61, no. 4 (2020), 632--647.

\bibitem{ZSSS}  Zhevlakov, K. A.; Slin'ko, A. M.; Shestakov, I. P.; Shirshov, A. I.: {\em Rings that are nearly associative.} (Russian) Moscow, Nauka, 1978; English transl. 
Academic Press, Inc. , New York-London, 1982, xi+371 pp.

\end{thebibliography}
 \end{document}